    \providecommand{\showkeys}{}
\def\blfootnote{\xdef\@thefnmark{}\@footnotetext}
  \newcommand{\fig}[2]{
    \IfFileExists{#1.pdf_tex}{
      \def\svgwidth{#2}\input{#1.pdf_tex}
    }{
      \frame{Missing figure ``#1.pdf''}
      \message{LaTeX Warning: Missing figure ``#1.pdf'' on input line \the\inputlineno}
    }
  }
  \newcommand{\fig}[2]{\frame{PDF Figure here}}	% if the target is not .pdf, don't include the figure
\newcommand{\dimension}{n}
\newcommand{\dist}{\operatorname{dist}}
\newcommand{\diam}{\operatorname{diam}}
\newcommand{\interior}{\operatorname{int}}
\newcommand{\trace}{\operatorname{tr}}
\newcommand{\argmin}{\operatorname*{arg\,min}}
\newcommand{\lap}{\Delta}
\newcommand{\dx}{\;dx}
\newcommand{\dy}{\;dy}
\newcommand{\diff}[1]{\;d{#1}}
\newcommand{\ov}[1]{\frac{1}{#1}}
\newcommand{\abs}[1]{\left|#1\right|}
\newcommand{\pth}[1]{\left(#1\right)}
\newcommand{\bra}[1]{\left[#1\right]}
\newcommand{\set}[1]{{\left\{#1\right\}}}
\newcommand{\clset}[1]{\cl{\set{#1}}}
\newcommand{\pset}[1]{\partial\set{#1}}
\newcommand{\at}[2]{{{\left.{#1}\right|}_{#2}}}
\newcommand{\norm}[1]{\left\|#1\right\|}
\newcommand{\cl}[1]{\overline{#1}}	% closure
\newcommand{\al}{\ensuremath{\alpha}}
\newcommand{\be}{\ensuremath{\beta}}
\newcommand{\ga}{\ensuremath{\gamma}}
\newcommand{\de}{\ensuremath{\delta}}
\newcommand{\e}{\ensuremath{\varepsilon}}
\newcommand{\vp}{\ensuremath{\varphi}}
\newcommand{\la}{\ensuremath{\lambda}}
\newcommand{\si}{\ensuremath{\sigma}}
\newcommand{\ta}{\ensuremath{\theta}}
\newcommand{\om}{\ensuremath{\omega}}
\newcommand{\R}{\ensuremath{\mathbb{R}}}
\newcommand{\Rd}{\ensuremath{{\mathbb{R}^{\dimension}}}}
\newcommand{\Rn}{\Rd}
\newcommand{\Z}{\ensuremath{\mathbb{Z}}}
\newcommand{\N}{\ensuremath{\mathbb{N}}}
\newcommand{\halflimsup}{\operatorname*{\star-limsup}}
\newcommand{\halfliminf}{\operatorname*{\star-liminf}}
\newcommand{\pd}[2]{\frac{\partial {#1}}{\partial {#2}}}
\newcommand{\td}[2]{\frac{d {#1}}{d {#2}}}
\definecolor{grey}{rgb}{0.6,0.6,0.6}
\renewcommand{\labelenumi}{(\alph{enumi})}
\newcommand{\romanlist}{\renewcommand{\labelenumi}{\textup{(}\roman{enumi}\textup{)}}}
\numberwithin{equation}{section}
\newtheorem{theorem}{Theorem}[section]
\newtheorem{lemma}[theorem]{Lemma}
\newtheorem{proposition}[theorem]{Proposition}
\newtheorem{corollary}[theorem]{Corollary}
\newtheorem{definition}[theorem]{Definition}
\theoremstyle{definition}
\newtheoremstyle{remarkstyle}% name of the style to be used
  {3pt}% measure of space to leave above the theorem. E.g.: 3pt
  {3pt}% measure of space to leave below the theorem. E.g.: 3pt
  {\small}% name of font to use in the body of the theorem
  {}% measure of space to indent
  {\bfseries}% name of head font
  {.}% punctuation between head and body
  { }% space after theorem head; " " = normal interword space
  {}% Manually specify head
\theoremstyle{remarkstyle}
\newtheorem{remark}[theorem]{Remark}
\newcommand{\grid}{{::}}
\newcommand{\gride}{{::\e}}
\newcommand{\ou}{\overline{u}}
\newcommand{\uu}{\underline{u}} % upper solution
\newcommand{\lu}{\overline{u}}  % lower solution
\newcommand{\ur}{\underline{r}} % upper velocity
\newcommand{\lr}{\overline{r}}  % lower velocity
\newcommand{\lPhi}{\overline\Phi}
\newcommand{\uPhi}{\underline\Phi}
\newcommand{\eqr}{{\e;q,r}}
\newcommand{\Pqr}{P_{q,r}}
\newcommand{\supers}{\overline{\mathcal{S}}}
\newcommand{\subs}{\underline{\mathcal{S}}}
\newcommand{\sol}{\mathcal{S}}
\newcommand{\cone}{{\mathrm{Cone}}}
\newcommand{\parahead}[1]{\bigskip\noindent\textbf{{#1}.}}
\newcommand{\ha}[1]{\frac{#1}{2}}
\newcommand{\Span}{\operatorname{span}}
\title[Homogenization of the Hele-Shaw problem]{Homogenization of the Hele-Shaw problem in periodic spatiotemporal media}
\author[N. Po\v{z}\'{a}r]{Norbert Po\v{z}\'{a}r}
\address{
University of Tokyo, Graduate School of Mathematical Sciences;
Current address: Kanazawa University, Faculty of Mathematics and Physics, Institute of Science and Engineering, 920-1192 Ishikawa, Kanazawa city, Kakuma town}
\email{npozar@se.kanazawa-u.ac.jp}
\date{}
\keywords{periodic homogenization, viscosity solutions,
Hele-Shaw problem, flow in porous media}
\subjclass[2010]{35B27, 35R35, 35J65, 35D40}
\begin{document}
\begin{abstract}
% abstract begin
We consider the homogenization of the Hele-Shaw problem
in periodic media that are inhomogeneous both in space and time.
After extending the theory of viscosity solutions into this context,
we show that the solutions of the inhomogeneous problem converge
in the homogenization
limit to the solution of a homogeneous Hele-Shaw-type problem
with a general, possibly nonlinear dependence of the free boundary velocity
on the gradient.
Moreover, the free boundaries converge locally uniformly
in Hausdorff distance.
% abstract end
\end{abstract}
\maketitle
\blfootnote{The final publication is available at Springer via \url{http://dx.doi.org/10.1007/s00205-014-0831-0}}

\tableofcontents

% CONTENTS BEGIN

\section{Introduction}
\label{sec:introduction}

Let $n \geq 2$ be the dimension
and let $\Omega \subsetneq \Rn$
be a domain (open, connected) with
a non-empty compact Lipschitz boundary
($\partial \Omega \in C^{0,1}$).
For given $\e > 0$,
we shall consider the following Hele-Shaw-type problem
on a parabolic cylinder $Q := \Omega \times (0, T]$ for some $T > 0$:
find $u^\e: Q \to [0, \infty)$ that formally satisfies
\begin{equation}
\label{HSt}
\begin{cases}
- \Delta u(x,t) = 0 & \text{in $\set{u > 0}$},\\
V_\nu(x,t) = g\pth{\frac{x}{\e},\frac{t}{\e}} \abs{Du^+(x,t)}
    & \text{on $\partial \set{u>0}$},
\end{cases}
\end{equation}
with some boundary data to be specified later
(Theorem~\ref{th:well-posedness}),
\begin{figure}
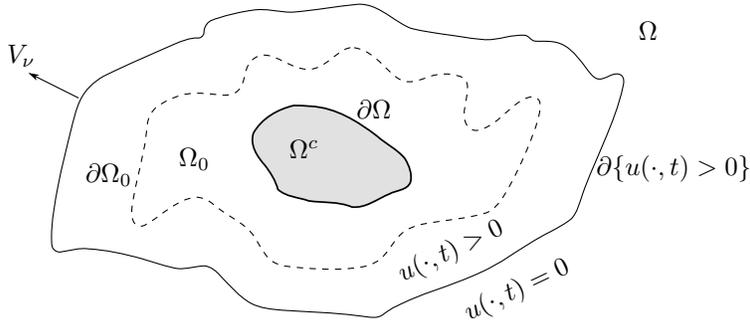

\fig{Fig1}{4.5in}
\caption{The Hele-Shaw problem in a plane}
\label{fig:setting}
\end{figure}
where $D = D_x$ and $\Delta = \Delta_x$ are respectively the gradient and  the Laplace operator in the $x$ variable,
and $V_\nu$ is the normal velocity of
the \emph{free boundary} $\partial \set{u > 0}$.
Assuming that the solution is sufficiently smooth,
the free boundary $\partial \set{u > 0}$ is a level set of $u$
and its normal velocity can be expressed as
\begin{align*}
V_\nu = \frac{u_t^+}{\abs{Du^+}}.
\end{align*}
Here $u_t^+$ and $Du^+$ formally represent
the limits of the derivatives from the positive set of $u$.
The well-posedness of problem \eqref{HSt}
requires some basic regularity of $g$.
In the following we shall assume that $g$ satisfies
\begin{align}
\label{g-bound}
0 < m \leq g \leq M
\end{align}
for some positive constants $m, M$,
and that $g$ is $L$-Lipschitz both in $x$ and $t$, that is,
\begin{align}
\label{g-Lipschitz}
\abs{g(x,t) - g(y,s)} \leq L(\abs{x-y} + \abs{t-s})
\qquad
\text{for all $(x,t), (y,s) \in \Rn\times\R$.}
\end{align}

We are interested in the behavior of the solutions of \eqref{HSt}
in the \emph{homogenization limit} $\e \to 0+$.
Since we want to observe an averaging behavior,
we further assume that $g$ is a $\Z^{n+1}$-periodic function, i.e.,
\begin{align}
\label{g-periodic}
g(x + k, t +l) &= g(x, t)
    && \text{for all $(x,t) \in \Rn \times \R,\ (k,l) \in \Z^n \times \Z$.}
\end{align}
In the following we define
\begin{align*}
g^\e(x,t) := g\pth{\frac{x}{\e}, \frac{t}{\e}}.
\end{align*}
Note that $g^\e$ is an $\e \Z^{n+1}$-periodic $\frac{L}{\e}$-Lipschitz function
which satisfies \eqref{g-bound}.

Problem \eqref{HSt} with $g \equiv const$ is the standard
Hele-Shaw problem with no surface tension.
In two dimensions, it was introduced in \cite{HS} as a model
of a slow movement of a viscous fluid injected
in between two nearby parallel plates
that form the so-called Hele-Shaw cell.
This problem
naturally generalizes to all dimensions $n \geq 1$.
In particular, in three dimensions it serves as a model
of a pressure-driven flow of an incompressible fluid through a porous medium.
Following this motivation,
problem \eqref{HSt} with general $g$ describes a pressure-driven flow of
an incompressible fluid in an inhomogeneous, time-dependent medium.
Free boundary problems with similar velocity laws
have various applications in the plastics industry
and other fields \cites{PR,Richardson,Steinbach}.
The Hele-Shaw-type problem \eqref{HSt}
can also be viewed as a quasi-stationary limit
of the one-phase Stefan problem with a latent heat of phase transition
depending on position and time
\cite{AGM,LR,Primicerio,Rou}.

%\parahead{Previous results}
\subsection*{Homogenization overview}
There is a large amount of literature on homogenization
which is beyond the scope of this discussion
and thus we refer the reader to \cite{JKO,Tartar}
and the references therein.

In the context of viscosity solutions for fully nonlinear problems
of the first and second order,
the now standard approach to homogenization
using correctors that are the solutions of an appropriate cell problem
was pioneered by Lions, Papanicolau \& Varadhan \cite{LPV}
for first order equations,
and later by Evans \cite{Evans92} for second order equations.
Unfortunately, this approach does not apply to problems
like \eqref{HSt} because the zero level of solutions has a special
significance and the perturbation of a test function
by a global periodic solution
of some cell problem, i.e., the corrector, does not seem feasible.

The idea of using obstacle problems to recover the homogenized operator
without the need for a cell problem
was developed by Caffarelli, Souganidis \& Wang \cite{CSW}
for the stochastic homogenization of fully-nonlinear second order
elliptic equations.
It was then applied to the homogenization of the Hele-Shaw problem
with spatial periodic inhomogeneity
by Kim \cites{K07},
using purely the methods of the theory of viscosity solutions.
This approach was later extended to a model of contact angle dynamics \cite{K08},
and an algebraic rate of the convergence of free boundaries
was obtained \cite{K09}.

Kim \& Mellet \cites{KM09,KM10} later succeeded at applying
a combination of viscosity and variational approaches
and obtained a homogenization result in the setting of
spatial stationary ergodic random media.
A related question of long-time asymptotics of
the spatially inhomogeneous Hele-Shaw problem was addressed by the author
\cite{P11}.
This technique relies on the special structure of
problem \eqref{HSt} with time-independent $g$
which allows us to rewrite
the problem as a variational inequality of a certain obstacle problem.
We refer the reader to \cite{Rodrigues} and the references therein
for an exposition of obstacle problems and their homogenization.

The homogenization results for spatial media do not necessarily
translate directly to the spatiotemporal homogenization.
In the context of Hamilton-Jacobi equations, for instance,
a lack of uniform estimates in the spatiotemporal case was
encountered by Schwab \cite{Schwab} who
proved the homogenization
in spatiotemporal stationary ergodic random media, which was
before established by Souganidis \cite{Soug99} for spatial random media.
The situation seems to be more extreme in the case of the Hele-Shaw problem since the homogenization in spatiotemporal media
is qualitatively different even in the periodic case.

This difference can be already observed in the homogenization of a single
ordinary differential equation
of the type $x_\e'(t) = f(x_\e(t)/\e, t/\e, x_\e(t))$.
In fact, the Hele-Shaw problem \eqref{HSt} reduces
to this type of ODE in one dimension; see Section~\ref{sec:homogenizationin1D}
for further discussion.
It is known that, under some assumptions on $f$, $x_\e \rightrightarrows \bar x$ locally uniformly,
where $\bar x$ is the solution of a homogenized ODE
\cite{Piccinini1,IM10}.
However, the form of the homogenized problem depends on $f$.
If, for illustration, $f(y,s,x) = f(y)$ is a $\Z$-periodic
Lipschitz function,
then the homogenized problem has the form $\bar x'(t) = \bar f$
where $\bar f$ is the constant
\begin{equation}
\label{hom-const}
\bar f = \pth{\int_0^1 (f(y))^{-1} \dy}^{-1}.
\end{equation}
On the other hand,
if $f(y,s,x) = f(y,s)$ has a nontrivial dependence on both $y$ and $s$,
then no such explicit formula exists and, in fact, the right-hand side
of the homogenized problem, while still a constant,
can possibly attain any value in the range $[\min f, \max f]$.

Another explanation of the qualitative difference,
slightly more intuitive, is the interpretation
of the Hele-Shaw problem \eqref{HSt} as the quasi-stationary
limit of the Stefan problem.
Then the quantity $(g(x/\e,t/\e))^{-1}$ can be interpreted
as the \emph{latent heat of phase transition}, that is,
the ``energy'' necessary
to change a unit volume of the dry region into the wet region
and advance the free boundary \cite{P11}.
The ``energy flux'' is proportional to the gradient of the pressure.
If $g$ does not depend on time, the homogenized latent heat
is simply the average, i.e., $\int_{[0,1]^n} (g(x))^{-1} \dx$,
giving the homogenized velocity recovered in \cite{KM09}
(of the form \eqref{hom-const}).

This formula was rigorously justified in \cite{KM09}
using the variational formulation via an obstacle
problem,
which allows one to solve for the shape of the wet region
at a given time without the need to solve the problem
at previous times.
The variational formulation was introduced in \cite{EJ}
using a transformation due to \cite{B72}.
In a sense, for the evolution overall,
the free boundary feels the same influence of the medium no matter how it passes through it. Or, in other words, the amount of the energy required
to fill a given region is always the same.

This drastically changes when the latent heat depends on
both space and time.
In this case, the energy required to advance the boundary depends
on the specific history of the motion of the free boundary through the space-time.
The variational formulation does not apply anymore.
What is more, it is no longer obvious that the problem should homogenize.
As the one-dimensional situation indicates---Section~\ref{sec:homogenizationin1D}---the homogenized velocity
might have a complicated dependence on the gradient,
with velocity pinning and directional dependence.
Some of these features appear in the spatial homogenization
of non-monotone problems \cite{K08,K09}.

Our approach can be characterized as geometric, relying on
maximum principle arguments.
A similar approach to homogenization,
albeit of (local) geometric motions,
was recently pursued by Caffarelli \& Monneau \cite{CM12}.
The geometric approach to the Hele-Shaw problem
is complicated by the nonlocal nature of the problem.
Indeed, \eqref{HSt} can be interpreted as a geometric motion
of the free boundary with the velocity given
by a nonlocal operator based on the Dirichlet-to-Neumann map.
Thus the domain is of crucial importance.

%\parahead{Main results}
\subsection*{Main results}
We present new well-posedness and homogenization results
for the Hele-Shaw-type problem \eqref{HSt}.

The well-posedness result is
a generalization and an improvement of the previous results
in \cites{K03,K07}.
In full generality, we consider the Hele-Shaw type problem
\begin{align}
\label{hs-f}
\begin{cases}
-\Delta u(x,t) = 0 & \text{in } \set{u > 0} \cap Q,\\
u^+_t(x,t) = f(x,t,Du)\abs{Du^+(x,t)}^2 & \text{on  }  \pset{u > 0} \cap Q.
\end{cases}
\end{align}
Let us introduce a number of assumptions
on the function $f(x,t,p): Q \times \Rn \to \R$.
In the following, we use the semi-continuous envelopes $f_*$ and $f^*$
of $f$ on $Q \times \Rn$ (see \eqref{usclsc} for definition).
Let us point out that we do not require continuity of $f(x,t,p)$ in $p$.
We shall need this generality to
handle the homogenized problem.
\begin{enumerate}[\bfseries ({A}1)]
\item (non-degeneracy)
There exist constants $m$ and $M$
such that $0 < m \leq f(x,t,p) \leq M$
for all $(x,t,p) \in Q \times \Rn$.
\item (Lipschitz continuity)
There exists a constant $L > 0$
such that $f$ is $L$-Lipschitz in $x$ and $t$ for all $p$.
\item (monotonicity)
$f^*(x,t, a_1 p) \abs{a_1 p} \leq f_*(x,t, a_2 p) \abs{a_2 p}$
for any $(x, t, p) \in Q \times \Rn$
and $0 < a_1 < a_2$.
\end{enumerate}
The assumption (A3) implies that the free boundary
velocity is monotone with respect to the gradient,
while allowing for certain jumps.

We have the following well-posedness theorem
for viscosity solutions that are introduced in Section~\ref{sec:viscosity-solutions}.

\begin{theorem}[Well-posedness]
\label{th:well-posedness}
Let $Q := \Omega \times (0,T]$ where $\Omega$
is a domain that satisfies the assumptions above
and $T > 0$.
Assume that either $f(x,t,p) = f(x,t)$
or $f(x,t,p) = f(p)$
and that $f$ satisfies (A1)--(A3).
Then for any positive function $\psi \in C(\partial \Omega \times [0,T])$
strictly increasing in time,
and for any open set $\Omega_0 \subset \Rn$
with smooth boundary, $\partial \Omega_0 \in C^{1,1}$,
such that $\Omega^c \subset \Omega_0$ and $\Omega_0 \cap \Omega$
is bounded (see Figure~\ref{fig:setting}),
there exists a unique bounded viscosity solution
$u: \cl Q \to [0, \infty)$ of the Hele-Shaw-type problem \eqref{hs-f}
such that $u^* = u_* = u$ on $\partial_P Q := \cl Q \setminus Q$
with boundary data $u(x,t) = \psi(x,t)$ on $\partial \Omega \times [0,T]$
and initial data $u(\cdot, 0) > 0$ in $\Omega \cap \Omega_0$
and $u(\cdot, 0) = 0$ in $\Omega \setminus \Omega_0$.
The solution is unique in the sense that
$u_* = v_*$ and $u^* = v^*$ for any two viscosity solutions
$u$, $v$ with the given boundary data.
\end{theorem}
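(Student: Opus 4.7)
The plan is to establish Theorem~\ref{th:well-posedness} via the standard two-step strategy for viscosity solutions of free boundary problems: first prove a comparison principle for viscosity sub- and supersolutions of \eqref{hs-f}, and then construct the solution by Perron's method.

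\textbf{Comparison.} The core is to show that if $u$ is a viscosity subsolution and $v$ a viscosity supersolution of \eqref{hs-f} with $u^* \leq v_*$ on $\partial_P Q$, then $u^* \leq v_*$ on $\cl Q$. Because both the elliptic equation and the free-boundary motion law are discontinuous across $\set{u=0}$, classical doubling-of-variables arguments are not well suited, and I would follow the geometric maximum-principle approach initiated in \cites{K03,K07}. The key device is a one-parameter perturbation of $u$---for instance a parabolic rescaling of the form $\tilde u^\delta(x,t) = (1+\delta)\, u\!\left(\tfrac{x}{1+\delta}, \tfrac{t}{1+c\delta}\right)$, or a space-time inf-convolution---that strictly separates the sub- and supersolutions on $\partial_P Q$ while preserving subsolution status. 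If comparison fails, at the first crossing time one produces a point on the free boundary where a smooth test function touches both solutions from the appropriate side, yielding two gradients $p_u, p_v$ aligned in a common direction with $\abs{p_u} > \abs{p_v}$. The free-boundary velocity inequalities then read $V_\nu \geq f^*(x,t,p_u)\abs{p_u}$ from below and $V_\nu \leq f_*(x,t,p_v)\abs{p_v}$ from above, and hypothesis (A3), tailored precisely to compare upper and lower envelopes at different positive scalings of a shared direction, supplies the contradiction. For the case $f=f(x,t)$, the Lipschitz bound (A2) instead absorbs the small spatial shift introduced by the perturbation.

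\textbf{Existence.} With comparison in hand, I would construct $u$ by Perron's method. Explicit barriers built from harmonic functions on a fixed reference domain, together with radial Hele-Shaw profiles whose free boundaries propagate at speeds controlled by $m$ and $M$ from (A1), provide matching subsolutions $u_-$ and supersolutions $u_+$ attaining the prescribed boundary data $\psi$ and initial data on $\Omega_0$. Setting
\[
u := \sup\set{w^* : w \text{ is a subsolution of } \eqref{hs-f},\ u_- \leq w \leq u_+},
\]
standard stability results---relying on the upper semicontinuity of $f^*$ and lower semicontinuity of $f_*$, which is why we need not assume continuity of $f$ in $p$---show that $u^*$ remains a subsolution and $u_*$ a supersolution. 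Comparison then forces $u^* = u_*$, so $u$ is the desired viscosity solution. The strict time-monotonicity of $\psi$ enters here in producing strictly ordered barriers on $\partial\Omega\times[0,T]$, and uniqueness is immediate from the comparison principle.

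\textbf{Main obstacle.} The most delicate point is the comparison principle in the case $f(x,t,p)=f(p)$ under only assumption (A3). There is no pointwise Lipschitz control in $p$, so the perturbation must be chosen so that the scaling-induced change of the gradients interacts cleanly with the permissible jumps encoded in $f^*$ and $f_*$, and the contact-point analysis must produce gradients that are genuine positive scalar multiples of one another in order that (A3) can be applied. A secondary difficulty is that $\Omega$ may be unbounded (only $\Omega_0\cap\Omega$ is bounded), which forces the global barriers to be constructed with care to respect both the far-field behavior and the boundary data simultaneously.
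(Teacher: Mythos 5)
Your overall architecture (comparison principle plus Perron's method with harmonic barriers) matches the paper's, but two of your key steps have genuine gaps. First, the comparison principle you can realistically prove here (and the one the paper proves, Theorem~\ref{th:comparison}) requires \emph{strict} separation $u \prec v$ on $\partial_P Q$, so uniqueness is \emph{not} ``immediate from the comparison principle'': two solutions with identical boundary data are not strictly separated anywhere. The paper closes this gap with a separate argument that you are missing: it perturbs one solution in time, $w(x,t)=u(x,\ta_\eta(t))$, exploiting the strict monotonicity of $\psi$ in $t$ and the strict expansion of the support at $t=0$ (proved by a barrier comparison) to create strict separation on the parabolic boundary, applies Theorem~\ref{th:comparison} on $Q_\eta$, and sends $\eta\to0$. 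When $f=f(x,t)$ even this step is delicate, because a plain shift $t\mapsto t-\eta$ (or a linear rescaling of time) does not preserve subsolutions beyond time intervals of order $m/L$; this is precisely why the paper constructs the nonlinear time reparametrization of Appendix~\ref{sec:nonlinear-perturbation} (Proposition~\ref{pr:nonuniform-perturbation} and Section~\ref{sec:nonlinear-scaling-sub}). For the same reason your proposed perturbation $\tilde u^\delta(x,t)=(1+\delta)u(x/(1+\delta),t/(1+c\delta))$ fails for time-dependent $f$ on an arbitrary fixed horizon $T$: the Lipschitz error $L\delta(|x|+c\,t)$ grows linearly in $t$ while the gain from the scaling factor is only of order $\delta m$, so no fixed $c$ works once $T\gtrsim m/L$. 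The paper instead uses sup/inf convolutions over $\Xi_r$ with multiplicative factors $(1\pm 3Lr/m)$, a convolution radius shrinking in time (which strictly increases the supersolution's velocity by $\de$ and supplies the strict gap needed at the contact point), Hopf's lemma and interior-ball regularity of the convolutions to extract ordered gradients and well-defined boundary velocities before invoking (A1)--(A3); your contact-point sketch also has the inequalities reversed (at the touching point the subsolution's gradient is the \emph{smaller} one, $\alpha<\beta$, and the subsolution's velocity is bounded \emph{above} by $f^*(\cdot,-\al\nu)\al$), and without the $\de$-gap the chain of inequalities closes to an equality rather than a contradiction.

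Second, your conclusion of the Perron step, ``comparison then forces $u^*=u_*$,'' cannot be right as stated: solutions of \eqref{hs-f} are in general discontinuous (merging of free boundaries), so $u^*=u_*$ is too strong. What the paper proves, via the uniqueness argument above applied to the Perron sub- and supersolution envelopes, is the weaker but correct regularity $(u^*)_*=u_*$ and $(u_*)^*=u^*$ (Corollary~\ref{co:regularity}) together with uniqueness of the envelopes $u_*$, $u^*$, which is exactly the sense of uniqueness asserted in Theorem~\ref{th:well-posedness}. Your existence construction and barrier philosophy are otherwise in line with the paper (including the remark that boundedness of the support must be established first when $\Omega$ is unbounded, via the finite expansion-speed estimate), but as written the proof of uniqueness and of the correct notion of ``solution found by Perron's method'' is missing the essential time-perturbation machinery.
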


In the context of a flow in porous media,
$\Omega^c$ represents the source of a liquid
with the prescribed pressure $\psi$
on its boundary, and $\Omega_0$ is the initial wet region.
The situation is depicted in Figure~\ref{fig:setting}.

\medskip
However, the main result of this paper concerns the homogenization of \eqref{HSt}.
Note that because the solution of the homogenized problem
might be discontinuous,
we cannot expect uniform convergence of solutions in general.
Furthermore, we do not know
if the homogenized velocity
($r$ in Theorem~\ref{th:intro-homogenization} below) is continuous.

\begin{theorem}[Homogenization]
\label{th:intro-homogenization}
Suppose that $g \in C(\Rn \times \R)$
satisfies \eqref{g-bound}--\eqref{g-periodic}.
Then there exists a function $r: \Rn \to \R$
such that
\[f(x,t,p) := \frac{r(p)}{\abs p}\] satisfies (A1)--(A3),
and,
for any $Q$ and initial and boundary data $\Omega_0, \psi$
that satisfy the assumptions in Theorem~\ref{th:well-posedness}
the following results hold:
\begin{enumerate}
\item
the unique solutions $u^\e$ of \eqref{HSt}
with data $\Omega_0, \psi$
converge in the sense of half-relaxed limits as $\e\to0$
(Definition~\ref{def:half-relaxed}) to
the unique solution $u$ of
\eqref{hs-f} with $f(x,t,p) = r(p)/\abs{p}$
and the same data $\Omega_0, \psi$;
\item
if $u$ is also continuous on a compact set
$K \subset \cl Q$ then $u^\e \rightrightarrows u$
converge uniformly on $K$;
\item
the free boundaries $\partial \set{(u^\e)_* > 0}$
converge uniformly
to the free boundary $\partial \set{u_* > 0}$
with respect to the Hausdorff distance
(Definition~\ref{def:Hausdorff-convergence}).
\end{enumerate}
\end{theorem}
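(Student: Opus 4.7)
The plan is to follow the standard viscosity-solution strategy for homogenization via half-relaxed limits, but with the free-boundary obstacle-type approach of Kim \cite{K07} adapted to the spatiotemporal periodic setting.

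\textbf{Step 1: Construction of the effective velocity $r(p)$.} For each $p \in \Rn$, I would consider (approximate) planar sub- and supersolutions of the $\e$-problem with asymptotic gradient $p$, i.e., functions that are close to the half-space profile $U_p(x) = (-p \cdot x)^+$ translated by some moving hyperplane, and track the asymptotic normal speed with which such a planar free boundary can propagate through the $\Z^{n+1}$-periodic medium $g$. Because of spatiotemporal periodicity, this speed is well-defined for each $p$ and each candidate speed $v$ I can test by comparison whether there exists a periodic subsolution (respectively supersolution) of an appropriate obstacle problem moving at speed $v$ with gradient $p$. Define
\begin{align*}
\lr(p) &:= \inf\set{v : \text{a global supersolution with gradient } p \text{ moving at speed } v \text{ exists}},\\
\ur(p) &:= \sup\set{v : \text{a global subsolution with gradient } p \text{ moving at speed } v \text{ exists}},
\end{align*}
and show via monotonicity and scaling arguments that $\ur(p) = \lr(p) =: r(p)$. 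Verify (A1)–(A3) for $f(x,t,p) := r(p)/\abs{p}$: the uniform bound follows from the bounds on $g$, Lipschitz in $(x,t)$ is trivial since $f$ has no $(x,t)$-dependence, and the monotonicity (A3) follows from a rescaling argument using that $U_{ap}$ and $U_p$ differ only by a factor $a$ in their slope.

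\textbf{Step 2: Half-relaxed limits and the perturbed test function argument.} Define $\ou := \halflimsup_{\e \to 0} u^\e$ and $\uu := \halfliminf_{\e \to 0} u^\e$ (Definition~\ref{def:half-relaxed}). I would show that $\ou$ is a viscosity subsolution of \eqref{hs-f} with $f = r(p)/\abs p$, and $\uu$ a supersolution. At an interior point of $\set{\ou > 0}$ this is the standard statement that uniform limits of harmonic functions are harmonic, so the work is at free-boundary contact points. There, following the perturbed-test-function strategy, if a smooth $\vp$ touches $\ou$ from above at $(x_0, t_0) \in \partial\set{\ou > 0}$ with $\vp_t/\abs{D\vp} > r(D\vp)/\abs{D\vp}\cdot\abs{D\vp}$ (in the appropriate sub/superdifferential sense after multiplying through), I would glue a planar sub-solution of the $\e$-problem with slope $D\vp(x_0,t_0)$ moving at a speed slightly below $\vp_t/\abs{D\vp}$ from Step~1 to $\vp$ in a neighborhood of $(x_0,t_0)$, producing a barrier that contradicts the comparison principle for $u^\e$ once $\e$ is small enough.

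\textbf{Step 3: Comparison, uniform convergence, and free boundary convergence.} Once $\ou$ is a subsolution and $\uu$ a supersolution of \eqref{hs-f} with $f = r(p)/\abs p$, apply the comparison principle from Theorem~\ref{th:well-posedness} to get $\ou \leq u \leq \uu$ on $\cl Q$, where $u$ is the unique solution of the homogenized problem; since by definition $\uu \leq \ou$ always holds in the appropriate semicontinuous sense, we obtain $\ou = u^*$ and $\uu = u_*$, which is assertion (1). Assertion (2) is immediate from the definition of half-relaxed limits whenever $u$ is continuous on a compact $K$. For (3), use the standard non-degeneracy of Hele-Shaw solutions: in a neighborhood of any free boundary point of $u$, $u_*$ grows linearly away from $\partial\set{u_* > 0}$, which combined with half-relaxed convergence prevents free boundaries from collapsing or expanding in the limit and upgrades the setwise convergence of positivity sets to locally uniform Hausdorff convergence of their boundaries (Definition~\ref{def:Hausdorff-convergence}).

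\textbf{Main obstacle.} The hard part will be Step~1: showing that the effective speeds $\ur(p)$ and $\lr(p)$ coincide, and that the resulting $r(p)$ satisfies the monotonicity (A3). In the spatial case \cite{K07} the obstacle problem has a variational structure that forces uniqueness of the effective speed; in the spatiotemporal setting this structure is lost, as emphasized in the introduction, so the existence of global periodic moving sub/supersolutions must be established directly by a barrier construction that respects both the spatial and temporal periods simultaneously. The expected failure of continuity of $r(p)$ (due to velocity pinning and resonance phenomena discussed in the one-dimensional analysis) means that the argument must tolerate discontinuities, which is why the formulation of (A3) is stated with envelopes $f^*, f_*$ rather than continuity in $p$, and why the comparison principle of Theorem~\ref{th:well-posedness} had to be established in this generality in the first place.
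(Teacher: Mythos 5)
Your overall architecture (identify an effective velocity from planar/obstacle-type comparisons, pass to half-relaxed limits, conclude by comparison and nondegeneracy) is the same as the paper's, but two steps you treat as routine are exactly where the argument would fail. First, in Step~1 you assert $\ur(p)=\lr(p)$ and claim (A3) follows "from a rescaling argument" on the slopes. In a medium depending on both $x$ and $t$ the scaling $u\mapsto au(x,at)$ is not available, and no global periodic traveling sub/supersolutions (correctors) exist in general (velocity pinning); the paper never constructs them. Instead it defines $\lr(q),\ur(q)$ through the flatness of the obstacle-problem solutions $\lu_\eqr,\uu_\eqr$ at the critical rate $\e^\be$, $\be\in(4/5,1)$, and it does \emph{not} prove $\lr=\ur$: it only obtains $\lr=\ur^*$ and $\ur=\lr_*$ (Proposition~\ref{pr:r-semicontinuity}), which suffices because the sub/supersolution tests use the envelopes $f^*,f_*$. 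The inequality that actually yields (A3), namely $\lr(q)\leq\ur(aq)$ for $a>1$ (Corollary~\ref{co:mixed-monotone}), is not a scaling identity; it is the content of the local comparison principle for $\e^\be$-flat obstacle solutions (Theorem~\ref{th:localComparison}), a heavy quantitative result with no counterpart in your sketch. Simple slope rescaling (via the nonuniform perturbation) only gives monotonicity of $a\mapsto\lr(aq)$ and $a\mapsto\ur(aq)$ separately, not the mixed ordering between the two candidates.

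Second, in Step~2 the contradiction at a free-boundary touching point cannot be produced by merely "gluing a planar subsolution moving at a speed slightly below $\vp_t/\abs{D\vp}$". Knowing that the candidate speed $r$ exceeds $\lr(q)$ only tells you that $\lu_{\e';q,r}$ fails to be $\e'^\be$-flat, i.e.\ it detaches from the obstacle \emph{somewhere}; to contradict the choice of the contact point $(x_\e,t_\e)$ you must know the detachment occurs at a prescribed location with a prescribed magnitude, and a locally inserted planar profile is not a barrier for this nonlocal (Dirichlet-to-Neumann) problem without control of the whole domain. This is precisely what the cone-flatness property (Proposition~\ref{pr:cone-flatness}, which depends on the special cone domain $Q_q$ and the matching planar barriers $R^\pm_\xi$) together with the monotonicity across scales delivers in the detachment lemma (Lemma~\ref{le:detachment-ball}): failure of $\e^\be$-flatness at scale $\e'$ forces $\lu_{\ga\e';q,r}=0$ on a fixed ball $B_{\ga\mu\e'^\be}(r\nu,1)$, and it is this ball into which the rescaled translate of $u^\e$ is mapped in the proof of Proposition~\ref{pr:homogenization-interior}. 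Your Step~3 (boundary barriers, uniqueness for the limit problem, nondegeneracy plus Harnack for Hausdorff convergence of the free boundaries) does match the paper, but without the flatness/local-comparison/detachment machinery Steps~1 and~2 as proposed do not close.
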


\subsection*{Sketch of the proof}
Let us give an overview of the main ideas in the paper.
Since the variational formulation via an obstacle problem,
discussed above, is not available,
we have to rely solely on the technically heavy tools of the viscosity theory. The time-dependence of $g$ poses significant new challenges which require a rather nontrivial extension of the previous results.

The first step is the identification of the homogenized problem.
Since the solutions of \eqref{HSt} are harmonic in space in their positive sets on any scale $\e$,
their limit as $\e \to 0$ should also be harmonic in space.
The free boundary velocity of the homogenized problem is, however,
unknown.
Following the ideas from \cite{CSW,K07},
we identify the correct homogenized free boundary velocity
by solving an obstacle problem and study its behavior as $\e\to0$.
To motivate our approach, suppose that the solutions $u^\e$ in the limit $\e \to 0$ converge
to a solution $u$ of the homogenized Hele-Shaw problem with
the free boundary law given as
\begin{align*}
    V_\nu &= r(Du) & &\text{on } \pset{u > 0},
\end{align*}
where $a \mapsto r(a q)$ is an increasing function on $\R_+$ for any $q \in \Rn$.
The crucial observation is that this problem has traveling wave solutions;
these are the planar solutions of the form
$\Pqr(x,t) = (\abs{q}r t + x \cdot q)_+$,
with the particular choice $r = r(q)$.
Here $(\cdot)_+$ stands for the positive part, i.e.,
$s_+ := \max(s, 0)$.
If $r > r(q)$ then $\Pqr$ is a supersolution
of the homogenized problem,
and if $r < r(q)$ it is a subsolution.
This observation allows us to identify the correct velocity $r(q)$
by solving the $\e$-problem \eqref{HSt} for each $\e > 0$ and comparing
the solution with the planar solution $\Pqr$ for given $r$ and $q$.
Loosely speaking, if $r$ is too large for a given $q$,
the solutions $u^\e$ should evolve slower than $\Pqr$
for small $\e$
and, similarly, if $r$ is too small then $u^\e$ should evolve faster
than $\Pqr$ for small $\e$.

To make this idea rigorous,
we solve the following obstacle problems
for each fixed $\e$:
given a fixed domain $Q$, $r$ and $q$,
find the largest subsolution $\lu_\eqr$ of the $\e$-problem
that stays under $\Pqr$ in $Q$
and the smallest supersolution $\uu_\eqr$ of the $\e$-problem
that stays above $\Pqr$ in $Q$.
Following this reasoning,
we find two candidates for the correct homogenized velocity $r(q)$
by ``properly measuring'' how much contact there is as $\e \to 0$
between the obstacle $\Pqr$
and the largest subsolution $\lu_\eqr$,
yielding $\lr(q)$,
and the smallest supersolution $\uu_\eqr$, yielding $\ur(q)$.
Since the free boundary velocity law is nonlocal,
given by a Dirichlet-to-Neumann map,
a good choice of the domain $Q$ for this procedure is important.

For homogenization to occur, it is necessary that both candidates
$\lr(q)$ and $\ur(q)$
yield the same limit problem \eqref{hs-f}.
We need to give a proper meaning to the intuitive idea of
evolving slower or faster than the obstacle $P_{q,r}$.
The selection of a quantity
that not only ``properly measures'' the amount of contact
between the obstacle $\Pqr$ and the solutions of the obstacle problem
in the homogenization limit $\e\to0$
but is also convenient to work with is far from obvious.
In particular, we want to take advantage
of the natural monotonicity (Birkhoff property)
of the solutions of the obstacle problem,
which is a consequence of the periodicity of the medium.

In \cite{CSW} as well as in \cite{K07,K08,K09},
the authors consider the coincidence set
of the solutions of the obstacle problem
and the obstacle $\Pqr$ as their quantity of choice.
This choice is motivated by the fact that
if there is a contact on a sufficiently large set,
then the solution must
be close to the obstacle everywhere.
This kind of estimate can be established using the
Alexandroff-Bakelman-Pucci estimate
in the case of fully nonlinear elliptic problems in \cite{CSW}.
Since ABP-type estimates are not available for
the Hele-Shaw problem,
the closeness to the obstacle must be derived by other means
\cite{K07}.

The disadvantage of this approach in the context of free boundary problems,
in particular for the homogenization of \eqref{HSt},
stems from the restriction that it imposes on
the directions in which one can translate
the solutions of the obstacle problem
to take advantage of their natural monotonicity property,
and to recover the monotonicity
of the contact set.
This creates technical difficulties and
requires a separate treatment of rational and irrational directions.
It seems that, to overcome these difficulties,
it is necessary to scale the solutions in time
for the arguments to work.
However, a scaling in time is not available
in the time-dependent medium.

The main new idea in this paper is
the introduction of \emph{flatness}
and its critical value $\e^\be$ (Section~\ref{sec:flatness}).
More specifically, to recover the correct homogenized
boundary velocity for a given gradient $q$
we test
if,
for some fixed $\be \in (0,1)$,
the free boundary of the solution of the obstacle problem
stays $\e^\be$-close to the obstacle for a unit time
on an arbitrary small scale $\e$.
This choice is motivated by the new cone flatness property
(Proposition~\ref{pr:cone-flatness}):
the boundary of the solution of the obstacle problem
stays between two cones that are $\sim \e \abs{\ln\e}^{1/2}$ apart.
Therefore if, for $\e \ll 1$, the boundary is farther than $\e^\be \gg \e \abs{\ln\e}^{1/2}$,
it will be detached from the obstacle on a large set.
This is formulated in the \emph{detachment lemma} (Lemma~\ref{le:detachment-ball}).
Moreover, the improvement of the \emph{local comparison principle}
(Theorem~\ref{th:localComparison})
for $\e^\be$-flat solutions to allow for $\be \in (4/5,1)$
is a necessary ingredient to close the argument.
This approach allows us to prove that
the candidates $\lr(q)$ and $\ur(q)$
almost coincide, i.e., $\lr = \ur^*$,
and that $f(x,t,p) = \lr(p)/\abs p$ satisfies
the assumptions (A1)--(A3).
Moreover, if the medium is time-independent, that is, $g(x,t) = g(x)$,
we are able to show that $\lr = \ur$
are continuous and one-homogeneous
(Section~\ref{sec:time-independent}),
essentially recovering the result of \cite{K07}.

Once the homogenized velocity has been identified,
we can prove that the half-relaxed limits
satisfy the homogenized problem (Section~\ref{sec:convergence}).
The perturbed test function method cannot be applied to free boundary
problems and a different, more geometric argument
based on a comparison of the solutions
of the $\e$-problem with rescaled translations of the solutions of the obstacle problems must
be engaged.
The detachment lemma (Lemma~\ref{le:detachment-ball}) plays a key role in this argument.
Finally, since the comparison with barriers guarantees that
the limits have the correct boundary data,
the comparison principle for the limit problem establishes
that the upper and lower half-relaxed limits coincide with the
unique solution of the homogenized problem.

\subsection*{Open problems}
Let us conclude the introduction by mentioning some of the open problems.
We have been only able to show that the homogenized velocity
$r$ is semi-continuous.
However, it seems quite reasonable
to expect continuity, or even H\"older continuity.
As the one-dimensional case suggests in Section~\ref{sec:homogenizationin1D},
this is the highest regularity one may hope for
in a general situation.
Another open problem is the question of a convergence rate of
free boundaries in the homogenization limit.
An algebraic rate was obtained in the periodic spatial homogenization case
\cite{K09}.
These issues will be addressed in future work.

A related open problem is the homogenization in
random media when the free boundary problem does not have a variational
structure, unlike \cite{KM09,KM10}.
This problem has not yet been solved since there is no obvious
subadditive quantity to which the subadditive ergodic theorem
can be applied to overcome the lack of compactness of a general
probability space.

Finally, in the current paper, we use the monotone propagation property
of the free boundary---that is, that the wet region cannot recede---to
obtain some of the important estimates.
The presented method, however, seems robust enough to handle
non-monotone problems such as the model of contact angle dynamics
as in \cite{K08,K09}. This will also be a subject of future work.

%\parahead{Outline}
\subsection*{Outline}
The exposition of the proof of the homogenization result
 was split in a number
of steps.
First we give the necessary definitions of solutions
in Section~\ref{sec:viscosity-solutions},
together with some preliminary results including the comparison principle
and a well-posedness result.
These are used in Section~\ref{sec:limit-problem}, where we identify a candidate for the limit velocity.
Finally, Section~\ref{sec:convergence} is devoted
to showing that the solutions converge in the homogenization limit.
In addition, the appendices contain some auxiliary results
used throughout the text.

\section{Viscosity solutions}
\label{sec:viscosity-solutions}

In this section, we briefly revisit the theory of viscosity solutions
of the Hele-Shaw problem \eqref{HSt}.
We generalize the definitions introduced in \cites{K03,K07},
and outline the proof of the comparison principle in our settings, Section~\ref{sec:comparison-principle},
with an aim at establishing Theorem~\ref{th:well-posedness} in Section~\ref{sec:well-posedness}.
Let us mention that viscosity solutions
seem to be the natural class of solutions of \eqref{HSt}
since the problem has a maximum principle structure.
Moreover, due to the possible topological changes
and merging of free boundaries, the solutions
might be discontinuous \cite{K03}.
The standard notion of solutions due to \cite{EJ}
does not apply when $g$ in \eqref{HSt} depends on time.

Since viscosity solutions are the only weak notion of solutions used
throughout the paper,
we will often refer to them simply as \emph{solutions}.

Before we give the definitions of viscosity solutions,
we need to introduce some notation.
For given radius $\rho > 0$ and center $(x,t) \in \Rn\times\R$,
we define the open balls
\begin{align*}
B_\rho(x,t) &:= \set{(y,s) \in \Rn\times\R:
\abs{y-x}^2 + \abs{s-t}^2 < \rho^2},\\
B_\rho(x) &:= \set{y \in \Rn : \abs{y-x} < \rho}.
\end{align*}
Let $E \subset \R^d$ for some $d \geq 1$.
Then $USC(E)$ and $LSC(E)$ are respectively
the sets of all upper semi-continuous
and lower semi-continuous functions on $E$.
For a locally bounded function $u$ on $E$
we define the semi-continuous envelopes
$u^{*, E} \in USC(\R^d)$ and $u_{*,E} \in LSC(\R^d)$
as
\begin{align}\label{usclsc}
\begin{aligned}
    u^{*,E} &:=
    \inf \set{v \in USC(\R^d): v \geq u \text{ on } E},\\
    u_{*, E} &:= \sup \set{v \in LSC(\R^d): v \leq u \text{ on } E}.
\end{aligned}
\end{align}
Note that $u^{*,E} : \R^d \to [-\infty, \infty)$
and $u_{*, E} : \R^d \to (-\infty, \infty]$ are finite on $\cl E$.
We simply write $u^*$ and $u_*$
if the set $E$ is understood from the context.
The envelopes can be also expressed as
\[
u^{*,E}(x) = \lim_{\de\to0} \sup \set{u(y) : y \in E,\ \abs{y - x} < \de}
\quad \text{for } x \in \cl E,
\qquad u_{*,E} = -(-u)^{*,E}.
\]

It will be also useful to use a shorthand notation for
the set of positive values of a given function $u: E \to [0,\infty)$,
defined on a set $E \subset \Rn\times \R$,
\begin{equation*}
\Omega(u; E) := \set{(x,t) \in E: u(x,t) > 0}, \qquad
\Omega^c(u; E) := \set{(x,t) \in E: u(x,t) = 0},
\end{equation*}
and the closure $\cl\Omega(u;E) := \cl{\Omega(u;E)}$.
For $t \in \R$, the time-slices $\cl\Omega_t(u; E)$,
$\Omega_t(u; E)$ and $\Omega^c_t(u; E)$
are defined in the obvious way, i.e.,
\begin{align*}
\cl\Omega_t(u;E) = \set{x : (x,t) \in \cl\Omega(u;E)}, \qquad \text{etc.}
\end{align*}
We shall call the boundary of the positive set in $E$ the \emph{free
boundary} of $u$ and denote it $\Gamma(u;E)$, i.e.,
\begin{align*}
\Gamma(u; E) = (\partial \Omega(u; E)) \cap E.
\end{align*}
If the set $E$ is understood from the context, we shall simply write
$\Omega(u)$, etc.

For given constant $\tau \in \R$ we will often abbreviate
\begin{align*}
\set{t \leq \tau} := \set{(x,t) \in \Rn\times\R: t \leq \tau},
    \qquad \text{etc.}
\end{align*}

It will be convenient to define viscosity solutions
on general parabolic neighborhoods;
we refer the reader to \cite{WangI} for a more general definition.

\begin{definition}[Parabolic neighborhood and boundary]
\label{def:parabolic-nbd}
\ \\A nonempty set $E \subset \Rn\times \R$ is called a \emph{parabolic neighborhood} if $E = U \cap \set{t \leq \tau}$ for some open set $U \subset \Rn\times \R$ and some $\tau \in \R$.
We say that $E$ is a parabolic neighborhood of $(x,t) \in \Rn\times\R$
if $(x,t) \in E$.
Let us define $\partial_P E := \cl{E} \setminus E$, the \emph{parabolic boundary} of $E$.
\end{definition}

\begin{remark}
Since $\interior E = U \cap \set{t < \tau}$,
one can observe that
$\partial_P E = \partial E \setminus (U \cap \set{t = \tau})$.
If $E = U$ then $\partial_P E = \partial E$. Note that we do not require that $E \neq U$, which is usually assumed, simply because it is unnecessary in this paper.
Adding this requirement does not change any of the results presented.
\end{remark}

Following \cites{K03,K07},
we define the viscosity solutions for \eqref{hs-f}.
In the following definitions,
$Q \subset \Rn\times\R$ is an arbitrary
parabolic neighborhood in the sense of
Definition~\ref{def:parabolic-nbd}
and
$f(x,t,p): Q \times \Rn \to \R$ satisfies assumptions (A1)--(A3)
(Section~\ref{sec:introduction}).
We do \emph{not} assume that $f$ is continuous in $p$.

\begin{definition}
\label{def:visc-test-sub}
We say that a locally bounded, non-negative
function $u: Q \to [0,\infty)$ is
a \emph{viscosity subsolution} of \eqref{hs-f} on $Q$ if
\begin{enumerate}
\romanlist
\item \emph{(continuous expansion)}
\[
\cl\Omega(u; Q) \cap Q \cap \set{t \leq \tau}
    \subset \cl{\Omega(u;Q) \cap \set{t < \tau}}
    \quad \text{for every $\tau > 0$},
\]

\item \emph{(maximum principle)}\\
for any $\phi \in C^{2,1}$
such that $u^* - \phi$ has a local maximum at
$(x_0, t_0) \in Q \cap \cl\Omega(u;Q)$
in $\cl\Omega(u;Q) \cap \set{t \leq t_0}$, we have
\begin{enumerate}[({ii}-1)]
\item if $u^*(x_0, t_0) > 0$ then $-\Delta \phi(x_0, t_0) \leq 0$,
\item if $u^*(x_0, t_0) = 0$ then either
    $-\Delta \phi(x_0, t_0) \leq 0$ or
    $D \phi(x_0, t_0) = 0$\\
    or
    $[\phi_t - f^*(x_0,t_0, D\phi(x_0, t_0)) \abs{D\phi}^2] (x_0, t_0) \leq 0$.
\end{enumerate}
\end{enumerate}
\end{definition}

\begin{remark}
The condition (i) in Definition~\ref{def:visc-test-sub} is necessary
to prevent a scenario where a `bubble' closes instantly;
more precisely, a subsolution cannot become instantly
positive on an open set surrounded by a positive phase,
or cannot fill the whole space instantly.
\end{remark}

The definition of a viscosity supersolution is similar.

\begin{definition}
\label{def:visc-test-super}
We say that a locally bounded, non-negative
function $u: Q \to [0,\infty)$ is
a \emph{viscosity supersolution} of \eqref{hs-f} on $Q$ if
\begin{enumerate}
\romanlist
\item (monotonicity of support)\\
    if $(\xi,\tau) \in \Omega(u_*; Q)$ then
    $(\xi, t) \in \Omega(u_*;Q)$
    for all $(\xi,t) \in Q$, $t \geq \tau$.
\item (maximum principle)\\
for any $\phi \in C^{2,1}$
such that $u_* - \phi$ has a local minimum at $(x_0, t_0) \in Q$
in $\set{t \leq t_0}$, we have
\begin{enumerate}[({ii}-1)]
\item if $u_*(x_0, t_0) > 0$ then $-\Delta \phi(x_0, t_0) \geq 0$,
\item if $u_*(x_0, t_0) = 0$ then either
    $-\Delta \phi(x_0, t_0) \geq 0$ or
    $D \phi(x_0, t_0) = 0$
    \\or
    $[\phi_t - f_*(x_0,t_0,D\phi(x_0, t_0)) \abs{D\phi}^2] (x_0, t_0)
        \geq 0$.
\end{enumerate}
\end{enumerate}
\end{definition}

\begin{remark}
The condition (i) in Definition~\ref{def:visc-test-super}
states that the support of a supersolution is nondecreasing.
It is a purely technical assumption which simplifies
the proof of the comparison theorem
by preventing an instantaneous disappearance of
a component of a positive phase.
Indeed, it can be shown that all solutions of the Hele-Shaw problem
have nondecreasing support.
This assumption can be removed by using the tools developed in
\cite{KP12}.
\end{remark}

Finally, we define a viscosity solution by combining the two previous definitions.

\begin{definition}
\label{def:visc-sol}
A function $u: Q \to [0,\infty)$ is a \emph{viscosity solution}
of \eqref{hs-f} on $Q$ if $u$ is both a viscosity subsolution
and a viscosity supersolution on $Q$.
\end{definition}

It will be useful to state an equivalent definition of viscosity solutions
via a comparison with barriers,
which in our case are strict classical sub- and supersolutions.
This definition seems more natural,
as it follows a more common pattern appearing
in the treatment of free boundary problems;
see \cites{ACSI,BS98,CS,CV,KP11,KP12}.

\begin{definition}
For a given closed set $K \in \Rn\times\R$,
we say that $\phi \in C^{2,1}_{x,t}(K)$
if there exists open set $U \supset K$
and $\psi \in C^{2,1}_{x,t}(U)$,
i.e., a function twice continuously differentiable in space
and once in time,
such that $\phi = \psi$ on $K$.
\end{definition}

\begin{definition}
\label{def:classical-sub}
Given a nonempty open set $U \subset \Rn\times\R$,
a function $\phi \in C(\cl U) \cap C^{2,1}_{x,t} (\cl\Omega(\phi; U))$
is called a \emph{subbarrier} of \eqref{hs-f} in $U$
if there exists a positive constant $c$ such that
\begin{enumerate}
\romanlist
\item
    $- \Delta \phi < -c$ on $\Omega(\phi; U)$, and

\item
    $\abs{D\phi^+} > c$ and
     $\phi^+_t - f_*(\cdot, \cdot, D \phi^+) \abs{D \phi^+}^2 < -c$
    on $\Gamma(\phi; U)$.
\end{enumerate}
Here $D\phi^+$ and $\phi_t^+$ denote the limits of $D\phi$ and $\phi_t$, respectively, on $\Gamma(\phi;U)$ from $\Omega(\phi;U)$.
\end{definition}

\begin{definition}
\label{def:classical-super}
Given a nonempty open set $U \subset \Rn\times\R$,
a function $\phi \in C(\cl U) \cap C^{2,1}_{x,t} (\cl\Omega(\phi;U))$
is called a \emph{superbarrier} of \eqref{hs-f} in $U$
if there exists a positive constant $c$ such that
\begin{enumerate}
\romanlist
\item
    $- \Delta \phi > c$ on $\Omega(\phi; U)$, and

\item
    $\abs{D\phi^+} > c$ and $\phi^+_t
        - f^*(\cdot, \cdot, D \phi^+) \abs{D \phi^+}^2 > c$
    on $\Gamma(\phi; U)$.
\end{enumerate}
\end{definition}

A notion of \emph{strict separation} is a crucial concept in the theory
and will be used multiple times
(our definition differs slightly from the one introduced in \cite{K03}).

\begin{definition}[Strict separation]
Let $E \subset \Rn \times \R$ be a parabolic neighborhood,
and $u, v : E \to \R$ be
bounded non-negative functions on $E$,
and let $K \subset \cl E$.
We say that $u$ and $v$ are strictly separated
on $K$ with respect to $E$,
and we write $u \prec v$ in $K$ w.r.t. $E$,
if
\begin{align*}
u^{*,E} &< v_{*,E} &&\text{in }&
    &K \cap \cl\Omega(u;E).
\end{align*}
\end{definition}

A definition of viscosity solutions using barriers follows.

\begin{definition}
\label{def:visc-barrier}
We say that a locally bounded, non-negative
function $u: Q \to [0,\infty)$ is
a \emph{viscosity subsolution} of \eqref{hs-f} on $Q$ if
for every bounded parabolic neighborhood $E = U \cap \set{t \leq \tau}$,
$E \subset Q$,
and every superbarrier $\phi$ on $U$
such that $u \prec \phi$ on $\partial_P E$ w.r.t. $E$,
we also have $u \prec \phi$ on $\cl E$ w.r.t. $E$.

Similarly, a locally bounded, non-negative
function $u: Q \to [0,\infty)$ is
a \emph{viscosity supersolution} if
Definition~\ref{def:visc-test-super}(i) holds and
for any
subbarrier $\phi$ on $U$
such that $\phi \prec u$ on $\partial_P E$ w.r.t. $E$,
we also have $\phi \prec u$ on $\cl E$ w.r.t. $E$.

Finally, $u$ is a viscosity solution if it is both
a viscosity subsolution and a viscosity supersolution.
\end{definition}

\bigskip
We finish this section by stating the equivalence
of the two definitions of viscosity solutions.

\begin{proposition}
\label{pr:visc-equivalency}
The definitions of viscosity subsolutions (resp. supersolutions)
in Definition~\ref{def:visc-test-sub} (resp. \ref{def:visc-test-super})
and in Definition~\ref{def:visc-barrier}
are equivalent.
\end{proposition}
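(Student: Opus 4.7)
The plan is to prove both directions for the subsolution case; the supersolution case is entirely symmetric, with the roles of $f^*$ and $f_*$ swapped.

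\textbf{Direction 1 (Definition~\ref{def:visc-test-sub} $\Rightarrow$ Definition~\ref{def:visc-barrier}).} Fix a bounded parabolic neighborhood $E = U \cap \set{t \leq \tau}$ and a superbarrier $\phi$ on $U$ with $u \prec \phi$ on $\partial_P E$. Suppose toward contradiction that $u \prec \phi$ fails on $\cl E$. Using upper semi-continuity of $u^{*,E}$ and lower semi-continuity of $\phi_{*,E}$ on the compact set $\cl E$, I select a minimal contact time $t_0$ and a corresponding point $(x_0, t_0) \in E \cap \cl\Omega(u;E)$ with $u^{*,E}(x_0, t_0) = \phi(x_0, t_0)$; the strict separation on $\partial_P E$ forces $(x_0, t_0) \notin \partial_P E$. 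Then $u^{*,E} - \phi$ attains a local maximum at $(x_0, t_0)$ within $\cl\Omega(u;E) \cap \set{t \leq t_0}$, so Definition~\ref{def:visc-test-sub}(ii) applies with test function $\phi$. If $\phi(x_0, t_0) > 0$, the conclusion $-\Delta \phi(x_0, t_0) \leq 0$ contradicts the strict superharmonicity in Definition~\ref{def:classical-super}(i). If $\phi(x_0, t_0) = 0$, then $(x_0, t_0) \in \Gamma(\phi; U)$ and each of the three alternatives of (ii-2) is ruled out by the corresponding strict inequality in Definition~\ref{def:classical-super}, using $C^{2,1}_{x,t}$-regularity up to the free boundary to pass the strict estimates to the boundary point.

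\textbf{Direction 2 (Definition~\ref{def:visc-barrier} $\Rightarrow$ Definition~\ref{def:visc-test-sub}).} Given $\phi \in C^{2,1}$ and $(x_0, t_0) \in Q \cap \cl\Omega(u;Q)$ such that $u^* - \phi$ attains a local maximum in $\cl\Omega(u;Q) \cap \set{t \leq t_0}$, I argue by contradiction: assume all alternatives in (ii) fail, i.e. $-\Delta \phi(x_0, t_0) > 0$, and in the free-boundary case $u^*(x_0, t_0) = 0$ additionally $D\phi(x_0, t_0) \neq 0$ and $[\phi_t - f^*(\cdot, D\phi)\abs{D\phi}^2](x_0, t_0) > 0$. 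I construct a superbarrier of the form
\[
\tilde\phi(x, t) := \phi(x, t) + \delta\pth{\abs{x - x_0}^2 + (t - t_0)^2} - \eta
\]
on a small parabolic neighborhood $E = U \cap \set{t \leq t_0}$. For $\delta > 0$ small the strict inequalities of Definition~\ref{def:classical-super} propagate from $(x_0, t_0)$ to all of $E$; non-vanishing gradient makes $\Gamma(\tilde\phi;U)$ a smooth hypersurface via the implicit function theorem, giving $\tilde\phi \in C^{2,1}_{x,t}(\cl\Omega(\tilde\phi; U))$. Choosing $\eta > 0$ smaller than the quadratic gain $\delta \cdot \dist^2(\cdot, (x_0, t_0))$ on $\partial_P E$, the local-maximum hypothesis gives $u \prec \tilde\phi$ on $\partial_P E$, while $u^{*,E}(x_0, t_0) > \tilde\phi(x_0, t_0) = -\eta$ violates $u \prec \tilde\phi$ on $\cl E$, contradicting Definition~\ref{def:visc-barrier}.

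The main obstacle is the free-boundary case in Direction 2: since $f$ need not be continuous in $p$, the strict inequality $\phi_t - f^*(\cdot, D\phi)\abs{D\phi}^2 > 0$ holds a priori only at the single point $(x_0, t_0)$, and promoting it to a uniform estimate on a neighborhood of $\Gamma(\tilde\phi; U)$ is exactly where upper semi-continuity of $f^*$ in $p$ is used; this is the asymmetric feature that mirrors the use of lower semi-continuity of $f_*$ on the supersolution side. A secondary technical point is the continuous-expansion condition Definition~\ref{def:visc-test-sub}(i): it is not explicitly listed in the barrier formulation and must either be silently assumed or recovered by testing $u$ against a family of shrinking radial superbarriers vanishing at the candidate point of instantaneous expansion. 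The supersolution analogue of condition (i) is already included in Definition~\ref{def:visc-barrier}, so no comparable reconciliation is needed there.
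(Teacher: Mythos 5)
Your overall strategy coincides with the paper's: a first-crossing-time argument for the implication from Definition~\ref{def:visc-test-sub} to Definition~\ref{def:visc-barrier}, and, for the converse, a superbarrier built by perturbing a test function that violates the maximum principle (your quadratic term plays the role of the paper's $|x-\hat x|^4+|t-\hat t|^2$ perturbation), with upper semi-continuity of $f^*$ used exactly where the paper leaves it implicit, and with the continuous-expansion condition recovered from the barrier definition by comparison with shrinking radial superbarriers, which is the paper's reference to Corollary~\ref{co:HS-expansion-speed}.

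There is, however, a genuine gap in your Direction 1, precisely in the free-boundary case. You assert that if $\phi(x_0,t_0)=0$ then $(x_0,t_0)\in\Gamma(\phi;U)$; this does not follow: vanishing of $\phi$ at a point does not place the point in $\cl\Omega(\phi;U)$, and away from $\cl\Omega(\phi;U)$ the superbarrier is merely continuous (it is $C^{2,1}_{x,t}$ only on $\cl\Omega(\phi;U)$) and none of its strict inequalities hold, so neither (ii-1) nor (ii-2) of Definition~\ref{def:visc-test-sub} can be contradicted there — indeed $\phi$ need not even be an admissible test function at such a point. What is required, and what the paper establishes as item (c) of its proof, is the inclusion $\cl\Omega(u;E)\cap\set{t\le\hat t}\subset\cl\Omega(\phi;E)$, obtained by combining the strict separation at times $s<\hat t$ with the continuous-expansion property Definition~\ref{def:visc-test-sub}(i): the contact point, lying in $\cl\Omega(u;E)$, is then a limit of points of $\Omega(u;E)\cap\set{t<\hat t}$, where $\phi>u>0$. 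Your argument never invokes condition (i) in this direction (you mention it only as something to be recovered in Direction 2), so the case $u^{*}(x_0,t_0)=0$ is not closed as written; the same inclusion also repairs your unjustified claim of equality $u^{*,E}(x_0,t_0)=\phi(x_0,t_0)$ at the first crossing time, since a priori one only gets $u^{*,E}\geq\phi$ there, and it is membership in $\cl\Omega(\phi;E)$ that forces $\phi\geq 0$ and hence $\phi=0$ when $u^{*}=0$. The case $\phi(x_0,t_0)>0$ is unaffected only because positivity of $\phi$ itself places the point in $\Omega(\phi;U)$.
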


Before proceeding with the proof,
let us state a useful property of a strict separation
on time dependent sets.

\begin{lemma}
\label{le:ordering-open}
Suppose that $E$ is a bounded parabolic neighborhood
and $u$, $v$ are non-negative locally bounded functions on $E$.
The set
\begin{align}
\label{ordering-set}
\Theta_{u,v;E} := \set{\tau: u \prec v
\text{ in $\cl E \cap \set{t \leq \tau}$
w.r.t. $E$}}
\end{align}
is open
and $\Theta_{u,v;E} = (-\infty, T)$ for some $T \in (-\infty, \infty]$.
\end{lemma}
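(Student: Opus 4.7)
The plan is to prove two claims which together imply the lemma: (a) $\Theta_{u,v;E}$ is \emph{downward-closed}, meaning $\tau \in \Theta_{u,v;E}$ and $\tau' < \tau$ imply $\tau' \in \Theta_{u,v;E}$; and (b) $\Theta_{u,v;E}$ is open. Any downward-closed open subset of $\R$ is either empty or of the form $(-\infty, T)$ for some $T \in (-\infty, \infty]$ (taking $T = \sup \Theta_{u,v;E}$), which yields the stated conclusion.

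Downward-closedness is immediate from the definition: if $\tau' < \tau$, then $\cl E \cap \set{t \leq \tau'} \subset \cl E \cap \set{t \leq \tau}$, and the pointwise inequality $u^{*,E} < v_{*,E}$ required on the smaller set follows from the inequality on the larger one.

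The core of the argument is openness, which I will obtain by a standard compactness argument. Fix $\tau \in \Theta_{u,v;E}$ and set
\begin{equation*}
K_\tau := \cl E \cap \set{t \leq \tau} \cap \cl\Omega(u;E).
\end{equation*}
Since $E$ is a bounded parabolic neighborhood, $\cl E$ is compact, and hence so is $K_\tau$. On $\cl E$ the envelopes $u^{*,E}$ and $v_{*,E}$ are finite, with $u^{*,E}$ upper semi-continuous and $v_{*,E}$ lower semi-continuous. Thus the difference $w := v_{*,E} - u^{*,E}$ is lower semi-continuous on $K_\tau$, and by the assumption $\tau \in \Theta_{u,v;E}$ it is strictly positive there. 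Compactness of $K_\tau$ gives the uniform gap $c := \min_{K_\tau} w > 0$ (treating $K_\tau = \emptyset$ as trivially handled).

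It remains to promote this gap slightly forward in time. Suppose, for contradiction, that no right neighborhood of $\tau$ lies in $\Theta_{u,v;E}$. Then there are $\tau_k \searrow \tau$ and points $(x_k, t_k) \in \cl E \cap \cl\Omega(u;E)$ with $\tau < t_k \leq \tau_k$ and $u^{*,E}(x_k, t_k) \geq v_{*,E}(x_k, t_k)$. By compactness we may pass to a subsequence with $(x_k, t_k) \to (x_0, \tau)$; since $\cl\Omega(u;E)$ is closed in $\Rn \times \R$ and $\cl E$ is closed, $(x_0, \tau) \in K_\tau$. Applying upper semi-continuity of $u^{*,E}$ and lower semi-continuity of $v_{*,E}$ yields
\begin{equation*}
v_{*,E}(x_0, \tau) \leq \liminf_{k \to \infty} v_{*,E}(x_k, t_k) \leq \limsup_{k \to \infty} u^{*,E}(x_k, t_k) \leq u^{*,E}(x_0, \tau),
\end{equation*}
contradicting $w(x_0, \tau) \geq c > 0$. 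Therefore $\tau$ has a right neighborhood inside $\Theta_{u,v;E}$, and together with downward-closedness this proves openness and the claimed form of $\Theta_{u,v;E}$. The only subtlety is the contradiction step — one must verify that the limiting point is automatically captured in the closed set $\cl\Omega(u;E)$ and that the envelopes are finite, both of which follow cleanly from the local boundedness of $u$ and $v$ and the closure under limits.
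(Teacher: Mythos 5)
Your proposal is correct and follows essentially the same route as the paper's proof: downward-closedness from the definition, then openness by a compactness argument taking a sequence of violating points with $t_k \searrow \tau$, extracting a limit in $\cl\Omega(u;E) \cap \set{t \leq \tau}$, and using the semi-continuity of $u^{*,E}$ and $v_{*,E}$ to contradict strict separation at time $\tau$. The only difference is your extraction of a uniform gap $c>0$ on $K_\tau$, which is harmless but unnecessary, since the contradiction already follows from strict positivity of $v_{*,E}-u^{*,E}$ at the single limit point.
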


\begin{proof}
By replacing $u$ by $u^{*,E}$ and $v$ by $v_{*, E}$, we may assume that $u$ is upper semi-continuous and $v$ is lower semi-continuous.
Let us choose $\tau \in \Theta_{u,v;E}$.
Clearly $s \in \Theta_{u,v;E}$ for all $s \leq \tau$.
Therefore, we only need to show that there exists $s > \tau$ such that
$s \in \Theta_{u,v;E}$.
We can assume that
$\set{t \leq \tau} \cap \cl E \neq \emptyset$,
otherwise the claim is trivial since $\cl E$ is compact.
Suppose that $u \not\prec v$
on $\cl E \cap \set{t \leq s}$ w.r.t. $E$ for all $s > \tau$.
Hence
there is a sequence $(x_k, t_k) \in \cl\Omega(u;E)$ such that
$t_k \searrow \tau$ and
$u(x_k, t_k) \geq v(x_k, t_k)$.
By compactness of $\cl E$,
we can assume that $(x_k, t_k) \to (\xi, \tau)$,
and we see that
$(\xi, \tau) \in \cl\Omega(u;E) \cap \set{t \leq \tau}$.
By semi-continuity we have $u(\xi, \tau) - v(\xi, \tau) \geq 0$,
a contradiction with $u \prec v$ in $\cl E \cap \set{t \leq \tau}$.
\qedhere\end{proof}

\begin{proof}[Proof of Proposition~\ref{pr:visc-equivalency}]
Let us prove the statement for subsolutions.
Suppose that $u$ is a subsolution on some parabolic neighborhood $Q$
in the sense
of Definition~\ref{def:visc-test-sub}.
By taking the semi-continuous envelope,
we can assume that $u \in USC(\cl Q)$.

Let
$E = U \cap \set{t \leq \tau} \subset Q$ be a bounded
parabolic neighborhood
and let $\phi$ be a superbarrier on $U$
such that $u \prec \phi$ on $\partial_P E$ w.r.t. $E$.
Consider the set $\Theta_{u, \phi, E}$,
introduced in \eqref{ordering-set}.
Let us define $\hat t = \sup \Theta_{u, \phi, E}$.
Since $E$ is bounded, we have $\hat t > -\infty$.
To show that $u$ is a subsolution
in the sense of Definition~\ref{def:visc-barrier},
it is enough to show that
$u \prec \phi$ in $\cl E$,
which is equivalent to
$\hat t = +\infty$.

Hence suppose that $\hat t < +\infty$.
Lemma~\ref{le:ordering-open} implies that
\begin{compactenum}[(a)]
\item
$u \not\prec \phi$ in $\cl E \cap \set{t \leq \hat t}$ w.r.t. $E$,
\item
$u \prec \phi$ in $\cl E \cap \set{t \leq s}$ for all $s < \hat t$,
and, finally,
\item
$\cl\Omega(u; E) \cap \set{t \leq \hat t} \subset \cl\Omega(\phi;E)$
due to (b) and the continuous expansion in
Definition~\ref{def:visc-test-sub}(i).
\end{compactenum}

By semi-continuity and compactness, $u - \phi$ has a local maximum
at some point $(\xi, \si)$ in $\cl\Omega(u;E) \cap \set{t \leq \hat t}$.
Then (a) implies that $u(\xi, \si) - \phi(\xi, \si) \geq 0$.
Consequently, since (b) holds,
we must have $\si = \hat t$,
and $u \prec \phi$ on $\partial_P E$ implies
$(\xi, \hat t) \in E$.
Finally, using (c) we conclude that
$(\xi, \hat t) \in E \cap \cl\Omega(u; E) \cap \cl\Omega(\phi;E)$.
Therefore either:
\begin{compactitem}
\item
$u(\xi,\hat t) > 0$,
and that is a contradiction with
Definition~\ref{def:visc-test-sub}(ii-1); or
\item $u(\xi, \hat t) = 0$,
but then $\phi(\xi,\hat t) = 0$ and consequently
$(\xi, \hat t) \in \Gamma(\phi;E)$,
which yields a contradiction with
Definition~\ref{def:visc-test-sub}(ii-2).
\end{compactitem}

The proof for supersolutions is analogous,
but more straightforward since the support of $u_*$ cannot
decrease due to Definition~\ref{def:visc-test-super}(i).

\bigskip
The direction from Definition~\ref{def:visc-barrier}
to Definitions~\ref{def:visc-test-sub} and \ref{def:visc-test-super}
is quite standard.
In particular,
Definition~\ref{def:visc-test-sub}(i)
can be shown by a comparison with barriers
as in the proof of Corollary~\ref{co:HS-expansion-speed}.
Furthermore,
any test function $\phi$ such that $u - \phi$ has a local maximum
at $(\hat x, \hat t)$
and violates either of
Definition~\ref{def:visc-test-sub}(ii)
can be used for a construction of a superbarrier
in a parabolic neighborhood of $(\hat x, \hat t)$
by considering
$\tilde \phi(x,t)
= (\phi(x,t) + |x - \hat x|^4 + |t - \hat t|^2 - \phi(\hat x, \hat t))_+$.
Once again,
the proof for supersolutions is similar.
\qedhere\end{proof}

\begin{definition}
For a given function $f$
and a nonempty parabolic neighborhood $Q \subset \Rd \times \R$,
we define the following classes of functions:
\begin{itemize}
\item $\supers(f,Q)$,
    the set of all viscosity supersolutions of the Hele-Shaw problem
    \eqref{hs-f} on $Q$;
\item $\subs(f,Q)$,
    the set of all viscosity subsolutions of \eqref{hs-f} on $Q$;
\item $\sol(f,Q) = \supers(f,Q) \cap \subs(f,Q)$,
    the set of all viscosity solutions of \eqref{hs-f} on $Q$.
\end{itemize}
\end{definition}

We have the following obvious inclusions:

\begin{lemma}
Let $Q \subset \Rn \times \R$ be a nonempty parabolic neighborhood.
If $f \leq g$ for given functions $f,g: Q \times \Rn \to (0,\infty)$ then
\begin{align*}
\supers(f, Q) &\supset \supers(g, Q) &&\text{and}&
\subs(f,Q) &\subset \subs(g, Q).
\end{align*}
\end{lemma}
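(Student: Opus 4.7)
The plan is to verify both inclusions directly from the test-function formulation in Definitions~\ref{def:visc-test-sub} and \ref{def:visc-test-super}, observing that only the free-boundary clause~(ii-2) actually involves the velocity function, while condition~(i) and clause~(ii-1) are velocity-independent.

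First I would record the purely elementary fact that $f \leq g$ pointwise on $Q \times \Rn$ implies $f_* \leq g_*$ and $f^* \leq g^*$ pointwise: any lower semi-continuous function lying below $f$ a fortiori lies below $g$, and dually from above, so the semi-continuous envelopes preserve the pointwise inequality.

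For the inclusion $\subs(f, Q) \subset \subs(g, Q)$, I would fix $u \in \subs(f, Q)$. The continuous expansion condition~(i) of Definition~\ref{def:visc-test-sub} refers only to $u$ and transfers without change. For clause~(ii), take any admissible $\phi$ such that $u^* - \phi$ has a local maximum at $(x_0, t_0) \in Q \cap \cl\Omega(u;Q)$. If $u^*(x_0,t_0) > 0$ the Laplacian test is independent of the velocity; if $u^*(x_0, t_0) = 0$ the subsolution property for $f$ yields one of $-\Delta\phi(x_0,t_0) \leq 0$, $D\phi(x_0,t_0) = 0$, or $[\phi_t - f^*(\cdot,\cdot, D\phi)\abs{D\phi}^2](x_0,t_0) \leq 0$, and in the last case the envelope inequality gives $[\phi_t - g^*(\cdot,\cdot,D\phi)\abs{D\phi}^2](x_0,t_0) \leq [\phi_t - f^*(\cdot,\cdot,D\phi)\abs{D\phi}^2](x_0,t_0) \leq 0$, which verifies~(ii-2) for $g$. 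Hence $u \in \subs(g,Q)$.

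The argument for $\supers(f, Q) \supset \supers(g, Q)$ is the mirror image: for any $u \in \supers(g, Q)$, the monotonicity-of-support condition~(i) in Definition~\ref{def:visc-test-super} is velocity-independent, and the inequality $\phi_t - f_*\abs{D\phi}^2 \geq \phi_t - g_*\abs{D\phi}^2 \geq 0$ at the touching point transfers the supersolution property from $g$ to $f$. There is no real obstacle here: the lemma is essentially the statement that the free boundary law in \eqref{hs-f} is monotone in the velocity coefficient, and this monotonicity survives passage to the semi-continuous envelopes invoked in the definitions.
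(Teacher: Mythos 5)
Your proof is correct and is exactly the elementary verification the paper leaves implicit (the lemma is stated there without proof as an obvious consequence of the definitions): the only clause of Definitions~\ref{def:visc-test-sub} and \ref{def:visc-test-super} that sees the velocity is (ii-2), the inequality $f\leq g$ passes to the envelopes $f^*\leq g^*$, $f_*\leq g_*$, and the sign of the free-boundary inequality is monotone in that coefficient. Nothing further is needed.
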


Finally, we observe that subsolutions and supersolutions can be
``stitched'' together.

\begin{lemma}
\label{le:stitch}
Let
$Q_1, Q_2 \subset \Rn\times\R$ be two parabolic neighborhoods
such that $Q_1 \subset Q_2$
and let $u_i \in \subs(f, Q_i)$, $i = 1,2$,
for some $f(x,t,p) : Q_2 \times \Rn \to (0,\infty)$.
We define
\begin{align*}
u =
\begin{cases}
\max (u_1, u_2) & \text{in } Q_1,\\
u_2 & \text{in } Q_2 \setminus Q_1.
\end{cases}
\end{align*}
If $u_1^{*,Q_1} \leq u_2^{*,Q_2}$ on $(\partial_P Q_1) \cap Q_2$
and
\begin{equation}
\label{incl-on-pb}
\cl\Omega(u_1; Q_1) \cap \partial_P Q_1 \subset \cl\Omega(u_2;Q_2)
\end{equation}
then $u \in \subs(f, Q_2)$.

Let now $v_i \in \supers(f, Q_i)$, $i = 1,2$,
and we define
\begin{align*}
v =
\begin{cases}
\min (v_1, v_2) & \text{in } Q_1,\\
v_2 & \text{in } Q_2 \setminus Q_1.
\end{cases}
\end{align*}
If $(v_1)_{*,Q_1} \geq (v_2)_{*,Q_2}$ on $(\partial_P Q_1) \cap Q_2$,
then $v \in \supers(f, Q_1)$.
\end{lemma}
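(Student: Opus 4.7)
My plan is to verify the test-function characterization (Definition~\ref{def:visc-test-sub}) for $u$ on $Q_2$. The key structural observation is that, because $u = \max(u_1, u_2)$ on $Q_1$, $u = u_2$ on $Q_2 \setminus Q_1$, and $Q_1 \subset Q_2$, we have
\[
\Omega(u; Q_2) = \Omega(u_1; Q_1) \cup \Omega(u_2; Q_2),
\qquad
\cl\Omega(u; Q_2) = \cl\Omega(u_1; Q_1) \cup \cl\Omega(u_2; Q_2).
\]
The continuous expansion property (Definition~\ref{def:visc-test-sub}(i)) for $u$ is then inherited slice by slice: a point $(x,\tau) \in \cl\Omega(u; Q_2) \cap Q_2$ lies in $\cl\Omega(u_2; Q_2)$ or in $\cl\Omega(u_1; Q_1)$, and in the second case, when $(x,\tau) \in \partial_P Q_1 \cap Q_2$, the inclusion \eqref{incl-on-pb} reduces us back to $u_2$'s continuous expansion.

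For the maximum principle Definition~\ref{def:visc-test-sub}(ii), I take $\phi \in C^{2,1}$ with $u^{*} - \phi$ attaining a local max at $(x_0, t_0) \in Q_2 \cap \cl\Omega(u; Q_2)$ on $\cl\Omega(u; Q_2) \cap \set{t \leq t_0}$, and split into three cases according to the position of $(x_0, t_0)$ relative to $Q_1$. (A) If $(x_0, t_0) \in \interior Q_1$, a neighborhood lies in $Q_1$, so locally $u = \max(u_1, u_2)$ and taking USC envelopes gives $u^{*,Q_2} = \max(u_1^{*,Q_1}, u_2^{*,Q_2})$ pointwise; hence $u^{*}(x_0, t_0) = u_i^{*}(x_0, t_0)$ for some $i \in \set{1,2}$, $u_i^{*} - \phi$ also attains a local max at $(x_0, t_0)$, and we may choose $i$ so that $(x_0, t_0) \in \cl\Omega(u_i; Q_i)$; the subsolution property of $u_i$ then yields the required inequality for $\phi$. (B) If $(x_0, t_0) \in Q_2 \setminus \cl Q_1$, locally $u = u_2$ and the conclusion follows directly from $u_2 \in \subs(f, Q_2)$.

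Case (C), where $(x_0, t_0) \in \partial_P Q_1 \cap Q_2$, is the main obstacle. Since $u \geq u_2$ on $Q_2$, we have $u^{*,Q_2} \geq u_2^{*,Q_2}$ everywhere. For the opposite inequality at $(x_0, t_0)$, I estimate the lim sup defining $u^{*,Q_2}(x_0, t_0)$ along approaches from inside $Q_1$ using $u \leq \max(u_1, u_2) \leq \max(u_1^{*,Q_1}, u_2^{*,Q_2})$ together with the assumption $u_1^{*, Q_1} \leq u_2^{*, Q_2}$ on $\partial_P Q_1 \cap Q_2$, and along approaches from $Q_2 \setminus \cl Q_1$ using $u = u_2$; both bounds reduce to $u_2^{*, Q_2}(x_0, t_0)$. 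Consequently $u^{*} = u_2^{*}$ at $(x_0, t_0)$ and $u_2^{*} - \phi$ also attains a local max there. Finally, the inclusion \eqref{incl-on-pb} combined with the decomposition of $\cl\Omega(u; Q_2)$ above ensures $(x_0, t_0) \in \cl\Omega(u_2; Q_2)$, so the subsolution property of $u_2$ closes the argument.

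The supersolution half follows the same strategy, replacing $\max$ by $\min$, $u \geq u_2$ by $v \leq v_2$, and USC envelopes by LSC envelopes. Under the hypothesis $(v_1)_{*, Q_1} \geq (v_2)_{*, Q_2}$ on $\partial_P Q_1 \cap Q_2$, the analogous envelope computation gives $v_{*} = (v_2)_{*}$ at seam points, transferring the test function property to $v_2$. Restricted to $Q_1$, the monotonicity-of-support condition of Definition~\ref{def:visc-test-super}(i) is inherited directly from $v_1$ and $v_2$ via $v_{*} \leq \min(v_{1*, Q_1}, v_{2*, Q_2})$ in $\interior Q_1$, which is why no analog of \eqref{incl-on-pb} is needed for $v$.
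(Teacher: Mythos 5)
Your proposal is correct and follows essentially the same route as the paper: decompose $\cl\Omega(u;Q_2)$ via \eqref{incl-on-pb} for the continuous-expansion property, and use the envelope identities $u^{*,Q_2}=\max(u_1^{*,Q_1},u_2^{*,Q_2})$ on $Q_1$ and $u^{*,Q_2}=u_2^{*,Q_2}$ on $Q_2\setminus Q_1$ (the latter coming from the seam hypothesis) to transfer the test function to $u_1$ or $u_2$; the paper simply collapses your cases (A) and (C) into the dichotomy $(\hat x,\hat t)\in Q_1$ versus $Q_2\setminus Q_1$, and treats the supersolution half just as tersely as you do. One cosmetic point: your three cases omit the top time-slice $Q_1\setminus\interior Q_1$ of the parabolic neighborhood, but the case-(A) argument applies there verbatim because the test-function condition only involves $\set{t\leq t_0}$ and the envelope identity holds on all of $Q_1$.
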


\begin{proof}
Let us show the proof for subsolutions.
We use Definition~\ref{def:visc-test-sub}.
Clearly $\Omega(u;Q_2) = \Omega(u_1;Q_1) \cup \Omega(u_2;Q_2)$
and therefore, by \eqref{incl-on-pb},
we have
\begin{align*}
\cl\Omega(u;Q_2) &= \cl\Omega(u_1;Q_1) \cup \cl\Omega(u_2;Q_2)
    \\&= (\cl\Omega(u_1;Q_1) \cap Q_1) \cup
        (\cl\Omega(u_1;Q_1) \cap \partial_P Q_1)
        \cup \cl\Omega(u_2;Q_2)
        \\&= (\cl\Omega(u_1;Q_1) \cap Q_1) \cup  \cl\Omega(u_2;Q_2).
\end{align*}
Thus Definition~\ref{def:visc-test-sub}(i) is clear.
To prove (ii),
suppose that $u^* - \phi$ has a local maximum at
$(\hat x,\hat t) \in \cl\Omega(u; Q_2)$
in $\cl\Omega(u; Q_2) \cap \set{t \leq \hat t}$.
We observe that, by hypothesis,
$u^* = \max(u_1^{*,Q_1}, u_2^{*,Q_2})$ on $Q_1$
and $u^* = u_2^{*,Q_2}$ on $Q_2 \setminus Q_1$.
Consequently, if $(\hat x, \hat t) \in Q_1$ then $u_1^* - \phi$
and/or $u_2^* - \phi$ have a local maximum
in the appropriate set,
and if $(\hat x, \hat t) \in Q_2 \setminus Q_1$ then $u_2^* - \phi$
has a local maximum.  Definition~\ref{def:visc-test-sub}(ii)
follows from this observation.

The proof for supersolutions is straightforward
since there is no restriction on their supports.
\qedhere\end{proof}

\subsection{Comparison principle}
\label{sec:comparison-principle}

A crucial result for a successful theory of viscosity solutions
is a comparison principle.
In this section we establish the comparison principle
between strictly separated viscosity solutions
on an arbitrary parabolic neighborhood.
We only give an outline of the proof
and point out the differences from previous results
in \cites{K03,K07,KP12}.

\begin{theorem}
\label{th:comparison}
Let $Q$ be a bounded parabolic neighborhood.
Suppose that $f : Q \times \Rn \to \R$ is a given function
that satisfies the assumptions (A1)--(A3) and is of the form either
$f(x,t,p) = g(x,t)$
or
$f(x,t,p) = f(p)$.
If $u \in \subs(f, Q)$ and $v\in \supers(f, Q)$
such that $u \prec v$ on $\partial_P Q$ w.r.t. $Q$,
then $u \prec v$ in $\cl Q$ w.r.t. $Q$.
\end{theorem}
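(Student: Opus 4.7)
My plan is to follow the first-contact-time strategy standard for Hele-Shaw viscosity solutions (as in~\cites{K03,K07,KP12}), combined with a sup-/inf-convolution regularization that makes the free boundaries of $u$ and $v$ geometrically smooth at the contact, and then to close the argument by testing with the classical sub-/superbarriers from Definitions~\ref{def:classical-sub}--\ref{def:classical-super}, using the monotonicity assumption (A3) to handle the $p$-dependence in the $f(p)$ case.

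First, replace $u$ by $u^{*,Q}$ and $v$ by $v_{*,Q}$, so that $u \in USC$, $v \in LSC$. By Lemma~\ref{le:ordering-open}, the set $\Theta_{u,v;Q}$ is $(-\infty, T)$ for some $T \in (-\infty,+\infty]$; since $u \prec v$ on $\partial_P Q$, we have $T > \inf\{t : (x,t)\in \cl Q\}$. Arguing by contradiction, suppose $T < \infty$. Because $\cl Q$ is compact and $u - v$ is upper semi-continuous, there exists a first-contact point $(x_0,T) \in Q \cap \cl\Omega(u;Q)$ with $u(x_0,T) \geq v(x_0,T)$; the strict separation on $\partial_P Q$ and Lemma~\ref{le:ordering-open} guarantee $(x_0,T) \in Q$. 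The case $u(x_0,T)>0$ is ruled out quickly: on the open set $\{u>0\}\cap\{v>0\}\cap\{t<T\}$ both $u$ and $v$ are harmonic in $x$ by Definitions~\ref{def:visc-test-sub}(ii-1) and~\ref{def:visc-test-super}(ii-1), so a standard Hopf / strong maximum principle argument applied to $v - u$ near $(x_0,T)$ contradicts the assumption that the ordering was strict on the parabolic boundary. Thus $u(x_0,T)=v(x_0,T)=0$, and the contact occurs on the free boundary.

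The second step is the free-boundary contradiction. To get around the possible roughness of $\Gamma(u;Q)$ and $\Gamma(v;Q)$, replace $u,v$ on a small parabolic cylinder around $(x_0,T)$ by sup-/inf-convolutions
\[
u^\rho(x,t):=\sup_{(y,s)\in \overline{B_\rho(x,t)}} u(y,s),\qquad v_\rho(x,t):=\inf_{(y,s)\in \overline{B_\rho(x,t)}} v(y,s),
\]
for $\rho \ll 1$. The Lipschitz dependence (A2) of $f$ in $(x,t)$ and the monotonicity (A3) imply, in the standard way, that $u^\rho$ is a subsolution and $v_\rho$ a supersolution of a slightly perturbed problem whose nonlinearity differs from $f$ by $O(\rho)$, still satisfying (A1)--(A3); moreover they remain strictly separated on $\partial_P Q$ for $\rho$ small, and $\Omega(u^\rho)$ satisfies an interior ball condition while $\Omega^c(v_\rho)$ satisfies an exterior ball condition at any first-contact point $(x_\rho,T_\rho)$. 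These geometric properties let me produce a spatial unit normal $\nu$ to both free boundaries at the contact, and to deduce that each of $u^\rho(\cdot,T_\rho)$ and $v_\rho(\cdot,T_\rho)$ has a well-defined directional derivative along $-\nu$ at $x_\rho$, which I denote $\alpha$ and $\beta$ respectively. Because $u^\rho \leq v_\rho$ up to time $T_\rho$ but they touch at $(x_\rho,T_\rho)$, harmonicity on each side combined with the ball conditions gives $0 < \alpha \leq \beta$.

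Now test $u^\rho$ at $(x_\rho,T_\rho)$ against a planar superbarrier of the form $\phi(x,t)=(\beta + \delta)\bigl((x - x_\rho)\cdot(-\nu) + c(t-T_\rho)\bigr)_+$ with velocity $c$ chosen slightly larger than the one dictated by $f^*$ for gradient $(\beta+\delta)(-\nu)$; similarly test $v_\rho$ against the matching planar subbarrier with $\alpha - \delta$ and a slightly smaller velocity. The definition of viscosity solutions via barriers (Definition~\ref{def:visc-barrier}) forces $u^\rho$ to advance no faster than the planar superbarrier and $v_\rho$ no slower than the planar subbarrier, so the free boundary of $u^\rho$ must have propagated at $(x_\rho,T_\rho)$ with normal speed $\leq f^*(x_\rho,T_\rho,\alpha(-\nu))\alpha$ and that of $v_\rho$ with speed $\geq f_*(x_\rho,T_\rho,\beta(-\nu))\beta$. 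But the two free boundaries touch from inside, hence the former speed must be $\geq$ the latter, giving
\[
f^*(x_\rho,T_\rho,\alpha(-\nu))\,\alpha \;\geq\; f_*(x_\rho,T_\rho,\beta(-\nu))\,\beta,
\]
which contradicts the monotonicity (A3) (recall $0<\alpha\le\beta$) by a strict margin produced by the $\delta$-perturbations, once $\rho$ is sent to $0$ (in the case $f=g(x,t)$, one additionally absorbs the $O(\rho)$ perturbation of the nonlinearity using the Lipschitz bound (A2) and picks $\delta \gg O(\rho)$).

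The main obstacle, which is also what distinguishes this comparison principle from the one for a standard PDE, is that the normal velocity of the free boundary is a \emph{nonlocal} quantity (tied to the Dirichlet-to-Neumann map on the full set $\Omega(u)$), so one cannot freely localize test functions. The sup-/inf-convolution step is therefore essential: it converts the crude semi-continuous envelopes into functions whose free boundaries admit one-sided tangent balls, which in turn enables the use of planar barriers whose asymptotic gradients $\alpha$ and $\beta$ are genuinely attained as boundary derivatives. The $p$-dependence in the $f(p)$ case then forces one to work with (A3) rather than pointwise equality of $f$, which is exactly why (A3) is stated in the one-sided envelope form $f^*(a_1 p)|a_1 p| \leq f_*(a_2 p)|a_2 p|$. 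The $f=g(x,t)$ case is, by comparison, easier since $\alpha,\beta$ enter only through the factor $|Dp|^2$, and the contradiction reduces to $\alpha^2 \geq \beta^2$ after the $\delta,\rho$ perturbations.
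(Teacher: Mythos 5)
Your overall strategy (first crossing time, space--time sup-/inf-convolutions to gain interior/exterior ball regularity of the free boundaries, comparison of one-sided gradients $\alpha,\beta$ at the contact, and a speed comparison via planar barriers closed with (A3)) is the same as the paper's, but the proposal has a genuine gap at the point where the contradiction is supposed to appear. At a first touching point of the two free boundaries the normal speeds of $\Omega(u^\rho)$ and $\Omega(v_\rho)$ may be exactly equal, and your gradient comparison only yields $0<\alpha\le\beta$ (you never invoke Hopf's lemma, which in the paper gives the strict inequality $\alpha<\beta$, and even that alone is not enough). The two inequalities you end up with, $f^*(\cdot,\alpha(-\nu))\alpha\ge f_*(\cdot,\beta(-\nu))\beta$ from the touching and $f^*(\cdot,\alpha(-\nu))\alpha\le f_*(\cdot,\beta(-\nu))\beta$ from (A3), are mutually consistent (they force equality), so no contradiction follows. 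Perturbing the \emph{test} barriers by $\delta$ does not help: it changes nothing about the actual solutions or their boundary speeds. The paper manufactures the missing strict gap by taking the inf-convolution over a radius $r-\delta t$ that \emph{shrinks in time}; this makes $W$ a supersolution of the problem with velocity increased by $\delta$ (Step 3 of the paper), and the final chain $m_Z\le f^*(\xi,\hat t,-\alpha\nu)\alpha\le f_*(\xi,\hat t,-\beta\nu)\beta\le m_Z-\delta$ is an outright contradiction. Without some such mechanism your argument cannot close.

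A second, related problem is your treatment of the $(x,t)$-dependence. Plain sup-/inf-convolutions of $u$ and $v$ are, by (A2), a subsolution of a problem with velocity \emph{increased} by $O(\rho)$ and a supersolution of a problem with velocity \emph{decreased} by $O(\rho)$, i.e.\ the perturbations go in the unfavorable direction for comparison, and choosing ``$\delta\gg O(\rho)$'' cannot repair this because, as above, you have no strict gain of order $\delta$ to spend. The paper avoids this by multiplying the convolutions by the factors $\pth{1\pm\frac{3Lr}{m}}$, which enlarge/shrink the gradients just enough (via (A1)--(A2) for $f=g(x,t)$, or (A3) for $f=f(p)$, through Proposition~\ref{pr:nonuniform-perturbation}) to make $Z$ and $W$ sub- and supersolutions of the \emph{same} problem. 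Finally, your quick dismissal of the case $u(x_0,T)>0$ at the unregularized level tacitly uses that the supports are ordered up to time $T$ (so that $u\le v$, with $u=0$, holds on the whole boundary of the relevant component of $\Omega_T(v)$); in the paper this support inclusion is itself proved after regularization, by a barrier argument using the interior ball property of $\Omega^c_t(W)$, so it should not be taken for granted for the bare semicontinuous envelopes.
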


\begin{proof}
\emph{Step 1.} We can assume that $u \in USC(\cl Q)$ and $v \in LSC(\cl Q)$.
Similarly to the proof of Proposition~\ref{pr:visc-equivalency},
we consider the set $\Theta_{u,v; Q}$,
defined in \eqref{ordering-set},
and introduce $\hat t_0 := \sup \Theta_{u,v; Q} > -\infty$.
Then $u \prec v$ on $\cl Q$ w.r.t. $Q$
is equivalent to $\hat t_0 = +\infty$.

Therefore suppose that $\hat t_0 < +\infty $.
Let us introduce the sup- and inf-convolutions
\begin{align*}
Z(x,t) &=  \pth{1+ \frac{3 L r}{m}} \sup_{\cl \Xi_r(x,t)}u,\\
W(x,t) &= \pth{1 - \frac{3 L r}{m}} \inf_{\cl \Xi_{r - \de t}(x,t)} v(x,t),
\end{align*}
with $0 < r \ll \frac{m}{3L}$ and $\de = \frac{r}{2T}$
where $T := \inf \set{\tau: Q \subset \set{t \leq \tau}}$.
The open set $\Xi_r(x,t)$
is defined as
\begin{align*}
    \Xi_r(x,t)
    = \set{(y,s) : (\abs{y - x}-r)_+^2 + \abs{s - t}^2 < r^2}.
\end{align*}
We refer the reader to \cites{K03,KP12}
for a detailed discussion of the properties of $Z$ and $W$.
In particular,
$Z$ and $W$ are well-defined on the closure
of the parabolic neighborhood $Q_r$,
\begin{align*}
Q_r := \set{(x,t): \cl\Xi_r(x,t) \subset Q},
\end{align*}
and $Z \in USC(\cl Q_r)$
and $W \in LSC(\cl Q_r)$.

Moreover,
$Z \in \subs(f, Q_r)$ and $W \in \supers(f, Q_r)$.
This is straightforward if $f(x,t,p) = f(p)$.
However, if $f$ depends on $(x,t)$ as in $f(x,t,p) = g(x,t)$
we have to use Proposition~\ref{pr:nonuniform-perturbation}
with factors $a = (1 \pm 3Lr/m)$, yielding the same conclusion.

Finally, and this is the main motivation for
the choice of the convolutions,
both $Z$ and $W$ have an important
interior/exterior ball property of their level sets
both in space for each time and in space-time;
see \cite{KP12}.

\emph{Step 2.} Since $u \prec v$ on $\partial_P Q$ w.r.t. $Q$,
we claim that
if $r$ is chosen small enough
we have $Z \prec W$ on $\partial_P Q_r$ w.r.t. $Q_r$.
Suppose that this is not true for any $r > 0$.
Let $r_k \to 0$ as $k \to \infty$.
There exists a sequence of $(x_k,t_k) \in \partial_P Q_{r_k}
\cap \cl\Omega(Z_{r_k};Q_{r_k})$
such that $Z_{r_k}(x_k,t_k) \geq W_{r_k}(x_k,t_k)$.
By definition of $Z$ and $W$ and semi-continuity,
there exist $(\xi_k, \si_k), (\zeta_k, \tau_k) \in \cl\Xi_{r_k}(x_k,t_k)$
such that $(\xi_k, \si_k) \in \cl\Omega(u;Q)$
and $u(\xi_k, \si_k) \geq v(\zeta_k, \tau_k)$.
By compactness of $\cl\Omega(u;Q)$,
for a subsequence, still denoted by $k$,
$(\xi_k,\si_k)$ converges to a point
$(\xi, \si)$, which must lie in $\partial_P Q \cap \cl\Omega(u; Q)$.
A simple observation $(\zeta_k, \tau_k) \in \cl\Xi_{2r_k}(\xi_k, \sigma_k)$ then implies that $(\zeta_k, \tau_k) \to (\xi, \si)$.
This is a contradiction with the semi-continuity of $u$ and $v$
and the fact that $u(\xi, \si) < v(\xi, \si)$.

\emph{Step 3.} We claim that $W$ is a supersolution of \eqref{hs-f} on $Q_r$ with the free boundary velocity increased by $\delta$, that is,
with $f^\de(x,t,p) = f(x,t,p) + \de \abs{p}^{-1}$.
As we already know that $W \in \supers(f, Q_r)$, we only need to verify Definition~\ref{def:visc-test-super}(ii-2).
Thus suppose that $\phi$ is a $C^{2,1}$-function
and $W - \phi$ has a strict local minimum $0$ at $(\xi, \si)$
in $\set{t \leq \si}$, and
$W(\xi, \si) = 0$, $-\Delta \phi(\xi, \si) < 0$ and $\abs{D \phi(\xi, \si)} > 0$.
By semi-continuity there exists $(y, s) \in \cl\Xi_{r - \de \si}(0,0)$
with
\[\pth{1 - \frac{3Lr}m}v(\xi + y, \si + s) = W(\xi, \si).\]
Moreover, the definition of $\phi$ and $W$ implies that
\begin{align}
\label{phi-v-with-shift}
    \phi(x,t) \leq W(x,t) \leq \pth{1 - \frac{3Lr}m}v(x + y + z, t + s),
\end{align}
for any $(y + z, s) \in \cl \Xi_{r - \de t}(0,0)$
and $(x,t)$ in a neighborhood of $(\xi, \sigma)$, $t \leq \sigma$.
Since the set $\cl \Xi_{r -\delta t}(0,0)$ is decreasing in $t$,
we can take any $z \in \cl B_{\de (\si - t)}(0)$.
Let us define $\nu = -\frac{D\phi}{\abs{D\phi}} (\xi, \si)$,
the unit outer normal to $\Omega_\si(\phi)$ at $\xi$.
With the particular choice $z = -\de (\si - t)\nu$,
we can rewrite \eqref{phi-v-with-shift} after a change of variables as
\begin{align*}
\psi(x,t) := \pth{1 - \frac{3Lr}m}^{-1}\phi(x - y + \de (\si + s - t) \nu, t - s) \leq
    v(x,t),
\end{align*}
for $(x,t)$ close to $(\zeta, \mu) := (\xi + y, \si + s)$, $t \leq \sigma + s$.
Additionally, the equality holds at $(\zeta, \mu)$
by the choice of $(y,s)$.
Thus $\psi$ is a test function for $v$ at $(\zeta, \mu)$ for Definition~\ref{def:visc-test-super}(ii-2),
and we have
\begin{align*}
[\phi_t - \de \abs{D\phi}](\xi,\si) &= \pth{1 - \frac{3Lr}m}\psi_t(\zeta, \mu)\\
& \geq
\pth{1 - \frac{3Lr}m} f_*\pth{\zeta, \mu, D\psi(\zeta, \mu)} \abs{D \psi(\zeta, \mu)}^2\\
&= \pth{1 - \frac{3Lr}m}^{-1} f_*\pth{\zeta, \mu, \pth{1 - \frac{3Lr}m}^{-1} D\phi(\xi, \sigma)} \abs{D \phi(\xi, \sigma)}^2.
\end{align*}
If $f(x,t,p) = g(x,t)$,
then we use the assumptions (A1)--(A2) that imply
$g(\zeta,\mu) \geq \pth{1 - \frac{3Lr}{m}}g(\xi, \si)$,
and if $f(x,t,p) = f(p)$ we use the assumption (A3)
to conclude that
\begin{align*}
[\phi_t - \de \abs{D\phi}](\xi,\si) \geq f_*(\xi, \si, D\phi(\xi,\sigma))
\abs{D\phi(\xi,\si)}^2.
\end{align*}
Therefore $W \in \supers(f^\de, Q_r)$.

\bigskip
\emph{Step 4.} We proceed with the proof of the comparison theorem.
Let us set \[\hat t = \sup \Theta_{Z, W; Q_r}\]
and observe that $\hat t \leq \hat t_0$.

As in the proof of Proposition~\ref{pr:visc-equivalency},
we have $\cl\Omega(Z) \cap \set{t \leq \hat t}
    \subset \cl{\Omega(W) \cap \set{t < \hat t}}$,
and there exists
$(\xi, \hat t) \in Q_r \cap \cl\Omega(Z) \cap \cl\Omega(W)$
such that $(\xi, \hat t)$ is a point of maximum of $Z - W$
in $\cl\Omega(Z) \cap \set{t \leq \hat t}$,
and $Z(\xi, \hat t) - W(\xi, \hat t) \geq 0$.
Since the set $\Omega^c_{\hat t}(W)$ has the interior
ball property of radius $r/2$,
we can show by a barrier argument
that in fact $Z = 0$ on $\Omega^c(W) \cap \set{t \leq \hat t}$,
see \cites{K03,KP12} for details.
Most of the arguments are analogous since $Z \in \subs(M, Q_r)$,
$W \in \supers(m, Q_r)$.
This together with the assumption that $\Omega(W)$ cannot decrease
in time due to Definition~\ref{def:visc-test-super}(i)
yields
$\Omega(Z) \cap \set{t \leq \hat t} \subset \Omega(W)$.

Suppose that $Z(\xi, \hat t) > 0$.
Let us denote by $U$ the connected component of $\Omega_{\hat t}(W)$
that contains $(\xi, \hat t)$.
Since $Z(\cdot, \hat t)$ is subharmonic in the (open) set
$U$,
$Z(\cdot, \hat t)$ cannot be identically zero on $\partial U$.
But the conclusion of the previous paragraph implies that
$Z \equiv 0$ on $(\partial U \times \set{\hat t}) \cap Q_r$.
Hence $(\partial U \times \set{\hat t}) \cap \partial_P Q_r \neq \emptyset$.
We have $0 < Z < W$ at the points of $\partial_P Q_r$ where $Z > 0$ by the assumption of strict separation.
Consequently, a straightforward argument of
adding a small harmonic function to
$Z$, using the comparison principle for subharmonic
and superharmonic functions,
yields $Z(\xi, \hat t) < W(\xi, \hat t)$, a contradiction with the previous paragraph.

Therefore $Z(\xi, \hat t) = W(\xi, \hat t) = 0$
and $Z \leq W$ on $\set{t \leq \hat t}$.
We arrive at the last step of the comparison theorem,
where we need to find some weak ordering of gradients
and the corresponding velocities, as in \cite{KP12}.

\emph{Step 5.} Hopf's lemma implies that the ``gradients'' of $Z$ and $W$ are
strictly ordered.
Indeed, let $\nu$ be the unique unit outer normal of
$\Omega_{\hat t}(Z)$ and $\Omega_{\hat t}(W)$ at $\xi$.
Since $Z(\cdot, \hat t)$ is subharmonic in $U$
and $W$ is superharmonic in
$U$,
and there is an interior ball of $\Omega_{\hat t}(Z)$ at $\xi$,
and $W(\cdot, \hat t) \not\equiv Z(\cdot, \hat t)$ in $U$,
we can apply the Hopf's lemma in $U$ at $\xi \in \partial U$
and conclude that
\begin{align}
\label{gradient-order}
    \alpha := \limsup_{h\to 0} \frac{Z(\xi - h\nu, \hat t)}{h}
        < \liminf_{h\to 0} \frac{W(\xi - h\nu, \hat t)}{h} =: \beta.
\end{align}
The assumption on $f$ implies
\begin{align}
\label{speed-order}
    f^*\pth{\xi, \hat t, -\al \nu} \abs{-\al \nu}
        \leq f_*\pth{\xi, \hat t, -\beta \nu} \abs{-\be \nu}.
\end{align}

It can be showed as in \cite{K03} (see also \cite{KP11,KP12})
that the sets $\cl\Omega(Z)$ and $\Omega^c(W)$
have interior space-time balls at $(\xi, \hat t)$
with space-time slopes that represent the velocities
of the free boundaries at $(\xi, \hat t)$
which we denote as $m_Z$ and $m_W$.
A barrier argument yields $0 < \de \leq m_W \leq m_Z < \infty$,
using the result of Step 3.

\emph{Step 6.} Now we proceed as in \cite{KP12}:
There are points $(\xi_u, t_u)$
and $(\xi_v, t_v)$
on the free boundaries of $u$ and $v$, respectively,
such that $(\xi_u, t_u), (\xi_v, t_v) \in \partial \Xi_r(\xi, \hat t)$.
Since $Z \leq W$ for $t\leq \hat t$ and $Z > 0$ on $\Xi_r(\xi_u, t_u)$,
we can deduce that $v > 0$ on $\bigcup \cl \Xi_{r-\delta t} (x,t)$,
where the union is over all points $(x,t) \in \Xi_r(\xi_u, t_u)$, $t \leq \hat t$.
Therefore as in \cite[Lemma~3.23]{KP12}, given arbitrary $\eta > 0$ small
we can put a radial test function under $v$ in a neighborhood of $(\xi_v, t_v)$
that touches $v$ from below at $(\xi_v, t_v)$ with free boundary velocity $m_Z - \delta + \eta$
and gradient $-\pth{1 -\frac{3Lr}m}^{-1} (\beta - \eta)\nu$ at $(\xi_v, t_v)$.
The definition of supersolution then implies
\begin{align*}
m_Z - \delta + \eta&\geq f_*\pth{\xi_v, t_v, -\pth{1 -\frac{3Lr}m}^{-1} (\beta - \eta) \nu} \pth{1 -\frac{3Lr}m}^{-1}(\beta - \eta),
\intertext{which can be estimated using (A1)--(A2) or (A3), depending on the form of $f$ as in Step 3, as}
&\geq f_*(\xi, \hat t, -(\beta - \eta)\nu) (\beta - \eta).
\end{align*}
Sending $\eta\to0$, the lower semi-continuity of $f_*$ implies
\begin{align*}
m_Z - \delta \geq f_*(\xi, \hat t, - \beta \nu) \beta.
\end{align*}
A similar, but more direct proof using the fact that $Z = 0$ on $\cl \Xi_r(\xi, \hat t)$ establishes
\begin{align*}
m_Z \leq f^*(\xi, \hat t, -\alpha \nu) \alpha.
\end{align*}

When we combine these two inequalities through \eqref{speed-order}, we arrive at
\begin{align*}
m_Z \leq f^*\pth{\xi, \hat t, - \al \nu}\al \leq f_*\pth{\xi, \hat t, -\be \nu}\be \leq m_Z - \de,
\end{align*}
a contradiction.
Therefore $Z$ and $W$ cannot cross, which in turn implies
that $u$ and $v$ cannot cross and hence
$u \prec v$ in $\cl Q$ w.r.t. $Q$.
\qedhere\end{proof}

\subsection{Well-posedness}
\label{sec:well-posedness}

We will use the comparison principle, Theorem~\ref{th:comparison},
to establish the well-posedness
of the Hele-Shaw-type problem \eqref{hs-f}, Theorem~\ref{th:well-posedness}.
Since Theorem~\ref{th:comparison} requires
strictly separated boundary data,
it does not provide uniqueness of solutions
for general boundary data.
However,
it is possible to prove uniqueness
when the boundary data features some special structure;
see \cites{K03,K07}.
We show the well-posedness theorem for boundary data
strictly increasing in time.

\begin{proof}[Proof of Theorem~\ref{th:well-posedness}]
In this proof we return to the parabolic cylinder
$Q = \Omega \times (0,T]$
introduced in Section~\ref{sec:introduction}.

\medskip
\noindent
\emph{Bounded support.}
First observe that exactly one of $\Omega$ and $\Omega_0$
is bounded and that both $\partial \Omega$
and $\partial \Omega_0$ are bounded.

Let $u$ be a bounded viscosity solution on $Q$
with the correct boundary data.
We will show that
$\cl\Omega(u;Q) \subset B_R(0) \times [0,T]$
for some large $R$.
If $\Omega$ is bounded, this is obvious.
Therefore assume that $\Omega$ is unbounded and therefore
$\Omega_0$ is bounded.
Let $d = \diam \Omega_0 > 0$,
set $K := \sup_Q u < \infty$ (by assumption)
and recall that $M$ is the upper bound on $f$.
We define $\mu := 2 \sqrt{2 n K M T}$
and $R := d + \mu$.
The claim now follows from Corollary~\ref{co:HS-expansion-speed}
applied with $E = \cl{\Omega_0}^c$.
We set $\Sigma := B_R(0) \cap \Omega$, which is clearly bounded.
Note that $\partial \Omega_0 \subset B_R(0)$.

\medskip
\noindent
\emph{Barriers.}
Before proceeding with the proof of existence,
let us first construct barriers for the Perron's method.
We write $\Omega_0^\rho := \Omega_0 + B_\rho(0)$, $\Omega_0^0 := \Omega_0$.
Since $\partial \Omega_0$ is $C^{1,1}$ and compact,
it is a set of a positive reach
and there exists $\rho_0 > 0$ such that $\partial \Omega_0^\rho \in C^{1,1}$
and $\partial \Omega_0^\rho \subset B_R(0)$ for $\rho \in [0, \rho_0]$.

For $\rho \in [0, \rho_0]$, $t \in [0,T]$,
let us define $u_t^\rho \in C(\Omega)$,
such that
$\Delta u_t^\rho = 0$ in $\Omega_0^\rho \cap \Omega$
with boundary data
$u_t^\rho(x) = \psi(x, t)$ on $\partial \Omega$,
$u_t^\rho = 0$ in $\Omega \setminus \Omega_0^\rho$.

Since $\partial \Omega \subset B_R(0)$ and
$\partial \Omega \in C^{0,1}$,
we can extend $\psi$ to $C(\Rn \times [0,T])$
as $\Delta \psi = 0$ in $(B_R(0) \setminus \partial \Omega) \times [0,T]$
and $\psi = 0$ in $B_R^c(0) \times [0,T]$.
Observe that $\psi \in \sol(M, \Sigma \times [0, T])$.

Now we introduce the barriers at the initial time $t = 0$.
By the Hopf's lemma and comparison for harmonic functions,
and the fact that $\psi$ is bounded from above and from zero
on $\partial \Omega \times [0, T]$,
there exists a constant $c > 0$ such that
$0 < c^{-1} \leq \abs{D(u_t^\rho)^+} \leq c$
on
$\partial \Omega_0^\rho$ for all $\rho \in [0, \rho_0/2]$, $t \in [0,T]$.
Therefore we can find $\omega > 0$ large enough
so that $U(x,t) := u_t^{\om^{-1} t}(x)$
is a subsolution $U \in \subs(m/2, \Omega \times (0, \de])$,
and $V(x,t) := u_t^{\om t}(x)$ is a
supersolution $V \in \supers(M, \Omega \times (0, \de])$
for some $\de \in (0, \rho_0 /\omega)$.
Moreover, $U, V \in C(\Omega \times [0,\de])$.
Let us extend $U$ and $V$ by defining $U(x,t) = u_t^{\om^{-1} \de}(x)$
and $V(x,t) = \psi(x,t)$ for $t \in (\de, T]$ (recall the extension
of $\psi$ above).
It is straightforward to show
that $U \in \subs(m/2, Q)$, $V \in \supers(M, Q)$
and that $U$ and $V$
satisfy the boundary condition on $\partial_P Q$
in the correct sense.
That is, $U^* = U_* = V^* = V_* = \psi$ on $\partial \Omega \times [0,T]$
and $U^* = U_* = V^* = V_* = u_0^0$.

\medskip
\noindent
\emph{Uniqueness.}
Let $u$ and $v$ be two solutions with the correct
boundary data.

We first check the value of $u$ and $v$ at $t = 0$.
In general, for any solution $u$,
we have $u^*(\cdot, t)$ is subharmonic in $\Omega$
and $u_*(\cdot, t)$ is superharmonic in $\Omega_t(u_*; Q)$
(see \cite{KP12} for a detailed proof).
Thus a simple argument,
using the facts that a support of a subsolution must expand continuously,
and that a support of a supersolution cannot decrease,
shows that
$u_*(\cdot, 0) = u^*(\cdot, 0) = v_*(\cdot, 0) = v^*(\cdot, 0) = u_0^0$
on $\Omega$.

Now we observe that the support of a supersolution $u$
is strictly increasing
at $t = 0$, i.e.,
$\cl\Omega_0(u_*;\cl Q) = \cl{\Omega \cap \Omega_0}
\subset \Omega_t(u_*;\cl Q)$ for any $t > 0$.
This follows from the comparison with a perturbation of $U$.
Specifically, fix a point $\zeta \in B_R(0) \setminus \cl\Omega$
and define $U_\kappa(x,t) = (U(x,t) + \kappa \abs{x - \zeta}^2
- 4 \kappa R^2)_+$.
Clearly $U_\kappa \rightrightarrows U$ uniformly on $\cl Q$
as $\kappa \to 0$ and $U_\kappa$ is a subbarrier
on $\Omega \times (0,\de]$ for $\kappa > 0$
small enough (recall that $U \in \subs(m/2, Q)$ with $m/2< m$).
Moreover, the support of $U$ strictly increases in the above sense.
Since $U_\kappa \prec u$ on $\partial_P Q$ for any $\kappa > 0$,
we conclude that the support of $u_*$ must strictly increase
by sending $\kappa \to 0$.

Because the comparison principle requires strictly separated boundary
data, we have to perturb the solutions
and create this separation.
We use a perturbation in time.
Such perturbation is more involved
because of the time-dependence of $f(x,t,p)$,
and in that case we will use the nonlinear
perturbation from Appendix~\ref{sec:nonlinear-perturbation}.
Thus let
\begin{compactitem}
\item
$\ta_\eta(t) = t - \eta$ if $f(x,t,p) = f(p)$, or let
\item
$\ta_\eta(t)$ be the function constructed
in Section~\ref{sec:nonlinear-scaling-sub} with $\tau = - \eta$
and $\rho = 0$ if $f(x,t,p) = f(x,t)$.
\end{compactitem}
Observe that, in either case, $\ta_\eta$
is well-defined on $[0,\infty)$, $\ta_\eta' \in (0,1]$
and therefore it is invertible,
$\ta_\eta(0) = -\eta$ and $\ta_\eta(t) \to t$ as $\eta \to 0$,
locally uniformly in $t$.

We define $w(x,t) = u(x,\ta_\eta(t))$.
As we showed above, the support of $v_*$
is strictly increasing at $t = 0$.
Furthermore, the boundary data $\psi$ on $\partial \Omega_0$
is strictly increasing in time
and $\ta_\eta(t) < t$ for all $t \geq 0$.
Since $\ta_\eta^{-1}(0) > 0$,
let us define
$Q_\eta := \Sigma \times (\ta_\eta^{-1}(0), T]$,
where $\Sigma$ was introduced above,
and we have for all small $\eta > 0$
\begin{align*}
w \prec v\quad \text{on } \partial_P Q_\eta.
\end{align*}
The comparison theorem~\ref{th:comparison} then implies
$w \prec v$ on $\cl Q_\eta$,
and thus $u^*(x,\ta_\eta(t)) \leq v_*(x,t)$.
By sending $\eta \to 0$, we conclude
$u_* \leq (u^*)_* \leq v_*$ on $Q$.
Futhermore, clearly
\begin{align*}
u^*(x,t) \leq v_*(x, \ta_\eta^{-1}(t))\quad
    \text{on } \cl Q \times [0, \ta_\eta^{-1}(T)],
\end{align*}
which yields $u^* \leq (v_*)^* \leq v^*$ on $Q$ after sending $\eta \to 0$.
The argument repeated with $u$ and $v$ swapped
implies that the solution is unique.

\medskip
\noindent
\emph{Existence.}
We use the classical Perron-Ishii method \cite{Ishii87}; see also \cites{CIL,K03}.
Let
\begin{align*}
u &= \sup \set{w \in \subs(f, Q): w \leq V}, &
v &= \inf \set{w \in \supers(f,Q): w \geq U}.
\end{align*}
Clearly $u \geq U$ and $v \leq V$.
Since $(V^*)_* = V$
and $(U_*)^* = U$,
a standard argument yields that $u, u^*,v, v_* \in \sol(f, Q)$.
Moreover,
because of the continuity of $U$ and $V$ at $\partial_P Q$
we conclude that all these solutions have the correct boundary data.
Therefore, by uniqueness,
if $w \in \sol(f, Q)$ with the correct boundary data, we have
$w_*=u_* = v_* = (u^*)_*$ and $w^* = u^* = v^* = (v_*)^*$, and thus
\[
(w^*)_* = w_*, \qquad (w_*)^* = w^*,
\]
which completes the proof.
\qedhere\end{proof}

The last step of the previous proof provides the following regularity.

\begin{corollary}
\label{co:regularity}
Let $u$ be the unique solution of the problem \eqref{hs-f}
with data satisfying the assumptions of Theorem~\ref{th:well-posedness}.
Then
\begin{equation}
\label{u-upper-lower}
(u_*)^* = u^* \qquad \text{and} \qquad (u^*)_* = u_*.
\end{equation}
and
\begin{equation}
\cl\Omega(u_*;Q) = \cl\Omega(u;Q)=\cl\Omega(u^*;Q).
\end{equation}
\end{corollary}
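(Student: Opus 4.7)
The plan is to derive the identities \eqref{u-upper-lower} directly from the existence step of the proof of Theorem~\ref{th:well-posedness}, and then to bootstrap from them to the chain of closure equalities for the positive sets.

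First I would note that \eqref{u-upper-lower} is already essentially present in the Perron--Ishii construction used to prove existence. There, the functions $u$, $u^*$, $v$, $v_*$ all lie in $\sol(f,Q)$ with the required boundary and initial data, so the uniqueness claim established just before---any two solutions $w_1, w_2$ with the correct data satisfy $(w_1)_* = (w_2)_*$ and $w_1^* = w_2^*$---can be applied with $w_1 = u$ and $w_2 = u^*$, and again with $w_2 = u_*$. This forces $(u^*)_* = u_*$ and $(u_*)^* = u^*$, proving \eqref{u-upper-lower} for the unique solution $u$.

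For the set equalities, my plan is to split them into the trivial and the non-trivial inclusion. Since $u_* \le u \le u^*$ pointwise, one immediately has
\begin{align*}
\Omega(u_*;Q) \subset \Omega(u;Q) \subset \Omega(u^*;Q),
\end{align*}
and taking closures gives $\cl\Omega(u_*;Q) \subset \cl\Omega(u;Q) \subset \cl\Omega(u^*;Q)$. The remaining inclusion $\cl\Omega(u^*;Q) \subset \cl\Omega(u_*;Q)$ is what genuinely uses \eqref{u-upper-lower}. Given $(x_0,t_0) \in \cl\Omega(u^*;Q)$, I would pick $(x_k,t_k) \in \Omega(u^*;Q)$ with $(x_k,t_k) \to (x_0,t_0)$, so $u^*(x_k,t_k) > 0$. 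By the identity $u^* = (u_*)^*$ and the definition of the upper semi-continuous envelope as a non-increasing limit in the radius, for every $k$ and every $\de > 0$ one has
\begin{align*}
\sup \set{u_*(y,s) : (y,s) \in Q,\ \abs{y-x_k}^2 + \abs{s-t_k}^2 < \de^2}
    \geq (u_*)^*(x_k,t_k) = u^*(x_k,t_k) > 0.
\end{align*}
Choosing $\de = 1/k$ and extracting $(y_k,s_k)$ from this supremum produces a sequence in $\Omega(u_*;Q)$ that converges to $(x_0,t_0)$, placing $(x_0,t_0)$ in $\cl\Omega(u_*;Q)$.

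The only genuine (and quite mild) obstacle is this last extraction: without the identity $(u_*)^* = u^*$, the positive set of $u_*$ could in principle be strictly smaller than that of $u^*$---think of isolated positive peaks of $u^*$ that $u_*$ fails to detect. The identity rules this scenario out by forcing $u_*$ to be positive on an approximating sequence near any point where $u^*$ is positive, after which a standard diagonal extraction closes the argument.
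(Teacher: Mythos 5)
Your overall route is the paper's: read \eqref{u-upper-lower} off the Perron--Ishii step of the proof of Theorem~\ref{th:well-posedness}, then use it to compare the closures of the positivity sets; your second half (extracting a sequence in $\Omega(u_*;Q)$ converging to a given point of $\cl\Omega(u^*;Q)$ from the identity $(u_*)^*=u^*$) is a sound, slightly more constructive rephrasing of the paper's contradiction argument with a small ball on which $u_*\equiv 0$. The gap is in the first half, in the words ``and again with $w_2=u_*$''. The uniqueness assertion of Theorem~\ref{th:well-posedness} applies only to pairs of viscosity solutions with the correct boundary data, and the existence step establishes this for $u$, $u^*$, $v$ and $v_*$ (the sup- and inf-Perron constructions and their relevant envelopes) --- not for $u_*$. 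That $u_*$ is itself a solution with the correct data is not automatic: in Definition~\ref{def:visc-test-sub} the subsolution property of $u_*$ is a condition on $(u_*)^*$, so asserting $u_*\in\sol(f,Q)$ at this stage essentially presupposes the identity $(u_*)^*=u^*$ you are trying to prove. This is not a harmless slip, because $(u_*)^*=u^*$ is precisely the identity your second paragraph relies on; the application you do justify, $w_2=u^*$, only yields the other identity $(u^*)_*=u_*$.

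The repair is short and is exactly the chain written at the end of the proof of Theorem~\ref{th:well-posedness}: apply uniqueness between the sup-Perron solution $u$ and the inf-Perron solution $v$ to get $u_*=v_*$, and then between $u$ and $v_*$ (which the existence step does show to be a solution with the correct data) to get $(u_*)^*=(v_*)^*=u^*$. With $u_*$ replaced by $v_*$ in this way, your argument is complete and coincides with the paper's proof, including the transfer of both identities to an arbitrary solution with the given data via uniqueness.
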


\begin{proof}
As was indicated above, \eqref{u-upper-lower}
was shown in the last step of the proof of Theorem~\ref{th:well-posedness}.

Let us point out that $\cl\Omega(u) = \cl\Omega(u^*)$ is always true.
It is enough to prove that $\cl\Omega(u^*) \subset \cl\Omega(u_*)$
since the other direction is obvious.
Suppose that $(\xi, \si) \in \cl\Omega(u^*) \setminus \cl\Omega(u_*)$.
Then there exists $\de > 0$ such that
$B_\de(\xi,\si) \cap \cl\Omega(u_*) = \emptyset$
and therefore $u_* = 0$ on $B_\de(\xi,\si)$.
However, \eqref{u-upper-lower} implies that
$u^* = (u_*)^* = 0$ on $B_\de(\xi, \si)$,
a contradiction with the choice of $(\xi, \si)$.
\qedhere\end{proof}

\section{Identification of the limit problem}
\label{sec:limit-problem}

This section is devoted to the key step in the proof of Theorem~\ref{th:intro-homogenization},
the identification of a viable candidate for the homogenized velocity $r(Du)$.
The results are quite technical, an unfortunate consequence of the employed arguments, and therefore we first give a brief overview of the content of this section.
We motivate the approach by considering the one-dimensional case in Section~\ref{sec:homogenizationin1D}.
The numerical results there suggest that the form of the homogenized velocity $r(Du)$ is qualitatively different from the time-independent case.
In particular, it indicates that a simple closed expression for $r(Du)$ is not available.

Therefore we proceed with the identification of $r(Du)$ by other means.
We follow the idea from \cite{CSW,K07} of using a class of certain obstacle problems.
They are introduced in Section~\ref{sec:obstacle-problem},
where for each scale $\e >0$, gradient $q$ and candidate $r$ of the free boundary velocity we set up a domain, an obstacle and define a subsolution and a supersolution of an obstacle problem.
The main feature of the solutions of the obstacle problems is their natural monotonicity with respect to a family of scalings and translations, Section~\ref{sec:monotonicity-obstacle}.
To quantify the viability of the homogenization velocity candidate, we introduce a new quantity,  called \emph{flatness}, in Section~\ref{sec:flatness}, and derive its basic properties.
It measures how far the obstacle problem solutions detach from the obstacle, that is, how ``flat'' they are.

The following three sections then cover three important results that build on the definitions above.
First, the \emph{local comparison principle}, Section~\ref{sec:local-comparison-principle}, improves on the theorem of the same name from \cite{K08}, and allows us to locally compare the subsolutions of the obstacle problems with the supersolutions, away from the domain boundary, even after the boundary values are no longer ordered, provided that the flatness of those solutions is bounded with a certain rate.
Section~\ref{sec:cone-flatness} then presents the new \emph{cone flatness} property, which is a consequence of our particular choice of the domain for the obstacle problems, and which guarantees that the free boundary of a given obstacle problem solution lies in between two nearby cones whose distance can be estimated.
Finally, the \emph{detachment lemma} of Section~\ref{sec:detachment}, a direct consequence of the cone flatness property, guarantees that if the flatness of a given obstacle problem solution exceeds a certain value, the solution is separated from a significant portion of the obstacle.

These three results motivate the introduction of the two candidates for the homogenized velocity in Section~\ref{sec:homogenized-velocity}.
We then deduce the semi-continuity of the candidates and their coincidence up to the points of discontinuity, as explained in Section~\ref{sec:semi-continuity}.
The main tools are the natural monotonicity of the obstacle problem, the local comparison principle and the detachment lemma.
In Section~\ref{sec:time-independent} we make a brief detour to show that if the scaling in time is available, as is the case in the time-independent medium, we can prove that the homogenized velocity is in fact continuous and positively one-homogeneous, which recovers the results of \cite{K08} via our methods.

\subsection{Homogenization in one dimension}
\label{sec:homogenizationin1D}

Let us first explore the homogenization of \eqref{HSt} in one dimension.
For simplicity,
we take $\Omega = (0, \infty)$ and $\Omega_0 = (-\infty, x_0)$
for some $x_0 > 0$.
Furthermore, let $g(x,t): \R \times \R \to [m,M]$
be a positive, $L$-Lipschitz, $\Z^2$-periodic function.
Finally, we assume that the boundary data on $\partial \Omega = \set{0}$
are given by a function $\psi(0,t) = \psi(t) > 0$,
$\psi \in C([0,\infty))$.

Let $u$ be the unique solution of \eqref{HSt} with the above data.
We observe that, in this simple setting, $\Omega(u; \Omega) = (0, x(t))$,
where $x(t)$ is the position of the free boundary
$\partial \Omega(u)$ at time $t$,
i.e., $\partial \Omega(u) = \set{(x(t),t) : t \geq 0}$.
Therefore the normal velocity is simply $V_\nu = x'$.
Moreover, the harmonic function $u(\cdot, t)$
must be linear on $(0,x(t))$ with the slope $\abs{Du(x(t), t)} = \psi(t)/x(t)$.
Therefore, for every fixed $\e > 0$,
the free boundary condition in \eqref{HSt}
is equivalent to a simple ODE of the form
\begin{align}
\label{eq:boundaryode}
\begin{cases}
x_\e'(t) = g\pth{\frac{x_\e(t)}{\e}, \frac{t}{\e}} \frac{\psi(t)}{x_\e(t)},\\
x_\e(0) = x_0.
\end{cases}
\end{align}
Due to the ODE homogenization result of \cite{IM10} (see also \cite{Piccinini1}),
$x_\e \to x$ locally uniformly,
where $x(t)$ is the solution of
\begin{align*}
\left\{
\begin{aligned}
x'(t) &= f (x(t), t),\\
x(0) &= x_0.
\end{aligned}
\right.
\end{align*}
The arguments in this paper then imply that $f$ is of the form
\[
f(x(t), t) = r(\psi(t)/x(t)).
\]
The function $r(q)$ can be found numerically by scaling \eqref{eq:boundaryode} and thus solving
\begin{align}\label{eq:boundaryodescaled}
\left\{
\begin{aligned}
x'(t) &= q g\pth{x(t), t},\\
x(0) &= x_0,
\end{aligned}
\right.
\end{align}
in some $t \in [0, T]$ for $T$ large. Then $r(q)$ can be approximated by $x(T)/T$. The limit $\lim_{T \to \infty} \frac{x(T)}{T}$ is independent of $x_0$ since we can squeeze any solution for $x_0 \neq 0$ between $x(t)$ and $x(t) + 1$ using the comparison principle, and use the periodicity of $g$.

Consider the medium
\begin{align*}
g(x,t) = \sin^2 ( \pi(x-t)) + 1.
\end{align*}
Then for $q \in [1/2, 1]$,
$x(t) = t + \pi^{-1} \arccos \sqrt{q^{-1} - 1}$
is a solution of \eqref{eq:boundaryodescaled}.
Thus we see that $r(q) = 1$ for $q \in [1/2, 1]$.
If the boundary propagates in the opposite direction,
i.e., when $g = \sin^2(\pi(-x-t)) + 1$,
this effect does not appear and $r(q)$ is strictly increasing.
The situation can be even more complicated;
see Figure~\ref{fig:pinningintervalmultiple} for an example of this phenomenon.
\begin{figure}[t]
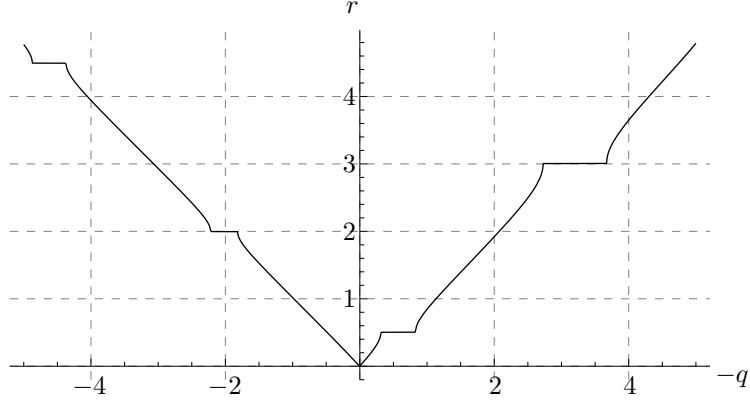

\centering
\fig{Fig2}{4.25in}
\caption{The graph of $r(q)$ for $g(x,t) = \sin(2 \pi (x-3t)) \sin(2 \pi (2t + x)) + 11/10$}
\label{fig:pinningintervalmultiple}
\end{figure}

\subsection{Obstacle problem}
\label{sec:obstacle-problem}

This section is devoted to the introduction of the class of obstacle problems whose solutions are the key tool for the identification of the homogenized free boundary velocity $r(Du)$ in Theorem~\ref{th:intro-homogenization}.

The obstacle problems lead to the following class of solutions:
for every scale $\e >0$, gradient $q \in \Rn \setminus \set0$ and candidate $r > 0$ of the homogenized velocity,
we define
the solutions of an obstacle problem on $\Rn\times\R$ as
\begin{subequations}
\label{obstaclesolution}
\begin{align}
\label{eq:obstaclesupersolution}
\uu_\eqr &= \pth{\inf\set{
    u :
        u \geq P_{q,r} \text{ on $\Rn\times\R$},
        u \in \supers(g^\e,Q_q)
        }}_*,\\
\ou_\eqr &= \pth{\sup\set{
    u :
        u \leq P_{q,r} \text{ on $\Rn\times\R$, }
        u \in \subs(g^\e, Q_q)
        }}^*.
\end{align}
\end{subequations}
As a mnemonics, notice that $u$ is \emph{above} the bar in $\uu_\eqr$ because $\uu_\eqr$ is \emph{above} the obstacle.
The obstacles $\Pqr$ are the planar traveling wave solutions of the homogenized Hele-Shaw problem, and they are introduced in Section~\ref{sec:planar-solutions} below.
The domain of the obstacle problem, $Q_q$, requires a special attention.
It is constructed in Section~\ref{sec:domain-geometry}.
After the definition has been clarified, the basic properties of the solutions $\uu_\eqr$ and $\ou_\eqr$ are discussed in Section~\ref{sec:obstaclesolutions-properties}.

\subsubsection{Planar solutions}
\label{sec:planar-solutions}

The homogenized Hele-Shaw problem with boundary velocity law
$V_\nu = r(Du)$
admits simple traveling wave solutions.
We call them planar solutions since each
is a positive part of a linear function
with gradient $q \in \Rn\setminus\set0$
moving in the direction $\nu := -q/\abs q$
with the normal velocity $V_\nu = r(q)$.
These solutions serve as the obstacles for the obstacle
problem \eqref{obstaclesolution} that will be used to find a candidate of
the homogenized free boundary velocity $r(q)$.

For any $q \in \Rn\setminus\set0$ and $r > 0$ we define
\begin{align*}
P_{q,r}(x,t) := (\abs{q} r t + x \cdot q)_+
= \abs q \pth{rt - x\cdot \nu}_+,
    \qquad \nu := \frac{-q}{\abs{q}},
\end{align*}
where $(s)_+ := \max(s, 0)$ is the positive part.
We interpret $r$ as the velocity of the free boundary
\begin{align*}
\Gamma(\Pqr) = \set{(x,t) : rt = x\cdot \nu}
\end{align*}
and $q \neq 0$ as the gradient $q = D \Pqr$.
Thus $\nu$ is the unit outer normal
vector to the free boundary at every time.

The following observation is a
trivial consequence of the nondegeneracy assumption
\eqref{g-bound}.
\begin{proposition}
\label{pr:planar-solution-range}
$\Pqr \in \subs(g^\e)$ if
\begin{align*}
r &\leq m \abs{q}
\intertext{and $\Pqr \in \supers(g^\e)$ if}
r &\geq M \abs{q}.
\end{align*}
\end{proposition}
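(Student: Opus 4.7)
The plan is to verify Definitions~\ref{def:visc-test-sub} and \ref{def:visc-test-super} directly for $P_{q,r}(x,t) = (|q|rt + x\cdot q)_+$, exploiting the fact that it is affine on its positive phase with $DP_{q,r} = q$ and $(P_{q,r})_t = |q|r$. The positive set $\Omega(P_{q,r}) = \set{|q|rt + x\cdot q > 0}$ is a half-space translating at constant speed $r > 0$ in the direction $\nu = -q/|q|$, which immediately yields condition~(i) of both definitions: the continuous-expansion property for subsolutions and the monotonicity of support for supersolutions. At any point where $P_{q,r} > 0$, the function is locally affine, hence harmonic, so the Laplacian test in (ii-1) is trivially satisfied by the standard maximum-principle argument for any touching test function.

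The only nontrivial step is (ii-2) at a free-boundary point $(x_0, t_0)$. In the subsolution case, suppose $\phi \in C^{2,1}$ satisfies $\phi \geq P_{q,r}$ on $\cl\Omega(P_{q,r}) \cap \set{t \leq t_0}$ with equality at $(x_0, t_0) \in \Gamma(P_{q,r})$; a first-order Taylor expansion at $(x_0,t_0)$ gives the linear inequality $(D\phi - q)\cdot v + (\phi_t - |q|r)\tau \geq 0$ on the half-cone $C = \set{(v, \tau) : v\cdot q + |q|r\tau \geq 0,\ \tau \leq 0}$, and Farkas' lemma applied to the two defining inequalities of $C$ forces $D\phi(x_0, t_0) = \lambda q$ with $\lambda \geq 1$ and $\phi_t(x_0, t_0) \leq \lambda|q|r$. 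Therefore
\[
\phi_t - g^\e |D\phi|^2 \;\leq\; \lambda |q|\bigl(r - \lambda\, g^\e(x_0,t_0)|q|\bigr) \;\leq\; 0,
\]
using $\lambda \geq 1$ and $r \leq m|q| \leq g^\e(x_0,t_0)|q|$, which verifies (ii-2) of Definition~\ref{def:visc-test-sub}.

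The supersolution case is analogous but must be split over both the ``interior'' cone $C_+ = C$ (where $\phi \leq P_{q,r}$ reads $\phi \leq |q|rt + x\cdot q$) and the ``exterior'' cone $C_- = \set{(v,\tau) : v\cdot q + |q|r\tau \leq 0,\ \tau \leq 0}$ (where $\phi \leq 0$), because the minimum in Definition~\ref{def:visc-test-super} is only restricted to $\set{t \leq t_0}$, not to $\cl\Omega$. The two Farkas computations jointly pin down $D\phi(x_0, t_0) = \gamma q$ with $\gamma \in [0,1]$ and $\phi_t(x_0, t_0) \geq \gamma|q|r$, so $\phi_t - g^\e|D\phi|^2 \geq \gamma|q|(r - \gamma g^\e|q|) \geq 0$ whenever $r \geq M|q| \geq g^\e(x_0,t_0)|q|$; the degenerate endpoint $\gamma = 0$ gives $D\phi = 0$, the other alternative in (ii-2). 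The main (though rather mild) technical point in the whole argument is the cone/duality analysis at the free boundary --- once it pins $D\phi$ to the ray $\R_{\geq 0}\, q$, the conclusion is immediate from the bounds $m \leq g^\e \leq M$ and the linearity of $P_{q,r}$.
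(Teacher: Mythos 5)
Your proof is correct and is essentially a fully detailed version of what the paper leaves implicit: the paper's proof is the one line ``use \eqref{g-bound}'', resting on the fact that $\Pqr$ is a classical traveling wave with $\abs{DP^+_{q,r}} = \abs q$ and $V_\nu = r$, so the free boundary inequality reduces to $r \leq m\abs q \leq g^\e \abs q$ (resp.\ $r \geq M\abs q \geq g^\e\abs q$). Your Farkas/cone analysis at free-boundary touching points, pinning $D\phi = \lambda q$ with $\lambda \geq 1$ for the subsolution test and $D\phi = \gamma q$ with $\gamma \in [0,1]$ (with the $\gamma = 0$ case handled by the $D\phi = 0$ alternative) for the supersolution test, is precisely the routine first-order verification the paper suppresses, and it is carried out correctly.
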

\begin{proof}
Use \eqref{g-bound}.
\qedhere\end{proof}

In view of Proposition~\ref{pr:planar-solution-range},
we will often impose the following restriction on the values
of $r$ and $q$ throughout the paper:
\begin{align}
\label{rqrestriction}
q \neq 0, \qquad m \leq \frac{r}{\abs{q}} \leq M.
\end{align}

\begin{proposition}
\label{pr:obstacle-ordering}
For given $q \neq 0$ and $r > 0$ we have
\begin{align*}
P_{q,r} (x - y, t - \tau) &\leq P_{q,r}(x,t) \text{ for all $x, t$}
    &\quad& \text{if and only if}\quad y \cdot \nu \leq r \tau,
\intertext{and}
P_{q,r} (x - y, t - \tau) &\geq P_{q,r}(x,t) \text{ for all $x, t$}
    &&\text{if and only if}\quad y \cdot \nu \geq r \tau.
\end{align*}
\end{proposition}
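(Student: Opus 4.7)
The plan is to reduce both equivalences to a one-variable observation about the positive part function. Using the second expression for $\Pqr$, write
\begin{align*}
\Pqr(x,t) = \abs{q}\pth{rt - x\cdot\nu}_+,
\qquad
\Pqr(x-y, t-\tau) = \abs{q}\pth{rt - x\cdot\nu - (r\tau - y\cdot\nu)}_+.
\end{align*}
Setting $s := rt - x\cdot\nu$ and $c := y\cdot\nu - r\tau$, the two expressions become $\abs{q}\,s_+$ and $\abs{q}\,(s+c)_+$, respectively, and as $(x,t)$ ranges over $\Rn\times\R$ the scalar $s$ ranges over all of $\R$ (since $\nu \neq 0$, pick any $x$ and adjust $t$).

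Next I would invoke the elementary fact that, for $c \in \R$, the inequality $(s+c)_+ \leq s_+$ holds for every $s \in \R$ if and only if $c \leq 0$, and $(s+c)_+ \geq s_+$ holds for every $s \in \R$ if and only if $c \geq 0$. The ``if'' direction is immediate from monotonicity of $s \mapsto s_+$; for the ``only if'' direction one simply evaluates at a large positive $s$ (which gives $s+c \leq s$, forcing $c \leq 0$) and symmetrically at a sufficiently negative $s$.

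Combining these, the inequality $\Pqr(x-y,t-\tau) \leq \Pqr(x,t)$ for all $(x,t)$ is equivalent to $c = y\cdot\nu - r\tau \leq 0$, i.e.\ $y\cdot\nu \leq r\tau$, and similarly for the reverse inequality. There is no real obstacle here; the only thing to be careful about is to make sure that $s$ genuinely attains all real values as $(x,t)$ varies, which is guaranteed by $q \neq 0$ (equivalently $\nu$ being a unit vector).
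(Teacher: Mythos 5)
Your proposal follows essentially the same route as the paper: the paper rewrites the translated planar solution via the identity $\abs{q}\,(r(t-\tau)-(x-y)\cdot\nu)=\abs{q}\,(rt-x\cdot\nu)+\abs{q}\,(y\cdot\nu-r\tau)$ and, for the only-if direction, takes $x=0$ and $t$ large — exactly your reduction to comparing $(s+c)_+$ with $s_+$ and testing at large positive $s$.

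One small slip in your elementary lemma: for the only-if direction of the second equivalence (that $(s+c)_+\geq s_+$ for all $s$ forces $c\geq 0$), evaluating at a \emph{sufficiently negative} $s$ gives only $0\geq 0$ and no information about $c$. You should again take $s$ large and positive (so both positive parts are active and $s+c\geq s$ yields $c\geq 0$), or equivalently, if $c<0$, take $s\in(0,-c)$ to get $s_+ = s>0=(s+c)_+$, a contradiction. With that correction the argument is complete and matches the paper's proof.
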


\begin{proof}
The statement is obvious from the identity
\begin{align*}
\abs{q}(r (t - \tau) - (x-y) \cdot \nu)
=  \abs q(r t - x \cdot \nu) + \abs{q} (y \cdot \nu - r \tau),
\end{align*}
and from the definition of $\Pqr$.
To prove the only-if direction, consider $x = 0$ and $t$ sufficiently large so that the first two terms above are positive.
\qedhere\end{proof}

\subsubsection{Domain}
\label{sec:domain-geometry}
\begin{figure}
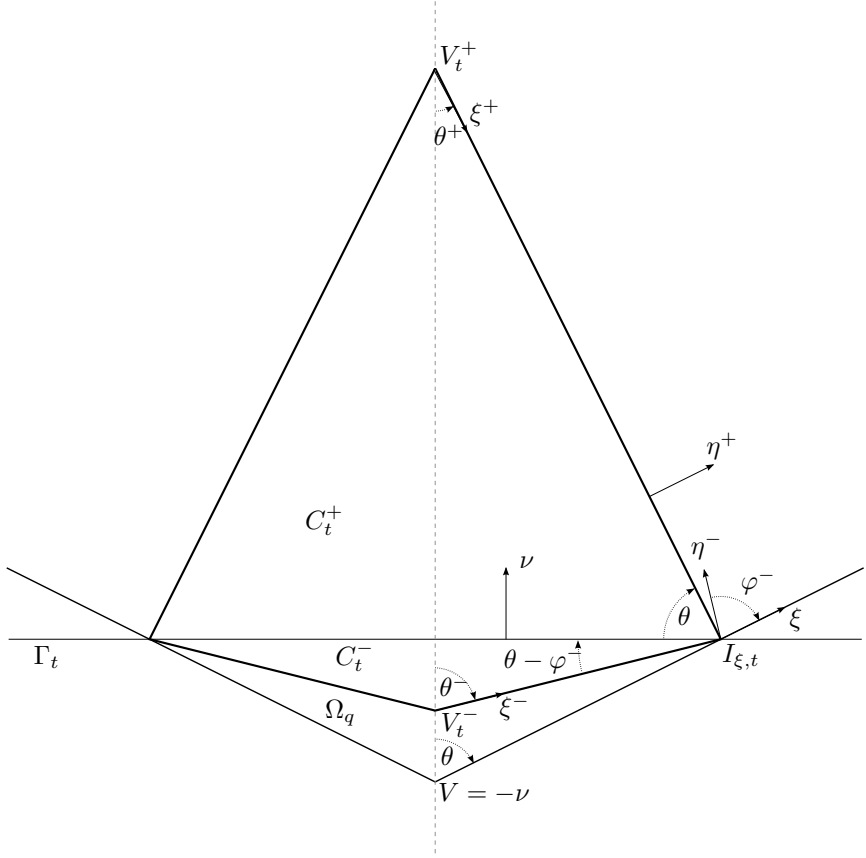

\centering
\fig{Fig3}{4.5in}
\caption{The cone $\Omega_q$ and the support of
the sub/supersolution boundaries at a time $t$}
\label{fig:obstacledomain}
\end{figure}

In this section
we construct a class of space-time cylinders $Q_q$
that serve as the domains for the obstacle problem
introduced in \eqref{obstaclesolution}.
The base of the cylinder is chosen to be a cone with a prescribed opening angle and an axis in the direction that coincides with the normal of the free boundary of the obstacle.
This particular choice of geometry will let us control
how fast the solution of the obstacle problem detaches
from the obstacle at the lateral boundary of the domain $Q_q$.
It is achieved by a comparison with the class of planar subsolutions and supersolutions $R^\pm_\xi$
constructed below.
In fact, this is the main motivation for this technical construction.
With these barriers at our disposal, we can show that the free boundaries of the solutions $\uu_\eqr$ and $\ou_\eqr$ must lie inside an intersection of certain cones; see Proposition~\ref{pr:obstacle-sols}.
The ability to control the boundary behavior
will be necessary for the extension of the monotonicity property
of the obstacle problem beyond the straightforward
inclusion of domains.
One of the main consequences of this extra monotonicity is
the cone flatness property, Proposition~\ref{pr:cone-flatness}.

Throughout the rest of this section,
we fix $q\in\Rn\setminus\set0$ and $r>0$
and let $\nu = \frac{-q}{\abs{q}}$.
For simplicity, we shall denote
\begin{align*}
\Gamma_t = \Gamma_t(\Pqr).
\end{align*}

Since the homogenization result is trivial
when $m = M$,
we may assume throughout the rest of the paper that $0 < m < M$.
We define angles $\ta, \ta^+ \in (0,\frac\pi2)$,
and $\ta^- \in (\ta, \frac\pi2)$ as
\begin{subequations}
\label{eq:choiceoftas}
\begin{align}
\label{eq:choiceofta}
\ta &= \arccos \sqrt{\frac{m}{M}}\\
\label{eq:choiceoftaplus}
\ta^+ &= \ha\pi - \ta\\
\label{eq:choiceoftaminus}
\ta^-&= \ha\pi + \ta - \vp^-, && \text{where } \vp^- := \arccos\frac{m}{M}.
\end{align}
\end{subequations}
Note that indeed $\ta^- \in (\ta, \frac\pi2)$
since $\vp^- \in (\ta, \frac\pi2)$
by $\cos \vp^- = \frac{m}{M} < \sqrt{\frac{m}{M}} = \cos \ta$.

Let ${\rm Cone}_{p, \ta}(x)$ be the open cone with axis
in the direction of $p\in\Rn$, opening angle $2\ta$
and vertex $x$, that is,
\begin{align*}
{\rm Cone}_{p, \ta}(x)
&:= \set{y : (y - x) \cdot p > \abs{y - x} \abs{p} \cos \ta}.
\end{align*}

Angles $\ta$, $\ta^+$ and $\ta^-$ then define cones $\Omega_q$,
$C_t^+$ and $C_t^-$ for every $t \geq 0$,
\begin{align*}
\Omega_q &:= {\rm Cone}_{\nu, \ta}(V),\\
C_t^- &:= {\rm Cone}_{\nu, \ta^-}(V^-_t),\\
C_t^+ &:= {\rm Cone}_{-\nu, \ta^+}(V^+_t),
\end{align*}
where $V = -\nu$ and the vertices $V^\pm_t$ are uniquely determined by
the equality
\begin{align}
\label{same-base}
\Omega_q \cap \Gamma_t = C_t^- \cap \Gamma_t = C_t^+ \cap \Gamma_t,
\end{align}
see Figure~\ref{fig:obstacledomain}.
Finally, we define the space-time cylinder
\begin{align*}
Q_q := \Omega_q \times (0, \infty).
\end{align*}

\begin{remark}
\label{re:speedVplust}
Since the positions of the vertices $V^\pm_t$ of the cones $C^\pm_t$
clearly depend linearly on time,
we can extend their definition to all $t \in \R$
as
\[
V^\pm_t = V^\pm_0 + r^\pm_V t \nu \qquad t \in \R,
\]
where $r^\pm_V$ are constants and we call them the velocities of $V^\pm_t$.
Note that $V = V^+_{-r^{-1}} = V^-_{-r^{-1}} = - \nu$
due to \eqref{same-base} and the fact that $V = -\nu \in \Gamma_t(\Pqr)$
at $t = -\ov r$.
The velocities of $r^\pm_V$ can be found explicitly.
Since the cones share a base, \eqref{same-base},
we infer that (see Figure~\ref{fig:obstacledomain})
\[(r_V^+ - r)\tan \ta^+ = r \tan \ta = (r - r_V^-) \tan \ta^-.\]
Since $\ta^+ = \frac{\pi}{2} - \ta$,
$\tan \ta^+ = \frac{1}{\tan \ta}$ holds and we have
\begin{align*}
r_V^+ = r(1 + \tan^2 \ta) = \frac{r}{\cos^2 \ta} = \frac{M}{m}r > r.
\end{align*}
We can similarly express the velocity $r^-_V$ as
\begin{align*}
r_V^- = \pth{1 - \frac{\tan \ta}{\tan \ta^-}} r = c_{\frac Mm} r \in (0, r).
\end{align*}
\end{remark}

The reason for this choice of domain $Q_q$ is the result in Lemma~\ref{le:matchingplanarsolutions} below.
But we begin with the following geometrical result.
Let us introduce the set of all directions
of rays in $\partial \Omega_q$:
\begin{align}\label{eq:Xi}
    \Xi = \set{\xi \in \Rd : \quad \abs{\xi} = 1, \quad \xi \cdot \nu = \cos \ta}.
\end{align}

\begin{proposition}
\label{pr:cone-planes}
For any given $x \in \partial \Omega_q$, $x \neq V$,
there exists unique $\xi \in \Xi$
such that $x \in L_\xi \subset \partial \Omega_q$,
where $L_\xi$ is the ray
\begin{align}\label{eq:Lxi}
    L_\xi := \set{\si \xi +V: \si \geq 0}.
\end{align}
Furthermore, there exist unique $\xi^-$ and $\xi^+$
such that $\abs{\xi^\pm} = 1$,
\begin{align*}
L^\pm_{\xi,t}
    := \set{\si \xi^\pm + V^\pm_t: \si \geq 0} \subset \partial C_t^\pm
\end{align*}
and
\begin{align*}
L^\pm_{\xi,t} \cap L_\xi = \set{I_{\xi,t}} \qquad \text{for all $t \geq 0$}
\end{align*}
for some point $I_{\xi,t}$.
Let $\eta^\pm_\xi$ be the unit normal to $\partial C_t^\pm$
on $L^\pm_{\xi,t}$ with $\eta^\pm \cdot \nu > 0$.
There exist unique constants $\mu^\pm$, $r^\pm$ and $T^\pm$
depending only on $(q, r, m, M)$,
and independent of $\xi$, such that
\begin{align}
\label{Rxi-touches-Ct}
\Gamma_t(R^\pm_\xi) \cap \partial C^\pm_t &= L^\pm_{\xi,t}
&& \text{for all $t \geq 0$,}\\
\label{eq:planarsolutionsmatch}
R^\pm_\xi &= P_{q,r} && \text{on $L_\xi \times [0, \infty)$}
\end{align}
where
\begin{align}
\label{eq:Rpmxi}
R^\pm_\xi (x,t):= P_{-\mu^\pm \eta_\xi^\pm, r^\pm}(x, t - T^\pm).
\end{align}
\end{proposition}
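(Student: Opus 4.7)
The proof is an explicit geometric construction; the crucial point is that the choice of angles $\ta,\ta^\pm$ in \eqref{eq:choiceoftas} and of vertex velocities $r_V^\pm$ in Remark~\ref{re:speedVplust} is engineered so that certain directions turn out to be independent of $t$. Set up coordinates so that $\nu = e_n$, hence $V = -\nu$. Any $x \in \partial \Omega_q \setminus \{V\}$ lies on exactly one generator of the cone $\partial \Omega_q$, with unit direction $\xi = (x - V)/\abs{x - V} \in \Xi$ (since $x \in \partial \Omega_q$ forces the angle between $x-V$ and $\nu$ to be exactly $\ta$); this gives the first claim.

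For $\xi^\pm$, fix $\xi \in \Xi$ and write $\xi = \cos\ta \cdot \nu + \sin\ta \cdot v$ with a unit vector $v \perp \nu$. Since $\xi \cdot \nu = \cos\ta > 0$, $L_\xi$ meets each hyperplane $\Gamma_t = \{x \cdot \nu = rt\}$ in the single point $I_{\xi,t} = rt\,\nu + (1+rt)\tan\ta\cdot v$, which by \eqref{same-base} also lies on $\partial C_t^\pm$. Using the representation $V_t^\pm = V_0^\pm + r_V^\pm t\,\nu$ (with $V_0^\pm$ on the axis) from Remark~\ref{re:speedVplust}, an elementary computation gives
\[
I_{\xi,t} - V_t^+ = (1+rt)\,\frac{\tan\ta}{\cos\ta}\,\xi^+, \qquad \xi^+ := -\sin\ta\cdot\nu + \cos\ta\cdot v,
\]
a positive scalar multiple of the $t$-independent unit vector $\xi^+$. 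Hence $L_{\xi,t}^+$ passes through $I_{\xi,t}$; and it lies on $\partial C_t^+$ because $\xi^+ \cdot (-\nu) = \sin\ta = \cos\ta^+$ by \eqref{eq:choiceoftaplus}. An analogous computation using \eqref{eq:choiceoftaminus} and the value of $r_V^-$ produces $\xi^-$. Uniqueness is automatic since the single ray from $V_t^\pm$ lying in $\partial C_t^\pm$ and passing through $I_{\xi,t}$ has its direction forced; the intersection property $L_{\xi,t}^\pm \cap L_\xi = \{I_{\xi,t}\}$ is immediate because the two rays meet at $I_{\xi,t}$ and are non-parallel ($\xi \cdot \nu > 0 > \xi^\pm \cdot \nu$).

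For the planar solutions, compute the outer unit normal $\eta_\xi^\pm$ to $\partial C_t^\pm$ along the generator $L_{\xi,t}^\pm$ by differentiating the defining function of the cone; the identity $\ta^+ = \pi/2 - \ta$ collapses the calculation to the clean equality $\eta_\xi^+ = \xi$, and an analogous (more opaque) expression holds for $\eta_\xi^-$. Since $R_\xi^\pm = P_{-\mu^\pm \eta_\xi^\pm, r^\pm}(\,\cdot\,, \,\cdot\, - T^\pm)$ has a hyperplane free boundary normal to $\eta_\xi^\pm$, \eqref{Rxi-touches-Ct} amounts to requiring this hyperplane to be tangent to $\partial C_t^\pm$ along $L_{\xi,t}^\pm$ for every $t$, which fixes its position and speed. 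To enforce \eqref{eq:planarsolutionsmatch}, parameterize $L_\xi$ by $\si \geq 0$ and equate the two affine functions on $L_\xi \times [0, \infty)$:
\[
\mu^+\bigl(r^+(t - T^+) + \cos\ta - \si\bigr) = \abs{q}\bigl(1 + rt - \si\cos\ta\bigr).
\]
Matching the coefficients of $\si$, $t$ and the constant gives a triangular linear system with the unique solution $\mu^+ = \abs{q}\cos\ta$, $r^+ = r/\cos\ta$, $T^+ = -\sin^2\ta/r$, all independent of $\xi$; the parameters $\mu^-, r^-, T^-$ follow from the analogous computation on the ``$-$'' side.

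The only non-mechanical step is the $t$-independence of $\xi^\pm$: it rests on the vector $I_{\xi,t} - V_t^\pm$ pointing in a direction independent of $t$, which is exactly the reason the velocities $r_V^\pm$ in Remark~\ref{re:speedVplust} were defined to satisfy $r_V^+ = r/\cos^2\ta$ and the companion identity for $r_V^-$.
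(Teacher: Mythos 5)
Your proposal is correct and follows essentially the same explicit geometric construction as the paper: you determine $\xi^\pm$ from the shared base condition \eqref{same-base}, observe $\eta^+_\xi=\xi$ (the paper's $\cos\vp^+=1$), and fix $\mu^\pm,r^\pm,T^\pm$ by matching $R^\pm_\xi$ with $P_{q,r}$ along $L_\xi$, your coordinate computation merely making explicit the $t$-independence of $\xi^\pm$ that the paper dismisses as straightforward, and your values $\mu^+=\abs{q}\cos\ta$, $r^+=r/\cos\ta$, $T^+=-\sin^2\ta/r$ agree with the paper's formulas \eqref{eq:rpm}, \eqref{eq:mupm} and $T^+=1/r_V^+-1/r$. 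One minor slip: the parenthetical justification ``$\xi\cdot\nu>0>\xi^\pm\cdot\nu$'' is false for the minus sign, since $C^-_t$ opens in the direction $+\nu$ and hence $\xi^-\cdot\nu=\cos\ta^->0$; the non-parallelism of $L_\xi$ and $L^-_{\xi,t}$ (and thus the single intersection point $I_{\xi,t}$) nevertheless holds because $\ta^-\neq\ta$, so the conclusion is unaffected.
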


\begin{proof}
The existence of $\xi, \xi^+, \xi^-$ is
straightforward.

$\mu^\pm$ and $r^\pm$ can be expressed using the following geometric
considerations.

By definition,
\begin{align*}
\cos \ta = \xi \cdot \nu.
\end{align*}
We introduce $\vp^\pm \in (0,\pi/2)$ via $\cos \vp^\pm = \xi \cdot \eta^\pm$ and note that
\begin{align}
\label{eq:taandvp}
\ta^+ &= \ha\pi + \vp^+ - \ta, & \ta^- &= \ha{\pi} + \ta - \vp^-.
\end{align}

Observe that the point $I_{\xi,t} \in L_\xi \cap \Gamma_t$ moves in the direction $\xi$ along $L_\xi$ with the velocity $r_I$ given as
\begin{align*}
r_I = \frac{r}{\cos \ta}.
\end{align*}

Since $R^\pm_\xi$ propagates in the direction $\eta^\pm$,
$\abs{\eta^\pm} = 1$,
and
\[
\set{I_{\xi,t}} = L^\pm_{\xi,t} \cap L_\xi
= \Gamma_t(R^\pm_\xi) \cap \partial C_t^\pm \cap L^\xi
\]
by condition \eqref{Rxi-touches-Ct},
we must have $I_{\xi,t} \in \Gamma_t(R^\pm_\xi)$
for all $t \geq 0$
and therefore
\begin{align}
\label{eq:rpm}
r^\pm = r_I (\xi \cdot \eta^\pm)
    =r_I \cos \vp^\pm =  r \frac{\cos \vp^\pm}{\cos \ta}.
\end{align}

The slope $\mu_L$ of $P_{q,r}$ on $L_\xi$ is
\begin{align*}
\mu_L = \abs{q} \cos \ta.
\end{align*}
In \eqref{eq:planarsolutionsmatch}
we require that this is also the slope of $R^\pm_\xi$ on $L_\xi$, i.e.,
\begin{align}
\label{eq:mupm}
\mu^\pm =  \frac{\mu_L}{\cos \vp^\pm} = \frac{\abs{q} \cos \ta}{\cos \vp^\pm}.
\end{align}

Finally, the constants $T^\pm$ are then fixed
by requiring $V^\pm_{T^\pm} = 0$,
and therefore, by Remark~\ref{re:speedVplust},
\[
T^\pm = \ov{r^\pm_V} - \ov r.
\]
It is straightforward to check that
$R^\pm_\xi$ defined with such unique choice of
$\mu^\pm$, $r^\pm$ and $T^\pm$
satisfy \eqref{Rxi-touches-Ct} and \eqref{eq:planarsolutionsmatch}.
\qedhere\end{proof}

We complete the family of $R_\xi^\pm$ defined in \eqref{eq:Rpmxi}
by introducing two special planar functions:
\begin{align}\label{eq:Rpm0}
\begin{aligned}
R^+_0(x,t) &= P_{q,\max(M\abs{q},r)}(x,t),\\
R^-_0(x,t) &= P_{q,\min(m\abs{q},r)}(x,t).
\end{aligned}
\end{align}

\begin{corollary}
\label{co:values-Rxi}
We have
\begin{align*}
R^-_\xi &\leq P_{q,r} \leq R^+_\xi &
&\text{on $Q_q$ for all $\xi \in \Xi \cup \set0$},
\end{align*}
with equality
\begin{align*}
R^\pm_\xi &= P_{q,r} &
&\text{on $L_\xi \times [0,\infty)\subset \partial Q_q$
for $\xi \in \Xi$},\\
R^\pm_0 &= P_{q,r} && \text{at $t = 0$},
\end{align*}
where $L_\xi$ was introduced in \eqref{eq:Lxi}.

Moreover,
\begin{align}
\label{Rxi-outside}
\sup_{\xi\in\Xi} R^-_\xi \geq \Pqr \geq \inf_{\xi\in\Xi} R^+_\xi
\qquad \text{on $\Omega_q^c \times [0,\infty)$.}
\end{align}

Finally,
\begin{align}
\label{Rxi-cone}
\bigcap_{\xi\in\Xi} \Omega_t(R_\xi^+) &= C^+_t, &
\bigcap_{\xi \in\Xi} \Omega_t^c(R_\xi^-) &= \overline{C^-_t}
\qquad \text{for all $t\geq 0$.}
\end{align}
\end{corollary}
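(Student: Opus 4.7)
The plan is to reduce all four claims to comparisons of positive parts of affine space-time functions, since $\Pqr$, each $R^\pm_\xi$ ($\xi \in \Xi$), and $R^\pm_0$ are all of this form. The equalities on $L_\xi \times [0,\infty)$ for $\xi \in \Xi$ are built into the construction via \eqref{eq:planarsolutionsmatch} in Proposition~\ref{pr:cone-planes}, and the equalities at $t=0$ for $\xi = 0$ are immediate from \eqref{eq:Rpm0}, since $R^\pm_0(x,0) = (x \cdot q)_+ = \Pqr(x,0)$: all three planar functions share the gradient $q$, so at $t = 0$ they coincide.

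For the inequalities $R^-_\xi \leq \Pqr \leq R^+_\xi$ on $Q_q$ with $\xi \in \Xi$, I would pass to the affine extensions of the three functions. They agree on the two-dimensional slab $L_\xi \times \R$, so the pairwise differences are affine on space-time and vanish there. The identity $\mu^\pm r^\pm = \abs q \, r$, which falls out by multiplying \eqref{eq:rpm} and \eqref{eq:mupm}, makes the time derivatives of $R^\pm_\xi$ and $\Pqr$ agree, so each difference is in fact a purely spatial linear function vanishing on $L_\xi$, with gradient $\abs q \nu - \mu^\pm \eta^\pm_\xi$ orthogonal to $\xi$. The sign of $R^\pm_\xi - \Pqr$ throughout $\Omega_q$ then reduces to the sign of this gradient paired against the inward normal to $\Omega_q$ along $L_\xi$; a short geometric computation exploiting the angle identities \eqref{eq:taandvp} (with $\theta + \theta^+ = \pi/2$ simplifying the $+$ case) delivers the two signs in the correct direction. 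Taking positive parts preserves the inequalities on $Q_q$. The case $\xi = 0$ is trivial: $R^+_0$ and $R^-_0$ share the gradient $q$ with $\Pqr$ but propagate with larger, respectively smaller, speed, so the inequality is inherited at every $t \geq 0$.

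For \eqref{Rxi-outside}, I would run the same affine comparison on the \emph{opposite} side of the hyperplane through $L_\xi$ normal to $\abs q \nu - \mu^\pm \eta^\pm_\xi$, where the sign of $R^\pm_\xi - \Pqr$ is reversed. Given $(x,t) \in \Omega_q^c \times [0,\infty)$, the convexity and rotational symmetry of $\Omega_q$ around $\nu$ let me select $\xi \in \Xi$ so that $x$ lies on the side of the tangent hyperplane to $\Omega_q$ along $L_\xi$ opposite to $\Omega_q$ (concretely, choose $\xi$ lying in the half-plane spanned by $\nu$ and the $\nu$-orthogonal projection of $x - V$). On that side $R^-_\xi(x,t) \geq \Pqr(x,t)$ and $R^+_\xi(x,t) \leq \Pqr(x,t)$, whence \eqref{Rxi-outside} follows by taking sup and inf over $\xi$.

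Finally, \eqref{Rxi-cone} is pure convex geometry. For each $t \geq 0$, $\Omega_t(R^+_\xi) = \set{x : x \cdot \eta^+_\xi < r^+(t - T^+)}$ is the open half-space bounded by the hyperplane tangent to $\partial C^+_t$ along $L^+_{\xi,t}$, with $\eta^+_\xi$ the outer normal to the convex cone $C^+_t$ there; as $\xi$ ranges over $\Xi$ these half-spaces exhaust the open supporting half-spaces of $C^+_t$, so their intersection equals $C^+_t$. The same argument applied to the closed complements $\Omega^c_t(R^-_\xi)$ yields $\overline{C^-_t}$. The hardest step will be the sign computation of the second paragraph, where the particular choices \eqref{eq:choiceoftas} of $\theta$ and $\theta^\pm$ together with the identity $\mu^\pm r^\pm = \abs q\, r$ must conspire to produce the correct orientation of the gradient against the inward normal to $\Omega_q$; everything else is either an unpacking of definitions or a standard convex-analytic fact.
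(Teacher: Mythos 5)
Your proposal runs along essentially the same lines as the paper's proof: compare the affine extensions of $\Pqr$ and $R^\pm_\xi$ across the tangent hyperplane containing $L_\xi$ (the paper's half-space $H$), use the reversed inequality on the opposite side to get \eqref{Rxi-outside}, and identify $\Omega_t(R^+_\xi)$ and $\Omega^c_t(R^-_\xi)$ as the supporting half-spaces of $C^\pm_t$ along their lateral rays to get \eqref{Rxi-cone}. The only real difference is how the sign of $R^\pm_\xi-\Pqr$ is pinned down: the paper does it softly by locating the vertices $V^\pm_t$ relative to the two positive sets, while your direct pairing of the gradient $\mu^\pm\eta^\pm_\xi-\abs{q}\nu$ with the inward normal also works (in the $+$ case $\vp^+=0$ makes $\eta^+_\xi=\xi$, and in the $-$ case one needs $\vp^->\ta$, which follows from $\cos\vp^-=\frac mM<\sqrt{\frac mM}=\cos\ta$).
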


\begin{proof}
Since the statement is obvious for $\xi = 0$ by the definition
of $R^\pm_0$ in \eqref{eq:Rpm0},
we fix $\xi \in \Xi$.
Let $\eta$ be the unit normal vector to $\partial \Omega_q$
on $L_\xi \subset \partial \Omega_q$.
Clearly $\eta \cdot \xi = 0$.
Define the half-space $H:= \set{x : (x - V) \cdot \eta > 0}$
and observe that $\Omega_q \subset H$
and $\partial \Omega_q \cap \partial H = L_\xi$.

Let us first show that $\Pqr = R^\pm_\xi$ on $\partial H$.
We note that if $x \in \partial H$ we can express
$x - V= \si \xi + y$, where $\si \in \R$ and
$y \cdot \xi = y \cdot \eta = 0$.
Since $\set{\nu, \eta^+, \eta^-} \subset \Span \set{\xi, \eta}$,
and $\Pqr = R^\pm_\xi$ on $L_\xi$ by Proposition~\ref{pr:cone-planes},
linearity implies
\begin{align*}
\Pqr(x,t) &= \Pqr(V + \si\xi + y,t) = \Pqr(V+ \si\xi,t)
\\&=
R^\pm_\xi(V+\si\xi,t) = R^\pm_\xi(x,t) \qquad (x,t)\in \partial H \times \R.
\end{align*}
Additionally, by linearity, since $V^\pm_t \in H$ for all $t\geq 0$,
and
\[V^+_t \in \cl\Omega_t(R^+_\xi) \setminus \cl\Omega_t(\Pqr),
\qquad
V^-_t \in \cl\Omega_t(\Pqr) \setminus \cl\Omega_t(R^-_\xi),
\]
we must have
\[
R^-_\xi \leq \Pqr \leq R^+_\xi \qquad
\text{on $H \times [0,\infty) \supset Q_q$}
\]
and
\[
R^-_\xi \geq \Pqr \geq R^+_\xi \qquad
\text{on $H^c \times [0,\infty)$}.
\]
Since for every $(x,t) \in \Omega_q^c \times [0,\infty)$
there exists $\xi \in \Xi$ such that $x \in H^c$,
\eqref{Rxi-outside} is also proved.

To show one inclusion ($\supset$) of \eqref{Rxi-cone},
we recall that by \eqref{Rxi-touches-Ct} and the choice $\eta^\pm \cdot \nu > 0$,
we clearly have
\[
C^+_t \subset \Omega_t(R^+_\xi), \qquad
\cl{C^-_t} \subset \Omega^c_t(R^-_\xi), \qquad t \geq 0.
\]

To show the other inclusion ($\subset$) of \eqref{Rxi-cone},
we only need to show that for any $x \in \Rn$
there exists $\xi \in \Xi$ that generates
$\xi^\pm, \eta^\pm$ as in Proposition~\ref{pr:cone-planes}
and $x$
can be expressed as
\begin{align}
\label{express-x}
x = \al^\pm \xi^\pm + \be^\pm\eta^\pm + V^\pm_t,
\end{align}
and therefore clearly for such $\xi$ we have
\begin{align*}
x \notin C^+_t \quad \Leftrightarrow \quad \be^+ \geq 0
\quad \Leftrightarrow \quad R^+_\xi(x,t) = 0
\quad \Leftrightarrow \quad x \notin \Omega_t(R^+_\xi),\\
x \notin \cl{C^-_t} \quad \Leftrightarrow \quad \be^- < 0
\quad \Leftrightarrow \quad R^-_\xi(x,t) > 0
\quad \Leftrightarrow \quad x \notin \Omega^c_t(R^-_\xi).
\end{align*}
To find such $\xi$ for a given $x \in \Rn$,
first observe that there exists $y \in \partial \Omega_q$
such that $x = y + a \nu$ for some $a \in \Rn$.
Since $y \in \partial \Omega_q$, there exists $\xi \in \Xi$
such that $y \in L_\xi$
and therefore $x = \si \xi + a \nu + V$.
Finally, we can write $x$ as \eqref{express-x}
since $\set{\xi, \nu, V^\pm_t - V} \subset \Span \set{\xi^\pm, \eta^\pm}$.
This concludes the proof.
\qedhere\end{proof}

The following geometrical observation,
which shows a connection between the cones $C^\pm_t$
and the planar functions $R^\pm_\xi$,
will be useful later
in deriving monotonicity properties of the solutions
of the obstacle problems.
\begin{proposition}
\label{pr:cone-plus-generator}
For fixed $a > 0$, the following are equivalent
(all with superscript either $+$ or $-$):
\begin{enumerate}
\item
$\pm a R_\xi^\pm(a^{-1} x,a^{-1} t)
    \leq \pm R_\xi^\pm(x - y, t - \tau)$ for all $\xi \in \Xi$,
$(x, t) \in \Rn \times \R$;
\item
    $a C^\pm_{a^{-1} t} \subset C^\pm_{t - \tau} + y$ for some $t \in \R$;
\item
    $y \in \cl{\cone_{\pm\nu,\ta^\pm}(r_V^\pm \tau \nu + (a-1) V_0^\pm)}$
    where $r_V^\pm$ were introduced in Remark~\ref{re:speedVplust}.
\end{enumerate}
\end{proposition}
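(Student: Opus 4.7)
My plan is to reduce the proposition to a cone-geometry computation by exploiting that each $R^\pm_\xi$ is a planar function with fixed slope $\mu^\pm$ and fixed free-boundary normal velocity $r^\pm$. The crucial observation is that the rescaling $(x,t) \mapsto a R^\pm_\xi(a^{-1}x, a^{-1}t)$ produces another planar function with the same slope and same velocity, merely shifted in time from $T^\pm$ to $aT^\pm$; likewise $R^\pm_\xi(x-y, t-\tau)$ is a space-time translated copy. For each fixed $\xi$, the pointwise inequality in (i) therefore reduces to a comparison of the affine constants and is equivalent to an inclusion of the positive (for $+$) or null (for $-$) half-spaces.

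For (i) $\Leftrightarrow$ (ii): Working $\xi$ by $\xi$, the $+$ case reduces to $a\Omega_{a^{-1}t}(R^+_\xi) \subset \Omega_{t-\tau}(R^+_\xi) + y$ (a half-space inclusion, equivalent after matching normals to placement of the supporting hyperplane), and the $-$ case to the inclusion of the complementary null half-spaces. Intersecting over $\xi \in \Xi$ and invoking Corollary~\ref{co:values-Rxi}, which identifies $\bigcap_{\xi \in \Xi} \Omega_s(R^+_\xi) = C^+_s$ and $\bigcap_{\xi \in \Xi} \Omega^c_s(R^-_\xi) = \cl{C^-_s}$, converts the family of half-space inclusions into the cone inclusion (ii). For the converse, since each $\Omega_s(R^+_\xi)$ (resp.\ $\Omega^c_s(R^-_\xi)$) is a supporting half-space of $C^+_s$ (resp.\ $\cl{C^-_s}$) at the boundary ray $L^\pm_{\xi,s}$ given by Proposition~\ref{pr:cone-planes}, the cone inclusion immediately forces each individual half-space inclusion.

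For (ii) $\Leftrightarrow$ (iii): By Remark~\ref{re:speedVplust} the vertex moves linearly, $V^\pm_s = V^\pm_0 + r^\pm_V s \nu$, so both $aC^\pm_{a^{-1}t}$ and $C^\pm_{t-\tau} + y$ are congruent cones with common axis $\mp\nu$ and opening $2\ta^\pm$, differing only by a translation of the vertices. The inclusion (ii) is thus equivalent to the vertex of the left-hand cone, $aV^\pm_{a^{-1}t} = aV^\pm_0 + r^\pm_V t \nu$, lying in the closure of the right-hand cone. Subtracting the right vertex $V^\pm_{t-\tau} + y$, this becomes
\[
(a-1) V^\pm_0 + r^\pm_V \tau \nu - y \in \cl{\cone_{\mp\nu, \ta^\pm}(0)},
\]
which, upon rearranging, is precisely $y \in \cl{\cone_{\pm\nu, \ta^\pm}(r^\pm_V \tau \nu + (a-1) V^\pm_0)}$, i.e.\ condition (iii). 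Since this difference is manifestly independent of $t$, the clause ``for some $t \in \R$'' in (ii) is equivalent to ``for all $t$'', and the proposition is unambiguously formulated.

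The main technical care is in the bookkeeping of orientations: the axis of $C^+$ is $-\nu$ while the axis of $C^-$ is $+\nu$, so the $\pm$ sign in (iii) flips in the expected way, and the $-$ case throughout must be handled with complementary (null) half-spaces rather than positive ones. Beyond this sign-tracking, there is no real obstacle---the statement is a direct geometric consequence of Corollary~\ref{co:values-Rxi} together with the linear time-dependence of the vertices $V^\pm_t$, and the algebra reduces to checking that scaling and translating planar solutions acts as advertised on their positive-set half-spaces.
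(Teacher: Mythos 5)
Your proof is correct and follows essentially the same route as the paper: you use scaling invariance to see that $aR^\pm_\xi(a^{-1}\cdot,a^{-1}\cdot)$ is a translate of $R^\pm_\xi$, reduce (a) to support/half-space containments, identify the cones via Corollary~\ref{co:values-Rxi}, and finish with the linear vertex motion from Remark~\ref{re:speedVplust}. The only cosmetic difference is that the paper phrases the intermediate step as membership of the scaled vertex $aV^\pm_{a^{-1}t}$ in $\cl{C^\pm_{t-\tau}}+y$ rather than as a family of supporting half-space inclusions, which amounts to the same computation.
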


\begin{proof}
Clearly $a R^\pm_\xi(a^{-1}x, a^{-1}t)$ is just a translation
of $R^\pm_\xi(x,t)$ by scaling invariance,
therefore we can compare the planar solutions
in (a)
by comparing their supports.
In fact,
we have
\[aV^\pm_{a^{-1}t} \in \Gamma_t(a R^\pm_\xi(a^{-1}\cdot, a^{-1}\cdot))\]
by construction
and
\[
\Omega_t(R^\pm_\xi(\cdot - y, \cdot - \tau)) = \Omega_{t-\tau}(R^\pm_\xi) + y,
\]
for all $\xi \in \Xi$.
Therefore by Corollary~\ref{co:values-Rxi}
it is straightforward that (a) is equivalent to
\begin{align*}
aV^\pm_{a^{-1}t} \in \cl{C^\pm_{t-\tau}} + y \qquad \text{for some $t \in \R$.}
\end{align*}
This is clearly equivalent to (b)
since $aV^\pm_{a^{-1}t}$ is the vertex of $aC^\pm_{a^{-1}t}$.
And it is also equivalent to (c) if we take $t = 0$
and compute
\begin{align}
\label{vertex-inclusion}
a V_0^\pm \in \cl{C^\pm_{-\tau}} + y = \cl{\cone_{\mp\nu,\ta^\pm}(V^\pm_{-\tau} + y)}.
\end{align}
Since $V^\pm_{-\tau} = V_0^\pm - r_V^\pm \tau \nu$
by Remark~\ref{re:speedVplust},
we can rewrite \eqref{vertex-inclusion} as (c).
\qedhere\end{proof}

The main motivation for such an involved choice of the domain
for the obstacle problem is the following observation
on the properties of functions $R^\pm_\xi$.

\begin{lemma}
\label{le:matchingplanarsolutions}
Suppose that $q$ and $r$ satisfy the condition \eqref{rqrestriction}.
Then the choice of the angles $\ta, \ta^\pm$ in \eqref{eq:choiceoftas},
depending only on $\frac{m}{M}$,
guarantees that
\begin{align*}
R^+_\xi &\in \supers(M) \subset \supers(g^\e)
&&\text{and} & R^-_\xi &\in \subs(m)\subset\subs(g^\e) &
&\text{for all $\xi \in \Xi \cup \set0$},
\end{align*}
where $R^\pm_\xi$ were introduced in \eqref{eq:Rpmxi}
and \eqref{eq:Rpm0}.
\end{lemma}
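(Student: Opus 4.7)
The plan is to reduce the lemma to a direct application of Proposition~\ref{pr:planar-solution-range}. Each $R^\pm_\xi$ is the positive part of a linear profile with gradient of magnitude $\mu^\pm$ moving with normal velocity $r^\pm$, so the statement is equivalent to the two inequalities $r^+/\mu^+ \geq M$ and $r^-/\mu^- \leq m$, uniformly for $\xi \in \Xi \cup \set{0}$.

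The case $\xi = 0$ is immediate from the definitions in \eqref{eq:Rpm0} together with the admissibility constraint \eqref{rqrestriction}, which forces $\max(M\abs{q}, r) = M\abs{q}$ and $\min(m\abs{q}, r) = m\abs{q}$; Proposition~\ref{pr:planar-solution-range} then applies directly. For $\xi \in \Xi$, I will combine the explicit formulas \eqref{eq:rpm} and \eqref{eq:mupm} from Proposition~\ref{pr:cone-planes} into the single identity
\[
\frac{r^\pm}{\mu^\pm} = \frac{r}{\abs{q}} \cdot \frac{\cos^2 \vp^\pm}{\cos^2 \ta},
\]
and then evaluate the cosines using the specific choices made in \eqref{eq:choiceoftas}.

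The only nontrivial step is the geometric evaluation of $\cos \vp^\pm$. By the rotational symmetry of $C^\pm_t$ around the axis $\R \nu$, the vectors $\xi$, $\xi^\pm$ and $\eta^\pm$ from Proposition~\ref{pr:cone-planes} all lie in the two-dimensional plane $\Span \set{\nu, \xi}$, so the computation reduces to planar trigonometry. Taking $\nu = (1,0)$ and $\xi = (\cos \ta, \sin \ta)$ in this plane, and using the sign convention $\eta^\pm \cdot \nu > 0$ to pin down $\eta^+ = (\sin \ta^+, \cos \ta^+)$ and $\eta^- = (\sin \ta^-, -\cos \ta^-)$, one reads off $\cos \vp^+ = \sin(\ta + \ta^+)$ and $\cos \vp^- = \sin(\ta^- - \ta)$. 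The choices $\ta^+ = \pi/2 - \ta$ and $\ta^- = \pi/2 + \ta - \vp^-$ with $\vp^- = \arccos(m/M)$ then give $\cos \vp^+ = 1$ and $\cos \vp^- = m/M$, which combined with $\cos^2 \ta = m/M$ yields
\[
\frac{r^+}{\mu^+} = \frac{r}{\abs{q}} \cdot \frac{M}{m} \geq M, \qquad \frac{r^-}{\mu^-} = \frac{r}{\abs{q}} \cdot \frac{m}{M} \leq m,
\]
where the last step uses the two-sided bound $m \leq r/\abs{q} \leq M$ from \eqref{rqrestriction}.

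I do not anticipate any substantive obstacle: the whole argument is a verification that the angles $\ta, \ta^+, \ta^-$ were designed precisely so that the boundary planar functions $R^\pm_\xi$ remain, respectively, a supersolution of $M$ and a subsolution of $m$ across the entire admissible window of $(q,r)$. The one point that requires care—what I would flag as the main step—is keeping the sign conventions for the outward normals $\eta^\pm$ consistent in the planar computation, since $C^+_t$ opens in the direction $-\nu$ while $C^-_t$ opens in the direction $\nu$, which flips the second component of $\eta^-$ relative to $\eta^+$ and thereby changes $\sin(\ta + \ta^+)$ into $\sin(\ta^- - \ta)$.
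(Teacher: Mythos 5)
Your proposal is correct and takes essentially the same route as the paper: reduce the claim to $r^+ \geq M\mu^+$ and $r^- \leq m\mu^-$ via Proposition~\ref{pr:planar-solution-range}, combine \eqref{eq:rpm} and \eqref{eq:mupm} into $r^\pm/\mu^\pm = \frac{r}{\abs q}\cos^2\vp^\pm/\cos^2\ta$, and evaluate the cosines from the choice of angles together with \eqref{rqrestriction}. Your explicit planar computation giving $\cos\vp^+=\sin(\ta+\ta^+)=1$ and $\cos\vp^-=\sin(\ta^--\ta)=m/M$ is just an unpacking of the relations \eqref{eq:taandvp} that the paper quotes from Proposition~\ref{pr:cone-planes}, so there is no substantive difference.
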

\begin{proof}
The statement is obvious for $R_0^\pm$, by definition.
For any $\xi \in \Xi$, by Proposition~\ref{pr:planar-solution-range},
we only need to verify $r^+ \geq M \mu^+$ and $r_- \leq m \mu^-$.
Recall that $q$ and $r$ satisfy \eqref{rqrestriction}.

We first observe that $\cos \vp^+ = 1$ by \eqref{eq:taandvp} and
\eqref{eq:choiceoftaplus}.
Then, in order,
\eqref{eq:rpm}, \eqref{eq:mupm}, \eqref{eq:choiceofta},
and finally \eqref{rqrestriction} lead to
\begin{align*}
\frac{r^+}{\mu^+}
    = \frac{r}{\abs{q}} \frac{\cos^2 \vp^+}{\cos^2 \ta}
    = \frac{r}{\abs{q}} \frac{M}{m} \geq M,
\end{align*}
which verifies that $R_\xi^+ \in \supers(M)$.

Similarly, \eqref{eq:rpm}, \eqref{eq:mupm}, \eqref{eq:choiceofta}, \eqref{eq:choiceoftaminus}
and \eqref{rqrestriction} yield
\begin{align*}
\frac{r^-}{\mu^-}
    = \frac{r}{\abs{q}} \frac{\cos^2 \vp^-}{\cos^2 \ta}
    = \frac{r}{\abs{q}} \frac{m}{M} \leq m,
\end{align*}
which verifies that $R_\xi^- \in \subs(m)$.
\qedhere\end{proof}

\subsubsection{Properties of the obstacle solutions}
\label{sec:obstaclesolutions-properties}

The basic properties of the solutions $\uu_\eqr$ and $\ou_\eqr$ of the obstacle problem introduced in \eqref{obstaclesolution} are
gathered in the following proposition.

\begin{proposition}
\label{pr:obstacle-sols}
For any $q \in \Rd\setminus\set0$, $r > 0$ satisfying \eqref{rqrestriction} and any $\e > 0$, the following statements apply.
\begin{enumerate}
\item
$\uu_\eqr \in \supers(g^\e, Q_q)$
and
$\uu_\eqr \in \subs(g^\e, Q_q \setminus (\Gamma(\uu_\eqr)\cap \Gamma_{q,r}))$;
\item
$\ou_\eqr \in \subs(g^\e, Q_q)$
and
$\ou_\eqr \in \supers(g^\e, Q_q \setminus (\Gamma(\ou_\eqr)\cap \Gamma_{q,r}))$;
\item
$(\uu_\eqr)^* \geq \uu_\eqr \geq P_{q,r}$
and
$(\ou_\eqr)_* \leq \ou_\eqr \leq P_{q,r}$ in $\Rn\times\R$,
with equality on $(\Rn\times\R)\setminus Q_q$;
\item
$\Omega_t(\uu_\eqr) \subset C^+_t \cup \Omega_t(\Pqr)$
and $\Omega_t^c(\ou_\eqr) \subset \cl{C^-_t} \cup \Omega^c_t(\Pqr)$;

\item
    $\uu_\eqr \in \subs(M, \Rn\times\R)$
    and
    $\ou_\eqr \in \supers(m,\Rn\times\R)$;
\item
$\uu_\eqr \leq \inf_{\xi\in\Xi\cup\set0} R^+_\xi$
and $\lu_\eqr \geq \sup_{\xi\in\Xi\cup\set0} R^-_\xi$
on $Q_q$.
\end{enumerate}
\end{proposition}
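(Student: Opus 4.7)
My plan is to combine Perron-type arguments for minimal supersolutions and maximal subsolutions with the specific structure of the obstacle $\Pqr$ and the planar barriers $R^\pm_\xi$. I will describe the argument for $\uu_\eqr$; the statements for $\ou_\eqr$ are strictly dual, with infima replaced by suprema, $\supers$ by $\subs$, and $R^+_\xi$ by $R^-_\xi$.

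The assertion $\uu_\eqr \in \supers(g^\e, Q_q)$ in (a) is the standard Perron-type result: the LSC envelope of the pointwise infimum of a family of supersolutions on a common parabolic neighborhood is itself a supersolution there, and the linear constraint $u \geq \Pqr$ is preserved by the pointwise infimum and by the LSC envelope because $\Pqr$ is continuous. The ordering $\uu_\eqr \geq \Pqr$ in (c) is then immediate. For the equality $\uu_\eqr = \Pqr$ on $(\Rn\times\R) \setminus Q_q$, I use as an explicit admissible competitor the function equal to $R^+_0$ on $\cl{Q_q}$ and to $\Pqr$ elsewhere: by Proposition~\ref{pr:planar-solution-range} and \eqref{eq:Rpm0}, $R^+_0$ is a global $g^\e$-supersolution majorizing $\Pqr$, so this competitor is admissible and attains $\Pqr(x_0,t_0)$ at any $(x_0,t_0) \notin \cl{Q_q}$, which together with $\uu_\eqr \geq \Pqr$ yields the equality pointwise there; the equality then extends to $\partial Q_q$ using (f) below and the coincidence $R^+_\xi = \Pqr$ on $L_\xi$ from Proposition~\ref{pr:cone-planes}.

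The second assertion of (a), that $\uu_\eqr$ is a $g^\e$-subsolution off the contact set, is the classical Perron bump: at a point $(x_0,t_0) \in Q_q$ where $\uu_\eqr^* > \Pqr$ holds in a neighborhood by semi-continuity, a test function $\phi$ from above violating the $g^\e$-subsolution condition is in particular a strict $g^\e$-supersolution near $(x_0,t_0)$, so for small $\delta > 0$ the function $\phi - \delta$ remains a $g^\e$-supersolution lying above $\Pqr$, and $\min(\uu_\eqr, \phi - \delta)$ (extended by $\uu_\eqr$ away from that neighborhood) would be a strictly smaller admissible competitor, a contradiction. The bound (f) follows from the analogous competitor construction: $\min(\uu_\eqr, R^+_\xi)$ on $Q_q$, extended by $\uu_\eqr$ outside, is admissible because $R^+_\xi \in \supers(M) \subset \supers(g^\e)$ by Lemma~\ref{le:matchingplanarsolutions} and $R^+_\xi \geq \Pqr$ on $Q_q$ by Corollary~\ref{co:values-Rxi}, so $\uu_\eqr \leq R^+_\xi$ on $Q_q$. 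The support inclusion (d) is then immediate from (f) combined with the identity $\bigcap_{\xi \in \Xi} \Omega_t(R^+_\xi) = C^+_t$ from Corollary~\ref{co:values-Rxi} and $\uu_\eqr = \Pqr$ outside $Q_q$ from (c).

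The main obstacle is (e), showing that $\uu_\eqr$ is globally an $M$-subsolution. I verify the test-function condition of Definition~\ref{def:visc-test-sub} at every $(x_0,t_0) \in \Rn\times\R$. If $\uu_\eqr^*(x_0,t_0) > \Pqr(x_0,t_0)$ then by semi-continuity $\uu_\eqr^* > \Pqr$ in a neighborhood, the Perron bump argument yields the $g^\e$-subsolution condition there, and the inclusion $\subs(g^\e) \subset \subs(M)$ closes the case. The delicate case is $\uu_\eqr^*(x_0,t_0) = \Pqr(x_0,t_0)$: the ordering $\uu_\eqr^* \geq \Pqr$ and the inclusion $\cl\Omega(\uu_\eqr) \supset \cl\Omega(\Pqr)$ imply that any test $\phi$ with $\uu_\eqr^* - \phi$ attaining a local maximum at $(x_0,t_0)$ in $\cl\Omega(\uu_\eqr) \cap \set{t \leq t_0}$ also has $\Pqr - \phi$ attaining a local maximum at $(x_0,t_0)$ in $\cl\Omega(\Pqr) \cap \set{t \leq t_0}$. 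Since \eqref{rqrestriction} together with Proposition~\ref{pr:planar-solution-range} yields $\Pqr \in \subs(M, \Rn\times\R)$, the $M$-subsolution condition for $\uu_\eqr^*$ at $(x_0,t_0)$ is inherited from that of $\Pqr$. This transfer across the contact set is precisely why the restriction \eqref{rqrestriction} is imposed. The dual argument using $\Pqr \in \supers(m)$ from \eqref{rqrestriction} establishes $\ou_\eqr \in \supers(m, \Rn\times\R)$.
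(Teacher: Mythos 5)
Your handling of (c), (d) and (f) is sound and is essentially the paper's own argument: the paper uses the single competitor $\max(\Pqr,R^+_\xi)$ where you use $\min(\uu_\eqr,R^+_\xi)$ plus a separate competitor outside $\cl{Q_q}$, but the content is identical, and the paper disposes of (a)--(b) by citing Lemma~4 of \cite{K07}. The genuine gap is in the subsolution assertions, i.e.\ the second half of (a) and the first half of (e). Your Perron bump is invoked only at points where $\uu_\eqr^*(x_0,t_0)>\Pqr(x_0,t_0)$; since $\uu_\eqr^*\geq\Pqr\geq 0$, any such point has $\uu_\eqr^*(x_0,t_0)>0$, so the bump only ever produces the interior inequality $-\Delta\phi\leq 0$ and never engages the free boundary condition of Definition~\ref{def:visc-test-sub}(ii-2). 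Your transfer argument in (e), on the other hand, requires the test point to lie in $\cl\Omega(\Pqr)$: if $(x_0,t_0)\notin\cl\Omega(\Pqr)$, then ``$\Pqr-\phi$ has a local maximum in $\cl\Omega(\Pqr)\cap\set{t\leq t_0}$'' is not an admissible test for $\Pqr$ and the subsolution property of $\Pqr$ constrains nothing. Consequently, at free boundary points of $\uu_\eqr$ with $\uu_\eqr^*=0$ and $\Pqr\equiv 0$ in a neighborhood --- exactly the points where the free boundary has detached from the obstacle, which is the generic situation the whole construction is meant to measure --- neither of your two cases applies, so the velocity bound in (e) is not proved there; the same omission, together with the contact points where $\uu_\eqr^*=\Pqr>0$ (where one is not allowed to lower below the obstacle at all), leaves (a) unproved on part of the set where it is claimed. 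The issue is specific to the subsolution side, because there the tests are restricted to $\cl\Omega(\uu_\eqr)$; your dual argument for $\ou_\eqr$ does not suffer from it.

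These holes are fixable, but by arguments different from the ones you wrote. At detached free boundary points the bump does work, precisely because $\Pqr$ vanishes in a neighborhood, so the lowered competitor stays above the obstacle; your hypothesis ``$\uu_\eqr^*>\Pqr$'' simply excludes these points (and, incidentally, upper semicontinuity does not give $\uu_\eqr^*>\Pqr$ in a neighborhood --- what you actually need is $\phi-\delta\geq\Pqr$ nearby, which follows from continuity of $\phi$ and $\Pqr$ and strictness at the point). At contact points with $\Pqr>0$, use that $\Pqr$ is linear in its positive set: a test function touching $\uu_\eqr^*$ from above there also touches $\Pqr$ from above in the spatial variables, forcing $D^2_x\phi\geq 0$ and hence $-\Delta\phi\leq 0$. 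Finally, for (e) the paper's route is not to transfer to $\Pqr$ at every point where the values coincide, but to combine (a)--(b) with the monotonicity of the solution classes: away from $\Gamma(\uu_\eqr)\cap\Gamma_{q,r}$ one has the $g^\e$-equation and $g^\e\leq M$ gives $\subs(g^\e)\subset\subs(M)$, while $\Pqr\in\subs(M)$ (valid under \eqref{rqrestriction} by Proposition~\ref{pr:planar-solution-range}) is used only where $\uu_\eqr$ coincides with the obstacle near points of $\cl\Omega(\Pqr)$.
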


\begin{proof}
(a) and (b) are standard; see \cite[Lemma~4]{K07}.

To prove (c), we recall the definition of $R_\xi^\pm$
in \eqref{eq:Rpmxi} and \eqref{eq:Rpm0}.
Lemma~\ref{le:matchingplanarsolutions}
and Corollary~\ref{co:values-Rxi}
yield that $\max(\Pqr, R_\xi^+) \in \supers(g^\e, Q_q)$
and $\min(\Pqr, R_\xi^-) \in \subs(g^\e, Q_q)$ and thus, by definition,
\begin{align}
\label{Rxi-bounds-u}
\uu_\eqr \leq \max(\Pqr, R_\xi^+), \qquad
\lu_\eqr \geq \min(\Pqr, R_\xi^-)
\qquad \text{for all $\xi \in \Xi \cup \set0$,}
\end{align}
which proves (f).
Therefore Corollary~\ref{co:values-Rxi}
implies that
\begin{align*}
(\uu_\eqr)^* = \uu_\eqr = (\ou_\eqr)_* = \ou_\eqr = \Pqr
\qquad \text{on $(\Rn\times\R)\setminus Q_q$},
\end{align*}
and (c) is proved.

The result of (d) follows from \eqref{Rxi-bounds-u},
using the support property of $R^\pm_\xi$ in \eqref{Rxi-cone}.

Since $q$ and $r$ satisfy \eqref{rqrestriction},
(e) follows from (a)--(b) and Proposition~\ref{pr:planar-solution-range}.
\qedhere\end{proof}

\begin{remark}
\label{rem:extension}
Fix $T > 0$ and a pair $q,r$ that satisfies \eqref{rqrestriction}.
We shall show that $\uu_\eqr$ coincides with the function defined on $Q^T = Q_q \cap \set{t < T}$,
\begin{align*}
v = \pth{\inf \set{u \in \supers(g^\e, Q^T):
    u \geq P_{q,r} \text{ on $Q^T$}}}_{*,\cl{Q^T}},
\end{align*}
which is the solution of the obstacle problem considered in \cite{K07}.
An analogous result holds for $\lu_\eqr$.

Indeed, $v \leq \uu_\eqr$ trivially, but also,
since $v \leq R_\xi^+$ on $\cl Q^T$,
we can define for any $s< T$ the function
\begin{align*}
w(x,t) =
\begin{cases}
v(x,t) & t\leq s,\\
 R_0^+(x,t) & t > s.
\end{cases}
\end{align*}
Clearly $w \in \supers(g^\e,Q_q)$
and hence, by definition,
$\uu_\eqr \leq w = v$ for $t \leq s$ for any $s < T$.
\end{remark}

\subsection{Monotonicity of the obstacle problem}
\label{sec:monotonicity-obstacle}

The solutions $\uu_\eqr$ and $\ou_\eqr$ of the obstacle problem introduced in \eqref{obstaclesolution} admit a natural monotonicity property with respect to the hyperbolic scaling and a change of scale $\e$.
Moreover, the periodicity of the medium also yields the monotonicity with respect to translations by multiplies of $\e$, both in space and time.
Since we have a control on how fast the free boundaries of $\uu_\eqr$ and $\ou_\eqr$ detach from the obstacle at the parabolic boundary of the domain $Q_q$, we can extend the allowed range of translations.
This is the main result of this section.
The second result is the monotonicity of $\uu_\eqr$ and $\ou_\eqr$ in time.

\begin{proposition}[Monotonicity]
\label{pr:monotonicity}
Let $q$ and $r$ satisfy \eqref{rqrestriction},
and let $a > 0$.
Then
\begin{align*}
    \uu_\eqr(x,t) \leq a^{-1}\uu_{a\e;q,r}(a(x - y), a(t - \tau))
\end{align*}
for all $(x,t) \in \Rn\times\R$
and for all $(y, \tau) \in \e\Z^{n+1}$ such that
\begin{align}
\label{y-in-taplus-cone}
y\cdot \nu &\geq \max (r\tau, M\abs{q} \tau), &
&\text{and}&
y &\in \cl {\cone_{\nu,\ta^+}(r^+_V \tau \nu + (a - 1) V^+_0)},
\end{align}
where $r_V^\pm$, $V_0^\pm$ and $\ta^\pm$ were defined
in Section~\ref{sec:domain-geometry}.

Similarly,
\begin{align*}
    \ou_\eqr(x, t) \geq a^{-1}\ou_{a\e;q,r}(a (x-y),a (t-\tau))
\end{align*}
for all $(x,t) \in \Rn\times\R$
and for all $(y, \tau) \in \e\Z^{n+1}$ such that
\begin{align}
\label{y-in-taminus-cone}
y\cdot \nu &\leq \min (r\tau, m\abs{q} \tau), &
&\text{and}&
y &\in \cl {\cone_{-\nu,\ta^-}(r^-_V \tau \nu + (a - 1) V^-_0)}.
\end{align}
\end{proposition}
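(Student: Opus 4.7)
The plan is to exhibit $v(x,t):=a^{-1}\uu_{a\e;q,r}(a(x-y),a(t-\tau))$ as an admissible competitor in the infimum defining $\uu_\eqr$; once we verify that $v\geq\Pqr$ on $\Rn\times\R$ and $v\in\supers(g^\e, Q_q)$, the minimality in \eqref{eq:obstaclesupersolution} will deliver $\uu_\eqr\leq v$.

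The obstacle bound is essentially immediate from the first hypothesis. Using $\uu_{a\e;q,r}\geq\Pqr$ together with the positive one-homogeneity of $\Pqr$, one obtains $v(x,t)\geq a^{-1}\Pqr(a(x-y),a(t-\tau))=\Pqr(x-y,t-\tau)$; Proposition~\ref{pr:obstacle-ordering} combined with $y\cdot\nu\geq r\tau$ then gives $\Pqr(x-y,t-\tau)\geq\Pqr(x,t)$ pointwise. The supersolution property of $v$ on the transformed cylinder $\tilde Q:=a^{-1}Q_q+(y,\tau)$ also follows routinely: the hyperbolic scaling $(X,T)\mapsto(aX,aT)$ preserves the Laplace equation and transforms the free-boundary velocity law via $g^{a\e}(aX,aT)=g^\e(X,T)$, and translation by $(y,\tau)\in\e\Z^{n+1}$ is compatible with the $\e$-problem thanks to the $\e\Z^{n+1}$-periodicity of $g^\e$.

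The main work is upgrading the supersolution property of $v$ from $\tilde Q$ to all of $Q_q$, since $\tilde Q$ need not contain $Q_q$ (for instance when $\tau>0$). On $Q_q\setminus\tilde Q$ the function $v$ reduces to $\Pqr(\cdot-y,\cdot-\tau)$, which is not an $\e$-problem supersolution when $r<M\abs{q}$, so a naive extension fails. The idea is to modify $v$ outside $\tilde Q$ using the planar supersolutions $R^+_\xi$ from Proposition~\ref{pr:cone-planes} together with the majorant $R^+_0$ from \eqref{eq:Rpm0}. The cone hypothesis $y\in\cl{\cone_{\nu,\ta^+}(r_V^+\tau\nu+(a-1)V_0^+)}$ is precisely calibrated for this: by Proposition~\ref{pr:cone-plus-generator} it is equivalent to the pointwise inequality $aR^+_\xi(a^{-1}x,a^{-1}t)\leq R^+_\xi(x-y,t-\tau)$ for all $\xi\in\Xi$, and a brief computation using one-homogeneity shows the second hypothesis $y\cdot\nu\geq M\abs{q}\tau$ yields the analogous bound for $R^+_0$. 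Coupled with the envelope bound $\uu_{a\e;q,r}\leq\inf_{\xi\in\Xi\cup\set0}R^+_\xi$ from Proposition~\ref{pr:obstacle-sols}(f), these inequalities produce supersolution majorants that agree with $v$ on $\partial\tilde Q\cap Q_q$, allowing a stitching procedure to construct an admissible supersolution on all of $Q_q$ lying above $\Pqr$ and below $v$, at which point the minimality of $\uu_\eqr$ finishes the argument.

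The statement for $\ou_\eqr$ is proved by the dual scheme, with all inequalities reversed, the roles of super- and subsolutions swapped, and the $R^+_\xi$, $R^+_0$ replaced by $R^-_\xi$, $R^-_0$; the hypothesis $y\cdot\nu\leq\min(r\tau, m\abs{q}\tau)$ plays the symmetric role of enforcing $\Pqr(\cdot-y,\cdot-\tau)\leq\Pqr$ and supplying the subsolution analogue via $R^-_0$. I expect the main obstacle in both cases to be the bookkeeping in the stitching step along the interface $\partial\tilde Q\cap Q_q$: the barrier $R^\pm_\xi$ must be chosen with $\xi$ depending on the boundary direction so that continuity with $v$ is preserved across the interface, which is precisely the content of the equivalence of (a), (b), (c) in Proposition~\ref{pr:cone-plus-generator}; moreover, carrying through the appropriate semi-continuity envelopes of the glued function and verifying the $\Pqr$ bound requires care.
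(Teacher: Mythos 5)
Your skeleton is the paper's: hyperbolic scaling plus $\e\Z^{n+1}$-periodicity of $g^\e$ to make $v$ a supersolution of the $\e$-problem on $\tilde Q=a^{-1}Q_q+(y,\tau)$, Proposition~\ref{pr:obstacle-ordering} with $y\cdot\nu\ge r\tau$ for the obstacle bound, and the barriers $R^+_\xi$, $R^+_0$ together with Proposition~\ref{pr:cone-plus-generator} and \eqref{Rxi-outside} to handle $Q_q\setminus\tilde Q$, finishing with Lemma~\ref{le:stitch} and extremality of $\uu_\eqr$. The step that does not close as written is your stitching. You propose to glue $v$ on $\tilde Q\cap Q_q$ to a barrier-built ``supersolution majorant'' on $Q_q\setminus\tilde Q$ lying between $\Pqr$ and $v$. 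No single member of the family can serve as that exterior piece: $R^+_0$ eventually exceeds $\Pqr(\cdot-y,\cdot-\tau)=v$ on the lateral part of $Q_q\setminus\tilde Q$ whenever $r<M\abs q$, and a fixed $R^+_\xi$ need not lie below $v$ there either; only the envelope $\inf_{\xi\in\Xi\cup\set0}R^+_\xi$ is guaranteed to drop below the translated obstacle outside the translated cone, by \eqref{Rxi-outside}. So your exterior piece must be this infimum over an uncountable family, and your scheme then needs it to be a viscosity supersolution of the $\e$-problem on $Q_q\setminus\tilde Q$. That is nowhere available: Proposition~\ref{pr:obstacle-sols}(f) is only a pointwise bound on $\uu_\eqr$, and Lemma~\ref{le:matchingplanarsolutions} covers each $R^+_\xi$ individually; stability of supersolutions under arbitrary infima for this free boundary problem (here the envelope's free boundary is the translating cone $\partial C^+_t$, with edges) is a genuine missing lemma, not bookkeeping.

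The paper avoids this by never modifying $v$: it uses exactly your barrier inequalities, but applied to $\uu_\eqr$ itself, to prove pointwise that $\uu_\eqr\le v$ on the complement of $\Sigma:=Q_q\cap\tilde Q$ (split into three regions: outside $Q_q$ via $\uu_\eqr=\Pqr$; the lateral part via $\uu_\eqr\le\inf_\xi R^+_\xi$, the suitably rescaled Proposition~\ref{pr:cone-plus-generator}(a) and \eqref{Rxi-outside}; the strip $\set{t<\tau}$ via $R^+_0$ and $y\cdot\nu\ge M\abs q\,\tau$), and then stitches $\min(v,\uu_\eqr)$ on $\Sigma$ with $\uu_\eqr$ on $Q_q\setminus\Sigma$ by Lemma~\ref{le:stitch}; the exterior piece is then already known to be a supersolution, it lies above $\Pqr$, and extremality of $\uu_\eqr$ concludes. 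With that reroute your argument is correct and is essentially the paper's. One smaller point: the inequality actually needed is $aR^+_\xi(a^{-1}x,a^{-1}t)\le R^+_\xi(x-ay,t-a\tau)$, i.e.\ Proposition~\ref{pr:cone-plus-generator} for the translation $(ay,a\tau)$, since $v$ scales about $(y,\tau)$; hypothesis \eqref{y-in-taplus-cone} still yields it because $V_0^+$ is a positive multiple of $\nu$, but this rescaling deserves an explicit line.
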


\begin{proof}
We shall show the ordering for $\ou_\eqr$,
the proof for $\uu_\eqr$ is analogous.

Fix $a > 0$ and $(y, \tau)$
that satisfies the hypothesis \eqref{y-in-taplus-cone}
and define the cylinder \[\Sigma = Q_q \cap (a^{-1}Q_q + (y, \tau)).\]
Let us, for simplicity,
denote \[v^a(x,t) := a^{-1}\ou_{a\e;q,r}(a(x-y), a(t-\tau)).\]
Due to the $\e\Z^{n+1}$-periodicity of $g^\e$,
we observe that $v^a \in \subs(g^\e, a^{-1}Q_q +(y,\tau))$.
We immediately have
\begin{align}
\label{obs-va-order}
\Pqr(x,t) \geq \Pqr(x-y,t-\tau) = a^{-1} \Pqr(a(x-y),a(t-\tau))
\geq v^a(x,t)
\end{align}
by \eqref{y-in-taminus-cone},
the scale invariance and Proposition~\ref{pr:obstacle-ordering}.

Our goal is to show that
\begin{align}
\label{uorder-complement}
\text{$\lu_\eqr \geq v^a$ on $\Sigma^c\quad$
and $\quad\cl\Omega(v^a)\cap \Sigma^c \subset \cl\Omega(\lu_\eqr)$}
\end{align}
since then we can apply Lemma~\ref{le:stitch}
with $Q_1 = \Sigma$, $Q_2 = Q_q$,
$u_1 = v^a$ and $u_2 = \lu_\eqr$
to conclude that $u \in \subs(g^\e, Q_q)$,
where $u$ is defined in Lemma~\ref{le:stitch}.
Because clearly $u \leq \Pqr$ by \eqref{obs-va-order},
we have by definition of $\lu_\eqr$ that
$\lu_\eqr \geq u \geq v^a$ on $\Sigma$
and therefore $\lu_\eqr \geq v^a$ on $\Rn\times\R$.

Let us prove \eqref{uorder-complement}.
We can write $\Sigma^c = A_1 \cup A_2 \cup A_3$
where
$A^1 := (Q_q)^c$,
$A_2 := Q_q \cap ((a^{-1} \Omega_q + y)^c \times [\tau,\infty))$
and
$A_3 := Q_q \cap \set{t < \tau}$.
We therefore prove $\lu_\eqr(x,t) \geq v^a(x,t)$
for all $(x,t) \in \Sigma^c$ by considering the following three cases:

\begin{itemize}
\item
$(x,t) \in A_1$:
This case is very simple since
Proposition~\ref{pr:obstacle-sols}(c)
and \eqref{obs-va-order},
yield
\begin{align*}
\lu_\eqr(x,t) = \Pqr(x,t) \geq v^a(x,t).
\end{align*}
\item
$(x,t) \in A_2$:
Observe that \eqref{y-in-taminus-cone} implies
Proposition~\ref{pr:cone-plus-generator}(c)
and therefore Proposition~\ref{pr:cone-plus-generator}(a)
holds.
We use
Proposition~\ref{pr:obstacle-sols}(f),
Proposition~\ref{pr:cone-plus-generator}(a)
and \eqref{Rxi-outside}, all properly rescaled, and estimate
\begin{align*}
\lu_\eqr(x,t) &\geq \sup_{\xi\in\Xi} R^-_\xi(x,t)
\geq \sup_{\xi\in\Xi} a^{-1}R^-_\xi(a(x-y), a(t -\tau))
\\&\geq a^{-1}\Pqr(a(x-y), a(t-\tau)) \geq v^a(x,t).
\end{align*}
\item
$(x,t) \in A_3$:
Since $0 \leq t \leq \tau$,
Proposition~\ref{pr:obstacle-sols}(f)
and \eqref{y-in-taminus-cone}
yield
\begin{align*}
\lu_\eqr(x,t) &\geq R^-_0(x,t) = \abs{q} (m\abs{q} t - x\cdot\nu)_+
\\&\geq \abs{q}(r(t- \tau) - (x - y)\cdot \nu)_+
= a^{-1}\Pqr(a(x-y), a(t-\tau))
\\&\geq v^a(x,t).
\end{align*}
\end{itemize}

Therefore we have shown the first part of \eqref{uorder-complement},
and the second part can be inferred from the above estimates as well.
Consequently, the argument using Lemma~\ref{le:stitch} described above
applies and we can conclude that
$\lu_\eqr \geq v^a$ on $\Rn\times\R$.
\qedhere\end{proof}

The second important result of this section is the almost obvious fact
that the solutions of the obstacle problems are increasing in time.
However, in contrast to \cite{K07},
Proposition~\ref{pr:monotonicity} only implies
the monotonicity in time for multiplies of $\e$
due to the time-dependence of $g$.
Here we present an argument using a nonlinear scaling of the solutions,
which also provides a lower bound
on the speed of the free boundary.

\begin{lemma}[Monotonicity in time]\label{le:monotonicity-in-time}
Let $r$ and $q$ satisfy \eqref{rqrestriction}
and let $\e > 0$. Then
\begin{align*}
\uu_\eqr(x - y, t_1) &\leq \uu_\eqr(x,t_2),\\
\ou_\eqr(x - y, t_1) &\leq \ou_\eqr(x,t_2)
\end{align*}
for any $t_1 <  t_2$ and $x \in \Rn$,
and $y \in \Rn$ such that $\abs{y} \leq \rho$
for some positive constant $\rho$ depending only on $m, L, \e, t_1, t_2$.
\end{lemma}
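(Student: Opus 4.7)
I focus on the claim for $\uu_\eqr$; the one for $\ou_\eqr$ is analogous, using the maximality characterization in \eqref{obstaclesolution} in place of the minimality.

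Set $\tau := t_2 - t_1 > 0$. The strategy is to exhibit an admissible competitor $w$ in the infimum \eqref{eq:obstaclesupersolution} defining $\uu_\eqr$ whose value at time $t_1$ coincides with $\uu_\eqr(x+y, t_2)$. The minimality characterization then gives $\uu_\eqr(x, t_1) \leq w(x, t_1) = \uu_\eqr(x+y, t_2)$, which is the claim after the substitution $x \mapsto x - y$. The naive candidate $(x,t) \mapsto \uu_\eqr(x+y, t+\tau)$ is only a supersolution of $g^\e(\cdot + y, \cdot + \tau)$, which differs from $g^\e(\cdot, \cdot)$ by up to $\frac{L}{\e}(\abs{y} + \tau)$; removing this discrepancy is the heart of the argument.

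I would replace the linear time shift by a nonlinear rescaling $\theta: [t_1, T_*) \to \R$, constructed via the same mechanism as in Appendix~\ref{sec:nonlinear-perturbation} that is used in the well-posedness proof, and set $w(x,t) := \uu_\eqr(x+y, \theta(t))$. Requiring $\theta(t_1) = t_2$ and $\theta' > 1$ and computing the free-boundary velocity of $w$ exactly as in Step~3 of the proof of Theorem~\ref{th:comparison}, the $L/\e$-Lipschitz estimate on $g^\e$ reduces the supersolution property of $w$ to the differential inequality
\begin{align*}
(\theta'(t) - 1)\, m \;\geq\; \theta'(t)\, \frac{L}{\e} \bigl(\rho + \theta(t) - t\bigr),
\end{align*}
which, setting $\phi := \theta - t$, admits a solution on some interval $[t_1, T_*)$ with $T_* > t_1$ as long as $\tau + \rho < m\e/L$. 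This yields $w \in \supers(g^\e, Q_q \cap \set{t < T_*})$. The obstacle condition $w \geq \Pqr$ reduces, via $\uu_\eqr \geq \Pqr$ and Proposition~\ref{pr:obstacle-ordering}, to $y \cdot \nu \leq r(\theta(t) - t)$, which holds for all $\abs{y} \leq \rho$ provided $\rho \leq r\tau$ (since $\theta(t) - t \geq \tau$ on $[t_1, T_*)$). These two conditions together fix the dependence of $\rho$ on $m, L, \e, \tau = t_2 - t_1$.

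For any $T \in (t_1, T_*)$, Remark~\ref{rem:extension} identifies $\uu_\eqr$ with the infimum of supersolutions on the truncated cylinder $Q^T = Q_q \cap \set{t < T}$; on this cylinder $w$ is an admissible competitor, so minimality gives $\uu_\eqr \leq w$ on $Q^T$, and evaluation at $t = t_1$ concludes the proof. The symmetric argument for $\ou_\eqr$ uses a reversed rescaling with $\theta(t_2) = t_1$ and $\theta' < 1$, so that $v(x,t) := \ou_\eqr(x - y, \theta(t))$ becomes an admissible subcompetitor below $\Pqr$; the maximality characterization then yields $v \leq \ou_\eqr$ and hence the claim. The main obstacle is the interplay between the two small parameters $\rho$ and $\tau$ inside the nonlinear rescaling: the differential inequality blows up when $\tau + \rho$ reaches $m\e/L$, so they must be kept small simultaneously. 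This is exactly what enforces the dependence of $\rho$ on $t_1, t_2$ through $\tau$, and the appeal to Remark~\ref{rem:extension} is what allows the construction to succeed on a bounded time interval rather than all of $Q_q$.
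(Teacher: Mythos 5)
Your core mechanism is the right one and essentially the paper's: a nonlinear time reparametrization of the type constructed in Appendix~\ref{sec:nonlinear-perturbation} to absorb the $\frac{L}{\e}$-Lipschitz time dependence of $g^\e$ (your inequality $(\theta'-1)\,m \ge \theta'\,\frac{L}{\e}(\rho+\theta(t)-t)$ is exactly the supersolution hypothesis of Proposition~\ref{pr:nonuniform-perturbation} with $a=1$ and constant $\rho$; working with a fixed translate instead of the paper's inf-convolution over $\cl B_\rho(0)$ is immaterial), followed by minimality of $\uu_\eqr$ on truncated cylinders via Remark~\ref{rem:extension}. The genuine gap is that you never verify that $w(x,t):=\uu_\eqr(x+y,\theta(t))$ is a supersolution on the \emph{whole} truncated cylinder $Q^T$, and in general it is not. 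Indeed, $\uu_\eqr$ is a supersolution only inside $Q_q$ and equals $\Pqr$ outside, so on the part of $Q_q$ whose shifted image leaves the cone (i.e.\ where $x+y\notin\Omega_q$, a nonempty region for generic $y$ in a ball since $\Omega_q$ is a cone), $w$ coincides with a translate of the planar obstacle moving with speed $r\theta'\approx r$; by Proposition~\ref{pr:planar-solution-range} such a planar front is a supersolution of $g^\e$ only when its speed is at least $M$ times its slope, which fails whenever $r<M\abs q$ — the generic case under \eqref{rqrestriction}. Checking $w\ge\Pqr$ and the velocity inequality where $w$ inherits them from $\uu_\eqr$ does not repair this: the infimum characterization can only be invoked against a function that is a supersolution on all of $Q^T$.

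This lateral-boundary mismatch is precisely what the paper's machinery is built for, and it is the step your proposal skips: the translation must be restricted by the cone condition \eqref{y-in-taplus-cone}, the ordering $\uu_\eqr(x,t)\le\uu_\eqr(x-y,\theta(t))$ must first be proved directly on the uncovered region $Q_q\setminus(Q_q+(y,\tau))$ by comparison with the barriers $R^+_\xi$ (Proposition~\ref{pr:obstacle-sols}(f), Corollary~\ref{co:values-Rxi}, Lemma~\ref{le:matchingplanarsolutions}), and only then are the two supersolutions glued with the stitching lemma, Lemma~\ref{le:stitch}, before minimality is applied; this is the content of Proposition~\ref{pr:monotonicity}, which the paper's proof of the present lemma reuses with $\tau=-\bar\lambda$ and $t$ replaced by $\theta(t)$, and it is also where the admissible radius $\rho$ really comes from (it must fit inside the cone of admissible translations, not merely satisfy $\rho\le r\tau$). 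Two smaller issues: your constraint $\tau+\rho<m\e/L$ only covers $t_2-t_1$ small, while the lemma allows arbitrary $t_1<t_2$; this is fixable by iterating over subintervals of length less than $m\e/L$, as the paper does, but it must be said. And admissibility has to hold on all of $Q^T$, including $t<t_1$, so $\theta$ should be obtained by solving your ODE forward from $t=0$ (with $\theta(0)-0>\rho/r$) rather than only on $[t_1,T_*)$, otherwise the obstacle condition can fail at early times.
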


\begin{proof}
Let us fix $r$, $q$ and $\e$.
We first prove the result for $\uu := \uu_\eqr$.
Since we have to compare solutions
that are not necessarily translated by a multiply of $\e$,
we cannot use the monotonicity of
Proposition~\ref{pr:monotonicity} directly.
Instead,
we will compare $\uu$
with its nonuniform perturbation
using Proposition~\ref{pr:nonuniform-perturbation}.

First, assume that $0 < t_1 < t_2 < t_1 + \ga$
where $\gamma := \frac mL$.
We shall define
\begin{align}
\label{def-of-v-mont}
v(x,t) = \inf_{y \in \cl B_\rho(0)} \uu_\eqr(x - y, \ta(t)),
\end{align}
where $\rho$ will be a positive constant
and $\ta(t) := \ta(t;\bar\la) = \bar\la + f(t;\bar\la)$
for some $\bar\la$ determined below.
Function $f(t;\la)$ is the function constructed in
Section~\ref{sec:nonlinear-scaling-super}
for given parameters $\gamma$ (defined above), $a = 1$,
and $\la > 0$. Because $D\rho \equiv 0$ we set $\al = a = 1$.

With these parameters, the expression for $\ta(t;\la)$
is simply
\begin{align*}
\ta(t;\la) =
t - \gamma W\pth{-\frac{\la}{\ga} e^{-\frac \la \ga} e^{\frac t \ga}},
\end{align*}
where $W$ is the Lambert W function.
Therefore $\ta(t; \la)$ is well-defined on $t \in [0, t_\la]$
and $\la \in [0,\gamma]$,
where $t_\la := \log \frac \ga \la + \frac \la\ga -1$
(we set $t_0 = +\infty$).
Observe that $t_\la$ is strictly decreasing in $\la$
and positive for $\la \in (0, \ga]$ with
$t_\la = 0$ at $\la = \ga$ and $t_\la \to +\infty$
as $\la \to 0+$.
Therefore for every $t > 0$ there exists a unique $\la =: \la_t$
such that $t_\la = t$.
From the properties of $W$, we observe that $\ta(t; 0) = t$
and $\ta(t; \la_t) = t + \ga$ for any $t > 0$.
By continuity and the assumption $t_2 \in (t_1, t_1 + \ga)$,
there exists $\bar \la \in (0, \la_{t_1})$
such that $\ta(t_1; \bar\la) = t_2$.
Since $\bar\la < \la_{t_1}$ we must have $t_{\bar\la} > t_1$.

Let us
thus take $\ta(t) = \ta(t;\bar \la)$
and consider $\uu_\eqr(x-y, \ta(t))$
for some $y \in \Rn$ and $(x,t) \in \set{t \leq t_{\bar\la}}$.
It was shown in the proof of Proposition~\ref{pr:monotonicity}
that
$\uu_\eqr(x,t) \leq \uu_\eqr(x - y, t - \tau)$
on $(Q_q \cap (Q_q + (y,\tau))^c$
for all $(y,\tau)$ satisfying \eqref{y-in-taplus-cone}.
Let us set $\tau = - \bar\la$. There exists $\rho>0$
such that $(y,\tau)$ satisfies \eqref{y-in-taplus-cone}
for all $y \in \cl B_\rho(0)$.
Since $t + \bar \la\leq \ta(t)$ for all $t \in [0, t_{\bar\la}]$,
the same argument yields
$\uu_\eqr(x, t) \leq \uu_\eqr(x-y, \ta(t))$
on $(Q_q \cap (Q_q + (y,\tau))^c \cap \set{t \leq t_{\bar\la}}$
for $(y,\tau) \in \cl B_\rho(0) \times \set{-\bar\la}$.
Hence the argument using Lemma~\ref{le:stitch}
as in the proof of Proposition~\ref{pr:monotonicity}
(with Remark~\ref{rem:extension}
for extension of $v$ for $t > t_{\bar\la}$)
yields that $\uu_\eqr(x,t) \leq \uu_\eqr(x-y,\ta(t))$
for $(x,t) \in \set{t < t_{\bar\la}}$.
In particular $u(x,t_1) \leq u(x - y, \ta(t_1)) = u(x - y, t_2)$
for all $y \in \cl B_\rho(0)$,
and that is what we wanted to prove.

The proof for general $0 < t_1 < t_2$ can be done iteratively
by splitting $[t_1, t_2]$ into subintervals of length smaller than
$\ga$.

We skip the proof for $\ou_\eqr$, which follows the same idea,
but is simpler.
In particular, it is unnecessary to restrict
$t_2 < t_1 + \ga$
since $f$ constructed in Section~\ref{sec:nonlinear-scaling-sub}
can handle arbitrary tranlations.
\qedhere\end{proof}

For completeness, and as a consequence of the monotonicity in time in the previous lemma,
we also show that the solutions are harmonic in their positive sets.

\begin{proposition}
\label{pr:harmonic}
Let $r$ and $q$ satisfy \eqref{rqrestriction}
and let $\e > 0$.
The functions $\uu_\eqr(\cdot, t)$ and
$\pth{\ou_\eqr}_*(\cdot, t)$ are harmonic
in $\Omega_t(\uu_\eqr)\cap\Omega_q$
and $\Omega_t((\ou_\eqr)_*) \cap \Omega_q$,
respectively, for every $t$.
\end{proposition}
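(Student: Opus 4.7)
My plan is to exploit that both obstacle solutions are simultaneously viscosity sub- and supersolutions on the interior of their positive sets, and then to translate this space-time viscosity statement into classical harmonicity of each spatial slice.

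For $\uu_\eqr$, fix a time $t$ and a point $x_0 \in \Omega_t(\uu_\eqr) \cap \Omega_q$. Since $\uu_\eqr$ is lower semi-continuous by construction (it is the $(\cdot)_*$ envelope of an infimum of supersolutions), the positivity at $(x_0,t)$ propagates to a space-time cylinder $W = B_\rho(x_0) \times (t-\delta,t+\delta) \subset Q_q$ on which $\uu_\eqr > 0$. Then $W$ is disjoint from the free boundary $\Gamma(\uu_\eqr)$, and therefore from the coincidence set $\Gamma(\uu_\eqr) \cap \Gamma_{q,r}$. Proposition~\ref{pr:obstacle-sols}(a) then gives that $\uu_\eqr$ is simultaneously a viscosity super- and a viscosity subsolution of \eqref{hs-f} on $W$; since $\uu_\eqr > 0$ throughout $W$, only parts (ii-1) of Definitions~\ref{def:visc-test-sub} and \ref{def:visc-test-super} ever fire, reducing both conditions to the pure Laplace requirement $-\Delta \uu_\eqr = 0$ in the viscosity sense from both sides.

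To convert this into harmonicity of the slice $\uu_\eqr(\cdot,t)$, I would test with auxiliary functions of the form $\phi(x,s) = \psi(x) \pm C(s - t)^2$ for arbitrary smooth spatial touching $\psi$ and $C$ large. This choice turns any spatial extremum of $\uu_\eqr(\cdot,t) - \psi$ at $x_0$ into a one-sided-in-time extremum of $\uu_\eqr - \phi$ in space-time, as required by the parabolic definitions, while leaving $\Delta\phi(x_0,t) = \Delta\psi(x_0)$ unchanged. Applying this with both signs yields simultaneously $-\Delta\psi(x_0) \leq 0$ and $\geq 0$, hence $\uu_\eqr(\cdot,t)$ is classically harmonic on $B_\rho(x_0)$, and varying $x_0$ delivers the claim on the entire slice.

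The case of $(\ou_\eqr)_*$ is parallel. If $(\ou_\eqr)_*(x_0,t) > 0$, lower semi-continuity provides a neighborhood $W' \subset Q_q$ on which $(\ou_\eqr)_* > 0$, and a fortiori $\ou_\eqr > 0$; Proposition~\ref{pr:obstacle-sols}(b) then makes $\ou_\eqr$ both a sub- and a supersolution on $W'$, and the same test-function reduction shows $\ou_\eqr(\cdot,t)$ is harmonic on a spatial ball around $x_0$. Since harmonic functions are continuous, $\ou_\eqr$ must coincide with $(\ou_\eqr)_*$ on that ball, so the harmonicity transfers. The only mildly technical step is the test-function reduction, but it is routine precisely because $-\Delta u = 0$ carries no time derivative, and the free-boundary clause of the viscosity definitions never enters since we remain strictly inside the positive phase throughout.
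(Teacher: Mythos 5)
There is a genuine gap, and it sits exactly where you declare the step ``routine'': the passage from the space--time viscosity inequalities to classical sub/superharmonicity of a fixed-time slice. The definitions in this paper are one-sided in time: a test function must touch $u^*$ from above (resp.\ $u_*$ from below) over a neighborhood intersected with $\set{s \leq t_0}$. Your correction $\phi(x,s)=\psi(x)\pm C(s-t)^2$ does not produce such a touching. For the superharmonicity direction this fails outright: $\uu_\eqr$ is strictly increasing in time in the region of interest (it lies above the obstacle $\Pqr$, which grows linearly in $t$), so for $s<t$ the values $u(x_0,s)$ drop below $u(x_0,t)$ by an amount of order $t-s$ (or worse, since no time modulus is known a priori), and no quadratic term $C(s-t)^2$ can absorb this; the local minimum of $u_*-\phi$ over $\set{s\le t}$ is then not attained at $(x_0,t)$, so the viscosity supersolution condition is never triggered by your test function. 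The fact that time slices of supersolutions are superharmonic in their positive sets is true but is a nontrivial lemma (the paper invokes \cite{KP12} for a detailed proof of exactly this in the well-posedness section), not a two-line test-function conversion. The subsolution/subharmonic direction of your reduction also silently needs monotonicity in time (Lemma~\ref{le:monotonicity-in-time}), which you never invoke, and it yields information about $(\uu_\eqr)^*$ rather than $\uu_\eqr$ itself. Finally, your last transfer step for $\ou_\eqr$ is unjustified: harmonicity (hence continuity) of the single slice $\ou_\eqr(\cdot,t)$ does not imply $(\ou_\eqr)_*(\cdot,t)=\ou_\eqr(\cdot,t)$, because the space--time lower envelope also sees earlier times and the function may still jump in $t$; one would instead have to argue that $(\ou_\eqr)_*(\cdot,t)$ is an increasing limit of harmonic slices and apply Harnack's convergence theorem.

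The paper avoids all of this by a different mechanism: it takes the slice superharmonicity (resp.\ subharmonicity) as known background and proves the \emph{reverse} inequality from the extremal characterization of the obstacle solutions. For $\uu_\eqr$, given any superharmonic $w\ge \uu_\eqr(\cdot,\tau)$ on $\partial E$, it stitches $v=\min(\uu_\eqr,w)$ on $E\times\set{t\le\tau}$, checks $v\in\supers(g^\e,\cdot)$, and uses that $\uu_\eqr$ is the \emph{smallest} supersolution above $\Pqr$ to conclude $\uu_\eqr(\cdot,\tau)\le w$; being the least superharmonic function with its boundary values, the slice is harmonic. For $(\ou_\eqr)_*$ it uses monotonicity in time and subharmonicity of earlier slices together with the maximum principle. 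Your idea of exploiting that the obstacle solutions are solutions away from the contact set (Proposition~\ref{pr:obstacle-sols}(a),(b)) is a legitimate alternative starting point, but to make it work you would essentially have to reprove the KP12-type slicing lemma, which is the hard part you have assumed away.
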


\begin{proof}
Since the claim is trivially true for $t \leq 0$,
we fix $\tau > 0$.
We first show that
$\uu_\eqr(\cdot, \tau)$ is harmonic
in the open set $E := \Omega_\tau(\uu_\eqr)\cap \Omega_q$.
For any $w \in LSC(\cl{E})$,
superharmonic in $E$, such that $w \geq \uu_\eqr(\cdot, \tau)$
on $\partial E$, we define a new function on $Q_q \cap \set{t \leq \tau}$
\begin{align*}
v(x, t) =
\begin{cases}
\min \pth{\uu_\eqr(x,t), w(x)} & x \in E,\\
\uu_\eqr(x, t) & \text{otherwise}.
\end{cases}
\end{align*}
By definition of $w$, we observe
that $v \in LSC(Q_q \cap \set{t \leq \tau})$
and $v \in \supers(g^\e, Q_q \cap \set{t \leq \tau})$.
Therefore $\uu_\eqr(x, \tau) \leq v(x, \tau) \leq w(x)$ in $E$
by definition of $\uu_\eqr$ and Remark~\ref{rem:extension}.
We recall that $\uu_\eqr(\cdot, \tau)$ is superharmonic in
$E$, and therefore it is the smallest superharmonic function,
i.e., it must be harmonic.

Now we show that $(\ou_\eqr)_*(\cdot, \tau)$ is harmonic
in the open set $E = \Omega_\tau((\ou_\eqr)_*)\cap\Omega_q$.
First we recall that $(\ou_\eqr)_*(\cdot, \tau)$
is superharmonic since $\ou_\eqr \in \supers(m, \Rn\times\R)$
by Proposition~\ref{pr:obstacle-sols}.
Let $w \in LSC(\cl E)$ be a superharmonic function in $E$,
such that $w \geq (\ou_\eqr)_*(\cdot, \tau)$ on $\partial E$.
By the monotonicity in time, Lemma~\ref{le:monotonicity-in-time},
and in particular the positive speed of expansion,
we observe that $\ou_\eqr(x, t) = 0$ for
$x \in \Gamma_\tau((\ou_\eqr)_*)$
and $t < \tau$.
Since $\ou_\eqr(\cdot, t)$ is subharmonic in $\Omega_q$ for all $t$,
we must have $\ou_\eqr(x,t) \leq w(x)$ for $t < \tau$.
Therefore, $(\ou_\eqr)_*(x, \tau) \leq w(x)$.
Again, $(\ou_\eqr)_*(\cdot, \tau)$ is the smallest superharmonic function
and thus harmonic.
\qedhere\end{proof}

\subsection{Flatness}
\label{sec:flatness}

In this section we introduce \emph{flatness},
the quantity that will be a measure
of how good our guess of the homogenized
velocity $r(q)$ is.
More specifically, it measures how far, for given $\e$, $q$ and $r$, the free boundaries of the obstacle problem solutions $\uu_\eqr$ and $\ou_\eqr$,
introduced in \eqref{obstaclesolution}, detach from the free boundary of the obstacle $\Pqr$ up to the given time.

For $\eta \in \R$, denote
\begin{align*}
P_{q,r}^\eta(x,t) := P_{q,r} (x - \eta \nu, t).
\end{align*}
Note that $\Pqr^\eta$ represents a translation of $\Pqr$
in the direction $\nu$ by distance $\eta$.

\begin{definition}
\label{def:flatness}
For $q \neq 0$, $r > 0$ and $\e > 0$,
we define the \emph{upper flatness} $\uPhi_\eqr(\tau)$,
i.e., the flatness of $\uu_\eqr$ up to time $\tau \in \R$
as
\begin{align*}
\uPhi_\eqr(\tau) =
    \inf \set{\eta:
        \uu_\eqr \leq P^\eta_{q,r} \text{ in } \set{t \leq \tau}}.
\end{align*}
Similarly, the \emph{lower flatness} $\lPhi_\eqr(\tau)$
is defined as the flatness of $\ou_\eqr$ up to time $\tau \in \R$,
\begin{align*}
\lPhi_\eqr(\tau) =
    \inf \set{\eta:
        \ou_\eqr \geq P^{-\eta}_{q,r} \text{ in } \set{t \leq \tau}}.
\end{align*}

We say that $\uu_\eqr$ (resp. $\ou_\eqr$) is $\eta$-flat up to time $\tau$
if $\uPhi_\eqr(\tau) \leq \eta$ (resp. $\lPhi_\eqr(\tau) \leq \eta$).
\end{definition}

The flatness is Lipschitz and monotone:

\begin{proposition}
\label{pr:phi-lipschitz}
Let $q \in \Rn \setminus \set0$, $r > 0$ and $\e > 0$.
Then for any $t \in \R$, $h > 0$,
\begin{align*}
\uPhi_\eqr(t) &\leq \uPhi_\eqr(t + h)
    \leq \uPhi_\eqr(t) + h(M\abs{q} - r)_+,\\
\lPhi_\eqr(t) &\leq \lPhi_\eqr(t + h)
    \leq \lPhi_\eqr(t) + h(r - m\abs{q})_+.
\end{align*}
In particular, $\lPhi_\eqr$ and $\uPhi_\eqr$ are
nondecreasing and Lipschitz continuous.
\end{proposition}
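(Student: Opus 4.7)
The monotonicity $\uPhi_\eqr(t) \leq \uPhi_\eqr(t+h)$ and $\lPhi_\eqr(t) \leq \lPhi_\eqr(t+h)$ is immediate from the definitions: any $\eta$ witnessing the bound at time $t+h$ also witnesses it at time $t$, so the admissible set shrinks as $\tau$ increases and the infimum is non-decreasing. Lipschitz continuity then reduces to the two one-sided estimates above.

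For the upper bound on $\uPhi_\eqr$, I would fix $\eta > \uPhi_\eqr(t)$ so that $\uu_\eqr \leq P^\eta_{q,r}$ on $\set{s \leq t}$, set $\tilde r := \max(r, M\abs q)$, and introduce the auxiliary planar solution
\begin{align*}
\tilde P(x,s) := P^{\eta + (r - \tilde r)t}_{q,\tilde r}(x,s),
\end{align*}
engineered so that $\tilde P(\cdot, t) \equiv P^\eta_{q,r}(\cdot, t)$ and so that $\tilde P$ propagates in direction $\nu$ with speed $\tilde r \geq M\abs q$; in particular $\tilde P$ is a classical supersolution of the problem with constant coefficient $M$. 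Since $\uu_\eqr \in \subs(M, \Rn\times\R)$ by Proposition~\ref{pr:obstacle-sols}(e), the comparison principle (Theorem~\ref{th:comparison}) should give $\uu_\eqr \leq \tilde P$ on $\set{t \leq s \leq t+h}$: the ordering at $s = t$ is built in, and on the lateral boundary $\partial Q_q \times [t,t+h]$ one has $\uu_\eqr = P_{q,r} \leq \tilde P$ by Proposition~\ref{pr:obstacle-sols}(c) combined with the direct observation that the two linear profiles differ by $\abs{q}(\eta + (\tilde r - r)(s-t)) \geq 0$. An elementary computation then yields $\tilde P(x,s) \leq P^{\eta + h(M\abs q - r)_+}_{q,r}(x,s)$ for every $s \in [t, t+h]$ (the profiles agree at $s = t$ and the right-hand side grows fast enough in the shift parameter), so that sending $\eta \searrow \uPhi_\eqr(t)$ delivers the claimed inequality. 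The estimate for $\lPhi_\eqr$ is obtained by the mirror construction, using the planar subbarrier $\tilde P := P^{-\eta + (r - \check r)t}_{q, \check r}$ with $\check r := \min(r, m\abs q)$, $\ou_\eqr \in \supers(m, \Rn\times\R)$, and $\ou_\eqr = P_{q,r} \geq \tilde P$ on the lateral boundary.

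The main technical delicacy will be executing the comparison step on the unbounded cylinder $Q_q$: one must simultaneously secure strict separation on the lateral boundary, which I would arrange by the mild perturbation of replacing $\tilde r$ with $\tilde r + \delta$ and passing $\delta \to 0$, and dominate the far field, which I would handle by truncating to $Q_q \cap (B_R \times [t, t+h])$ and controlling the behavior at $\partial B_R$ via the global bound $\uu_\eqr \leq R_0^+$ from Proposition~\ref{pr:obstacle-sols}(f). Once the comparison goes through, the Lipschitz constants themselves are dictated purely by the planar geometry together with the non-degeneracy bound \eqref{g-bound}.
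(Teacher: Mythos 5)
Your proposal is correct in substance, but it takes a genuinely different route from the paper, and one step needs to be read the right way to close. The paper's proof never touches the comparison theorem: since a suitable translate of $R^+_0=P_{q,M\abs q}$ (shifted in space--time so that it coincides with $P^\eta_{q,r}$ at time $t$, hence lies above $\uu_\eqr(\cdot,t)$ and above the obstacle for all $s\ge t$) is itself in $\supers(g^\e,Q_q)$, stitching it with $\uu_\eqr$ across $s=t$ (Lemma~\ref{le:stitch}, Remark~\ref{rem:extension}) produces an admissible competitor in the infimum defining $\uu_\eqr$, and minimality plus the identity $P_{q,M\abs q}(\cdot,h)=P^{(M\abs q-r)h}_{q,r}(\cdot,h)$ gives the bound at once, with no truncation or strict-separation issues; the $\lPhi$ bound is the mirror image with $R^-_0$ and maximality of $\ou_\eqr$. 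You instead run Theorem~\ref{th:comparison} between $\uu_\eqr\in\subs(M,\Rn\times\R)$ and your planar barrier $\tilde P$, which is legitimate (constant $f\equiv M$ satisfies (A1)--(A3), and speed exactly $M\abs q$ already puts $\tilde P$ in $\supers(M)$, so the $\delta$-perturbation of $\tilde r$ is harmless but unnecessary; strictness on the initial slice and on $\partial\Omega_q$ comes from taking $\eta>\uPhi_\eqr(t)$, since the linear profiles then differ by at least $\abs q\,\eta>0$ wherever $\cl\Omega(\uu_\eqr)$ meets the boundary), but it buys you the extra work of truncating the unbounded cone. There, your stated mechanism is not a pointwise domination: when $r<M\abs q$, $t>0$ and $\eta$ is small one has $\tilde P\le R^+_0$ rather than the reverse, so on $\partial B_R$ you cannot deduce $\uu_\eqr\le\tilde P$ from $\uu_\eqr\le R^+_0$. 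What that bound (or Proposition~\ref{pr:obstacle-sols}(d)) does give is that $\cl\Omega(\uu_\eqr)\cap\pth{\cl\Omega_q\times[t,t+h]}$ is bounded, because $\Omega_q$ opens in the direction $\nu$ while the positive set of $R^+_0$ at time $s$ is the half-space $\set{x\cdot\nu<\max(M\abs q,r)\,s}$; hence for $R$ large the artificial boundary $\partial B_R\times[t,t+h]$ never meets $\cl\Omega(\uu_\eqr)$, the strict-separation requirement there is vacuous, and the truncated comparison closes. With that reading your argument is complete, and the mirror construction with $\check r=\min(r,m\abs q)$ and $\ou_\eqr\in\supers(m,\Rn\times\R)$ handles $\lPhi_\eqr$ in the same way; the trade-off is that the paper's definition-based proof is shorter and avoids all of this boundary bookkeeping, while yours only uses the abstract comparison principle and the global sub/supersolution properties of the obstacle solutions.
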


\begin{proof}
$\lPhi_\eqr$ and $\uPhi_\eqr$ are clearly nondecreasing
from their definition.

To show the upper bound for $\uPhi_\eqr(\tau + h)$ for any $\tau \in \R$,
$h > 0$,
we may assume that $r < M \abs q$, otherwise $\uu_\eqr = \Pqr$
and so $\uPhi_\eqr(\tau + h) = \uPhi_\eqr(\tau) = 0$.
We only need to compare $\uu_\eqr$ with an appropriate translation
of $R^+_0=P_{q,M\abs{q}}$, using the fact that
$\uu_\eqr(\cdot, \tau) \leq \Pqr^\eta(\cdot, \tau)$
and the identity
$P_{q, M\abs q}(\cdot, h) = \Pqr^{(M\abs q - r)h}(\cdot, h)$.
Similarly, the upper bound for $\lPhi_\eqr(\tau+h)$ follows
from comparison with a translation of $R^-_0 = P_{q,m\abs{q}}$.
\qedhere\end{proof}

Finally, we can always find a point on the free boundary of $\uu_\eqr$ or $\ou_\eqr$
whose spatial distance from the free boundary of the obstacle is exactly equal to the flatness at the given time.

\begin{lemma}
\label{le:furthest-point}
Let $q \in \Rn \setminus \set0$, $r > 0$ and $\e > 0$.
Let $\tau \in \R$.
Then for any $\eta \in (0, \uPhi_\eqr(\tau)]$ there exists
$(\zeta, \sigma) \in \Gamma(\uu_\eqr)$
such that
$\zeta \cdot \nu = r \sigma + \eta$
and $\uPhi_\eqr(t) \leq \eta$ for all $t \leq \si$.
Similarly,
for any $\eta \in (0, \lPhi_\eqr(\tau)]$ there exists
$(\zeta, \sigma) \in \Gamma(\ou_\eqr)$
such that
$\zeta \cdot \nu = r \sigma - \eta$
and $\lPhi_\eqr(t) \leq \eta$ for all $t \leq \si$.
\end{lemma}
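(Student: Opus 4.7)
The plan is to prove the conclusion for $\uu_\eqr$; the lower-flatness case is entirely analogous, as noted at the end.

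First I locate a candidate time $\sigma$ by one-dimensional reasoning on $t \mapsto \uPhi_\eqr(t)$. Since $\uu_\eqr = \Pqr$ on $(\Rn\times\R)\setminus Q_q$ by Proposition~\ref{pr:obstacle-sols}(c), and $Q_q \subset \set{t > 0}$, one has $\uPhi_\eqr(t) = 0$ for all $t \leq 0$. Combining with the continuity and monotonicity given by Proposition~\ref{pr:phi-lipschitz} and the hypothesis $\eta \in (0, \uPhi_\eqr(\tau)]$, the intermediate value theorem produces
\[
\sigma := \inf \set{t : \uPhi_\eqr(t) \geq \eta} \in (0, \tau], \qquad \uPhi_\eqr(\sigma) = \eta,
\]
and monotonicity yields $\uPhi_\eqr(t) \leq \eta$ for all $t \leq \sigma$, which already supplies the second assertion of the lemma.

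The second step is to find $\zeta$ on the free boundary of $\uu_\eqr$ at some time $\bar\sigma \leq \sigma$ with $\zeta \cdot \nu = r\bar\sigma + \eta$. Set $\phi(x,t) := x \cdot \nu - rt$. Unwinding the definition of flatness, $\uPhi_\eqr(\sigma) = \eta$ is equivalent to $\sup\set{\phi(x,t) : (x,t) \in \Omega(\uu_\eqr),\, t \leq \sigma} = \eta$, so choosing $\eta_n \nearrow \eta$ gives a sequence $(x_n,t_n) \in \Omega(\uu_\eqr) \cap \set{t \leq \sigma}$ with $\phi(x_n,t_n) > \eta_n$. The main obstacle, in my view, is showing this sequence is precompact; this is handled by Proposition~\ref{pr:obstacle-sols}(d), since $\phi(x_n,t_n) > 0$ forces $x_n \notin \Omega_{t_n}(\Pqr)$, hence $x_n \in C^+_{t_n} = \cone_{-\nu,\ta^+}(V^+_{t_n})$. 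Because $\ta^+ < \pi/2$, the cone inequality rearranges to $\abs{x_n - V^+_{t_n}} \cos \ta^+ < V^+_{t_n} \cdot \nu - x_n \cdot \nu$; the interval $t_n \in [0,\sigma]$ is bounded, Remark~\ref{re:speedVplust} bounds $V^+_{t_n}$, and $x_n \cdot \nu$ lies in $[r t_n + \eta_n,\, V^+_{t_n} \cdot \nu)$, so the sequence $(x_n, t_n)$ is precompact.

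Passing to a convergent subsequence $(x_n, t_n) \to (\zeta, \bar\sigma) \in \cl\Omega(\uu_\eqr)$ with $\bar\sigma \leq \sigma$, continuity of $\phi$ gives $\phi(\zeta, \bar\sigma) = \eta$. The limit point cannot lie in the open set $\Omega(\uu_\eqr)$: if it did, $(\zeta + \delta\nu, \bar\sigma)$ would be in $\Omega(\uu_\eqr) \cap \set{t \leq \sigma}$ for all small $\delta > 0$, yet $\phi(\zeta + \delta \nu, \bar\sigma) = \eta + \delta > \eta$ would contradict the sharp supremum. Hence $(\zeta, \bar\sigma) \in \Gamma(\uu_\eqr)$, and since $\bar\sigma \leq \sigma$, monotonicity of $\uPhi_\eqr$ preserves $\uPhi_\eqr(t) \leq \eta$ for $t \leq \bar\sigma$, so $\bar\sigma$ plays the role of $\sigma$ in the statement. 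The argument for $\ou_\eqr$ is parallel upon replacing $\phi$ by $\psi(x,t) := rt - x \cdot \nu$, $\Omega(\uu_\eqr)$ by $\Omega^c(\ou_\eqr)$, and $C^+_t$ by $\cl{C^-_t}$, using the second half of Proposition~\ref{pr:obstacle-sols}(d); the same cone-based boundedness argument goes through.
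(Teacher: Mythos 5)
Your construction of $\sigma$, the compactness argument via Proposition~\ref{pr:obstacle-sols}(d) (the cone $C^+_t$ traps the maximizing sequence), and the final openness argument placing the limit point on $\Gamma(\uu_\eqr)$ are all correct, and the overall route is close in spirit to the paper's. However, there is a genuine gap at the pivotal sentence ``Unwinding the definition of flatness, $\uPhi_\eqr(\sigma)=\eta$ is equivalent to $\sup\set{\phi(x,t):(x,t)\in\Omega(\uu_\eqr),\ t\leq\sigma}=\eta$.'' Only one direction of this is definitional: from $\uu_\eqr\leq \Pqr^{\eta'}$ in $\set{t\leq\sigma}$ one gets $\Omega(\uu_\eqr)\cap\set{t\leq\sigma}\subset\set{\phi<\eta'}$, hence the supremum is at most $\eta$. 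The direction you actually need to produce the sequence $(x_n,t_n)$ --- that the supremum cannot equal some $\eta-\de'<\eta$ --- is not an unwinding of the definition, because flatness is an inequality between \emph{functions}, not an inclusion of supports: for a general function lying above the obstacle, the inclusion $\Omega(\uu_\eqr)\cap\set{t\leq\sigma}\subset\set{\phi\leq\eta-\de'}$ does not by itself imply $\uu_\eqr\leq\Pqr^{\eta-\de'}$ there.

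To close this step you must use the structure of $\uu_\eqr$, exactly as the paper does: $\uu_\eqr(\cdot,t)$ is harmonic in $\Omega_t(\uu_\eqr)\cap\Omega_q$ by Proposition~\ref{pr:harmonic}, it vanishes on its free boundary, and it coincides with $\Pqr\leq\Pqr^{\eta-\de'}$ outside $Q_q$ by Proposition~\ref{pr:obstacle-sols}(c), while $\Pqr^{\eta-\de'}(\cdot,t)$ is linear, hence harmonic, on the set $\set{\phi<\eta-\de'}$ containing the positivity set; the comparison principle for harmonic functions then gives $\uu_\eqr\leq\Pqr^{\eta-\de'}$ in $\set{t\leq\sigma}$, contradicting $\uPhi_\eqr(\sigma)=\eta$. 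This harmonic-comparison argument is precisely the heart of the paper's proof (there phrased as: if no contact point exists, the positive distance allows one to slide the obstacle back by $\de$ and conclude $\uu_\eqr\leq\Pqr^{\eta-\de}$, lowering the flatness). With this step inserted your argument is complete; note that the analogous comparison (with $(\ou_\eqr)_*$ and the translated obstacle $\Pqr^{-(\eta-\de')}$) must likewise be supplied in the lower-flatness case, where it again does not follow from the definition alone.
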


\begin{proof}
We will show the proof for $\uu_\eqr$,
the proof for $\lu_\eqr$ is similar.

Let us define $T = \sup\set{t \leq \tau: \uPhi_\eqr(t) \leq \eta}$.
Clearly $T \in (0, \tau]$. Moreover, by continuity,
$\uPhi_\eqr(T) = \eta$.
We claim that we can find
$(\zeta, \sigma) \in \Omega^c(\Pqr^\eta) \cap \cl\Omega(\uu_\eqr)
\cap \set{t \leq T}$.
If such point does not exist,
we must have
$\dist(\Omega^c(\Pqr^\eta) \cap \set{0 \leq t \leq T},
\cl\Omega(\uu_\eqr)) > 0$
and therefore there exists $\de \in (0, \eta)$
such that
$\Omega^c(\Pqr^{\eta-\de}) \cap \cl\Omega(\uu_\eqr)
\cap \set{t \leq T} = \emptyset$.
Since $\Pqr^{\eta-\de}$ is harmonic in its positive set,
it follows that $\uu_\eqr \leq \Pqr^{\eta-\de}$ in $\set{t \leq T}$.
But that is a contradiction with
$\uPhi_\eqr(T) = \eta > \eta - \de$.
Therefore $(\zeta, \si)$ exists and
we observe that
$(\zeta, \si) \in \Gamma(\uu_\eqr) \cap \Gamma(\Pqr^\eta)
\cap \set{0 < t \leq T}$.
\qedhere\end{proof}

\subsection{Local comparison principle}
\label{sec:local-comparison-principle}

An important tool in the analysis of the behavior in the homogenization
limit is the \emph{local comparison principle}.
In contrast to the standard comparison principle, which requires
ordering on the parabolic boundary to guarantee ordering
in the whole cylinder,
the local comparison relies on the extra information about flatness
of solutions in the sense of Definition~\ref{def:flatness},
and therefore allows for solutions to cross on the lateral boundary
but still guarantees the ordering of free boundaries
for a short time
in a small region far from the boundary.
This is a generalization of a result that appeared in
\cite{K07,K08} (with $\be =1$).

\begin{theorem}[Local comparison principle]
\label{th:localComparison}
Let $\be \in (\frac45, 1)$, $r_1 > r_2 > 0$ and $q_1, q_2 \in \Rn$
such that $a^3 q_1 = q_2$ for some $a > 1$.\\
Then there exists
$\e_0 = \e_0(r_1,r_2, \abs{q_1}, \abs{q_2}, m, M, n, \be)$ such that
\[
\max \pth{\lPhi_{\e; q_1, r_1}(1), \uPhi_{\e; q_2, r_2}(1)} > \e^\be
\qquad \text{for } \e < \e_0.
\]
In other words,
$\ou_{\e; q_1, r_1}$ and $\uu_{\e; q_2, r_2}$ cannot be both $\e^\be$-flat up to time $t = 1$ for any $\e < \e_0$.
\end{theorem}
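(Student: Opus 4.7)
The plan is a proof by contradiction: assume that $\ou := \ou_{\e;q_1,r_1}$ and $\uu := \uu_{\e;q_2,r_2}$ are both $\e^\be$-flat up to $t=1$, and derive a contradiction with Theorem~\ref{th:comparison} for all sufficiently small $\e$. Since $q_2 = a^3 q_1$, both planar obstacles propagate in the common direction $\nu := -q_1/\abs{q_1}$. Writing $\pi(x) := x \cdot \nu$, the flatness of $\ou$ gives $\ou \geq P^{-\e^\be}_{q_1,r_1}$, hence $\Omega(\ou) \supset \{\pi(x) < r_1 t - \e^\be\}$; likewise $\Omega(\uu) \subset \{\pi(x) < r_2 t + \e^\be\}$. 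Because $r_1 > r_2$, whenever $\e^\be < (r_1 - r_2)/2$ there is an open region of $\{t=1\}$ lying in $\Omega(\ou) \setminus \Omega(\uu)$, which is the geometric source of the eventual contradiction. The extra slope $\abs{q_2} = a^3 \abs{q_1}$ will be used to convert this geometric gap into a strict pointwise separation when comparing with a suitable supersolution built from $\uu$.

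The first technical step is to introduce a rescaled translate of $\uu$ via Proposition~\ref{pr:monotonicity}. One chooses $(y_0,\tau_0) \in \e\mathbb{Z}^{n+1}$ satisfying the cone condition \eqref{y-in-taplus-cone} and sets
\[
v(x,t) := a^{-1}\, \uu_{a\e;\,q_2,r_2}\bigl(a(x - y_0),\, a(t - \tau_0)\bigr).
\]
Proposition~\ref{pr:monotonicity} guarantees $v \leq \uu$ and that $v$ is a supersolution of \eqref{hs-f} with coefficient $g^\e$ on its rescaled domain, and $v$ inherits the $\e^\be$-flatness of $\uu$ in the relevant time window. The hyperbolic rescaling by $a > 1$ enlarges the effective lattice from $\e\mathbb{Z}^{n+1}$ to $a\e\mathbb{Z}^{n+1}$, which is crucial when $\be < 1$ since a single un-rescaled lattice shift cannot absorb wiggles of order $\e^\be \gg \e$. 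The cubic power in $q_2 = a^3 q_1$ encodes the matching of the three scales that must be simultaneously reconciled by this scaling plus shift: the lateral displacement $y_0 \cdot \nu$, the time offset $\tau_0$, and the slope jump between $\abs{q_1}$ and $\abs{q_2}$. Adjusting $(y_0,\tau_0)$ places the free boundary of $v$ at time $0$ strictly ahead of that of $\ou$.

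Next, fix a space-time cylinder $D = U \times (s_0, 1]$ with $U \Subset \Omega_{q_1} \cap \Omega_{q_2}$ and establish the strict separation $\ou \prec v$ on $\partial_P D$ with respect to $D$. On the bottom $\{t = s_0\}$, this follows from the placement of $v$'s free boundary ahead of $\ou$'s together with the slope margin $(a^3-1)\abs{q_1}$, which converts the lateral gap into a pointwise inequality on the pressures despite the $\e^\be$ flatness error. On the lateral part $\partial U \times [s_0,1]$, where both functions are positive, the coarse flatness $\e^\be$ is not fine enough and one must invoke the cone flatness of Proposition~\ref{pr:cone-flatness}, which controls the free boundary at the sharper scale $\e\abs{\ln\e}^{1/2}$, to obtain pointwise ordering. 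Once strict separation is established on $\partial_P D$, Theorem~\ref{th:comparison} forces $\ou \prec v$ on $\cl{D}$, contradicting the geometric observation that some interior point of $D$ at $t=1$ satisfies $\ou > 0 = v$.

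The main obstacle is executing the lateral strict separation, and it is precisely there that the hypothesis $\be > 4/5$ enters. One must balance three scales against each other: the $\e^\be$ flatness wiggle, the $\e\abs{\ln\e}^{1/2}$ cone flatness, and the $\e$ lattice period enlarged to $a\e$ by the scaling. Requiring that an admissible $(y_0,\tau_0) \in \e\mathbb{Z}^{n+1}$ satisfying \eqref{y-in-taplus-cone} with $\abs{(y_0,\tau_0)} = O(\e^\be)$ can be found, and that the resulting boundary ordering on $\partial_P D$ survives an entire unit of time, dictates both the lower bound $\be > 4/5$ and the cubic power of $a$ in the hypothesis $q_2 = a^3 q_1$.
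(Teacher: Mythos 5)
Your strategy---compare $\ou_{\e;q_1,r_1}$ with a supersolution $v$ built from $\uu_{\cdot;q_2,r_2}$ on a fixed cylinder $D=U\times(s_0,1]$ and invoke Theorem~\ref{th:comparison} up to time $1$---fails at exactly the step you flag as "the main obstacle," and no appeal to cone flatness can repair it. The mechanism that produces your interior contradiction point also destroys the lateral separation: by flatness, $\Omega(\ou_{\e;q_1,r_1})\supset\set{x\cdot\nu<r_1t-\e^\be}$ while $\Omega(\uu_{\e;q_2,r_2})\subset\set{x\cdot\nu<r_2t+\e^\be}$, so the free boundary of $\ou$ overtakes that of $\uu$ \emph{uniformly in the lateral directions} already by time of order $\e^\be/(r_1-r_2)\ll1$, and at $t=1$ the two boundaries are an $O(1)$ distance apart. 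Any $v$ for which you can assert $v=0$ at your interior contradiction point (your route is $v\leq\uu$) is then also zero at lateral boundary points of $\partial U\times[s_0,1]$ lying in the strip $r_2t+\e^\be<x\cdot\nu<r_1t-\e^\be$, where $\ou>0$; hence $\ou\prec v$ on $\partial_P D$ is simply false, and Theorem~\ref{th:comparison} cannot be applied. Cone flatness (Proposition~\ref{pr:cone-flatness}) controls the geometry of each individual free boundary at scale $\e\abs{\ln\e}^{1/2}$; it says nothing about the ordering of two solutions whose boundaries genuinely cross, and the slope margin $(a^3-1)\abs{q_1}$ only beats the accumulated speed deficit $(r_1-r_2)t$ at depth of order one, never in the near-boundary strip. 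Two further errors: Proposition~\ref{pr:monotonicity} gives $\uu_{\e;q_2,r_2}\leq a^{-1}\uu_{a\e;q_2,r_2}(a(\cdot-y),a(\cdot-\tau))$, i.e.\ the \emph{opposite} of the inequality $v\leq\uu$ you assert; and the $\e^\be$-flatness of the $a\e$-scale solution is not part of the contradiction hypothesis (only the $\e$-scale solutions along a subsequence are assumed flat), so $v$ does not "inherit" flatness.

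The paper's proof is organized precisely to avoid keeping the two solutions ordered for a unit time. It perturbs the \emph{subsolution}: $u^\la=\sup_{\cl B_{\la\ga\e}}a\,\ou_{\e;q_1,r_1}(\cdot-\xi_0,\cdot)$, spending part of the factor $a$ on sup-convolution and radial nonuniform perturbations (Proposition~\ref{pr:nonuniform-perturbation}, Lemma~\ref{le:radial-nonlin-perturbation}) and reserving the slope gap between $a\abs{q_1}$ and $\abs{q_2}=a^3\abs{q_1}$ to force $v-u^\la\geq\e^\be$ at depth $\si_1\sim\e^\be$. It works only on $[0,T]$ with $T=3\e^\be/(r_1-r_2)$, locates the \emph{first crossing time} $\hat t\leq T$ inside a cylinder whose radius shrinks at the fast rate $\e^{-\al}$, $\al=4/5$, upgrades non-crossing to strict separation $u^2\prec v$ on a slightly smaller cylinder for $t\leq\hat t$ via the nondegeneracy bound $v\gtrsim\e^{2-\be}$ (Lemma~\ref{le:nondegeneracyObstacleSupersolution}) and the thin-cylinder subharmonic estimate (Lemma~\ref{le:subharmonicOnThinDomain}) applied to $(u^2-v)/(c_N\e^{2-\be})$, and only then applies Theorem~\ref{th:comparison} on a small annulus around the crossing point with a radial bump exceeding $5\ga\e$ at $\hat x$. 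The restriction $\be>\frac45$ comes from the exponent bookkeeping $4-4\be<\al<\be$ that makes the shrinking rate, the time step $\tau\sim\e^{2-\be}$, and the ratio bound $\sim\e^{2\be-2}$ compatible---not from the existence of a lattice shift of size $O(\e^\be)$ or from surviving a unit of time. As written, your proposal is missing these localization and quantitative ingredients, and its central comparison step cannot be carried out.
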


Before proceeding with the proof of Theorem~\ref{th:localComparison},
we first present a simple result on the nondegeneracy of $\uu_\eqr$.
This is an improvement of
Corollary~\ref{co:hs-nondegeneracy}
when we also know that $\uu_\eqr$ is $\e^\be$-flat.

\begin{lemma}
\label{le:nondegeneracyObstacleSupersolution}
For given $q \in \Rn \setminus \set0, r>0$,
there exists a positive constant $c = c(r, \abs{q}, M, n)$ such that if
$\uPhi_\eqr(T) \leq \eta$, for some $\e, \eta, T > 0$, then
\begin{align*}
\uu_\eqr(x,t) \geq c \frac{\rho^2}{\eta}
\end{align*}
for every $0 < \rho \leq \eta$ and $(x,t)$ such that $B_\rho(x) \subset \Omega_t(\uu_\eqr) \cap \Omega_q$, $t \leq T$.
\end{lemma}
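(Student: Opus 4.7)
The plan is to combine the pointwise lower bound $\uu_\eqr \geq P_{q,r}$ from Proposition~\ref{pr:obstacle-sols}(c) with the harmonicity of $\uu_\eqr(\cdot,t)$ in its positive set from Proposition~\ref{pr:harmonic}, supplemented by a sub-barrier argument in a degenerate regime. Setting $s := rt - x\cdot\nu$, the assumptions $B_\rho(x)\subset\Omega_t(\uu_\eqr)$ together with the flatness bound $\uu_\eqr\leq \Pqr^\eta$ force $s > \rho - \eta$, so $s$ ranges from slightly below $-\eta$ (when $\rho$ is small) up to $+\infty$.

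I would split into two regimes. In the \emph{non-degenerate} regime $s \geq -\rho/2$, harmonicity of $\uu_\eqr(\cdot, t)$ on $B_\rho(x)$ and the mean value identity combined with $\uu_\eqr\geq P_{q,r}$ give
\[
\uu_\eqr(x,t) \;=\; \frac{1}{|B_\rho|}\int_{B_\rho(x)}\uu_\eqr(y,t)\,dy \;\geq\; \frac{|q|}{|B_\rho|}\int_{B_\rho(x)}(rt-y\cdot\nu)_+\,dy.
\]
Splitting $B_\rho(x)$ on cross-sections orthogonal to $\nu$, the integrand is at least $|q|\rho/4$ on a subregion of volume comparable to $\rho^{n}$ (for instance, the slab where $\xi\cdot\nu \in (-3\rho/4, -\rho/2)$ inside $B_\rho(0)$, where $s - \xi\cdot\nu \geq 0$ uniformly in our regime). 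This yields $\uu_\eqr(x,t) \geq c_n|q|\rho$, which, together with $\rho\leq\eta$, produces the desired $c_n|q|\rho^2/\eta$.

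The \emph{degenerate} regime $s < -\rho/2$ can occur only when $\rho < 2\eta/3$, and places $x$ strictly inside the strip between the free boundaries of $P_{q,r}$ and $P^\eta_{q,r}$. Since the mean value bound is essentially vacuous there, I would construct a planar Hele-Shaw subsolution of the form $\phi(y,\tau) = A\bigl(\bar r(\tau - t_*) - (y-y_*)\cdot\nu\bigr)_+$ with small amplitude $A$ of order $|q|\rho/\eta$ and velocity $\bar r \leq mA$ (so that $\phi\in\subs(g^\e)$ strictly), positioned so that at time $t$ its free boundary reaches $x$ with $\phi(x,t)\sim A\rho\sim|q|\rho^2/\eta$. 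One then verifies $\phi\leq \uu_\eqr$ on the parabolic boundary of a cylinder $B_R(x)\times(t-T, t]$, with $R\sim\rho$ and $T\sim\eta/r$, by combining $\uu_\eqr\geq P_{q,r}$ at the initial time $t-T$, the monotonicity of $\uu_\eqr$ in time from Lemma~\ref{le:monotonicity-in-time} (which controls the behavior on the lateral boundary), and the supersolution property on $Q_q$. The main obstacle is precisely this step: the slope $A\sim|q|\rho/\eta$ of the barrier is much shallower than the slope $|q|$ of the obstacle $P_{q,r}$, so the parameters $A$, $\bar r$, $y_*$, $t_*$, $R$, $T$ must be tuned very carefully so that the barrier both fits under $P_{q,r}$ along the portion of $\partial_P(B_R(x)\times(t-T,t])$ that lies in $\Omega(P_{q,r})$ and vanishes along the rest.
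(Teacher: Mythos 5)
Your non-degenerate regime is fine and is essentially the paper's first case in a slightly different guise: the paper evaluates $\uu_\eqr \geq \Pqr$ at the single point $x-\tfrac{\rho}{2}\nu$ and then transfers to $x$ by Harnack, while you average over $B_\rho(x)$ via the mean value property; both are legitimate. (One small slip: with your slab $\xi\cdot\nu\in(-3\rho/4,-\rho/2)$ and $s\geq-\rho/2$ the integrand is only $\geq 0$, not $\geq\abs q\rho/4$; take e.g. the cap $\xi\cdot\nu\in(-\rho,-3\rho/4)$ instead.)

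The degenerate regime, however, contains a genuine gap, and it is not merely a matter of tuning the barrier. The only quantitative lower bound you have on the parabolic boundary of your cylinder is $\uu_\eqr\geq \Pqr$, whose front advances with speed $r$; positivity of $\uu_\eqr$ on $B_\rho(x)$ is purely qualitative and cannot anchor a subbarrier. Your planar barrier with slope $A\sim\abs q\rho/\eta$ is an admissible subsolution only if its speed satisfies $\bar r\leq mA\leq m\abs q\rho/\eta$, but to sit under $\Pqr$ at time $t-T$ its front must start behind $r(t-T)$, and to give $\phi(x,t)\sim A\rho$ it must reach past $x\cdot\nu>rt+\rho/2$ by time $t$, i.e. it needs average speed exceeding $r+\rho/(2T)$. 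With $T\sim\eta/r$ this forces $m\abs q\rho/\eta\gtrsim r$, which fails whenever $\rho\ll\eta$ (and always in the relevant range $r\geq m\abs q$ of \eqref{rqrestriction}); so no choice of $A,\bar r,y_*,t_*,R,T$ can close the comparison. The missing idea is that in this regime the bound does not come from the supersolution property at all but from the \emph{subsolution} property of $\uu_\eqr$: by flatness, $\uu_\eqr$ vanished on $\cl B_{\rho/4}(x)$ at time $t-\eta/r$, and since $\uu_\eqr\in\subs(M,\Rn\times\R)$ (Proposition~\ref{pr:obstacle-sols}(e)) is positive at $(x,t)$ and nondecreasing in time, the expansion-speed estimate of Corollary~\ref{co:hs-nondegeneracy} forces $\sup_{B_{\rho/4}(x)}\uu_\eqr(\cdot,t)\geq \tfrac{r}{32nM}\,\rho^2/\eta$; the Harnack inequality on $B_\rho(x)$ (harmonicity from Proposition~\ref{pr:harmonic}) then transfers this bound to the center. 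This is exactly the paper's argument, and some ingredient of this type (an upper barrier closing in on the subsolution, not a lower barrier pushed under the supersolution) appears unavoidable.
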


\begin{proof}
Let $c_H = c_H(n)$ be the constant from Harnack inequality such that
if $\psi \geq 0$, harmonic in $B_1(0)$ then
$\inf_{\cl{B}_{1/2}(0)} \psi \geq c_H \sup_{\cl{B}_{1/2}(0)} \psi$.

Let us denote $u = \uu_\eqr$. Fix $(\xi, \tau)$ such that $B_\rho(\xi) \subset \Omega_{\tau}(u) \cap \Omega_q$.
Then we have two possible scenarios:
\begin{enumerate}
\item $\xi \cdot \nu \leq r \tau + \frac{\rho}{4}$: We estimate
\begin{align*}
u(\xi - \frac{\rho}{2} \nu, \tau) \geq P_{q,r}(\xi - \frac{\rho}{2}\nu, \tau) \geq \abs{q} \frac{\rho}{4} \geq \frac{\abs{q}}{4} \frac{\rho^2}{\eta}.
\end{align*}

\item $\xi \cdot \nu > r \tau + \frac{\rho}{4}$:
In this case necessarily $r < M \abs{q}$,
otherwise $u = \Pqr$, a contradiction.
We have $u \in \subs(M, \Rn\times \R)$
by Proposition~\ref{pr:obstacle-sols}
and $u$ is nondecreasing in time by Lemma~\ref{le:monotonicity-in-time}.
Due to the flatness assumption $\uPhi_\eqr(\tau) \leq \eta$,
$u(\cdot, t) = 0$ on $\cl{B}_{\rho/4}(\xi)$ for
$t \leq \tau - \frac{\eta}{r}$.
Consequently,
Corollary~\ref{co:hs-nondegeneracy} implies
\begin{align*}
\sup_{B_{\rho/4}(\xi)} u(\cdot, \tau) \geq \frac{\rho^2}{32n M \frac{\eta}{r}} = \frac{r}{32n M} \frac{\rho^2}{\eta}.
\end{align*}
\end{enumerate}
Since $u(\cdot, \tau)$ is harmonic in
$\Omega_\tau(u) \cap \Omega_q \supset B_{\rho}(\xi)$,
the Harnack inequality, properly rescaled, yields
\[
u(\xi, \tau) \geq c_H \min \pth{\frac{\abs{q}}{4}, \frac{r}{32nM}} \frac{\rho^2}{\eta}.\qedhere
\]
\qedhere\end{proof}

An important tool in the proof of the local comparison principle
is the nonuniform perturbation in space,
see Section~\ref{sec:nonlinear-perturbation}.
We will construct a particular radius $\rho(x)$
in the following lemma.

\begin{lemma}
\label{le:radial-nonlin-perturbation}
There exists $R > 0$ and a smooth,
radially symmetric function $\rho : \cl B_{2 R}(0) \setminus B_R(0)
\to [1,6]$
such that
$\rho \Delta \rho \geq (n-1) \abs{D \rho}^2$
in $B_{2 R}(0) \setminus \cl B_R(0)$,
$\rho = 1$ on $\partial B_{2R}(0)$ and
$\rho > 5$ on $\cl B_{R+1}(0) \setminus B_R(0)$.
\end{lemma}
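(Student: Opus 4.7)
The plan is to reduce the nonlinear inequality $\rho\Delta\rho \geq (n-1)\abs{D\rho}^2$ to a \emph{linear} one by a power transformation, and then to take the radial fundamental solution of the Laplacian (tuned to the boundary conditions) as the candidate.

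The first step is the algebraic identity: for any smooth $\rho > 0$,
\[
\rho^{n}\,\Delta\pth{\rho^{2-n}} = (2-n)\bra{\rho\Delta\rho - (n-1)\abs{D\rho}^2}\qquad (n \geq 3),
\]
and the analogue $\rho^{2}\Delta(\log\rho) = \rho\Delta\rho - \abs{D\rho}^2$ for $n=2$. Since $2-n<0$ for $n\geq 3$, the required inequality is equivalent to superharmonicity of $v:=\rho^{2-n}$ (for $n\geq 3$), respectively to subharmonicity of $w:=\log\rho$ (for $n=2$). Thus it suffices to produce a radial, positive, smooth $v$ (resp. $w$) on the closed annulus $\cl{B}_{2R}(0)\setminus B_R(0)$ satisfying the appropriate linear inequality and the corresponding transformed boundary conditions.

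For the construction itself I would use the radial fundamental solution. For $n\geq 3$ set $v(x):=A+B\abs{x}^{2-n}$, which is radially harmonic (hence in particular superharmonic) on the annulus, and define $\rho:=v^{1/(2-n)}$; for $n=2$ use $w(x):=A+B\log\abs{x}$ and $\rho:=e^{w}$. The two free parameters $A,B$ are fitted by the two conditions $\rho(2R)=1$ and $\rho(R+1)=\tfrac{11}{2}$, yielding closed-form expressions with $B<0$ in both cases. Since $v$ (resp. $w$) is then monotone in $\abs{x}$, so is $\rho$, so that $\rho\geq \rho(R+1)=\tfrac{11}{2}>5$ throughout $\cl{B}_{R+1}(0)\setminus B_{R}(0)$, and $\rho\geq\rho(2R)=1$ on the entire annulus.

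The only remaining point---and essentially the only nontrivial step---is the pointwise upper bound $\rho\leq 6$; this is where the value of $R$ enters. A short asymptotic computation using the explicit formulas for $A,B$ shows that $\rho(R)\to\tfrac{11}{2}$ as $R\to\infty$ in both the $n\geq 3$ and $n=2$ cases, so by taking $R$ sufficiently large the bound $\rho\leq 6$ holds uniformly on the annulus (with a comfortable margin). Fixing any such $R$ gives a function satisfying all the requirements; smoothness of $\rho$ is automatic because the annulus is bounded away from the origin. This completes the plan, the main obstacle being merely the routine verification of the asymptotics.
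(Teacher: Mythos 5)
Your proposal is correct and takes essentially the same route as the paper: the paper also builds $\rho$ from a radial profile whose power $\rho^{2-n}$ is harmonic on an annulus (citing Caffarelli for exactly the identity you derive), the only difference being that it fixes the unit annulus $B_2\setminus B_1$ with boundary values $1$ and $6$ and obtains the width-one collar where $\rho>5$ by continuity near the inner boundary plus the scale invariance $\rho\mapsto a\rho(b\,\cdot)$ (taking $R=1/\de$), whereas you pin $\rho(R+1)=11/2$ directly and choose $R$ large via the asymptotics. A minor bonus of your version is that it treats $n=2$ explicitly through $\log\rho$, where the exponent $2-n$ degenerates.
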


\begin{proof}
Let $\vp$ be the radially symmetric smooth positive solution
of
\begin{align*}
\begin{cases}
\lap \vp^{2-n} = 0, & \text{ in } B_{2} \setminus \cl B_1\\
\vp = 1 & \text{ on } \partial B_{2},\\
\vp = 6 & \text{ on } \partial B_{1}.\\
\end{cases}
\end{align*}
It was observed in \cite[p. 148]{C87}
that
$\vp$ satisfies $\vp \lap \vp \geq (n-1) \abs{D\vp}^2$.
Moreover,
note that also the rescaled function $a \vp(b x)$ satisfies the same inequality for any $a, b$ positive.
By continuity, there exists $\de > 0$ such that $\vp > 5$
on $\cl B_{1 + \de} \setminus B_1$.
Define $R = 1/\de$ and
$\rho(x) = \vp(\de x)$
on $\cl B_{2R} \setminus B_R$.
It is straightforward to verify that $\rho$ satisfies
the properties asserted in the statement and the proof is finished.
\qedhere\end{proof}

The following technical lemma
shows
that the closure of a time-slice of $\Omega(\lu_\eqr)$
is the time-slice of the closure,
a fact
that will be used throughout the proof of the local comparison principle
and
that allows us to work with simple time-slices of
a subsolution with nondecreasing support.

\begin{lemma}
Let $u \in \subs(M, Q)$
be a bounded subsolution on $Q$
for some $M > 0$ and $Q = E \times (t_1,t_2)$,
$E \subset \Rn$ open,
and assume that $\Omega_t(u^*) \subset \Omega_s(u^*)$ for all
$t, s$ such that
$t_1 < t \leq s < t_2$.
Then for any $(x,t) \in Q$ we have
\begin{align*}
x &\in \cl{\Omega_t(u^*;Q)}&&\text{if and only if}
& (x,t) \in \cl\Omega(u;Q),
\end{align*}
or, equivalently,
\begin{align*}
\cl{\Omega_t(u^*;Q)} = \cl\Omega_t(u;Q).
\end{align*}
\end{lemma}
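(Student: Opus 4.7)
The plan is to prove the equivalence by establishing both inclusions of $\cl{\Omega_t(u^*;Q)} = \cl{\Omega_t(u;Q)}$, where only the ``$\supset$'' direction uses the subsolution property of $u$ together with the monotonicity hypothesis; the ``$\subset$'' direction is purely a soft consequence of the definition of the upper semi-continuous envelope.

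For the inclusion $\cl{\Omega_t(u^*;Q)} \subset \cl{\Omega_t(u;Q)}$, I would first observe that $\cl\Omega_t(u;Q) = \set{x : (x,t) \in \cl\Omega(u;Q)}$ is a slice of a closed subset of $\Rn \times \R$, hence closed in $\Rn$. Therefore it is enough to show that any $x \in \Omega_t(u^*;Q)$ lies in $\cl\Omega_t(u;Q)$. But $u^*(x,t) > 0$ means, by the characterization of $u^*$ as $\lim_{\delta\to 0}\sup\set{u(y,s) : (y,s) \in Q, \abs{(y,s)-(x,t)}<\delta}$, that there is a sequence $(y_j, s_j) \to (x,t)$ in $Q$ with $u(y_j, s_j) > 0$, so $(x,t) \in \cl\Omega(u;Q)$, i.e.\ $x \in \cl\Omega_t(u;Q)$. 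The result then follows by taking closures.

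For the inclusion $\cl{\Omega_t(u;Q)} \subset \cl{\Omega_t(u^*;Q)}$, suppose $(x,t) \in \cl\Omega(u;Q) \cap Q$. The key step is to apply the continuous-expansion property of subsolutions, Definition~\ref{def:visc-test-sub}(i), with $\tau = t$, which gives
\[
(x,t) \in \cl\Omega(u;Q) \cap Q \cap \set{s \leq t} \subset \cl{\Omega(u;Q) \cap \set{s < t}}.
\]
Thus there is a sequence $(x_k, t_k) \to (x,t)$ with $t_1 < t_k < t$ (for $k$ large) and $u(x_k, t_k) > 0$, so in particular $u^*(x_k, t_k) > 0$ and $x_k \in \Omega_{t_k}(u^*;Q)$. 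The monotonicity hypothesis $\Omega_{t_k}(u^*) \subset \Omega_t(u^*)$ then places every $x_k$ into the single slice $\Omega_t(u^*;Q)$, and passing to the limit $x_k \to x$ yields $x \in \cl{\Omega_t(u^*;Q)}$.

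The main (though modest) obstacle is the second inclusion: one must upgrade a sequence in $\cl\Omega(u;Q)$ converging in space-time to a sequence inside one single time-slice $\Omega_t(u^*;Q)$. Neither the subsolution property nor the monotonicity hypothesis alone suffices; they must be combined, with the continuous-expansion property being used precisely to guarantee $t_k < t$ so that the forward monotonicity of the support can then collapse the sequence onto the slice at time $t$. This also explains why the hypothesis is formulated on $u^*$ and not on $u$ itself.
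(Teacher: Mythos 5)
Your proof is correct and takes essentially the same route as the paper's: the inclusion $\cl{\Omega_t(u^*;Q)}\subset\cl\Omega_t(u;Q)$ is the soft envelope/closure argument, and the reverse inclusion combines the continuous-expansion property of subsolutions (Definition~\ref{def:visc-test-sub}(i)) with the forward monotonicity of $\Omega_t(u^*)$, exactly as in the paper, which merely compresses the same sequence argument after the harmless reduction $u=u^*$.
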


\begin{proof}
Since $\cl\Omega(u) = \cl\Omega(u^*)$,
we can WLOG assume that $u = u^*$.
Let $(\xi, \tau) \in Q$.
It is clear that if $\xi \in \cl{\Omega_\tau(u)}$
then $(\xi, \tau) \in \cl \Omega(u)$.
Now suppose that $(\xi, \tau) \in \cl\Omega(u) \cap Q$.
By Definition~\ref{def:visc-test-sub}(i)
and the monotonicity assumption,
$(\xi, \tau) \in \cl{\Omega(u) \cap \set{t < \tau}}
\subset \cl{\Omega_\tau(u)} \times \set{t \leq \tau}$.
\qedhere\end{proof}

\begin{proof}[Proof of Theorem~\ref{th:localComparison}]
The proof proceeds in a number of technical steps.

Suppose that such $\e_0$ does not exist and
both $\ou_{\e; q_1, r_1}$ and $\uu_{\e; q_2, r_2}$
are $\e^\be$-flat up to time $t = 1$
along some subsequence of $\e$ that converges to 0.
We will show that this leads to a contradiction
with the comparison principle for the Hele-Shaw problem
for $\e$ sufficiently small.

\parahead{(1) Setup}
Let $\al := \frac{4}{5}$, and observe that, since $\be \in (4/5,1)$,
\begin{align}
\label{alpha-beta-relation}
2 - 2 \be \leq 4 - 4 \be < \al < \be.
\end{align}

First,
for fixed $\e > 0$,
we introduce notation that will be used throughout the proof.
Symbol $c$ with a subscript will denote quantities independent of $\e$.

We shall use multiple perturbations of
the subsolution $\ou_{\e,q_1,r_1}$.
Throughout the proof,
$\nu = \frac{-q_1}{\abs{q_1}} = \frac{-q_2}{\abs{q_2}}$.
Let us denote $\Lambda := 6$ and define the constant
\begin{align}
\label{choice-gamma}
\ga = \frac{1}{\Lambda}\min\pth{\frac{m}{L}(a^{1/2} - 1), \ov2}
\end{align}
and a translation
\begin{align*}
\xi_0 \in \argmin_{\substack{\xi \in \e Z^n\\
                            \xi \cdot \nu \leq - \e}}
    \abs{\xi}.
\end{align*}
Clearly $\abs{\xi_0} \leq n \e$.
For $\la \in [0,\Lambda]$,
we define the perturbations
\begin{align*}
u^\la(x,t) :=
    \sup_{y \in \cl B_{\la \ga \e}(x)} a\lu_{\e,q_1,r_1}(y - \xi_0,t),
\end{align*}
where $\cl B_0(x) := \set{x}$ for simplicity.
The choice of $\gamma$ and $\xi_0$,
and Proposition~\ref{pr:nonuniform-perturbation}
imply that $u^\la \in \subs(g^\e, \tilde Q)$,
where $\tilde Q := \set{(x,t) : B_{(n+1)\e}(x) \times \set{t} \subset Q_q}$.
Furthermore,
since $\ga \leq \frac1{2\Lambda}$ we note that
$u^\la \leq P_{aq_1, r_1}$ for all $\la \in [0, \Lambda]$.

Next, we introduce the time
\begin{align*}
T := \frac{3 \e^\be}{r_1 -r_2} = c_T \e^\be > 0.
\end{align*}
If $\e$ is small enough so that
$T \leq 1$ then $u^\la$ and $v$ will be
$(\e^\be + n\e)$-flat and $\e^\be$-flat,
respectively, up to time $t = T$,
and if also
$\e^\be \geq n\e \geq \abs{\xi_0 \cdot \nu}$
then the free boundary of $u^\la$
must completely overtake the free boundary of $v$
by time $T$.

\newcommand{\Cyl}{\operatorname{Cyl}}

Finally, we choose the constants $\si_1, \si_2$ as
\begin{align*}
\si_1 &:= \frac{3a \abs{q_1} + 1}{\abs{q_2} - a \abs{q_1}} \e^\be
    = c_{\si_1} \e^\be,\\
\si_2 &:= T r_1 = c_{\si_2} \e^\be.
\end{align*}
Let us also denote $\si := \si_1 + \si_2$ and remark
that $\si = c_\si \e^\be$.
The motivation behind this particular choice of $\si_1$
is the fact that
\begin{align}
\label{eq:boundsigma1}
v(x,t) - u^\la(x,t) \geq P_{q_2, r_2}(x,t) -P_{aq_1,r_1}(x,t) \geq \e^\be
\end{align}
for $x \cdot \nu \leq - \si_1$, $t \in [0,T]$, $\la \leq \Lambda$.
Similarly,
$\si_2$ is motivated by
\begin{align}
\label{obstacle-reach}
{\cl\Omega_t(P_{a q_1,r_1}) \cup \cl\Omega_t(P^{\e^\be}_{q_2,r_2})}
    \subset \set{x : x \cdot \nu \leq \si_2}
\end{align}
for $t \in [0,T]$,
i.e. the free boundaries of $\ou_{\e;q_1,r_1}$ and
$\uu_{\e;q_2,r_2}$ will be contained in
$\set{x : 0 \leq x \cdot \nu \leq \si_2}$ for $t \in [0,T]$.

\parahead{(2) Fast-shrinking domain}
For a point $x \in \Rn$,
we define $x^\perp$ to be the
component of $x$ orthogonal to $\nu$,
\begin{align*}
x^\perp = x - (x \cdot \nu) \nu = (I - \nu \otimes \nu) x.
\end{align*}
By symbol $\Cyl(\rho; \eta_1, \eta_2)$ we shall
denote the open space cylinder of radius $\rho$
with axis parallel to $\nu$,
between the hyperplanes $\set{x : x \cdot \nu = \eta_1}$ and
$\set{x: x \cdot \nu = \eta_2}$,
\begin{align*}
\Cyl(\rho; \eta_1, \eta_2)
    = \set{x : \abs{x^\perp} < \rho,
        \ \eta_1 < x \cdot \nu < \eta_2}.
\end{align*}

Let us define a ``fast-shrinking'' domain $\Sigma$,
whose each time-slice $\Sigma_t$ is the cylinder
\begin{align*}
\Sigma_t :=
\begin{cases}
\Cyl(\rho_\Sigma(t); -\si_1, \si_2) & t \geq 0,\\
\emptyset & t < 0.
\end{cases}
\end{align*}
with radius
\begin{align*}
\rho_\Sigma(t) = \frac{\tan\ta}2 - \e^{-\al} t,
\end{align*}
where $\ta$ is the opening of the cone $\Omega_q$ defining $Q_q$,
introduced in \eqref{eq:choiceofta}.
In particular, the radius $\rho_\Sigma(t)$ of the cylinder $\Sigma_t$
shrinks with velocity
$\e^{-\al}$.
Furthermore,
if $\e$ is small enough,
we have $\Sigma + (\cl B_{(n+1)\e} \times \set0) \subset Q_q$,
which also implies
$u^\la \in \subs(g^\e, \Sigma)$ for $\la \in [0,\Lambda]$.
Additionally,
$\Sigma_t \neq \emptyset$ for all $t \in [0, T]$
since, as $\al < \be$ by \eqref{alpha-beta-relation},
\begin{align*}
\rho_\Sigma(T) &= \frac{\tan\ta}2 - \e^{-\al} T
    = \frac{\tan\ta}2 - c_T \e^{\be - \al} > 0
    &&\text{for $\e>0$ small.}
\end{align*}

Finally, we need to fix a small time step $\tau$,
\begin{align}
\label{lc-def-of-tau}
\tau := \frac{\ga^2 \e^2}{4n a \si \abs{q_1} M} \quad
        = c_\tau \e^{2-\be}.
\end{align}
Let us explain how such choice of $\tau$ guarantees
that, for $\la \in [0,\Lambda]$,
the support of $u^\la$
cannot expand by more than $\ga \e$
from time $\hat t - \tau$  to time $\hat t$
for any $\hat t \in [\tau, T]$ in the sense
\begin{align}
\label{expand-by-gae}
\cl\Omega_s(u^\la)
    &\subset \cl\Omega_{\hat t - \tau}(u^\la) + \cl B_{\ga\e}
&&s \in [\hat t-\tau, \hat t].
\end{align}
This follows from a straightforward application of
Corollary~\ref{co:HS-expansion-speed},
on the parabolic cylinder
\[E = \set{(x,t) : -\si_1 < x \cdot \nu,\ t > \hat t - \tau},\]
taking advantage of the bound
\begin{align}
\label{eq:boundulambda}
u^\la(x,t) &\leq P_{aq_1,r_1}(x,t) \leq a \abs{q_1} \si \quad =: c_K \e^\be,
\end{align}
valid for $x\cdot \nu \geq -\si_1$ and $t \in [0, T]$
due to \eqref{obstacle-reach}.

\parahead{(3) Boundary crossing point}
With these definitions at hand,
we proceed with the proof.
As we observed above,
since $n \e \leq \e^\be$ if $\e$ is small enough,
$u^\la$ is $(\e^\be + n\e)$-flat
for any $\la \in [0, \Lambda]$,
and $v$ is $\e^\be$-flat,
the choice of $\xi_0$ and $T$ guarantees
that the supports of $u^\la$ and $v$ are originally strictly ordered,
but
will cross each other
before the time $T$ in the domain $\Sigma$.
So let $\hat t$ be the first time that
the boundaries of $u^5$ and $v$ touch in $\Sigma$,
i.e.,
\begin{align*}
\hat t := \sup \set{s : \cl\Omega(u^5) \cap
    \Omega^c(v) \cap \cl\Sigma \cap \set{t \leq s} = \emptyset} < T.
\end{align*}
Observe that $\hat t \geq \tau$ since
$\cl\Omega_t(u^5)$ cannot advance more than $\ga\e$
in time $\tau$ by \eqref{expand-by-gae}.

We claim that the set $\cl\Omega_{\hat t}(u^5) \cap
    \Omega^c_{\hat t}(v) \cap \cl\Sigma_{\hat t}$
is nonempty.
Indeed, if that were not the case,
$\dist(\cl\Omega_{\hat t}(u^5),
    \Omega^c_{\hat t}(v) \cap \cl\Sigma_{\hat t}) > 0$
because the first set is closed and the second one is compact.
Therefore, there exists $\de > 0$ such that
the distance is positive for $t < \hat t + \de$ by
Corollary~\ref{co:HS-expansion-speed}
and the monotonicity of $\Omega^c_t(v) \cap \cl\Sigma_t$,
which yields a contradiction with the definition of $\hat t$.
Let us thus choose
\begin{align*}
\hat x \in \cl\Omega_{\hat t}(u^5) \cap
    \Omega^c_{\hat t}(v) \cap \cl\Sigma_{\hat t}.
\end{align*}
We will show that the existence of such point leads to a contradiction
with the comparison principle for the Hele-Shaw problem
since $u^5$ and $v$ were strictly separated on a much larger
domain $\cl \Sigma_{\hat t - \tau}$ at time $t = \hat t - \tau$.

To make this idea rigorous, we introduce the cylinders
\begin{align*}
S_i &= \Cyl(\rho_i; -\si_1, \si_2) & &\text{for } i=1,2,
\end{align*}
(see Figure~\ref{fig:localcomparison2})
\begin{figure}
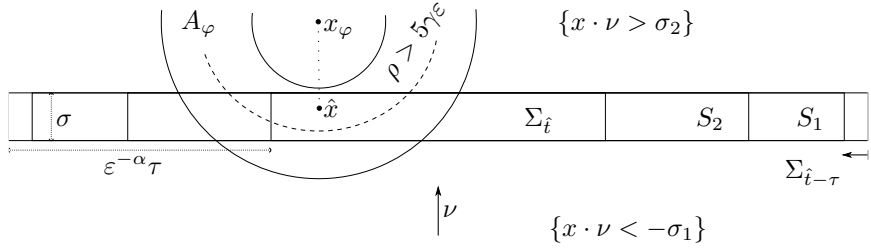

\centering
\fig{Fig4}{4.5in}
\caption{Arrangement of cylinders}
\label{fig:localcomparison2}
\end{figure}
with radii
\begin{align*}
\rho_1 &:= \rho_\Sigma(\hat t - \tau) - 2 \ga \e, &
\rho_2 &:= \rho_1 - \pth{\frac{c_K}{c_N}\e^{2\be - 2} + 2}c_\si \e^\be,
\end{align*}
where constant $c_N$ is specified below in \eqref{cN}.
We point out that if $\e$ is small enough, we have
\begin{align}
\label{rho2-rhoSigma}
\rho_2 > \rho_\Sigma(\hat t) + 2 R \si,
\end{align}
where $R$ is the constant from
Lemma~\ref{le:radial-nonlin-perturbation},
since
\[
\rho_\Sigma(\hat t - \tau) - \rho_\Sigma(\hat t) = \tau \e^{-\alpha}
= c_\tau \e^{2-\al-\be} \gg \rho_{\Sigma}(\hat t - \tau) -\rho_2 + 2R\sigma
\qquad \text{if } \e \ll 1
\]
due to \eqref{alpha-beta-relation}.
In particular, we have
\begin{align*}
\Sigma_{\hat t - \tau} \supset S_1 \supset S_2 \supset \Sigma_{\hat t},
\end{align*}
see Figure~\ref{fig:localcomparison2}.

Now we consider function $u^2$.
First, clearly $\cl\Omega_t(u^5) = \cl\Omega_t(u^2) + \cl B_{3\ga\e}(0)$.
The definition of $\hat t$ yields
$\cl\Omega_t (u^5) \cap
    \Omega^c_t(v) \cap \cl\Sigma_t = \emptyset$
for $t < \hat t$,
and thus, in particular,
\begin{align*}
\pth{\cl\Omega_{\hat t - \tau} (u^2) + \cl B_{3\ga \e}(0)}
    \cap \Omega^c_{\hat t - \tau}(v)
    \cap \cl\Sigma_{\hat t - \tau} = \emptyset.
\end{align*}
As $\Omega^c_t(v)$ cannot expand
(due to the monotonicity, Lemma~\ref{le:monotonicity-in-time}),
and the positive set of $u^2$ cannot expand by more than $\ga \e$
in time $\tau$,
in the sense of \eqref{expand-by-gae},
we conclude that
\begin{align}
\label{u1-v-separated}
(\cl\Omega_t(u^2) + \cl B_{2\ga \e}(0))
    \cap \Omega^c_t (v) \cap \cl S_1 &= \emptyset
    &&t \in [0, \hat t].
\end{align}

Let us define the closed sets
\begin{align*}
A_t &:= \pth{\cl\Omega_{t}(u^2) + \cl B_{\ga\e}(0)}
    \cap \cl S_1
&&t \in [0, \hat t].
\end{align*}
Due to the nondegeneracy of $v$ (Lemma~\ref{le:nondegeneracyObstacleSupersolution} applied with $\eta = \e^\be$ and $\rho = \ga \e$), we observe that
\begin{align}
\label{cN}
v(x,t) \geq c(r_2, \abs{q_2}, M, n) \ga^2 \e^{2-\be} \quad = c_N\e^{2-\be} \qquad x \in A_t, \ t \in [0, \hat t],
\end{align}
where $c(r_2, \abs{q_2}, M, n)$ is the constant
from Lemma~\ref{le:nondegeneracyObstacleSupersolution}.
Consider the function
$w^t(x) = \frac{u^2(x,t) - v(x,t)}{c_N \e^{2-\be}}$
defined on $A_t$.
Function $w^t$ is subharmonic on $A_t$ for all $t \in [0, \hat t]$
since $v(\cdot, t)$ is superharmonic in $\interior A_t$ as $A^t \subset \Omega_t(v) \cap \Omega_q$,
and $u^2(\cdot, t)$ is clearly subharmonic in $\interior A_t$.

Our goal is to apply Lemma~\ref{le:subharmonicOnThinDomain}
to function $w^t$.
Since $\e$ is small,
we have $\e^{\be} \geq c_N \e^{2-\be}$
and thus $w^t \leq -1$ on $\partial A_t \cap \set{x : x \cdot \nu = -\si_1}$
by \eqref{eq:boundsigma1}.
Additionally,
$u^2 = 0$ on $\partial A_t \cap S_1$ by definition of $A_t$
and therefore
$w^t \leq -1$ on $\partial A_t \cap S_1$.
Finally, by \eqref{eq:boundulambda},
\begin{align*}
w^t \leq
    \frac{u^2}{c_N \e^{2-\be}}
    \leq \frac{c_K}{c_N} \e^{2\be -2} \qquad \text{on } A_t.
\end{align*}
Therefore Lemma~\ref{le:subharmonicOnThinDomain}
implies that $w^t < 0$
on the set $S_1 \cap \set{x : \abs{x^\perp} \leq \rho_1'}$,
where $\rho_1'$ is given by
\begin{align*}
\rho_1' = \rho_1 - c_T
    \pth{\frac{c_K}{c_N}\e^{2\be - 2} + 2}c_\si \e^\be,
\end{align*}
and $\rho' \geq \rho_2$ by the choice of $\rho_1$ and $\rho_2$.
We conclude that $w^t < 0$ in $\cl{S_2} \cap A_t$
for all $t \in [0, \hat t]$,
and therefore,
with \eqref{u1-v-separated},
\begin{align}
\label{u2-prec-v}
u^2 &\prec v &&\text{ in $\cl S_2 \times [0, \hat t]$}.
\end{align}

Let $R$ and $\rho(x)$ be from
Lemma~\ref{le:radial-nonlin-perturbation} and define
\begin{align*}
x_\vp := \hat x^\perp + (\si_2 + R\si) \nu
\end{align*}
and
\begin{align*}
\vp(x) := \ga \e \rho \pth{\frac{x - x_\vp}{\si}}
    \qquad \text{on } A_\vp := B_{2R\si}(x_\vp) \setminus \cl B_{R\si}(x_\vp).
\end{align*}
Let us set
$c_\rho = \max_{\cl B_{2R}(0) \setminus B_R(0)} \abs{D\rho} < \infty$.
Clearly
\begin{align*}
\max_{\cl A_\vp} \abs{D\vp}
    = \frac{\ga \e}{\si} c_\rho
    = \frac{c_\rho}{c_\si} \ga \e^{1-\be}.
\end{align*}
Hence for $\e$ small enough we have
\begin{align}
\label{smallness-Dvp}
(1 - \max_{\cl A_\vp} \abs{D\vp})^{-2} \leq a^{1/2}.
\end{align}
Moreover, $\si_2 < \si$ yields
$\hat x \in \cl B_{(R+1) \si}(x_\vp) \setminus B_{R\si}(x_\vp)$
and hence,
by the definition of $\rho$,
we also have $\vp(\hat{x}) > 5\ga\e$.

\parahead{(4) Non-uniform perturbation}
In the final step, we consider the nonuniform perturbation $u^\vp$
of $\ou_\eqr$,
\begin{align*}
u^\vp(x,t) := \sup_{y \in \cl B_{\vp(x)}(x)}
                a \ou_{\e;aq_1, r_1}(y - \xi_0, t)
    \qquad (x,t) \in A_\vp \times (0, \hat t].
\end{align*}
This function is a subsolution $u^\vp \in \subs(g^\e, Q_\vp)$
on $Q_\vp := A_\vp \times (0,\hat t]$
for all $\e$ small enough
so that \eqref{smallness-Dvp} holds,
due to Proposition~\ref{pr:nonuniform-perturbation}
and the choice of $\gamma$ in \eqref{choice-gamma}.

We want to show that $u^\vp \prec v$ on
the parabolic boundary
$\partial_P (A_\vp \times (0, \hat t])$
to be able to use the comparison theorem for the Hele-Shaw problem:
\begin{compactitem}
\item
$u^\vp \prec v$ on $A_\vp \times \set{0}$
by the choice of $\xi_0$,
which implies $u^\Lambda \prec v$ on $A_\vp \times \set{0}$,
and $u^\vp \leq u^\Lambda$.
\item
$\cl\Omega(u^\vp; Q_\vp)
    \cap \pth{\partial B_{R\si}(x_\vp) \times [0,\hat t]} = \emptyset$
since $\cl B_{R \si} (x_\vp) \subset
\set{x: x\cdot\nu\geq \si_2}$, recalling \eqref{eq:boundulambda}
and \eqref{obstacle-reach}.
\item
To show $u^\vp \prec v$ on
$\partial B_{2R\si}(x_\vp) \times [0, \hat t]$ w.r.t. $Q_\vp$,
we split $\partial B_{2R\si}(x_\vp)$ into three parts.
By construction of $S_2$
and \eqref{rho2-rhoSigma},
we have
\[
\partial B_{2R\si}(x_\vp) \subset \set{x : x\cdot \nu \leq - \si_1}
\cup S_2 \cup \set{x: x\cdot\nu \geq \si_2}.
\]
But the strict separation $u^\vp \prec v$
follows from $u^\vp = u^1$ on $\partial B_{2R\si(x_\vp)} \times [0,\hat t]$,
and then \eqref{eq:boundsigma1} for the first part,
\eqref{u2-prec-v} for the second part,
and \eqref{eq:boundulambda} with \eqref{obstacle-reach}
for the third part.

\end{compactitem}
Therefore $u^\vp \prec v$ on $\cl A_\vp \times [0,\hat t]$
by the comparison theorem, Theorem~\ref{th:comparison}.
This is however a contradiction with
\begin{align*}
\hat x \in \cl \Omega_{\hat t}(u^\vp) \cap \Omega^c_{\hat t}(v),
\end{align*}
since $u^\vp(\hat x, \hat t) > 0$
due to $\hat x \in \cl \Omega_{\hat t}(u^5)$
and $\vp(\hat x) > 5 \ga \e$.
This finishes the proof of the local comparison principle.
\qedhere\end{proof}

\subsection{Cone flatness}
\label{sec:cone-flatness}

In this part
we show that the positive and the zero sets of the obstacle solutions $\uu_\eqr$ and $\ou_\eqr$
cannot form long, thin fingers in the direction orthogonal to the obstacle, and, on the contrary, that their free boundaries are in fact in between
two cones that are $\sim \e \abs{\ln \e}^{1/2}$ apart.
We refer to this property as \emph{cone flatness}.
It is a consequence of the monotonicity of the obstacle problem and our particular choice of the domain $Q_q$
for the obstacle problem that allows us to control how fast the free boundaries of $\uu_\eqr$ and $\ou_\eqr$ detach from the obstacle at the boundary of the domain $Q_q$; see also the discussion in previous sections.

We have the following result:

\begin{proposition}[Cone flatness]
\label{pr:cone-flatness}
Let $q$ and $r$ satisfy \eqref{rqrestriction}
and let $T > 0$.
There exist positive constants $K$ and $\e_0$,
both depending only on $(n, m, M, \abs q, T)$,
such that
for any $(\zeta, \si) \in \Omega^c(\uu_\eqr)$, $\si \leq T$ and $\e < \e_0$
\begin{align*}
    \cone_{\nu, \ta^+}(\zeta + K \e \abs{\ln \e}^{1/2} \nu)
        \times (-\infty, \si] \subset \Omega^c(\uu_\eqr),
\end{align*}
and similarly for any $(\zeta,\sigma) \in \Omega^c(\ou_\eqr)$
and $\e < \e_0$
\begin{align*}
    \cone_{\nu, \ta^-}(\zeta + K \e \abs{\ln \e}^{1/2} \nu)
        \times (-\infty, \si]
        \subset \Omega^c(\ou_\eqr).
\end{align*}
(Note the different opening angles $\ta^+$ and $\ta^-$, introduced in Section~\ref{sec:domain-geometry}.)
\end{proposition}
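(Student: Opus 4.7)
The argument is built around Proposition~\ref{pr:monotonicity} specialized to the scaling factor $a=1$, which for $\uu_\eqr$ reads $\uu_\eqr(x,t) \leq \uu_\eqr(x-y,t-\tau)$ whenever $(y,\tau)\in\e\Z^{n+1}$ satisfies $y\cdot\nu \geq \max(r\tau,M\abs{q}\tau)$ and $y\in\cl{\cone_{\nu,\ta^+}(r_V^+\tau\nu)}$. Setting $(x,t)=(\zeta+y,\sigma+\tau)$, the hypothesis $\uu_\eqr(\zeta,\sigma)=0$ propagates to $\uu_\eqr(\zeta+y,\sigma+\tau)=0$ at every admissible discrete translate. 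First I would reduce the claim to a spatial statement at the fixed time $t=\sigma$: the monotonicity of the support of a supersolution (Definition~\ref{def:visc-test-super}(i), which $\uu_\eqr$ inherits via Proposition~\ref{pr:obstacle-sols}(a)) together with Lemma~\ref{le:monotonicity-in-time} makes $\Omega^c(\uu_\eqr)$ non-increasing in backward time, so it suffices to produce the cone of zeros at time $\sigma$. Taking $\tau=0$ in the monotonicity then yields $\uu_\eqr(\cdot,\sigma)=0$ on the discrete lattice cone $\zeta+(\e\Z^n\cap\cl{\cone_{\nu,\ta^+}(0)})$.

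The main obstacle is then to bridge the $O(\e)$ gaps between those lattice zeros to obtain a continuous cone of zeros after an apex shift of only $K\e\abs{\ln\e}^{1/2}$. I would proceed by contradiction, supposing $\uu_\eqr(x^\star,\sigma)>0$ at some $x^\star\in\cone_{\nu,\ta^+}(\zeta+K\e\abs{\ln\e}^{1/2}\nu)$. The upper bound $\uu_\eqr\leq R_0^+$ from Proposition~\ref{pr:obstacle-sols}(f) combined with Proposition~\ref{pr:phi-lipschitz} caps the flatness by $\uPhi_\eqr(T)\leq (M\abs{q}-r)_+T$, and hence $\uu_\eqr \leq \Pqr^{\uPhi_\eqr(T)}$ supplies a linear upper envelope for $\uu_\eqr$ with constants depending only on $(n,m,M,\abs{q},T)$. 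On the other hand, the nondegeneracy estimate Lemma~\ref{le:nondegeneracyObstacleSupersolution} forces $\uu_\eqr(x^\star,\sigma)\geq c\rho^2/\uPhi_\eqr(T)$ whenever $B_\rho(x^\star)\subset\Omega_\sigma(\uu_\eqr)\cap\Omega_q$, while the requirement that this ball avoid every lattice cone zero a priori caps $\rho$ by $O(\e)$.

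Closing the contradiction with the stated $\abs{\ln\e}^{1/2}$ rate is the technical heart of the proof. I would iterate the monotonicity statement with $\tau<0$: each backward step enlarges the admissible discrete cone of guaranteed zeros of $\uu_\eqr(\cdot,\sigma)$ by pushing the lattice apex further into the $-\nu$ direction through the condition $y\in\cl{\cone_{\nu,\ta^+}(r_V^+\tau\nu)}$, while the harmonicity of $\uu_\eqr(\cdot,\sigma)$ on its positive set (Proposition~\ref{pr:harmonic}) permits a harmonic-measure estimate for the value at $x^\star$ in terms of the non-vanishing boundary values along the remaining connected component of $\Omega_\sigma(\uu_\eqr)$ near $x^\star$. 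After $O(\abs{\ln\e})$ iterations the positive component containing $x^\star$ is confined to a tube of transverse width $O(\e)$ and longitudinal length $\sim K\abs{\ln\e}^{1/2}$; a standard thin-domain harmonic-measure bound then decays like $\e^{cK^2}$, which is incompatible with the polynomial nondegeneracy lower bound once $K$ is chosen large enough in terms of $(n,m,M,\abs{q},T)$. Optimizing the number of iterations against the tangential discretization error of the lattice inside the cone produces the apex shift $K\e\abs{\ln\e}^{1/2}$.

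The proof for $\ou_\eqr$ is entirely symmetric. The subsolution half of Proposition~\ref{pr:monotonicity} supplies analogous discrete cone translations, now in the direction $-\nu$ with opening $\ta^-$, while Proposition~\ref{pr:obstacle-sols}(e) together with the harmonicity of $(\ou_\eqr)_*(\cdot,\sigma)$ on its positive set from Proposition~\ref{pr:harmonic} yields the matching linear upper envelope and the nondegeneracy required for the contradiction step. The change of opening angle from $\ta^+$ to $\ta^-$ in the conclusion is forced precisely by the different admissible translation cone in the subsolution version of the monotonicity, as constructed in Section~\ref{sec:domain-geometry}.
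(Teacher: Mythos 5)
Your reduction to the fixed time $t=\sigma$ and the propagation of the zero at $(\zeta,\sigma)$ to the lattice cone $\zeta+\bigl(\e\Z^n\cap\cl{\cone_{\nu,\ta^+}(0)}\bigr)$ via Proposition~\ref{pr:monotonicity} are fine, and this is indeed how the paper starts. The gap is in the bridging step. Knowing only that $\uu_\eqr(\cdot,\sigma)$ vanishes at the lattice \emph{points} does not confine the positive set to ``a tube of transverse width $O(\e)$'': isolated points have zero harmonic capacity in $n\ge 2$, so a component of $\Omega_\sigma(\uu_\eqr)$ can simply surround every lattice zero and occupy essentially the whole cone; no thin-domain geometry is forced. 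Moreover, even granting the tube, the quantitative bookkeeping does not close: an elliptic thin-domain (harmonic measure) estimate over a tube of width $\sim\e$ and length $K\e\abs{\ln\e}^{1/2}$ decays like $e^{-cK\abs{\ln\e}^{1/2}}=\e^{cK/\abs{\ln\e}^{1/2}}$, not like $\e^{cK^2}$, and this is \emph{larger} than the polynomial lower bound $c\e^2/\uPhi_\eqr(T)$ from Lemma~\ref{le:nondegeneracyObstacleSupersolution} for all small $\e$, for any fixed $K$. (The lower bound itself is also only conditional, since it requires fitting a ball of radius $\sim\e$ inside the positive set.) So the contradiction you aim for cannot be reached by a fixed-time argument of this type; the $\e^{cK^2}$ rate you quote would come from a Gaussian/parabolic estimate, which you have not established.

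What is missing is precisely the parabolic ingredient the paper uses. Because $\uu_\eqr$ is a supersolution, nondecreasing in time and harmonic in its positive set, vanishing at $(\zeta,\sigma)$ forces the quantitative bound $\uu_\eqr(z,t)\le \frac{C}{m}\frac{\abs{z-\zeta}^2}{\sigma-t}$ on a ball $B_{\la\e}(\zeta)$ for $t<\sigma$ (Proposition~\ref{pr:solUpperBoundZeroSet}); the $\e\Z^{n+1}$-monotonicity with a time shift of $\e$ then spreads the bound $\frac{C\la^2}{m}\frac{\e^2}{\e+\sigma-t}$ over the whole continuous cone, and the closing barrier of Corollary~\ref{co:rational-contract-bound} converts this smallness into actual vanishing: the admissible radius $\mu$ must satisfy $\int_0^\sigma \frac{C\la^2\e^2}{m(\e+\sigma-t)}\,dt\sim \e^2\ln(1+\sigma/\e)\lesssim \mu^2$, which is exactly where the shift $K\e\abs{\ln\e}^{1/2}$ comes from. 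Your proposal never produces smallness of $\uu_\eqr$ on sets of positive capacity (only pointwise zeros), so no barrier or harmonic-measure argument can be run from it; to repair the proof you would need Proposition~\ref{pr:solUpperBoundZeroSet} (or an equivalent backward-in-time smallness estimate near a zero of a supersolution) as an input.
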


\begin{proof}
Before proceeding with the detailed proof let us give its brief outline. We use $u$ to denote either $\uu_\eqr$ or $(\ou_\eqr)_*$.
\begin{compactenum}[Step 1.]
\item A supersolution $u \in \supers(m, Q_q)$, harmonic in its positive set,
can be bounded from above using Proposition~\ref{pr:solUpperBoundZeroSet}.
\item The monotonicity property,
Proposition~\ref{pr:monotonicity}, implies that $u$
is in fact small in the whole cone.
\item A barrier prevents $u$ from becoming positive
    too far from the boundary of the cone.
\end{compactenum}

\medskip
\emph{Step 1.} For the sake of brevity, let us denote $\uu = \uu_\eqr$.
By Remark~\ref{rem:grid-approx-cone}
there exists $\la = \la(n, \ta^+) > 0$
such that
\begin{align}
\label{cone-ball-cover}
{\cone_{\nu,\ta^+}(x)} \subset
\pth{\cone_{\nu,\theta^+}(x)}^{\grid\e} + B_{\la\e}(0)
 \qquad \text{for any $x \in \Rn$, $\e > 0$,}
\end{align}
where the notation $\grid\e$
is introduced in Definition~\ref{def:grid}.

Pick $(\zeta, \si) \in \Omega^c(\uu) \cap \set{t \leq T}$.
Note that $\uu$ is in $\supers(m, Q_q)$,
nondecreasing in time by Lemma~\ref{le:monotonicity-in-time}
and harmonic in $\Omega_t(\uu; Q_q)$ by
Proposition~\ref{pr:harmonic}.
Therefore
Proposition~\ref{pr:solUpperBoundZeroSet} yields
\begin{align}
\label{rational-bound}
\uu(z,t) &\leq \frac Cm \frac{\abs{z - \zeta}^2}{\si - t}
    \leq \frac Cm \frac{\la^2 \e^2}{\si - t}
&& \text{for all } z \in \cl B_{\la \e} (\zeta),\ t < \si.
\end{align}

\emph{Step 2.} By the monotonicity result of Proposition~\ref{pr:monotonicity},
we have
\begin{align}
\label{epsilon-monotonicity}
\uu(z + y,t) \leq \uu(z, t - \e)
\end{align}
for all $(z,t) \in \Rn\times\R$ and
\begin{align}
\label{y-in-cone}
y \in
    \pth{{\cone_{q,\theta^+}} (c\e \nu)}^\gride,
\end{align}
where $c = \max (M\abs q, r_V^+)$.

\eqref{cone-ball-cover} implies
${\cone_{\nu,\theta^+}} (c\e \nu + \zeta)
\subset \zeta + \pth{{\cone_{\nu,\theta^+}} (c\e \nu)}^\gride + B_{\la \e}(0)$
and therefore for any
$x \in \cone_{\nu,\theta^+} (c\e \nu + \zeta)$
there exist $z \in B_{\la \e}(\zeta)$ and $y$ satisfying \eqref{y-in-cone},
and,
combining \eqref{epsilon-monotonicity} and \eqref{rational-bound},
we obtain
\begin{align}
\label{rational-bound-in-cone}
\uu(x,t) = \uu(z + y, t) \leq
\uu(z,t - \e)
\leq \frac{C \la^2}{m} \frac{\e^2}{\e + \si - t}.
\end{align}

\emph{Step 3.} Let us now choose $\mu$ satisfying
\begin{align*}
\frac{m\mu^2}{2nMC\la^2\e^2} = -2 \ln \e,
\end{align*}
i.e.,
\begin{align*}
\mu = \tilde K \abs{\ln \e}^{1/2} \e.
\end{align*}
Hence, provided that $\e < \e_0 = \e_0(T)$,
we have
\begin{align}
\label{eps-and-T}
\e \pth{e^{\frac{m \mu^2}{2nM C\la^2\e^2}} - 1} =
\e (\e^{-2} - 1) > T.
\end{align}
We choose $K$ such that for all $\e < \e_0$
\begin{align*}
\dist\pth{\cone_{\nu,\ta^+}(K \abs{\ln\e}^{1/2}\e \nu),
\partial \cone_{\nu,\ta^+}(c\e\nu)} > \mu.
\end{align*}
Consequently,
whenever $\xi \in \cone_{\nu, \ta^+}(\zeta + K \e \abs{\ln \e}^{1/2} \nu)$
we also have
$\cl B_\mu(\xi) \subset \cone_{\nu, \ta^+}(\zeta + c\e\nu)$
and therefore the bound \eqref{rational-bound-in-cone}
for
$\uu$
holds in $\cl B_\mu(\xi) \times [0, \si]$.
Since $\si \leq T$ and \eqref{eps-and-T} holds,
we set $A = \frac{C\la^2\e^2}{m}$ and
apply Corollary~\ref{co:rational-contract-bound}
to conclude that $\uu(\xi, t) = 0$ for $t \in [0,\si]$.
This finishes the proof for $\uu_\eqr$.

The proof for $\ou = \ou_\eqr$ is analogous since
$\ou \in \supers(m, Q_q)$ due to Proposition~\ref{pr:obstacle-sols}(e),
$(\ou_\eqr)_*$ is harmonic in the positive phase
by Proposition~\ref{pr:harmonic},
and the correct monotonicity holds due to
Proposition~\ref{pr:monotonicity}
with a cone ${\cone_{\nu,\theta^-}}$.
\qedhere\end{proof}

\begin{remark}
\label{rem:reversed-cone-flatness}
The statement of Proposition~\ref{pr:cone-flatness}
can be reversed in the following sense:
For $q$, $r$, $T$, $K$ and $\e_0$ from Proposition~\ref{pr:cone-flatness}
we have that
if
$(\zeta,\sigma) \in \Omega(\uu_\eqr)$, $\si\leq T$
and $\e < \e_0$
then
\begin{align}
\label{reversed-flatness}
\cone_{-\nu, \ta^+}(\zeta - K \e \abs{\ln\e}^{\frac12}\nu) \times
[\si, +\infty) \subset \Omega(\uu_\eqr).
\end{align}
The proof is very simple:
suppose that the inclusion \eqref{reversed-flatness} is violated,
i.e., we have $(\zeta, \si) \in \Omega(\uu_\eqr)$
for which there exists $(y,s)$
that belongs to the left-hand side of \eqref{reversed-flatness}
but belongs also to $\Omega^c(\uu_\eqr)$.
Since $s \geq \si$, the monotonicity in time, Lemma~\ref{le:monotonicity-in-time},
implies that $(y, \si) \in \Omega^c(\uu_\eqr)$
and we can apply Proposition~\ref{pr:cone-flatness}
and conclude that
$(\xi, \sigma) \in \Omega^c(\uu_\eqr)$
since $\xi\in
 \cone_{\nu, \ta^+}(y + K \e \abs{\ln \e}^{1/2} \nu)$.
But that is obviously a contradiction.
A similar reflection argument applies for $\lu_\eqr$ as well.
\end{remark}

Since it is easier to work with a ball instead of a cone,
we also formulate the following consequence of
Proposition~\ref{pr:cone-flatness}.

\begin{corollary}
\label{co:ball-outlier}
Let $q$ and $r$ satisfy \eqref{rqrestriction}
and let $\la > 0$
and $\be \in [0, 1)$.
There exists $\e_0 = \e_0(n, m, M, q, \la,\be)$
such that for all $\e < \e_0$
the following statements hold:
\begin{itemize}
\item
if $\uPhi_\eqr(1) > \frac34 \e^\be$
then there exists $(\zeta,\si)$ such that
$\zeta \cdot \nu = r \si + \frac12 \e^\be$,
$\uPhi_\eqr(\si + \la \e) \leq \e^\be$
and
\begin{align*}
B_{\la \e}(\zeta, \si) \subset
\Omega(\uu_\eqr) \cap
Q_q \cap C^+ \cap \set{t \leq 1};
\end{align*}
\item
if $\lPhi_\eqr(1) > \frac34 \e^\be$
then there exists $(\zeta,\si)$ such that
$\zeta \cdot \nu = r\si - \frac12 \e^\be$,
$\lPhi_\eqr(\si + \la \e) \leq \e^\be$
and
\begin{align*}
B_{\la \e}(\zeta, \si) \subset
\Omega^c(\lu_\eqr) \cap
Q_q \cap C^+ \cap \set{t \leq 1}.
\end{align*}
\end{itemize}
\end{corollary}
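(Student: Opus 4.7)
The two cases are symmetric, so we focus on the upper-flatness statement. Our approach combines three ingredients: Lemma~\ref{le:furthest-point} to locate a point on $\Gamma(\uu_\eqr)$ at a prescribed $\nu$-distance from the obstacle; the reversed cone flatness of Remark~\ref{rem:reversed-cone-flatness} to place an open cone inside $\Omega_{\tilde\si}(\uu_\eqr)$ emanating in direction $-\nu$ from just behind that point; and the time-monotonicity of $\Omega_t(\uu_\eqr)$ from Lemma~\ref{le:monotonicity-in-time} to propagate the resulting spatial ball forward in time, covering every time-slice of the desired space-time ball.

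Concretely, with $K$ the cone-flatness constant of Proposition~\ref{pr:cone-flatness} for $T=1$ and $\kappa := (1+\cos\ta^+)/(1-\cos\ta^+)$, set $c_\e := K\abs{\ln\e}^{1/2} + \kappa\la + 1$ and $\eta := \frac{1}{2}\e^\be + r\la\e + c_\e\e$. For $\e$ small one has $c_\e\e = O(\e\abs{\ln\e}^{1/2}) = o(\e^\be)$, hence $\eta \leq \frac{3}{4}\e^\be$; combined with the Lipschitz estimate of Proposition~\ref{pr:phi-lipschitz}, the hypothesis $\uPhi_\eqr(1) > \frac{3}{4}\e^\be$ upgrades to $\uPhi_\eqr(1 - 2\la\e) > \eta$. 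Apply Lemma~\ref{le:furthest-point} with $\tau = 1 - 2\la\e$ and this $\eta$ to obtain $(\tilde\zeta,\tilde\si) \in \Gamma(\uu_\eqr)$ with $\tilde\zeta\cdot\nu = r\tilde\si + \eta$, $\tilde\si \leq 1 - 2\la\e$, and $\uPhi_\eqr(t) \leq \eta$ for $t \leq \tilde\si$. Set $\zeta := \tilde\zeta - c_\e\e\nu$ and $\si := \tilde\si + \la\e$; then $\zeta\cdot\nu = r\si + \frac{1}{2}\e^\be$ and $\si + \la\e \leq 1$ by direct computation, while the Lipschitz bound delivers $\uPhi_\eqr(\si + \la\e) \leq \eta + O(\e) \leq \e^\be$ for $\e$ small.

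The main geometric step is the inclusion $B_{\la\e}(\zeta) \subset \Omega_{\tilde\si}(\uu_\eqr)$. Since $\tilde\zeta \in \cl\Omega_{\tilde\si}(\uu_\eqr)$, pick a sequence $\zeta_k \to \tilde\zeta$ in $\Omega_{\tilde\si}$; Remark~\ref{rem:reversed-cone-flatness} places each open cone $\cone_{-\nu,\ta^+}(\zeta_k - K\e\abs{\ln\e}^{1/2}\nu)$ in $\Omega_{\tilde\si}$, and any point strictly inside the limit cone $\cone_{-\nu,\ta^+}(\tilde\zeta - K\e\abs{\ln\e}^{1/2}\nu)$ lies in the approximating cones for large $k$, so the limit cone is itself contained in $\Omega_{\tilde\si}$. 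The inequality $c_\e\e - K\e\abs{\ln\e}^{1/2} > \kappa\la\e$ then forces $B_{\la\e}(\zeta)$ strictly inside this limit cone by an elementary geometric computation. Time-monotonicity extends the inclusion to every $t \in [\tilde\si, \tilde\si + 2\la\e] = [\si - \la\e, \si + \la\e]$, so $B_{\la\e}(\zeta,\si) \subset \Omega(\uu_\eqr)$. The auxiliary containments are quick: for $(\zeta',t) \in B_{\la\e}(\zeta,\si)$ one has $\zeta'\cdot\nu \geq r\si + \frac{1}{2}\e^\be - \la\e > rt$ (for $\e$ small), so $(\zeta',t) \notin \Omega(\Pqr)$, and Proposition~\ref{pr:obstacle-sols}(d) forces $(\zeta',t) \in C^+_t$; the inclusion $B_{\la\e}(\zeta,\si) \subset Q_q$ uses that $\zeta$ lies within $O(\e^\be)$ of $\Gamma_\si(\Pqr) \subset \Omega_q$, which has positive distance to $\partial\Omega_q$ uniformly in $\si \in [0,1]$. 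The lower-flatness case runs symmetrically via cone flatness with opening $2\ta^-$ for $\lu_\eqr$ and the fact that $\Omega^c_t(\lu_\eqr)$ is nonincreasing in $t$: one centers the ball at $\si := \tilde\si - \la\e$ with $\tilde\si$ returned by Lemma~\ref{le:furthest-point} applied with $\tau = 1$, and the Lipschitz lower bound $\tilde\si \gtrsim \e^\be$ ensures $\si > \la\e$ so that the space-time ball stays in $\set{t > 0}$. The main technical difficulty is this parameter calibration: $c_\e\e$ must simultaneously dominate the cone-flatness scale $\e\abs{\ln\e}^{1/2}$ and the geometric buffer $\kappa\la\e$ while remaining $o(\e^\be)$, and the choice of $\tau$ in Lemma~\ref{le:furthest-point} must leave exactly the right room to fit $\si$ and $\si + \la\e$ into $[0,1]$.
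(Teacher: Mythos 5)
Your construction follows essentially the same route as the paper's proof: Lemma~\ref{le:furthest-point} to produce a free-boundary point of $\uu_\eqr$ at a prescribed distance from the obstacle, the reversed cone flatness of Remark~\ref{rem:reversed-cone-flatness} to fit a ball of radius $\la\e$ into $\Omega(\uu_\eqr)$ at a center shifted backwards along $\nu$, and Proposition~\ref{pr:phi-lipschitz} together with time-monotonicity to control $\uPhi_\eqr(\si+\la\e)$ and the constraint $\si+\la\e\le 1$. Your bookkeeping differs harmlessly from the paper's (you apply the furthest-point lemma at $\tau=1-2\la\e$ with $\eta=\tfrac12\e^\be+r\la\e+c_\e\e$ and shift by $c_\e\e\sim\e\abs{\ln\e}^{1/2}$, while the paper takes $\eta=\tfrac58\e^\be$, shifts by $\tfrac18\e^\be-r\la\e$, and deduces $\si+\la\e\le1$ from monotonicity of the flatness); your constant $\kappa=(1+\cos\ta^+)/(1-\cos\ta^+)$ exceeds $1/\sin\ta^+$, so the ball-in-cone computation goes through, and your approximation of the boundary point by interior points is in fact more careful than the paper's direct application of the Remark at a point of $\Gamma(\uu_\eqr)$.

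The one step whose stated justification is wrong is the containment $B_{\la\e}(\zeta,\si)\subset Q_q$. You argue that $\zeta$ lies within $O(\e^\be)$ of $\Gamma_\si(\Pqr)\subset\Omega_q$, ``which has positive distance to $\partial\Omega_q$ uniformly in $\si\in[0,1]$.'' This is false: $\Gamma_\si(\Pqr)$ is the full hyperplane $\set{x:x\cdot\nu=r\si}$, which is not contained in the cone $\Omega_q$ and intersects $\partial\Omega_q$; moreover the point $(\tilde\zeta,\tilde\si)$ produced by Lemma~\ref{le:furthest-point} may a priori sit close to the lateral boundary of $\Omega_q$, so no uniform distance can be extracted from proximity to $\Gamma_\si(\Pqr)$ alone. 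The correct argument is a one-liner from Proposition~\ref{pr:obstacle-sols}(c): $\uu_\eqr=\Pqr$ on $(\Rn\times\R)\setminus Q_q$, hence $\Omega(\uu_\eqr)\cap\Omega^c(\Pqr)\subset Q_q$; since you have already shown the ball lies in $\Omega(\uu_\eqr)$ and in $\set{x\cdot\nu>rt}=\Omega^c(\Pqr)$, the $Q_q$-containment (and, via Proposition~\ref{pr:obstacle-sols}(d), the $C^+$-containment) follows, which is exactly how the paper closes this step. A secondary caveat: the lower-flatness case is not quite a literal mirror image, because Proposition~\ref{pr:obstacle-sols}(d) there places the ball in $\cl{C^-}$ while the statement asks for $C^+$; one needs the easy extra observation that $\cl{C^-_t}\cap\set{x\cdot\nu<rt}\subset C^+_t$ (the two cones share the base $\Omega_q\cap\Gamma_t$, with $C^+_t$ widening and $C^-_t$ narrowing behind the front), and likewise the argument that the ball stays in $\set{t>0}$ uses your Lipschitz lower bound on $\tilde\si$, so ``runs symmetrically'' should be expanded to include these points.
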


\begin{proof}
Fix $q$ and $r$.
For $T = 1$
we have constants $\e_0$ and $K$ from Proposition~\ref{pr:cone-flatness}.
Let us define
\begin{align}
\label{def-eta}
    \eta := \inf \set{s : B_\la(s \nu) \subset \cone_{\nu,\ta^+}(0)}
        = \frac{\la}{\sin \ta^+}.
\end{align}
Because $\be <1$,
we may assume, making $\e_0$ smaller if necessary,
that for all $\e < \e_0$
\begin{align}
\label{shazam}
\max \set{r\la\e + \eta \e + K \e \abs{\ln\e}^{\frac12},
2\la \e  (M-m)\abs q}
< \frac18 \e^\be.
\end{align}
Lemma~\ref{le:furthest-point}
provides a point $(\hat \zeta,\hat \si) \in \partial \Omega(\uu_\eqr)$
such that $\hat \zeta = r \hat \sigma + \frac58 \e^\be$
and $\uPhi_\eqr(\hat\si) = \frac58 \e^\be$.
We define
\begin{align*}
\zeta &= \hat \zeta + (r\la\e - \frac18 \e^\be) \nu, &
\sigma &= \hat \sigma + \la \e.
\end{align*}
Clearly $\zeta \cdot \nu = r \sigma + \frac12 \e^\be$.
Furthermore,
due to \eqref{shazam}, the definition of $\eta$ in
\eqref{def-eta}
and Remark~\ref{rem:reversed-cone-flatness},
we have
\begin{align*}
B_{\la \e} (\zeta, \si)
\in \cone_{-\nu,\ta^+} (\hat \zeta - K \e \abs{\ln\e}^{\frac12}\nu)
\times [\hat \si, +\infty)
\subset \Omega(\uu_\eqr),
\end{align*}
see Figure~\ref{fig:ball-outlier}.
\begin{figure}
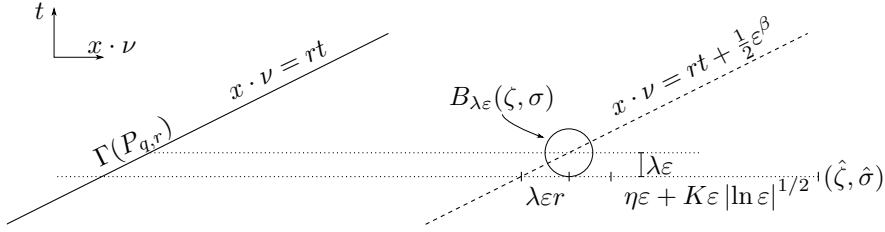

\centering
\fig{Fig5}{4.5in}
\caption{Construction of the outlying ball in Corollary~\ref{co:ball-outlier}}
\label{fig:ball-outlier}
\end{figure}
Since $B_{\la\e}(\zeta,\si) \subset \Omega^c(\Pqr)$,
Proposition~\ref{pr:obstacle-sols}(d)
implies $B_{\la\e}(\zeta,\sigma) \subset Q_q \cap C^+$.

Finally,
Proposition~\ref{pr:phi-lipschitz}
and \eqref{shazam}
imply that
\begin{align*}
\uPhi_\eqr(\si+\la\e) =
\uPhi_\eqr(\hat\si+2\la\e)
&\leq \uPhi_\eqr(\hat\si) + 2\la\e(M\abs q - r)_+
\\&\leq \frac58\e^\be + 2\la\e(M-m)\abs q < \frac34 \e^\be.
\end{align*}
In particular, $\uPhi_\eqr(\si + \la\e) \leq \e^\be$
and $\si + \la \e \leq 1$, that is,
$B_{\la\e}(\zeta,\sigma) \subset \set{t\leq 1}$.
This finishes the proof for $\uu_\eqr$.
The proof for $\lu_\eqr$ is analogous.
\qedhere\end{proof}

\subsection{Detachment lemma}
\label{sec:detachment}

The main consequence of the cone flatness property of the previous section (Proposition~\ref{pr:cone-flatness}), combined
with the monotonicity of solutions
across scales (Proposition~\ref{pr:monotonicity}),
is the detachment lemma, presented in this section.
Heuristically speaking, we can conclude that if
the solution of the obstacle problem is not $\e^\be$-flat on some scale,
the boundary of the solution on a smaller scale is completely detached
from the obstacle
by a fraction of $\e^\be$
on a substantial portion of $Q_q$.

\begin{lemma}[Detachment]
\label{le:detachment-ball}
For every $q$ and $r$ satisfying \eqref{rqrestriction}
and $\be \in (0,1)$,
there exist $\ga \in (0,1)$ and $\e_0 > 0$ such that
for $\mu := \frac{1}{4\sqrt{1 +r^2}}$:
\begin{itemize}
\item
if
$\lPhi_\eqr(1) > \frac34\e^\be$ for some $\e < \e_0$
then
\begin{align}
\label{detachment-expansion-set-lu}
    \lu_{\ga \e; q,r} &= 0
    &\text{in} \quad &
    B_{\ga\mu \e^\be} (r \nu, 1);
\end{align}
\item
if
$\uPhi_\eqr(1) > \frac34\e^\be$ for some $\e < \e_0$
then
\begin{align}
\label{detachment-expansion-set-uu}
    \uu_{\ga \e; q,r} &> 0
    &\text{in} \quad &
    B_{\ga\mu\e^\be} (r \nu, 1).
\end{align}
\end{itemize}
\end{lemma}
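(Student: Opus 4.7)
The plan is to extract a single large detached ball at scale $\e$ from Corollary~\ref{co:ball-outlier}, use the scaling-translation monotonicity of Proposition~\ref{pr:monotonicity} with scaling factor $a = \gamma \in (0,1)$ to transport this detachment to a whole family of admissibly translated balls on the finer scale $\gamma\e$, and choose $\gamma$ small enough that the union of these balls covers the target $B_{\gamma\mu\e^\be}(r\nu,1)$. I describe the construction for the subsolution $\lu_{\gamma\e} = 0$; the case of $\uu_{\gamma\e} > 0$ is entirely symmetric, with the cone angle $\ta^-$ replaced by $\ta^+$ and the relevant inequalities reversed.

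First, fix $\lambda = \sqrt{n+1}$, so that balls of radius $\lambda\e$ centered on $\e\Z^{n+1}$ cover $\Rn\times\R$, and let $\e_0$ be the corresponding constant from Corollary~\ref{co:ball-outlier}. For $\e < \e_0$ with $\lPhi_\eqr(1) > \frac{3}{4}\e^\be$, we obtain $(\zeta,\sigma)$ with $\zeta\cdot\nu = r\sigma - \tfrac12 \e^\be$ and $B_{\lambda\e}(\zeta,\sigma) \subset \Omega^c(\lu_\eqr)$. Applying Proposition~\ref{pr:monotonicity} with $a = \gamma$ gives, for each admissible $(y,\tau) \in \e\Z^{n+1}$,
\begin{align*}
\lu_\eqr(x,t) \geq \gamma\, \lu_{\gamma\e}\bigl(\gamma(x-y),\gamma(t-\tau)\bigr) \qquad \text{for all } (x,t) \in \Rn\times\R;
\end{align*}
letting $(x,t)$ range over $B_{\lambda\e}(\zeta,\sigma)$ then forces $\lu_{\gamma\e} \equiv 0$ on the image ball $B_{\gamma\lambda\e}(\gamma(\zeta-y),\gamma(\sigma-\tau))$. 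For any target $(x_*,t_*) \in B_{\gamma\mu\e^\be}(r\nu,1)$, the idealized choice $(y,\tau) = (\zeta - x_*/\gamma,\,\sigma - t_*/\gamma)$ centers this image ball exactly at $(x_*,t_*)$, and rounding to the nearest point of $\e\Z^{n+1}$ costs at most $\lambda\e/2$ by the choice of $\lambda$, so the rounded image ball of radius $\gamma\lambda\e$ still contains $(x_*,t_*)$.

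The remaining task is to verify the admissibility of the rounded $(y,\tau)$. For $\gamma$ sufficiently small, $\tau \approx \sigma - 1/\gamma$ is negative; since $m\abs{q} \leq r$, we have $\min(r\tau,\,m\abs{q}\tau) = r\tau$, so the speed condition reduces to $y\cdot\nu \leq r\tau$. Using $(r\nu)\cdot\nu = r$ and the space-time Cauchy--Schwarz bound with the vector $(r,-1)$, for every $(x_*,t_*) \in B_{\gamma\mu\e^\be}(r\nu,1)$,
\begin{align*}
rt_* - x_*\cdot\nu = r(t_*-1) - (x_*-r\nu)\cdot\nu \leq \sqrt{1+r^2}\,\gamma\mu\e^\be = \tfrac14\gamma\e^\be,
\end{align*}
so the nominal value $y\cdot\nu - r\tau = -\e^\be/2 + (rt_* - x_*\cdot\nu)/\gamma$ is at most $-\e^\be/4$. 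This $\e^\be/4$ margin is precisely what the choice $\mu = 1/(4\sqrt{1+r^2})$ is designed to produce, and it easily absorbs the $O(\lambda\e)$ rounding error once $\e$ is small. The cone condition $y \in \cl{\cone_{-\nu,\ta^-}(r_V^-\tau\nu + (\gamma-1)V_0^-)}$ becomes automatic for small $\gamma$: the displacement from the vertex to $y$ is dominated by $(r - r_V^-)\nu/\gamma$, which lies strictly inside $\cone_{-\nu,\ta^-}$ because $r_V^- < r$ by Remark~\ref{re:speedVplust}, so the cone inclusion holds with room to spare once $\gamma$ is chosen small in terms of $(m,M,\abs{q},r)$. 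The main technical obstacle is exactly this uniform verification of both admissibility conditions throughout the target ball, and the role of $\mu$ is to guarantee an $\e^\be$-scale margin that absorbs every approximation along the way.
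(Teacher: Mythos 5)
Your proposal is correct and takes essentially the same route as the paper's proof: Corollary~\ref{co:ball-outlier} supplies the outlying detached ball, Proposition~\ref{pr:monotonicity} with $a=\ga$ transports it to the finer scale, and the $\e^\be/4$ margin built into $\mu$ plus a small choice of $\ga$ handle the speed and cone admissibility conditions uniformly over the target ball; the paper merely organizes the covering setwise (via the sets $H$, $S_\ga$ and Lemma~\ref{le:grid-cover}) rather than by your pointwise choice and rounding of the translation. Only cosmetic slips: the monotonicity prefactor should be $\ga^{-1}$ rather than $\ga$ (irrelevant for the zero/positivity conclusion), and the dominant displacement from the cone vertex is $-(r-r_V^-)\nu/\ga$, i.e.\ along the cone axis $-\nu$, which is what makes the inclusion hold for small $\ga$.
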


\begin{proof}
We give the proof for $\uu_\eqr$.
The proof for $\ou_\eqr$ is analogous.
The basic idea of the proof is quite straightforward.
If $\uu_\eqr$ is not $\e^\be$-flat for sufficiently small $\e$,
the cone flatness proposition implies
that there must be a ball of sufficiently large radius $\la \e$
with a center
still $\frac{1}{2}\e^\be$ away from the obstacle in the direction of $\nu$,
and $\uu_\eqr$ is positive on this ball.
Then by looking at a smaller scale $\ga \e$,
$\ga < 1$,
and using the monotonicity property,
we can cover the ball given in \eqref{detachment-expansion-set-uu}
by the translates of the rescaled ball of radius $\ga \la \e$
and conclude that $\uu_{\ga \e; q, r}$ must be positive in that set.
These translates cover the ball $B_{\gamma\mu\e^\be}(r\nu,1)$.

For any $\ga \in (0,1)$, let us define the sets
\begin{align*}
E &:= \set{(x,t) : x\cdot\nu = rt + \frac12 \e^\be} \cap
Q_q \cap \set{t \leq 1}, \\
H &:= \set{(x,t) : x\cdot \nu \leq r t},\qquad
S_\ga :=  \bra{(1- \gamma) \cl C^+} \cap \set{t \geq 0}.
\end{align*}
Recall that if $-(y,\tau) \in H \cap S_\ga$
then $(y,\tau)$ satisfies \eqref{y-in-taplus-cone} (with $a = \ga$).

We set $\la = \sqrt{n + 1}$ and
note that the choice of $\la$
guarantees $\Z^{n+1} + B_\la = \R^{n+1}$.
We set $\mu = \frac1{4\sqrt{1 + r^2}}$
and observe that
\begin{align*}
B_{\ga\mu\rho}(r\nu,1) \subset
B_{\ga\mu\rho}(r\nu,1) + H \subset H + \frac{\gamma\rho}4 (\nu,0)
\end{align*}
for any $\rho > 0$.

Take $\e_0$ from Corollary~\ref{co:ball-outlier}.
By making $\e_0$ smaller if necessary,
we can assume that
\begin{align*}
\la \e \leq \mu \e^\be \qquad \text{for } \e < \e_0.
\end{align*}
This guarantees,
since $-\gamma E \subset H - \frac\gamma2\e^\be(\nu,1)$,
that
\begin{align}
\label{gamma-cons-1}
B_{\ga\mu \e^\be}(r\nu, 1) + B_{\ga \la \e}(0,0)
        - \ga E
        =B_{\ga\mu \e^\be + \ga \la \e}(r\nu, 1)
        - \ga E
        \subset H
\end{align}
for all $\ga > 0$, $\e < \e_0$.

We claim that we can find $\ga \in (0,1)$ such that,
for all $\e \in (0,\e_0)$,
\begin{align}
\label{gamma-cons-2}
    B_{\ga\mu \e^\be}(r\nu, 1) + B_{\ga \la \e}(0,0)
        - \ga E
        \subset S_\ga.
\end{align}
This is indeed possible by choosing $\ga$ small,
because
the set on the left-hand side is bounded,
contained in a ball $B_{\ga R}(r\nu, 1)$ for some $R > 0$,
independent of $\e < 1$,
and $(r \nu, 1)$ is in the interior of $S_\ga$ for all $\ga < 1$.
Such choice of $\ga$ and $\e_0$
that provides \eqref{gamma-cons-1} and \eqref{gamma-cons-2}
then assures by Lemma~\ref{le:grid-cover} that
\begin{align}
\label{choice-gamma-cons}
B_{\ga \mu \e^\be} (r\nu, 1) &\subset (S_\gamma \cap H)^{\grid\ga\e}
+ B_{\ga\la\e}(0,0)
+\ga(\zeta,\si)
\end{align}
for any $\e< \e_0$ and $(\zeta, \si) \in E$,
where the notation $\grid\ga\e$ is introduced in
Definition~\ref{def:grid}.

By Corollary~\ref{co:ball-outlier} we can find $(\zeta,\si) \in E$ such that
\begin{align}
\label{z-sigma}
\uu_\eqr &> 0 \text{ in }
B_{\la\e}(\zeta,\si).
\end{align}
Additionally, any $(y,\tau) \in -(S_\ga \cap H)^{\grid\ga\e}$
satisfies the condition \eqref{y-in-taplus-cone}
with $a = \ga$
and also $(y,\tau) \in \ga\e \Z^{n+1}$.
Hence, by the monotonicity proposition~\ref{pr:monotonicity},
\begin{align*}
\uu_{\ga \e; q, r} (x,t)
    \geq \ga\uu_\eqr\pth{\frac{x + y}{\ga}, \frac{t + \tau}{\ga}}
    > 0
\end{align*}
for all $(x,t)$ such that
$\ga^{-1}\pth{x + y, t + \tau} \in B_{\la \e}(\zeta, \si)$;
this can be rewritten as
\begin{align*}
(x,t) \in (S_\gamma \cap H)^{\grid\ga\e} + B_{\ga \la \e}(0,0)
+ \ga (\zeta,\si).
\end{align*}
Therefore $\uu_{\ga\e;q,r} > 0$ in $B_{\ga\mu\e^\be}(r\nu,1)$
by \eqref{choice-gamma-cons}.
\qedhere\end{proof}

\subsection{Homogenized velocity}
\label{sec:homogenized-velocity}

By now we have laid enough groundwork to be able to introduce reasonable candidates for
the homogenized velocity $r(Du)$.
The two main items in our toolbox, the local comparison principle (Theorem~\ref{th:localComparison})
and the detachment lemma (Lemma~\ref{le:detachment-ball}),
both work at the critical flatness rate $\e^\beta$ for $\beta$ in the interval $(\frac45, 1)$.

We therefore fix an exponent $\be \in (\frac45, 1)$ in the rest of the paper
and make the educated guess that if $r$ is a good candidate for the homogenized velocity at
a given gradient $q \in \R^n \setminus \set0$,
then the flatness of both $\uu_\eqr$ and $\ou_\eqr$ decreases \emph{faster} than $\e^\beta$ as $\e \to 0+$.
This consideration leads us to the two candidates for the homogenized velocity:
given $q \in \Rn \setminus \set0$,
we define the lower and upper homogenized velocities
\begin{subequations}
\label{homogenized-velocities}
\begin{align}
\lr(q) &:= \sup \set{r > 0 :
    \limsup_{\e\to0} \e^{-\be} \lPhi_\eqr(1) < 1},\\
\ur(q) &:= \inf \set{r > 0 :
    \limsup_{\e\to0} \e^{-\be} \uPhi_\eqr(1) < 1}.
\end{align}
\end{subequations}
If $q = 0$ we set $\lr(0) = \ur(0) = 0$.
Heuristically speaking,
if $r > \lr(q)$ then
$\ou_\eqr$ does not have $\e^\be$-flat free boundary
up to time $1$ for a subsequence of $\e$ converging to $0$.
Similarly, if $r < \ur(q)$ then
$\uu_\eqr$ does not have $\e^\be$-flat free boundary up to time $1$
for a subsequence of $\e$ converging to $0$.

In this and in the following subsection
we show that the two candidates $\lr$ and $\ur$ give rise to the same homogenized velocity $r$
(in the sense of Proposition~\ref{pr:r-semicontinuity}).
In this section we show that they are well defined and satisfy some basic monotonicity properties.

\begin{proposition}
\label{pr:bound-on-r}
For all $q \in \Rn$, the homogenized velocities
$\overline{r}(q)$ and $\underline{r}(q)$
introduced in \eqref{homogenized-velocities}
are well defined.
Moreover,
\begin{align*}
\overline{r}(q) &\in [m\abs{q}, M\abs{q}],
    & \underline{r}(q) &\in [m\abs{q}, M\abs{q}].
\end{align*}
\end{proposition}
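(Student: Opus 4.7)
The plan is to establish the four bounds $\lr(q) \geq m\abs{q}$, $\ur(q) \leq M\abs{q}$, $\lr(q) \leq M\abs{q}$, and $\ur(q) \geq m\abs{q}$ separately. The first two are immediate from Proposition~\ref{pr:planar-solution-range}: for $r \leq m\abs{q}$ the planar function $\Pqr$ lies in $\subs(g^\e)$ and is therefore itself an admissible candidate in the supremum in \eqref{eq:obstaclesupersolution}, so $\ou_\eqr \geq \Pqr$; together with the built-in upper bound $\ou_\eqr \leq \Pqr$ this forces $\ou_\eqr = \Pqr$, so $\lPhi_\eqr \equiv 0$ for every $\e > 0$, the value $r$ lies in the set defining $\lr(q)$, and $\lr(q) \geq m\abs{q}$ follows by taking sup. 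Symmetrically, $r \geq M\abs{q}$ yields $\Pqr \in \supers(g^\e)$, hence $\uu_\eqr = \Pqr$, $\uPhi_\eqr \equiv 0$, and $\ur(q) \leq M\abs{q}$.

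For the other two bounds the plan is to compare with auxiliary obstacle solutions on the constant media $g \equiv M$ and $g \equiv m$ that do not depend on $\e$. Define $\ou^M_{q,r}$ and $\uu^m_{q,r}$ by the recipes in \eqref{obstaclesolution} with $g^\e$ replaced by the constants $M$ and $m$ respectively; both are well defined (the zero function is always admissible for $\ou^M_{q,r}$, while $P_{q, M\abs{q}} \in \supers(M) \subset \supers(m)$ is always admissible for $\uu^m_{q,r}$). Because $g^\e \leq M$ gives $\subs(g^\e) \subset \subs(M)$ and $g^\e \geq m$ gives $\supers(g^\e) \subset \supers(m)$, monotonicity of the defining sup/inf over admissible classes yields $\ou_\eqr \leq \ou^M_{q,r}$ and $\uu_\eqr \geq \uu^m_{q,r}$ pointwise. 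Via the definitions in Section~\ref{sec:flatness} these translate into $\lPhi_\eqr(1) \geq \lPhi^M(1)$ and $\uPhi_\eqr(1) \geq \uPhi^m(1)$, with the right-hand sides depending only on $(m, M, q, r)$.

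The remaining task is to show $\lPhi^M(1) > 0$ for $r > M\abs{q}$ and $\uPhi^m(1) > 0$ for $r < m\abs{q}$. I would argue by contradiction for the first statement: if $\lPhi^M(1) = 0$ then $\ou^M_{q,r} \geq P^{-\eta}_{q,r}$ for every $\eta > 0$ on $\set{t \leq 1}$, so $\ou^M_{q,r} \geq \Pqr$ and, combined with $\ou^M_{q,r} \leq \Pqr$, this forces $\ou^M_{q,r} = \Pqr$ on $Q_q \cap \set{t \leq 1}$; since $\ou^M_{q,r} \in \subs(M, Q_q)$ this would make $\Pqr$ a viscosity subsolution of $M$ in $Q_q \cap \set{t \leq 1}$. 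To refute this, pick any free boundary point $(x_0, t_0) \in \Gamma(\Pqr) \cap Q_q$ with $t_0 \in (0, 1]$ (for instance $x_0 = r t_0 \nu$ on the axis of $\Omega_q$) and use the test function
\begin{align*}
\phi(x,t) := r \abs{q}(t - t_0) - \abs{q}(1+\de)(x\cdot\nu - x_0\cdot\nu) - \tfrac{\kappa}{2}(x\cdot\nu - x_0\cdot\nu)^2,
\end{align*}
with $\kappa > 0$ small and $\de \in \bigl(0, \sqrt{r/(M\abs{q})} - 1\bigr)$ (nonempty since $r > M\abs{q}$). Using $x_0\cdot\nu = r t_0$, on the positive side of $\Pqr$ one computes $\Pqr - \phi = \abs{q}\de(x\cdot\nu - x_0\cdot\nu) + \tfrac{\kappa}{2}(x\cdot\nu - x_0\cdot\nu)^2$, whose linear term is nonpositive on the admissible set $\cl\Omega(\Pqr) \cap \set{t \leq t_0}$ (where $x\cdot\nu \leq r t \leq r t_0 = x_0\cdot\nu$) and dominates the quadratic term when $\abs{x\cdot\nu - x_0\cdot\nu} < 2\abs{q}\de/\kappa$; a parallel check on the free boundary side shows $\Pqr - \phi \leq 0$ there too, so $\Pqr - \phi$ has a local maximum equal to $0$ at $(x_0, t_0)$. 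However at $(x_0, t_0)$ we have $-\Delta\phi = \kappa > 0$, $D\phi = -\abs{q}(1+\de)\nu \neq 0$, and $\phi_t - M\abs{D\phi}^2 = \abs{q}\bigl(r - M\abs{q}(1+\de)^2\bigr) > 0$, so every clause of Definition~\ref{def:visc-test-sub}(ii-2) fails. The contradiction delivers $\lPhi^M(1) > 0$; since $\lPhi^M(1)$ is independent of $\e$, we conclude $\lPhi_\eqr(1) \geq \lPhi^M(1) > 0$ uniformly, whence $\limsup_{\e \to 0} \e^{-\be}\lPhi_\eqr(1) = +\infty$ and $\lr(q) \leq M\abs{q}$. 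The mirror argument, using the test function $\phi(x,t) = r\abs{q}(t - t_0) - \abs{q}(1-\de)(x\cdot\nu - x_0\cdot\nu) + \tfrac{\kappa}{2}(x\cdot\nu - x_0\cdot\nu)^2$ to show $\Pqr \notin \supers(m)$ for $r < m\abs{q}$, yields $\ur(q) \geq m\abs{q}$.

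The main obstacle is calibrating the test function: a purely normal-quadratic or purely normal-linear perturbation automatically satisfies the first clause of the viscosity condition via $-\Delta\phi$ having the correct sign, so the linear slope must be tilted by $(1 \pm \de)$ to activate the free-boundary clause $\phi_t - f\abs{D\phi}^2$, the sign of the quadratic chosen oppositely to flip $-\Delta\phi$, and both perturbations kept small enough to preserve the one-sided touching in $\cl\Omega(\Pqr) \cap \set{t \leq t_0}$.
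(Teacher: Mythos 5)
Your proposal is correct, but for the two nontrivial bounds it takes a genuinely different route from the paper. The easy half ($\lr(q)\geq m\abs q$ and $\ur(q)\leq M\abs q$ via Proposition~\ref{pr:planar-solution-range} and $\lu_\eqr=\Pqr$, resp.\ $\uu_\eqr=\Pqr$) coincides with the paper. For $\lr(q)\leq M\abs q$, however, the paper invokes the local comparison principle (Theorem~\ref{th:localComparison}): it picks $q_2=a^3q$, $r_2=M\abs{q_2}<r$, notes $\uu_{\e;q_2,r_2}=P_{q_2,r_2}$ is exactly flat, and concludes $\lu_\eqr$ cannot be $\e^\be$-flat for small $\e$ (and says the $\ur$ half is similar). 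You instead compare with obstacle solutions for the constant media $g\equiv M$, $g\equiv m$, use $\subs(g^\e)\subset\subs(M)$ and $\supers(g^\e)\subset\supers(m)$ to get an $\e$-independent lower bound on the flatness, and rule out zero flatness by exhibiting a tilted test function showing $\Pqr\notin\subs(M)$ for $r>M\abs q$ (resp.\ $\Pqr\notin\supers(m)$ for $r<m\abs q$); your calibration of $\de$ and $\kappa$ and the local-maximum check on $\cl\Omega(\Pqr)\cap\set{t\leq t_0}$ are correct. What each approach buys: the paper's argument is short because the heavy machinery of Theorem~\ref{th:localComparison} is already in place and fits the section's overall strategy; yours is more elementary and self-contained (no local comparison needed) and even gives the stronger conclusion that $\lPhi_\eqr(1)$, $\uPhi_\eqr(1)$ are bounded below by positive constants independent of $\e$. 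The price is that you introduce auxiliary constant-medium obstacle solutions for pairs $(q,r)$ violating \eqref{rqrestriction}, so Proposition~\ref{pr:obstacle-sols} cannot be cited wholesale; you only need its ``standard'' Perron-type parts, which indeed do not use the restriction, but this should be said. Two small repairs: $P_{q,M\abs q}$ alone is not admissible for $\uu^m_{q,r}$ since it fails to dominate $\Pqr$ for $t<0$; use $\max(\Pqr,P_{q,M\abs q})$, which equals $P_{q,M\abs q}$ on $Q_q$ (the paper's own device in the proof of Proposition~\ref{pr:obstacle-sols}(c)). And in the supersolution mirror the touching from below must hold in a full space-time neighborhood intersected with $\set{t\leq t_0}$, not only on $\cl\Omega(\Pqr)$; your tilted function does satisfy this on the zero set (the term $r\abs q(t_0-t)$ plus the $(1-\de)$-slope absorbs the quadratic there), but the verification deserves a line rather than the word ``mirror''.
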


\begin{proof}
We will show the statement for $\lr(q)$,
the proof for $\ur(q)$ is similar.

First, we consider $r \in (0, m\abs{q}]$.
Due to Proposition~\ref{pr:planar-solution-range},
$\Pqr \in \subs(g^\e, Q_q)$
and therefore $\lu_\eqr = \Pqr$, by definition.
In particular, $\lPhi_\eqr  \equiv 0$
and we conclude that $\lr(q) \geq m\abs{q}$.

On the other hand, if $r > M \abs{q}$,
we can find $r_2$, $q_2$ such that
$q_2 = a^3 q$ for some $a > 1$ and $M \abs{q_2} = r_2 < r_1$.
Since $P_{q_2,r_2} \in \supers(g^\e, Q_q)$,
we now have $\uu_\eqr = P_{q_2, r_2}$, which is $\e^\be$-flat
for all $\e > 0$.
Therefore, by the local comparison, Theorem~\ref{th:localComparison},
there exists $\e_0$ such that
$\ou_\eqr$ cannot be $\e^\be$-flat for any $\e < \e_0$.
We conclude that $\lr(q) \leq M \abs{q}$.
\qedhere\end{proof}

$\lr$ and $\ur$ also enjoy the following monotonicity property.

\begin{proposition}
For any $a > 1$ and $q \in \Rn$ we have
\begin{align*}
\lr(q) &\leq \lr(a q) & \ur(q) &\leq  \ur(aq).
\end{align*}
\end{proposition}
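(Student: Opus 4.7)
The plan is to exploit a basic scaling property of the Hele-Shaw problem: if $u$ is a viscosity subsolution of $g^\e$ (informally, $u_t\leq g^\e |Du|^2$ on the free boundary) and $a\geq 1$, then $au$ remains a subsolution, since
\[
(au)_t = au_t \leq a g^\e |Du|^2 \leq a^2 g^\e |Du|^2 = g^\e |D(au)|^2.
\]
Symmetrically, for $a\geq 1$, $(1/a)v$ remains a supersolution whenever $v$ is one. This can be verified at the level of test functions (Definitions~\ref{def:visc-test-sub} and~\ref{def:visc-test-super}): a test function $\phi$ for $au$ corresponds to $\phi/a$ for $u$, and the free boundary inequality carries the right sign because $a\geq 1$.

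The second ingredient is the elementary geometric observation that $\nu=-q/|q|$ depends only on the direction of $q$, and so do the domain $Q_q$ and the cones $C^\pm_t$ (which depend only on $\nu$, $r$, $m$, $M$). Hence $Q_{aq}=Q_q$ for $a>0$, and $P_{aq,r}=aP_{q,r}$ with the same support, so also $P^{\pm\eta}_{aq,r} = a P^{\pm\eta}_{q,r}$. Combining these, $a\ou_{\e;q,r}$ is a viscosity subsolution on $Q_{aq}$ lying below $aP_{q,r}=P_{aq,r}$, and the maximality built into the definition~\eqref{eq:obstaclesupersolution} of $\ou_{\e;aq,r}$ yields
\[
a\,\ou_{\e;q,r} \leq \ou_{\e;aq,r} \qquad \text{on } \Rn\times\R.
\]
An entirely symmetric argument, applied to $(1/a)\uu_{\e;aq,r}$ (a supersolution above $P_{q,r}$), gives $\uu_{\e;q,r} \leq (1/a)\uu_{\e;aq,r}$.

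Translating these to flatness via Definition~\ref{def:flatness}: whenever $\ou_{\e;q,r}\geq P^{-\eta}_{q,r}$ on $\set{t\leq 1}$, one has $\ou_{\e;aq,r}\geq aP^{-\eta}_{q,r}=P^{-\eta}_{aq,r}$ on the same set, hence $\lPhi_{\e;aq,r}(1) \leq \lPhi_{\e;q,r}(1)$; similarly $\uPhi_{\e;q,r}(1) \leq \uPhi_{\e;aq,r}(1)$. Feeding these pointwise-in-$\e$ inequalities into the definitions~\eqref{homogenized-velocities}, the set of $r$'s qualifying in the definition of $\lr(q)$ is contained in the analogous set for $\lr(aq)$, while the set of $r$'s qualifying in the definition of $\ur(aq)$ is contained in that for $\ur(q)$. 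Taking $\sup$ and $\inf$ respectively gives both claimed inequalities.

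The only point that needs a brief sanity check is the compatibility of the scaling argument with the semicontinuous envelopes appearing in~\eqref{obstaclesolution}: multiplication by a positive constant preserves upper (respectively lower) semicontinuity, so $a\ou_{\e;q,r}$ (respectively $(1/a)\uu_{\e;aq,r}$) is a legitimate competitor in the sup (respectively inf), and the envelope does not obstruct the inequalities above. I anticipate no substantive difficulty beyond this bookkeeping.
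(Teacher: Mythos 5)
Your proposal is correct and follows essentially the same route as the paper: the paper likewise multiplies $\ou_{\e;q,r}$ by $a$ (citing Proposition~\ref{pr:nonuniform-perturbation} for the fact that scaling by $a\geq 1$ preserves subsolutions, which is what your test-function check verifies directly), uses $aP_{q,r}=P_{aq,r}$ together with $Q_{aq}=Q_q$ and the maximality in the definition of $\ou_{\e;aq,r}$ to get $a\,\ou_{\e;q,r}\leq \ou_{\e;aq,r}$, and concludes through the flatness comparison, treating the $\ur$ case as the analogous $(1/a)$-scaling argument you spell out. No gaps.
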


\begin{proof}
The statement is trivial for $q = 0$.
Hence we assume that $q \neq 0$
and $a > 1$.
Suppose that $\lr(q) > \lr(aq)$.
By definition of $\lr$, there exists $r$ with
$\lr(q) \geq r > \lr(aq)$ and $\e_0 > 0$ such that
$\limsup_{\e\to0}\e^{-\be}\lPhi_\eqr(1) < \ta < 1$
for some $\ta$.
Observe that due to Proposition~\ref{pr:nonuniform-perturbation}
we have $a \ou_\eqr \in \subs(g^\e, Q_q)$,
and clearly $a \ou_\eqr \leq a P_{q,r} = P_{aq,r}$.
Therefore, by definition of $\ou_{\e;aq,r}$ and flatness of $\ou_\eqr$,
\[
P_{aq, r}^{-\e^\be \ta} = a P_{q,r}^{-\e^\be\ta} \leq a \ou_\eqr
    \leq \ou_{\e;aq, r} \qquad \text{in } \set{t \leq 1},\ \e \ll 1,
\]
and we conclude that
$\limsup_{\e\to0}\e^{-\be}\lPhi_{\e;aq,r}(1) \leq \ta < 1$ and therefore $r \leq \lr(aq)$, a contradiction.
The proof for $\ur$ is analogous.
\qedhere\end{proof}

However,
the local comparison principle,
Theorem~\ref{th:localComparison},
also provides some ordering between $\ur$ and $\lr$.

\begin{corollary}
\label{co:mixed-monotone}
For any $q \in \Rn$ and $a > 1$ we have
\begin{align*}
    \lr(q) \leq \ur(aq).
\end{align*}
\end{corollary}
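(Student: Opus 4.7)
The plan is to argue by contradiction via the local comparison principle, Theorem~\ref{th:localComparison}, which is engineered precisely for pairs of gradients and velocities of this sort. Suppose to the contrary that $\lr(q) > \ur(aq)$ for some $q \in \Rn$ and $a > 1$; the case $q = 0$ is vacuous since $\lr(0) = \ur(0) = 0$ by convention, so I assume $q \neq 0$.

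Using the $\e$-characterization of the supremum in the definition \eqref{homogenized-velocities} of $\lr(q)$, the strict inequality $\lr(q) > \ur(aq)$ lets me pick
\begin{align*}
r_1 \in \set{r > 0 : \limsup_{\e\to0}\e^{-\be}\lPhi_{\e;q,r}(1) < 1}, \qquad r_1 > \ur(aq).
\end{align*}
Similarly, by the analogous characterization of the infimum in the definition of $\ur(aq)$, I pick
\begin{align*}
r_2 \in \set{r > 0 : \limsup_{\e\to0}\e^{-\be}\uPhi_{\e;aq,r}(1) < 1}, \qquad r_2 < r_1.
\end{align*}
In particular $r_1 > r_2 > 0$, and there exist $\ta \in (0,1)$ and $\e_1 > 0$ such that $\lPhi_{\e;q,r_1}(1) \leq \ta\e^\be$ and $\uPhi_{\e;aq,r_2}(1) \leq \ta\e^\be$ for every $\e \in (0,\e_1)$; in other words, $\ou_{\e;q,r_1}$ and $\uu_{\e;aq,r_2}$ are both $\e^\be$-flat up to time $t=1$ for all sufficiently small $\e$.

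To close the argument, I set $b := a^{1/3} > 1$, giving $b^3 q = aq$ and fitting the hypothesis of Theorem~\ref{th:localComparison} applied with $q_1 = q$, $q_2 = aq$, the exponent $\be \in (\frac45, 1)$ fixed earlier, and the pair $r_1 > r_2$. The theorem furnishes $\e_0 > 0$ such that $\max\pth{\lPhi_{\e;q,r_1}(1),\, \uPhi_{\e;aq,r_2}(1)} > \e^\be$ for every $\e < \e_0$; choosing any $\e < \min(\e_0, \e_1)$ then forces $\ta > 1$, which is the desired contradiction.

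The proof is almost immediate given the machinery already built. The only place to exercise care is the existence of the pair $r_1 > r_2$ inside the respective ``flatness'' sets, which is where the strict inequality $\lr(q) > \ur(aq)$ is consumed; this requires nothing beyond the defining properties of the supremum and infimum, and in particular no monotonicity of $\lPhi_{\e;q,\cdot}(1)$ or $\uPhi_{\e;q,\cdot}(1)$ in the velocity parameter need be invoked.
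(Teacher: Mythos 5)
Your proof is correct and follows essentially the same route as the paper: assume $\ur(aq) < \lr(q)$, extract $r_1 > r_2 > 0$ from the defining sup/inf so that both $\ou_{\e;q,r_1}$ and $\uu_{\e;aq,r_2}$ are $\e^\be$-flat up to time $1$ for all small $\e$, and contradict Theorem~\ref{th:localComparison} (with $a^{1/3}$ in the role of the theorem's scaling factor). Your explicit care in choosing $r_1, r_2$ directly from the sets, rather than assuming those sets are intervals, is a slight tightening of the paper's phrasing but not a different argument.
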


\begin{proof}
Again, the ordering is obvious if $q = 0$.
Therefore let us assume that $q \neq 0$ and $a > 1$.
Suppose that the ordering does not hold, i.e.,
$\ur(aq) < \lr(q)$.

Set $q_1 = q$ and $q_2 = a q$.
By the definition of $\lr$ and $\ur$,
there are $r_1$ and $r_2$ with
\begin{align*}
    \ur(a q) < r_2 < r_1 < \lr(q)
\end{align*}
such that
\[
\limsup_{\e\to0} \e^{-\be} \max \pth{\lPhi_{\e; q_1, r_1}(1), \uPhi_{\e; q_2, r_2}(1)} < 1.
\]
But that is in contradiction with the conclusion of
Theorem~\ref{th:localComparison}.
\qedhere\end{proof}

\subsection{Semi-continuity}
\label{sec:semi-continuity}

In this section we prove that
$\ur(q) \leq \lr(q)$,
and that in fact $\lr$ and $\ur$ are semi-continuous.

\begin{proposition}
\label{pr:r-semicontinuity}
The functions $\lr$ and $\ur$ are semi-continuous on $\Rn$ and
\begin{align*}
\lr = \ur^* &\in USC(\Rn), & \ur = \lr_* &\in LSC(\Rn).
\end{align*}
Furthermore, $a \mapsto \lr(a q)$ and $a \mapsto \ur(aq)$ are nondecreasing
on $(0, \infty)$ for any $q \in \Rn$.
(Recall that we set $\lr(0) = \ur(0) = 0$.)
\end{proposition}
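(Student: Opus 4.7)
The proof divides into three parts.

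\textbf{Radial monotonicity and easy directions.} The monotonicity of $a \mapsto \lr(aq)$ and $a \mapsto \ur(aq)$ on $(0,\infty)$ follows from the preceding proposition: for $0 < b_1 < b_2$ and $q \in \Rn$, set $q' = b_1 q$ and $a = b_2/b_1 > 1$ in $\lr(q') \leq \lr(aq')$ to obtain $\lr(b_1 q) \leq \lr(b_2 q)$, and similarly for $\ur$. Corollary~\ref{co:mixed-monotone} then gives $\lr(q) \leq \ur(aq)$ for every $a > 1$; letting $a \to 1^+$,
\[
\lr(q) \;\leq\; \limsup_{a\to 1^+} \ur(aq) \;\leq\; \ur^*(q).
\]
Applying the corollary to $q' = aq$ with $a^{-1} > 1$ yields $\lr(aq) \leq \ur(q)$ for $a \in (0,1)$, and sending $a \to 1^-$ gives $\lr_*(q) \leq \ur(q)$. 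The case $q=0$ is immediate from Proposition~\ref{pr:bound-on-r}.

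\textbf{Hard direction.} It remains to upgrade these inequalities to equalities, namely $\lr \geq \ur^*$ and $\ur \leq \lr_*$; the two are symmetric and I will sketch only the first, which is equivalent to $\lr \in USC(\Rn)$. Arguing by contradiction, suppose $\lr$ fails to be USC at some $q \neq 0$: there exist $q_k \to q$ and $r$ with $\lr(q) < r \leq \liminf_k \lr(q_k)$. From $r > \lr(q)$ we have $\limsup_{\e\to 0} \e^{-\be}\lPhi_{\e;q,r}(1) \geq 1$, so along a subsequence $\e_j \to 0$ one has $\lPhi_{\e_j;q,r}(1) > \tfrac34 \e_j^\be$, and the detachment lemma (Lemma~\ref{le:detachment-ball}) yields $\ga \in (0,1)$ such that $\ou_{\ga\e_j;q,r} \equiv 0$ on $B_{\ga\mu\e_j^\be}(r\nu,1)$ with $\nu = -q/\abs{q}$. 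Conversely, $r < \lr(q_k)$ along a subsequence of $k$ gives the flatness $\ou_{\e;q_k,r} \geq P_{q_k,r}^{-\e^\be}$ for $t \leq 1$ and all small $\e$; evaluating at $x = r\nu$, $t = 1$ with $\e = \ga\e_j$ and using $\nu_k \to \nu$ as $k \to \infty$, one obtains $\ou_{\ga\e_j;q_k,r}(r\nu,1) \geq \tfrac12\abs{q}(\ga\e_j)^\be$ for $k$ sufficiently large (depending on $j$). I would then select $k = k(j)$ so that moreover $\abs{q_{k(j)} - q} \ll \e_j^\be$; the obstacles $P_{q_{k(j)},r}$ and $P_{q,r}$ then differ by $o(\e_j^\be)$ on bounded sets, and a stability argument comparing $\ou_{\ga\e_j;q,r}$ with $\ou_{\ga\e_j;q_{k(j)},r}$ via the viscosity comparison principle (Theorem~\ref{th:comparison}), after a small vertical shift absorbing the obstacle discrepancy, transfers the zero set on $B_{\ga\mu\e_j^\be}(r\nu,1)$ to the perturbed problem on a slightly smaller ball, contradicting the positive lower bound just established.

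\textbf{Main difficulty.} The principal obstacle is the quantitative continuous dependence of $\ou_{\e;q,r}$ on the gradient $q$ at a fixed scale $\e$: one must verify that a perturbation of $q$ by $o(\e^\be)$ changes the solution by $o(\e^\be)$ uniformly on compact subsets of $Q_q$, so that the detachment of size $\sim\e^\be$ survives the perturbation of the obstacle. Once this stability step is in hand, the scale-matching argument closes cleanly, and the LSC statement $\ur \leq \lr_*$ (hence $\ur = \lr_*$) follows by the symmetric argument using the second branch of the detachment lemma together with the analogous pointwise flatness bound for $\uu_\eqr$.
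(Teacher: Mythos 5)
Your first part is fine: the radial monotonicity and the inequalities $\lr\le\ur^*$, $\lr_*\le\ur$ follow from Corollary~\ref{co:mixed-monotone} exactly as in the paper's final assembly. The hard direction, however, has a genuine gap already at its first step. You claim that ``$r<\lr(q_k)$ gives the flatness $\ou_{\e;q_k,r}\ge P_{q_k,r}^{-\e^\be}$ for $t\le 1$ and all small $\e$.'' This does not follow from the definition \eqref{homogenized-velocities}: $\lr(q_k)$ is a \emph{supremum}, so $r<\lr(q_k)$ only yields some $r''>r$ with $\limsup_{\e\to0}\e^{-\be}\lPhi_{\e;q_k,r''}(1)<1$, and flatness is not known to be monotone in the velocity (for $r<r''$ one has $\ou_{\e;q_k,r}\le\ou_{\e;q_k,r''}$, which is the wrong direction, and a lower bound $P_{q_k,r''}^{-\eta}\le \ou_{\e;q_k,r''}$ does not transfer to $\ou_{\e;q_k,r}$). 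The paper never uses such downward-closedness: all of its arguments invoke only \emph{non}-flatness for velocities strictly above $\lr$ or strictly below $\ur$, which is what the sup/inf definitions legitimately provide.

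The second, deeper gap is the one you flag yourself: the ``stability'' step, that an $o(\e^\be)$ perturbation of $q$ perturbs $\ou_{\e;q,r}$ by $o(\e^\be)$ locally, is not available and cannot be produced by a small vertical shift plus Theorem~\ref{th:comparison}. The two obstacle problems are posed on different cones $Q_q$ and $Q_{q_k}$, a locally shifted copy of one solution is not an admissible competitor for the other obstacle problem, and no modulus of continuity of these free boundaries in the data at precision $\e^\be$ is proved anywhere in the paper. The paper's route avoids precisely this: Lemma~\ref{le:halfContinuity} proves the \emph{mixed} inequalities $\ur(p)\le\lr(q)+\eta$ and $\ur(q)\le\lr(p)+\eta$ for $\abs{p-q}<\de$, pairing an upper with a lower obstacle solution so that both sides need only non-flatness (valid for $r<\ur(q)$ and $\hat r>\lr(q_k)$, via Corollary~\ref{co:ball-outlier} and Lemma~\ref{le:furthest-point}), and then compares $\uu_\eqr$ at a \emph{fixed} scale $\e$ with rescaled lattice translates of $\ou_{\e_k;q_k,\hat r}$ at scales $\e_k\to0$; the strict velocity gap $\hat r=r-\eta/2<r$, together with the blow-up of the rescaled obstacle, is what absorbs the gradient discrepancy and yields the boundary orderings needed for Lemma~\ref{le:stitch} — no gradient-stability estimate and no detachment lemma enter at this stage (Lemma~\ref{le:detachment-ball} is used only later, in the convergence proof). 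Note finally that even if you had USC of $\lr$ and LSC of $\ur$, the equalities $\lr=\ur^*$, $\ur=\lr_*$ still require the pointwise inequality $\ur\le\lr$, which your easy directions do not give; in the paper it is again a consequence of Lemma~\ref{le:halfContinuity} with $p=q$, and the ``symmetric argument'' you defer to at the end is exactly that mixed statement, i.e.\ it needs the two-scale comparison, not the stability estimate.
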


To prove this proposition, we start with the following geometric result.

\begin{lemma}
\label{le:halfContinuity}
For every $q \in \Rn$, $\eta > 0$ there exists $\de > 0$ such that
\begin{align*}
\underline{r}(p)& \leq \overline{r}(q) + \eta, &
\underline{r}(q)& \leq \overline{r}(p) + \eta,
\end{align*}
for all $p \in \Rn$, $\abs{p - q} < \de$.
\end{lemma}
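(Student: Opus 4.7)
The proof is by contradiction and reduction to the local comparison principle, Theorem~\ref{th:localComparison}. I focus on the first inequality, $\underline{r}(p) \leq \overline{r}(q) + \eta$; the second follows by an analogous argument, swapping the roles of $\uu$ and $\ou$ and of $\uPhi$ and $\lPhi$. Suppose the claim fails: there exist $q \in \Rn$, $\eta > 0$ and a sequence $p_k \to q$ with $\underline{r}(p_k) > \overline{r}(q) + \eta$. The case $q = 0$ is immediate from the bound $\underline{r}(p) \leq M|p|$ of Proposition~\ref{pr:bound-on-r}, so assume $q \neq 0$. By the definitions in \eqref{homogenized-velocities}, choose $r_2 \in (\overline{r}(q), \overline{r}(q) + \eta/4)$ with $\limsup_{\e \to 0}\e^{-\be}\uPhi_{\e;q,r_2}(1) < 1$, and $r_1 \in (\overline{r}(q) + 3\eta/4, \underline{r}(p_k))$ with $\limsup_{\e \to 0}\e^{-\be}\lPhi_{\e;p_k,r_1}(1) < 1$; since all these values lie in a bounded interval depending only on $|q|$ for $k$ large (Proposition~\ref{pr:bound-on-r}), a subsequence makes $r_1$ independent of $k$.

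Theorem~\ref{th:localComparison} requires the two gradients to be collinear, $a^3 q_1 = q_2$ with $a > 1$, whereas $p_k$ and $q$ need not be parallel. The plan is to bridge this gap by a stability estimate: for a fixed $\kappa \in (0, (r_1 - r_2)/2)$, there is $\delta_0 > 0$ such that
\[
\limsup_{\e \to 0}\e^{-\be}\uPhi_{\e; p', r_2 + \kappa}(1) < 1 \quad \text{whenever } |p' - q| < \delta_0.
\]
Granting this, fix $a > 1$ close enough to $1$ that $(a^3 - 1)|p| < \delta_0/2$ for $p$ in a bounded neighborhood of $q$, and take $k$ large so that $|a^3 p_k - q| \leq (a^3 - 1)|p_k| + |p_k - q| < \delta_0$. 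Applying Theorem~\ref{th:localComparison} with $q_1 = p_k$, $q_2 = a^3 p_k$, velocities $r_1 > r_2 + \kappa$, and scaling factor $a$ then contradicts the two flatness bounds for all sufficiently small $\e$.

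The stability estimate is the main technical step and the principal obstacle. The strategy is to construct, using the hypothesis that $\uu_{\e;q,r_2}$ is trapped below $P^{\theta\e^\be}_{q,r_2}$ for some $\theta < 1$, an admissible supersolution $v$ of the $g^\e$-Hele-Shaw problem on $Q_{p'}$ satisfying $v \geq P_{p', r_2 + \kappa}$ and remaining within $O(\e^\be)$ of this obstacle. The candidate is obtained by taking an $\e\Z^{n+1}$-translate of $\uu_{\e;q,r_2}$ (admissible by Proposition~\ref{pr:monotonicity}) and patching it on $Q_{p'}$ with the planar supersolutions $R_0^+$ and $R_\xi^+$ of Corollary~\ref{co:values-Rxi} and Lemma~\ref{le:matchingplanarsolutions} across the symmetric difference between $Q_{p'}$ and $Q_q$. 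The $\kappa$-slack in the obstacle velocity absorbs the small discrepancy induced by the rotation from $\nu_q$ to $\nu_{p'}$ and by the tilt of $Q_{p'}$ relative to $Q_q$. By the minimality in the definition of $\uu_{\e;p', r_2 + \kappa}$, the flatness is then inherited. The difficulty lies in verifying that the patched function is genuinely a supersolution of the obstacle problem on $Q_{p'}$ and that the resulting flatness remains below $\e^\be$ for all sufficiently small $\e$ and small $|p'-q|$; this requires a delicate matching along the interfaces between the pieces, together with a precise tuning of $\kappa$ and the magnitude of the translate against the directional perturbation $|p'-q|$.
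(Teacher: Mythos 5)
There is a genuine gap, and it sits exactly where the argument starts. Under your contradiction hypothesis $\ur(p_k) > \lr(q) + \eta$, the definitions \eqref{homogenized-velocities} hand you only \emph{non}-flatness statements: from $r_2 > \lr(q)$ you may conclude $\limsup_{\e\to0}\e^{-\be}\lPhi_{\e;q,r_2}(1) \geq 1$ (the subsolution $\ou_{\e;q,r_2}$ detaches), and from $r_1 < \ur(p_k)$ you may conclude $\limsup_{\e\to0}\e^{-\be}\uPhi_{\e;p_k,r_1}(1) \geq 1$. The two \emph{flatness} statements you actually select --- $\uPhi_{\e;q,r_2}$ flat for some $r_2 \in (\lr(q), \lr(q)+\eta/4)$, and $\lPhi_{\e;p_k,r_1}$ flat for some $r_1 < \ur(p_k)$ --- would require, respectively, $r_2 \geq \ur(q)$ (i.e.\ $\ur(q) \leq \lr(q)+\eta/4$) and $r_1 \leq \lr(p_k)$ (i.e.\ $\ur(p_k) \leq \lr(p_k)$); both are instances of precisely the inequality the lemma is meant to establish and are not available before Proposition~\ref{pr:r-semicontinuity}, which relies on this lemma. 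This is not a cosmetic mix-up of the two $\Phi$'s: Theorem~\ref{th:localComparison} consumes two flatness statements ($\lPhi$ flat at the \emph{larger} velocity, $\uPhi$ flat at the \emph{smaller} one, collinear gradients), so it can only produce the mixed inequality $\lr(q) \leq \ur(aq)$ of Corollary~\ref{co:mixed-monotone}; the present lemma is the complementary inequality, and under its negation one only gets detachment, which the local comparison principle cannot exploit. Your ``stability estimate'' has the same circularity: flatness of $\uPhi_{\e;p',r_2+\kappa}$ for all $p'$ near $q$ is essentially the statement $\ur(p') \leq \lr(q) + \eta$ being proven, and the patching you sketch cannot deliver it with $\de_0$ independent of $\e$, because a translate of $\uu_{\e;q,r_2}$ stays within $O(\e^\be)$ of the plane $P_{q,r_2}$, while on the unit-scale domain $Q_{p'}$ the obstacle $P_{p',r_2+\kappa}$ is tilted by an angle $\sim \abs{p'-q}$, producing a spatial discrepancy of order $\abs{p'-q}$ in a single time slice; a fixed velocity increase $\kappa$ shifts the front in time but cannot absorb an angular error that is independent of $\e$ and vastly exceeds the tolerance $\e^\be$.

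For comparison, the paper's proof uses the detachment statements \emph{quantitatively} and never invokes Theorem~\ref{th:localComparison} here. Taking $r$ slightly below $\ur(q)$, Corollary~\ref{co:ball-outlier} produces a ball of radius $\la\e$ on which $\uu_{\e;q,r} > 0$ at height $\tfrac12\e^\be$ above its obstacle; taking $\hat r$ slightly below that but above $\lr(q_k)$, Lemma~\ref{le:furthest-point} produces a free-boundary point of $\ou_{\e_k;q_k,\hat r}$ at depth $\tfrac12\e_k^\be$ below its obstacle. These are then compared across two widely separated scales $\e_k \ll \e$: after rescaling by $\e/\e_k$ and translating by an element of $\e\Z^{n+1}$, the detachment of the $q_k$-solution becomes $\rho_k = \tfrac12\e_k^{\be-1}\e \to \infty$, so locally its (rescaled) obstacle data dominate $P_{q,r}$ despite the non-parallel gradients, and the minimality of $\uu_{\e;q,r}$ combined with Lemma~\ref{le:stitch} forces $\uu_{\e;q,r} \leq v_k$ on the ball, contradicting the positivity there. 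If you want to keep your route, the local comparison step must be replaced by a two-scale argument of this kind; as written, the proof does not go through.
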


\begin{proof}
By the continuity of $\lr$ and $\ur$ at $0$ (Proposition~\ref{pr:bound-on-r}),
we can assume that $q \neq 0$.

We will prove the second inequality.
The proof of the first one is analogous.
Let us further assume that $\ur(q) - \eta > 0$.
We set $r = \ur(q) - \eta/2$ and $\hat r = \ur(q) - \eta = r - \eta/2 < r$.

Let $\la := \sqrt{n+1}$.
By the definition of $\ur(q)$,
there exists $\e >0$ small enough
such that Corollary~\ref{co:ball-outlier} applies
and there exists $(\zeta, \si)$
such that $B_{\la\e}(\zeta, \si) \subset \Omega(\uu_\eqr) \cap \set{0 \leq t \leq 1}$
and $\zeta \cdot \nu = r \si + \e^\be/2$.
Moreover, $\uPhi_\eqr(\si + \la \e) \leq \e^\be$.

Suppose that the conclusion does not hold and there exists
a sequence $\set{q_k}_k \subset \Rn$ such that $q_k \to q$
as $k \to \infty$ and $\ur(q) > \lr(q_k) + \eta$.
By the definition of $\lr$, there exists
$\e_k \in (0, \e)$, $\e_k \to 0$ such that
$\lPhi_{\e_k;q_k, \hat r} \geq \frac34 \e_k^\be$.
We will show that this leads to a contradiction with the comparison principle
for $k$ large enough.

Recall that we set $\nu = -q/\abs q$ and $\nu_k = -q_k/\abs{q_k}$.
By Lemma~\ref{le:furthest-point},
there exists $(\zeta_k,\si_k) \in \Gamma(\ou_{\e_k;q_k,\hat r})$
such that $\zeta_k \cdot \nu_k = \hat r \si_k - \frac12\e_k^\be$
and $\lPhi_{\e_k;q_k,\hat r}(\si_k) = \frac12 \e_k^\be$.

We introduce the scaling factor $\ga_k := \e_k/\e$.
By the choice of $\la$ we can find $(\xi_k, \tau_k) \in \e \Z^{n+1}$
such that
\begin{align*}
(X_k, T_k) := (\xi_k, \tau_k) + \ga_k^{-1} (\zeta_k, \si_k)
    \in B_{\la\e} (\zeta, \si).
\end{align*}
By compactness and taking a subsequence, we can assume that $(X_k, T_k) \to (X,T) \in \cl B_{\la\e}(\zeta, \sigma)$.
Let $v_k$ be the rescaled solution
\begin{align*}
v_k(x,t) = \ga_k^{-1} \ou_{\e_k;q_k,\hat r} (\ga_k(x - \xi_k), \ga_k(t - \tau_k)).
\end{align*}
Observe that $(X_k, T_k) \in \Gamma(v_k)$
and$(X_k, T_k) \in B_{\la \e}(\zeta, \si) \subset \Omega(\uu_\eqr)$.
If we prove that $\uu_\eqr \leq v_k$ in $B_{\la \e}(\zeta, \si)$
for large $k$,
we get a contradiction.

To that end, we will use the monotonicity of the obstacle problem
and the fact that $\lu_{\e_k;q_k,\hat r}$
is a solution away from the boundary of the obstacle.
Let $\Sigma_k := \ga_k^{-1} Q_{q_k} + (\xi_k, \tau_k)$.
Furthermore, let us define the translated planar solutions
\begin{align*}
P_k(x,t) &:= P_{q_k, \hat r}(x - \xi_k, t - \tau_k)&
    P^{-\rho_k}_k &:= P^{-\rho_k}_{q_k, \hat r}(x - \xi_k, t - \tau_k),
\end{align*}
where $\rho_k := \frac12 \ga_k^{-1} \e_k^\be = \frac12 \e_k^{\be -1} \e$.
Note that $\rho_k \to \infty$ since $\e_k \to 0$ and $\be < 1$.

We observe that
$v_k \in \sol(g^\e, \Sigma_k \setminus \Gamma(P_k))$.
Therefore it is enough to show that
\begin{compactenum}
\item
$v_k \geq P_{q,r}$
in $Q_q \cap \set{t \leq T_k}$,
\item
$\cl\Omega(\uu_\eqr)\cap \set{t \leq T_k} \cap \Gamma(P_k) = \emptyset$, and
\item
$\uu_\eqr \leq (v_k)_*$
on $\partial_P (Q_q \cap \Sigma_k \cap \set{t \leq T_k})$.
\end{compactenum}

By the properties of the obstacle problem,
Proposition~\ref{pr:obstacle-sols},
we have
\begin{align*}
\uu_\eqr &= \Pqr \quad\text{on } \partial Q_q &
    (v_k)_* = P_k \quad\text{on } \partial \Sigma_k.
\end{align*}
Moreover, the flatness assumptions provide the bounds
\begin{align*}
\uu_\eqr &\leq \Pqr^{\e^\be} & v_k&\geq P_k^{-\rho_k},
    \quad \text{ both in }
    \set{t \leq T_k}.
\end{align*}

Therefore, since
$\partial_P (Q_q \cap \Sigma_k \cap \set{t \leq T_k})
\subset (\partial Q_q \cup \partial \Sigma_k) \cap \cl Q_q$,
we only have to show that, for large $k$,
\begin{align}
\label{order1}
P_k > \Pqr^{\e^\be} \quad\text{in } \cl Q_q \cap \set{t\leq T_k}
    \cap \cl\Omega(\Pqr^{\e^\be})
\end{align}
and
\begin{align}
\label{order2}
P_k^{-\rho_k} > \Pqr \quad\text{in } \cl Q_q \cap \set{t\leq T_k}
    \cap \cl\Omega(\Pqr),
\end{align}
and (a)--(c) shall follow.
A straightforward calculation yields
\[
P_k^{-\rho_k}(x,t) = P_{q_k, \hat r}(x - X_k, t - T_k)
    \to P_{q,\hat r}(x - X, t - T), \quad {k \to\infty}
\]
where the convergence is locally uniform in $\Rn \times \R$.
A similar calculation yields
\[
P_k(x,t) = P^{\rho_k}_{q_k, \hat r}(x - X_k, t - T_k) \to \infty,
\quad k \to \infty
\]
locally uniformly, since $\rho_k \to \infty$.
Finally, we realize that
\[
P_{q,\hat r}(x - X, t - T) > \Pqr \quad\text{in } \cl Q_q \cap \set{t\leq \si + \la\e}
    \cap \cl\Omega(\Pqr),
\]
due to $\hat r < r$
and $X \cdot \nu \geq r T + \e^\be/4$,
and
we conclude that the orderings \eqref{order1} and \eqref{order2}
hold for sufficiently large $k$.
Therefore by the stitching lemma, Lemma~\ref{le:stitch},
and the obstacle monotonicity
we have that $\uu_\eqr \leq v_k$ in $B_{\la \e}(\zeta, \si)$
for large $k$ but that is a contradiction with the facts
that $\uu_\eqr > 0$ on $B_{\la\e}(\zeta,\si)$
and $(X_k, T_k) \in B_{\la\e}(\zeta,\si) \cap \Gamma(v_k)$.
The proof of the lemma is finished.
\qedhere\end{proof}

We can now proceed with the proof of semi-continuity.

\begin{proof}[Proof of Proposition~\ref{pr:r-semicontinuity}]
Let us fix $q \in \R$ and $\eta > 0$,
and let $\de > 0$ be from Lemma~\ref{le:halfContinuity}.
We will show that $\lr$ is USC at $q$.
First, we observe that whenever $p \in \Rn$ and $\abs{p - q} < \de$,
there exists $a > 1$ such that $\abs{a p - q} < \de$.
Therefore, for such $p$ and $a$,
Corollary~\ref{co:mixed-monotone}
and Lemma~\ref{le:halfContinuity} yield
\[
\lr(p) - \eta \leq \ur(a p) -\eta \leq \lr(q).
\]
Since $\de > 0$ exists for arbitrary $\eta > 0$, we conclude that $\lr$
is $USC$ at $q$ for all $q \in \Rn$, and thus $\lr \in USC(\Rn)$.
We can similarly show that $\ur \in LSC(\Rn)$.

Furthermore, we have $\ur \leq \lr$ on $\Rn$ by Lemma~\ref{le:halfContinuity}
which together with the semi-continuity implies
\[
\ur \leq \lr_*, \qquad\qquad \ur^* \leq \lr.
\]
On the other hand, Corollary~\ref{co:mixed-monotone} yields
$\lr(a^{-1} q) \leq \ur(q) \leq \lr(q) \leq \ur(aq)$
for any $a > 1$ and therefore
\[
\lr_*(q) \leq \liminf_{a \to1+} \lr(a^{-1} q) \leq \ur(q)
    \leq \lr(q) \leq \limsup_{a \to1+} \ur(aq) \leq \ur^*(q)
    \qquad q \in \Rn.
\]

Finally, the monotonicity of $\lr(aq)$ and $\ur(aq)$
in $a$ is a simple consequence of
\[
\ur(a_1 q) \leq \lr(a_1 q) \leq \ur(a_2 q) \leq \lr(a_2 q)
    \qquad 0 < a_1 < a_2,\ q \in \Rn.
\]
This finishes the proof.
\qedhere\end{proof}

\subsection{Time-independent medium}
\label{sec:time-independent}

In this section, we show that in a time-independent medium,
i.e., when $g(x,t) = g(x)$,
we can in fact prove that $\lr$ and $\ur$ are continuous and therefore coincide, and that they are positively one-homogeneous.
We thus obtain the same results as in \cite{K07}.
We are able to achieve this because of the key feature
of a time-independent medium, that is,
the availability of scaling in time
independently of space.
Indeed,
it is trivial to check that
if $u \in \sol(g^\e)$ then also $[(x,t) \mapsto a u(x, a t)] \in \sol(g^\e)$
for any $\e > 0$ and $a > 0$.

The main result of this section is the following proposition.
\begin{proposition}
\label{pr:timeind}
Suppose that $g(x,t) = g(x)$.
Then
\begin{align}
\label{lrur-coincide}
\lr = \ur \in C(\Rn).
\end{align}
and
\begin{align*}
\lr(q) = \ur(q) = \abs{q} \ur\pth{\frac{q}{\abs{q}}}
\qquad \text{for all $q \in \Rn \setminus \set0$.}
\end{align*}
\end{proposition}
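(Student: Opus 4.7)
\emph{Proof plan.}
The plan is to combine the pure time scaling, available only when $g$ is time-independent, with the standard hyperbolic scaling, in order to derive an extra identity that, together with Corollary~\ref{co:mixed-monotone}, will force $\lr$ and $\ur$ to coincide. Since $g^\e(x,t) = g(x/\e)$ does not depend on $t$, the map $u(x,t) \mapsto a u(x,at)$ preserves $\sol(g^\e)$ for any $a>0$; a direct computation yields $a P_{q,r}(x, at) = P_{aq, ar}(x,t)$, and because the cone $\Omega_q$ depends only on the direction $\nu = -q/\abs{q}$ we have $Q_{aq} = Q_q$, so the characterization of the obstacle problem solutions gives
\begin{align*}
\uu_{\e;aq,ar}(x,t) = a\, \uu_{\e;q,r}(x, at), \qquad \ou_{\e;aq,ar}(x,t) = a\, \ou_{\e;q,r}(x, at),
\end{align*}
which translates into the flatness identity $\uPhi_{\e;aq,ar}(1) = \uPhi_{\e;q,r}(a)$ and the analogous one for $\lPhi$. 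The hyperbolic scaling $u(x,t) \mapsto b u(x/b, t/b)$, always available since it sends the $\e$-problem to the $b\e$-problem and leaves $P_{q,r}$ invariant, yields likewise $\uPhi_{b\e;q,r}(1) = b\, \uPhi_{\e;q,r}(1/b)$. Composing the two scalings produces the master identity
\begin{align*}
\limsup_{\e\to 0}\e^{-\be}\, \uPhi_{\e;aq,ar}(1) = a^{1-\be}\, \limsup_{\e\to 0}\e^{-\be}\, \uPhi_{\e;q,r}(1),
\end{align*}
and similarly for $\lPhi$.

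Plugging this into the definitions \eqref{homogenized-velocities} and using that $a^{1-\be}<1$ for $a<1$ (since $\be<1$), the next step is to extract the one-sided scaling inequalities
\begin{align*}
\ur(aq) \leq a\, \ur(q) \quad \text{and} \quad \lr(aq) \geq a\, \lr(q) \qquad \text{for } a \in (0,1],
\end{align*}
with the reversed inequalities for $a \geq 1$ obtained via the substitution $a \mapsto 1/a$. Combined with $\lr \in USC(\Rn)$ and $\ur \in LSC(\Rn)$ from Proposition~\ref{pr:r-semicontinuity}, these force left-continuity along rays through the origin, namely $\lim_{a \to 1^-} \lr(aq) = \lr(q)$ and $\lim_{a \to 1^-} \ur(aq) = \ur(q)$.

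Finally, substituting $q \mapsto q/a$ in Corollary~\ref{co:mixed-monotone} gives $\lr(bq) \leq \ur(q)$ for every $b \in (0,1)$; sending $b \to 1^-$ and invoking the just-established left-continuity of $\lr$ yields $\lr(q) \leq \ur(q)$. Combined with $\ur \leq \lr$ from Proposition~\ref{pr:r-semicontinuity}, this delivers $\lr = \ur$, and since a $USC$ function that agrees with an $LSC$ function is continuous, the common value lies in $C(\Rn)$, proving \eqref{lrur-coincide}. Positive one-homogeneity is then immediate: writing $r := \ur = \lr$, the scaling inequality for $\ur$ gives $r(aq) \geq a\, r(q)$ and the one for $\lr$ gives $r(aq) \leq a\, r(q)$ for every $a \geq 1$, so $r(aq) = a\, r(q)$; the case $a \in (0,1)$ follows by substitution, and the identity $r(q) = \abs{q}\, r(q/\abs{q})$ falls out. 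The main subtlety I anticipate is avoiding circular reasoning: the master identity by itself produces only threshold-dependent variants of $\ur$ and $\lr$, so one-homogeneity cannot be established directly; one must first extract the one-sided continuity from the scaling inequalities, and only then use Corollary~\ref{co:mixed-monotone} to collapse $\lr$ onto $\ur$, after which one-homogeneity emerges as a corollary.
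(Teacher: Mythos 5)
Your exact time-scaling identity $\uu_{\e;aq,ar}(x,t)=a\,\uu_{\e;q,r}(x,at)$ is correct (it is Lemma~\ref{le:phi-scaling}, and indeed $Q_{aq}=Q_q$ since the cone depends only on $\nu$), but the hyperbolic-scaling ``identity'' $\uPhi_{b\e;q,r}(1)=b\,\uPhi_{\e;q,r}(1/b)$ is not: although $P_{q,r}$ is invariant under $(x,t)\mapsto(x/b,t/b)$, the obstacle-problem domain $Q_q$ is not, because the cone $\Omega_q$ has its vertex at $V=-\nu$, a fixed unit distance behind the initial free boundary. Rescaling an admissible super/subsolution produces one on $bQ_q$, which restricts to $Q_q$ only when $b\ge 1$, so all one gets is the one-sided bound $\uPhi_{b\e;q,r}(1)\le b\,\uPhi_{\e;q,r}(1/b)$ for $b\ge1$ (similarly for $\lPhi$); composed with the time-scaling identity as in your plan (taking $b=1/a>1$ for $a<1$) this yields only a \emph{lower} bound on $\uPhi_{\e;aq,ar}(1)$, which cannot certify that $ar$ is admissible for $aq$. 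So the ``master identity'' with an equality sign is unjustified. The repair is immediate and needs no hyperbolic scaling at all: by the time-scaling identity $\uPhi_{\e;aq,ar}(1)=\uPhi_{\e;q,r}(a)$, and since flatness is nondecreasing in time (Proposition~\ref{pr:phi-lipschitz}), one has $\uPhi_{\e;aq,ar}(1)\le\uPhi_{\e;q,r}(1)$ at the \emph{same} $\e$ for $a\in(0,1]$, whence $\ur(aq)\le a\,\ur(q)$, and the analogous argument with $\lPhi$ gives $\lr(aq)\ge a\,\lr(q)$; these are exactly the inequalities your plan requires (and are precisely how the paper proves homogeneity).

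With that one-line fix the remainder of your argument is correct and follows a genuinely different route to the coincidence $\lr=\ur$ than the paper's. You obtain left-continuity of $a\mapsto\lr(aq)$ at $a=1$ from $\lr(aq)\ge a\,\lr(q)$ together with $\lr\in USC(\Rn)$, rewrite Corollary~\ref{co:mixed-monotone} as $\lr(bq)\le\ur(q)$ for $b\in(0,1)$, and let $b\to1^-$ to get $\lr\le\ur$; combined with $\ur\le\lr$ and Proposition~\ref{pr:r-semicontinuity} this gives $\lr=\ur\in C(\Rn)$, and homogeneity then drops out of the two scaling inequalities. The paper instead proves the coincidence via the spatial-scaling inequality of Lemma~\ref{le:flatness-scaling} (flatness of $(aq,ar)$ at scale $\e$ controlled by that of $(q,r)$ at scale $a^{-1}\e$, for $a>1$), which requires the $a^{1-\be}$ bookkeeping and the choice $a\in(1,\ta^{1/(\be-1)})$. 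Your route avoids that lemma entirely, using only the $a\le1$ scaling plus the already-established mixed monotonicity, at the price of passing through the limit $b\to1^-$; once the scaling inequalities are derived correctly, it is a leaner argument yielding the same conclusion.
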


Let us first show the continuity.
We will need the following scaling result.

\begin{lemma}
\label{le:flatness-scaling}
Suppose that $g(x,t) = g(x)$.
Then for any $q \in \Rn$ and $\e, r > 0$
we have
\begin{align}
\label{flatness-scaling}
\uPhi_{\e; aq, ar} (t) \leq a\uPhi_{a^{-1}\e;q,r}(t)
\qquad \text{for all $a>1$ and $t\in\R$.}
\end{align}
\end{lemma}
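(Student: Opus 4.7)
The plan is to exhibit an explicit competitor for the infimum defining $\uu_{\e;aq,ar}$, obtained by a pure spatial rescaling of $u := \uu_{a^{-1}\e;q,r}$. Specifically, I would set
\[
v(x,s) := a^{2} u(a^{-1}x,s).
\]
First I would record the geometric and algebraic prerequisites: $Q_{aq} = Q_q$ (the domain cone depends only on the direction of $q$), and a direct computation using $P_{q,r}(x,s) = (\abs q r s + x\cdot q)_+$ gives the identity $P_{aq,ar}(x,s) = a^{2} P_{q,r}(a^{-1}x,s)$, which together with $u \geq P_{q,r}$ yields $v \geq P_{aq,ar}$ on $\Rn\times\R$. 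The monotonicity in time of the support of $v$ is inherited from that of $u$ via $\Omega_s(v) = a\,\Omega_s(u)$, so Definition~\ref{def:visc-test-super}(i) holds for $v$.

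The main step is to verify that $v \in \supers(g^{\e}, Q_{aq})$; this is where time-independence of $g$ enters crucially. For any $C^{2,1}$ test function $\phi$ such that $v-\phi$ has a local minimum at $(x_0,s_0)$ in $\set{t\leq s_0}$, the rescaled function $\tilde\phi(y,s) := a^{-2}\phi(ay,s)$ touches $u$ from below at $(a^{-1}x_0, s_0)$, with $\abs{D\tilde\phi} = a^{-1}\abs{D\phi}$, $\tilde\phi_s = a^{-2}\phi_s$ and $\Delta\tilde\phi = \Delta\phi$. Applying Definition~\ref{def:visc-test-super}(ii) to $u$ and unwinding the rescaling, the free-boundary inequality for $v$ acquires the coefficient
\[
g^{a^{-1}\e}(a^{-1}x_0, s_0) = g\pth{\frac{x_0}{\e},\ \frac{a s_0}{\e}}.
\]
Since $g(x,t)=g(x)$, this equals $g(x_0/\e) = g^{\e}(x_0,s_0)$, which is exactly the required modulus for a supersolution at scale $\e$. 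In a genuine spatiotemporal medium the analogous identification fails because of the stray time rescaling by the factor $a$, which is precisely why the statement is restricted to time-independent $g$. With the subsolution inequality thus verified, $v$ is admissible in the infimum defining $\uu_{\e;aq,ar}$, and since $v$ is lower semi-continuous the defining lower envelope yields $\uu_{\e;aq,ar} \leq v$ on $\Rn\times\R$.

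It remains to translate flatness. If $\uu_{a^{-1}\e;q,r}$ is $\eta$-flat up to time $t$, meaning $u(y,s) \leq P_{q,r}(y-\eta\nu,s)$ for $s\leq t$, then
\[
v(x,s) = a^{2} u(a^{-1}x,s) \leq a^{2} P_{q,r}\pth{a^{-1}(x-a\eta\nu), s} = P_{aq,ar}(x-a\eta\nu,s) = P_{aq,ar}^{a\eta}(x,s)
\]
for $s \leq t$. Combined with $\uu_{\e;aq,ar}\leq v$, this gives $\uPhi_{\e;aq,ar}(t) \leq a\eta$; taking the infimum over admissible $\eta$ produces \eqref{flatness-scaling}. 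The main obstacle, as emphasized above, is the second step: the scaling $(x,s)\mapsto(a^{-1}x,s)$ must preserve membership in $\supers(g^{\e}, Q_{aq})$, and this is only possible when the medium does not depend on $t$, so that a change in the temporal period of $g$ induced by the space-only scaling is invisible.
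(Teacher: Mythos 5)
Your argument is correct and is essentially the paper's own proof: the same competitor $v(x,t)=a^{2}\uu_{a^{-1}\e;q,r}(a^{-1}x,t)$, the same use of time-independence of $g$ to see that the purely spatial rescaling stays in $\supers(g^{\e})$ (the paper states this on $aQ_q\supset Q_q=Q_{aq}$, which you verify directly by the test-function computation), the identity $P_{aq,ar}=a^{2}P_{q,r}(a^{-1}\cdot,\cdot)$ for admissibility, and the same translation of $\eta$-flatness into $a\eta$-flatness. Only a cosmetic slip: the inequality you verify for $v$ is of course the \emph{supersolution} (not ``subsolution'') inequality.
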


\begin{proof}
Since the statement is trivial for $q = 0$,
we assume $q \neq 0$.
Fix $a > 1$ and define
\[
v(x,t) = a^2 \uu_{a^{-1} \e; q, r}(a^{-1} x, t).
\]
Since $\uu_{a^{-1} \e; q, r} \in \supers(g^{a^{-1} \e}, Q_q)$,
the scaling of the problem and the independence of $g$ on time
yield
that $v \in \supers(g^\e,a Q_q)$.
Moreover,
\[
v(x,t) \geq a^2 P_{q,r}(a^{-1}x,t) = P_{aq,ar}(x,t).
\]
Therefore, by the definition of $\uu_{\e; aq, ar}$
and $Q_q \subset a Q_q$, we get
\[
\uu_{\e; aq, ar} \leq v \qquad \text{in $\Rn\times\R$.}
\]
By the definition of flatness, we have
for any $\tau \in \R$ and $t \leq \tau$
\begin{align*}
\uu_{\e; aq, ar}(x,t) \leq
v(x,t) &\leq a^2P_{q,r}(a^{-1} x - \uPhi_{a^{-1}\e;q,r}(\tau)\nu, t)
\\&= P_{aq,ar}(x - a\uPhi_{a^{-1}\e;q,r}(\tau)\nu, t).
\end{align*}
Thus \eqref{flatness-scaling} follows.
\qedhere\end{proof}

\begin{proof}[Proof of Proposition~\ref{pr:timeind}, continuity]
Fix $q \in \Rn$.
Since $\lr(0) = \ur(0)$ by definition,
we shall assume that $q \neq 0$.
By Lemma~\ref{le:halfContinuity} and
Corollary~\ref{co:mixed-monotone}
we infer
\[
\ur(q) \leq \lr(q) \leq \ur(a q) \qquad \text{for all } a > 1.
\]
Now fix $\de > 0$.
By definition of $\ur(q)$, there exists
$r \in [\ur(q), \ur(q) + \de/2)$
such that $\limsup_{\e\to0} \e^{-\be} \uPhi_\eqr(1) = \ta < 1$.
Now Lemma~\ref{le:flatness-scaling} for any $a \in (1, \ta^{1/(\be-1)})$
yields
\begin{align*}
\limsup_{\e\to0} \e^{-\be} \uPhi_{\e; aq, ar}(1)
\leq \limsup_{\e\to0} a^{1-\be}(a^{-1} \e)^{-\be} \uPhi_{a^{-1}\e;q,r}(1)
= a^{1-\be}\ta < 1.
\end{align*}
Therefore if also $a < 1 + \de/2r$,
the definition of $\ur(aq)$ implies
\begin{align*}
\ur(aq) \leq ar \leq r+\de/2 \leq \ur(q) + \de.
\end{align*}
Since $\de > 0$ was arbitrary,
we have $\lr(q) = \ur(q)$.
The semi-continuity result of Proposition~\ref{pr:r-semicontinuity}
implies $\lr = \ur \in C(\Rn)$.
\qedhere\end{proof}

For the proof of the one-homogeneity,
we will use the following scaling result.

\begin{lemma}
\label{le:phi-scaling}
Suppose that $g(x,t) = g(t)$.
Then for any $q \in \Rn$ and $\e, r > 0$ we have
\begin{align}
\label{uPhi-scaling}
\uPhi_\eqr(t) = \uPhi_{\e; aq, ar}(a^{-1} t) \qquad
\text{for all $a > 0$, $t \in\R$.}
\end{align}
The same holds for $\lPhi$.
\end{lemma}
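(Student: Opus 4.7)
The plan is to exploit the scaling invariance available when $g$ is time-independent. For any $a > 0$, the transformation $u(x,t) \mapsto a\, u(x, at)$ maps solutions of the Hele-Shaw problem \eqref{HSt} to solutions, and it pairs naturally with the obstacle rescaling $P_{aq, ar}(x,t) = a P_{q,r}(x, at)$ and with the identity $Q_{aq} = Q_q$, the latter because the cone $\Omega_q$ depends on $q$ only through $\nu = -q/\abs{q}$, which is invariant under positive dilations.

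First I would verify that this transformation preserves $\supers(g^\e, Q_q)$. If $u \in \supers(g^\e, Q_q)$ and $v(x,t) := a\, u(x, at)$, harmonicity in the positive phase is preserved because $\Delta v = a \Delta u = 0$; and the free boundary relation is preserved because the normal velocity scales as $V_\nu^v = a V_\nu^u$ while $\abs{Dv} = a\abs{Du}$, so
\begin{align*}
V_\nu^v(x,t) = a\, V_\nu^u(x, at) = a\, g(x/\e)\,\abs{Du(x,at)} = g(x/\e)\,\abs{Dv(x,t)},
\end{align*}
where crucially $g$ has no time dependence. The analogous statement for $\subs(g^\e, Q_q)$ is identical; either can be read off directly from Definitions~\ref{def:visc-test-sub} and \ref{def:visc-test-super}.

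Second, combining this invariance with $P_{aq, ar} = a P_{q,r}(\cdot, a\cdot)$ and $Q_{aq} = Q_q$, the map $u \mapsto a\, u(\cdot, a\cdot)$ is a bijection between admissible competitors in the definition of $\uu_\eqr$ in \eqref{eq:obstaclesupersolution} and those of $\uu_{\e; aq, ar}$. Since the scaling commutes with the lower semi-continuous envelope, passing it through the infimum yields
\begin{align*}
\uu_{\e; aq, ar}(x, t) = a\, \uu_\eqr(x, at),
\end{align*}
and the symmetric argument with suprema and upper semi-continuous envelopes gives $\ou_{\e; aq, ar}(x, t) = a\, \ou_\eqr(x, at)$.

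Finally, I would pass to flatness. A direct computation from the definition gives $P_{aq, ar}^\eta(x, t) = a P_{q,r}^\eta(x, at)$, so the inequality $\uu_{\e; aq, ar} \leq P_{aq, ar}^\eta$ on $\set{s \leq t}$ is, after substituting $\tau = as$, equivalent to $\uu_\eqr \leq P_{q,r}^\eta$ on $\set{\tau \leq at}$. Taking the infimum over $\eta$ gives $\uPhi_{\e; aq, ar}(t) = \uPhi_\eqr(at)$, which is \eqref{uPhi-scaling} after the relabeling $t \mapsto a^{-1} t$. The argument for $\lPhi$ is identical, with suprema and $-\eta$ replacing infima and $\eta$. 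The only delicate point in the whole argument is confirming that the scaling preserves the classes $\supers(g^\e, Q_q)$ and $\subs(g^\e, Q_q)$, which uses the time-independence of $g$ in an essential way; everything else is bookkeeping.
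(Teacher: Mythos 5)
Your proof is correct and follows essentially the same route as the paper: exploit the time-scaling invariance $u \mapsto a\,u(\cdot, a\cdot)$ available for time-independent $g$ (together with $P_{aq,ar} = aP_{q,r}(\cdot,a\cdot)$ and $Q_{aq}=Q_q$) to get $\uu_{\e;aq,ar}(x,t) = a\,\uu_\eqr(x,at)$, and then read off the flatness identity. The paper phrases the key identity as two one-sided comparisons (each scaled function is an admissible competitor for the other obstacle problem) rather than your bijection of competitor classes, but this is the same argument in different words.
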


\begin{proof}
Let us first observe that a simple calculation yields
\begin{align}
\label{pqr-scaling}
a \Pqr^\eta(x, at) = P_{aq,ar}^\eta(x,t)
\qquad \text{for all $q \in \Rn$, $a,r>0$.}
\end{align}
Fix $a > 0$ and consider $v(x,t) = a \uu_\eqr(x,at)$.
Since $g$ is time-independent, we have
$v \in \supers(g^\e, Q_q)$
and \eqref{pqr-scaling} yields
$v \geq P_{aq,ar}$. Therefore,
by definition of $\uu_{\e;aq,ar}$,
we obtain
\begin{align}
\label{uu-1}
\uu_{\e;aq,ar}(x,t) \leq v(x,t) = a \uu_\eqr(x,at).
\end{align}
Similarly, we set $w(x,t) = a^{-1} \uu_{\e; aq, ar} (x,a^{-1} t)$
and observe that $w \in \supers(g^\e,Q_q)$
and $w \geq P_{q,r}$.
Therefore
\begin{align}
\label{uu-2}
\uu_\eqr (x,t) \leq w(x,t) = a^{-1} \uu_{\e; aq, ar} (x,a^{-1} t).
\end{align}
Since $a > 0$ was arbitrary, \eqref{uu-1} and \eqref{uu-2} yield together
\begin{align*}
\uu_\eqr(x,t) = a^{-1} \uu_{\e; aq, ar} (x,a^{-1} t).
\end{align*}
The scaling of the planar solutions in \eqref{pqr-scaling}
and the definition of flatness thus yield \eqref{uPhi-scaling}.

The same statement for $\lPhi$ can be obtained analogously.
\qedhere\end{proof}

We can now finish the proof of the main result of this section.

\begin{proof}[Proof of Proposition~\ref{pr:timeind}, homogeneity]
Let us fix $q \in \Rn \setminus\set0$
and $a \in (0,1)$.
Then Proposition~\ref{pr:phi-lipschitz} and Lemma~\ref{le:phi-scaling}
yield
\[
\lPhi_{\e;aq,ar}(1) \leq \lPhi_{\e;aq,ar}(a^{-1})
\leq \lPhi_\eqr(1).
\]
Thus the definition of $\lr$ implies
\[
\lr(aq) \geq a \lr(q),
\]
and an analogous argument with $\uPhi$ yields
\[
\ur(aq) \leq a \ur(q).
\]
Since $\ur = \lr$ by \eqref{lrur-coincide},
we can conclude that
\begin{align}
\label{urlr-scale}
\ur(aq) = \lr(aq) = a\ur(q) = a \lr(q)\qquad\text{for all $a \in (0,1)$.}
\end{align}
Since $q \in \Rn\setminus\set0$ was arbitrary,
we can define $q' = aq$ and show that
\[
a^{-1}\ur(q') = a^{-1}\lr(q') = \ur(a^{-1}q') = \lr(a^{-1}q')\qquad\text{for all $a \in (0,1)$.}
\]
Therefore \eqref{urlr-scale} holds for all $q \in \Rn$ and $a > 0$.
In particular, if $q \neq 0$, for $a = 1/\abs{q}$ which concludes the proof.
\qedhere\end{proof}

\section{Convergence in the homogenization limit}
\label{sec:convergence}

In the view of Proposition~\ref{pr:r-semicontinuity},
we define the homogenized Hele-Shaw problem
\begin{align}
\label{hs-homogenized}
\begin{cases}
-\Delta u = 0, & \text{in } \set{u > 0} \cap Q,\\
V = \overline{r}(D u), & \text{on } \partial \set{u > 0} \cap Q,
\end{cases}
\end{align}
with the appropriate domain $Q:= \Omega \times (0,T]$
and the initial and boundary data $\Omega_0, \psi$
as in Theorem~\ref{th:well-posedness}.
The viscosity solutions of this problem were introduced in
Section~\ref{sec:viscosity-solutions}.

We show that the solutions of the inhomogeneous Hele-Shaw problem \eqref{HSt}
converge as $\e \to 0$
to the unique solution of the homogenized problem \eqref{hs-homogenized}
with the same initial and boundary data,
in the sense of half-relaxed limits (denoted $\halfliminf$ and $\halflimsup$ here, Definition~\ref{def:half-relaxed} below).
Since the solution of the homogenized problem might be discontinuous,
a half-relaxed limit is the standard mode of convergence
in such setting.
In particular, we prove the following homogenization result.

\begin{theorem}[Homogenization]
\label{th:homogenization}
Suppose that the domain $Q := \Omega \times (0,T]$
and the initial and boundary data satisfy the assumptions of
Theorem~\ref{th:well-posedness}.
Let $u^\e \in \sol(g^\e, Q)$ be the unique solution of
problem \eqref{HSt} for $\e>0$
and let $u \in \sol(\lr(p)/\abs{p}, Q)$ be the unique solution
of \eqref{hs-homogenized}.

Then $u_* = \halfliminf_{\e\to0} u^\e$
and $u^* = \halflimsup_{\e\to0} u^\e$.
Moreover, $\partial \Omega((u^\e)_*;Q)$ converge to
$\partial \Omega(u_*;Q)$ uniformly in Hausdorff distance.
\end{theorem}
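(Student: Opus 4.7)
The plan is to identify $V := \halflimsup_{\e\to 0} u^\e$ and $W := \halfliminf_{\e\to 0} u^\e$ with $u^*$ and $u_*$ respectively, by showing that $V \in \subs(\lr(p)/\abs p, Q)$ and $W \in \supers(\lr(p)/\abs p, Q)$, both matching the prescribed initial-boundary data; the uniqueness half of Theorem~\ref{th:well-posedness}, together with Corollary~\ref{co:regularity}, will then force the identifications. By construction $V \in USC(\cl Q)$, $W \in LSC(\cl Q)$, and $W \leq V$ automatically, so the uniform convergence on compact sets where $u$ is continuous will be immediate.

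The correct boundary data for $V$ and $W$ follow from the classical barriers $U_0 \in \subs(m/2, Q)$ and $V_0 \in \supers(M, Q)$ constructed in the proof of Theorem~\ref{th:well-posedness}, which depend only on $(\psi, \Omega_0, m, M)$ and are uniform in $\e$. Applying the $\e$-comparison principle (Theorem~\ref{th:comparison}) after a small time-perturbation to create strict separation on $\partial_P Q$ gives $U_0 \leq u^\e \leq V_0$ on $\cl Q$; this ordering passes to $V$ and $W$, and continuity of $U_0, V_0$ at $\partial_P Q$ with the prescribed data forces the required values.

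The heart of the proof is the sub/supersolution property, for which I use the barrier formulation of Definition~\ref{def:visc-barrier}. I sketch only the subsolution property of $V$; the supersolution property of $W$ is symmetric, using subbarriers, $\ur$, and the obstacle subsolution $\ou_\eqr$ with its flatness $\lPhi_\eqr$ in place of superbarriers, $\lr$, and $\uu_\eqr$ with $\uPhi_\eqr$. Fix a superbarrier $\phi$ on an open $\mathcal{U}$ and a bounded parabolic neighborhood $E = \mathcal{U} \cap \set{t \leq \tau} \subset Q$ with $V \prec \phi$ on $\partial_P E$. Arguing as at the first crossing time in the proof of Theorem~\ref{th:comparison}, the conclusion $V \prec \phi$ on $\cl E$ can fail only at a point $(x_0, t_0) \in E$ with $V(x_0, t_0) = \phi(x_0, t_0) = 0$ on $\Gamma(\phi) \cap \cl\Omega(V)$; strictly positive crossings are eliminated by subharmonicity of $V(\cdot, t_0)$ (inherited from $u^\e$) against $-\Delta\phi > 0$. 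Writing $q := D\phi^+(x_0, t_0)$ and $\nu := -q/\abs{q}$, the superbarrier velocity condition with $f^*(p) = \lr(p)/\abs p$ yields $\tilde r := \phi^+_t(x_0, t_0)/\abs{q} > \lr(q)$. Fix $r \in (\lr(q), \tilde r)$; since $\ur(q) \leq \lr(q) < r$, definition \eqref{homogenized-velocities} forces $\limsup_{\e\to 0} \e^{-\be}\uPhi_{\e;q,r}(1) < 1$, so the obstacle supersolution $\uu_{\e;q,r}$ is $\e^\be$-flat for all small $\e$. I then build the translated supersolution $v^\e(x,t) := \uu_{\e;q,r}(x - y_\e, t - \tau_\e)$, with $(y_\e, \tau_\e) \in \e\Z^{n+1}$ selected so that the bounding planar profile $P_{q,r}(\cdot - y_\e, \cdot - \tau_\e)$ has free boundary displaced by $\sim \e^\be$ behind $\Gamma(\phi)$ at $(x_0, t_0)$, and restrict to a parabolic ball $B_\rho(x_0, t_0) \cap \set{t \leq t_0}$ with $\rho$ fixed small but $\e$-independent so that the Taylor remainder $\phi - P_{q, \tilde r}(\cdot - x_0, \cdot - t_0) = O(\rho^2)$ is dominated by the velocity gap $(\tilde r - r)\rho$. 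Combining these estimates with $V \prec \phi$ on $\partial_P E$ gives $u^\e \leq v^\e$ on the parabolic boundary of the small ball for $\e$ sufficiently small, and the $\e$-comparison principle propagates the ordering inside. Since the $\e^\be$-flatness of $v^\e$ forces $v^\e \equiv 0$ on a fixed neighborhood of $(x_0, t_0)$ lying in $\Omega^c(\phi)$, passing to the half-relaxed limit gives $V \equiv 0$ there, contradicting $(x_0, t_0) \in \cl\Omega(V)$.

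With the sub/supersolution properties and boundary data in hand, the time-perturbation argument from the uniqueness part of Theorem~\ref{th:well-posedness} yields $V \leq u^*$ and $W \geq u_*$; combined with $u_* \leq W \leq V \leq u^*$ and Corollary~\ref{co:regularity}'s identities $(u^*)_* = u_*$, $(u_*)^* = u^*$, the semi-continuity of $V$ and $W$ forces $V = u^*$, $W = u_*$, giving claims (1) and (2). The Hausdorff convergence (3) follows from the half-relaxed convergence combined with the uniform nondegeneracy estimate (Corollary~\ref{co:hs-nondegeneracy}): accumulation points of $\partial\Omega((u^\e)_*;Q)$ must lie in $\partial\Omega(u_*;Q)$, and conversely the nondegeneracy of $u_*$ ensures every point of $\partial\Omega(u_*;Q)$ is approached. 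The main technical difficulty is the construction of $v^\e$: one must reconcile the fact that $\uu_{\e;q,r}$ is defined only on the cone $Q_q$ (bridged to a full neighborhood of $(x_0, t_0)$ via the planar barriers $R^\pm_\xi$ of Lemma~\ref{le:matchingplanarsolutions}), select $(y_\e, \tau_\e) \in \e\Z^{n+1}$ compatible with the $\e\Z^{n+1}$-periodicity of the medium, and balance the $\e^\be$-flatness against the quadratic Taylor remainder — the last of which is precisely why the restriction $\be \in (4/5, 1)$ inherited from Theorem~\ref{th:localComparison} is essential.
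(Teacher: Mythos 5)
Your overall architecture (half-relaxed limits, barriers from the well-posedness proof for the boundary data, uniqueness plus Corollary~\ref{co:regularity} to identify the limits, nondegeneracy for the Hausdorff convergence) matches the paper. But the key interior step -- showing the half-relaxed limits solve the homogenized problem -- is attempted by a mechanism that has a genuine gap. First, a small one: from $r>\ur(q)$ you conclude $\limsup_\e \e^{-\be}\uPhi_{\e;q,r}(1)<1$, but $\ur(q)$ is defined as an infimum and no monotonicity in $r$ of the flatness condition is ever established; this is fixable by picking $r'\in(\ur(q),\tilde r)$ actually belonging to the defining set. The serious problem is the contradiction mechanism itself. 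Flatness of the translated supersolution $v^\e$ only says its free boundary stays within $\e^\be$ of the translated plane $P_{q,r}(\cdot-y_\e,\cdot-\tau_\e)$, so it yields $v^\e=0$ only in the region lying more than $\e^\be$ \emph{beyond} that front. To force vanishing of $u^\e$ in a fixed neighborhood of the contact point $(x_0,t_0)$ you must place the front a fixed distance $s>0$ behind $x_0$ at time $t_0$; but since $r<\tilde r$, the speed-$r$ plane and the speed-$\tilde r$ profile of $\phi$ then cross at times $t_0-s/(\tilde r-r)<t<t_0$, so the ordering $u^\e\le v^\e$ you need on the parabolic boundary of the fixed ball $B_\rho(x_0,t_0)$ fails near the final time, and Theorem~\ref{th:comparison} cannot be invoked. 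Moreover, at a single scale the Taylor remainder $O(\rho^2)$ with $\rho$ fixed is \emph{not} small compared with $\e^\be\to0$, so it cannot be absorbed by the flatness margin; stopping the comparison earlier and propagating the zero set forward fails too, because the expansion bound of Corollary~\ref{co:HS-expansion-speed} over a time $c\rho$ is of order $\sqrt\rho\gg\rho$.

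The paper's proof (Proposition~\ref{pr:homogenization-interior}) avoids exactly these obstructions by a different mechanism: for $r>\lr(q)$ it uses the \emph{failure} of flatness of the obstacle subsolution, $\lPhi_{\e';q,r}(1)>\tfrac34\e'^\be$ along a subsequence, and the detachment lemma (Lemma~\ref{le:detachment-ball}), which gives $\lu_{\hat\e;q,r}=0$ in a ball centered \emph{at} the front point $(r\nu,1)$ -- precisely what is needed to exclude the contact point. It then fixes $\hat\e$ and rescales the approximating subsolutions hyperbolically by $h=\e/\hat\e\to0$, choosing $(y_\e,\tau_\e)\in\e\Z^{n+1}$ so that the rescaled function $w_\e$ satisfies $w_\e\le P_{q,r}$ on $Q_q\cap\{t\lesssim1\}$; the ordering $w_\e\le\lu_{\hat\e;q,r}$ then follows from the \emph{maximality} in the definition of $\lu$, not from the comparison principle, so no boundary ordering near the final time is needed, and the quadratic errors are of size $O(h^2)$, dominated by the detachment radius $ch\hat\e^\be$ since $\hat\e$ is fixed. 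Your single-scale, comparison-principle route with the flat obstacle supersolution does not reproduce this, so as written the interior consistency step does not close.
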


Before proceeding with the proof of Theorem~\ref{th:homogenization},
we recall the notion of half-relaxed limits and the notion of convergence in
Hausdorff distance.

\begin{definition}
\label{def:half-relaxed}
Suppose that $\set{v_\e}_{\e > 0}$
is a family of functions
$v_\e : E \to \R$ on some set $E \subset \Rn\times\R$.
For $(x,t) \in \cl E$ we define
the \emph{upper and lower half-relaxed limits}
\begin{align*}
\halflimsup_{\e\to0} v_\e(x,t) &:=
    \lim_{\eta\to0}
    \sup \set{v_\e(y,s): (y,s) \in E,\ \abs{y - x} + \abs{s - t} + \e \leq \eta}
\\
\halfliminf_{\e\to0} v_\e(x,t) &:=
    \lim_{\eta\to0}
    \inf \set{v_\e(y,s): (y,s) \in E,\ \abs{y - x} + \abs{s - t} + \e \leq \eta}.
\end{align*}
\end{definition}
Let us recall that $v_\e \rightrightarrows v$ uniformly as $\e\to0$
on a compact set $K \subset E$, where $v \in C(K)$, if
$ \halflimsup_{\e\to0} v_\e = \halfliminf_{\e\to0} v_\e = v$ on $K$.

\begin{definition}
\label{def:Hausdorff-convergence}
Let $E_\e, E \subset \Rn\times\R$ for $\e > 0$
be compact sets.
We say that $E_\e$ converges to $E$ uniformly in Hausdorff distance
as $\e \to 0$
if for every $\de > 0$ there exists $\e_0 > 0$ such that
\[
    E \subset E_\e + B_\de(0,0), \quad E_\e \subset E + B_\de(0,0)
    \quad \text{for } \e < \e_0.
\]
\end{definition}

The following characterization is well known \cite{CC06}.
\begin{proposition}
\label{pr:hausdorff}
Compact sets $E_\e \subset \Rn\times\R$
converge to a compact set $E\subset\Rn\times\R$ uniformly in Hausdorff
distance if and only if the following two statements are true:
\begin{compactenum}[(i)]
\item
if $(x_\e,t_\e) \in E_\e$ for all $\e > 0$, then every limit point of
$\set{(x_\e,t_\e)}_{\e>0}$ lies in $E$;
\item for every $(x,t) \in E$ there exists $(x_\e,t_\e) \in E_\e$
such that $(x_\e,t_\e) \to (x,t)$ as $\e\to0$.
\end{compactenum}
\end{proposition}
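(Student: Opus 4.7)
The plan is to prove the two implications separately; both are contradiction/compactness arguments that unpack the definition of uniform Hausdorff convergence, namely the existence, for each $\de > 0$, of $\e_0 > 0$ with $E \subset E_\e + B_\de(0,0)$ and $E_\e \subset E + B_\de(0,0)$ for all $\e < \e_0$.

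For the forward direction, assume uniform Hausdorff convergence and fix arbitrary $\de > 0$. To verify (i), let $(x_\e, t_\e) \in E_\e$ and suppose $(x_{\e_k}, t_{\e_k}) \to (x,t)$ along some subsequence. The inclusion $E_\e \subset E + B_\de(0,0)$ for $\e < \e_0(\de)$ places $(x_{\e_k}, t_{\e_k})$ in $E + \cl{B_\de(0,0)}$ for all large $k$, so passing to the limit and using that $E$ is closed gives $(x,t) \in E + \cl{B_\de(0,0)}$; since $\de$ is arbitrary, $(x,t) \in E$. For (ii), fix $(x,t) \in E$ and choose $\e_k \searrow 0$ with $E \subset E_\e + B_{1/k}(0,0)$ for all $\e < \e_k$. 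For each $\e \in (\e_{k+1}, \e_k)$ pick $(x_\e, t_\e) \in E_\e$ with $\abs{(x_\e,t_\e) - (x,t)} < 2/k$; the resulting family converges to $(x,t)$.

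For the reverse direction, assume (i) and (ii) and verify the two Hausdorff inclusions for small $\e$ by contradiction. If $E \not\subset E_\e + B_\de(0,0)$ for some $\de > 0$ along $\e_k \to 0$, pick $(x_k, t_k) \in E$ with $\dist((x_k, t_k), E_{\e_k}) \geq \de$; by compactness of $E$ extract $(x_k, t_k) \to (x,t) \in E$, and apply (ii) to produce $(y_\e, s_\e) \in E_\e$ with $(y_\e, s_\e) \to (x,t)$, which forces $\dist((x_k, t_k), E_{\e_k}) \to 0$, a contradiction. If $E_\e \not\subset E + B_\de(0,0)$ along $\e_k \to 0$, pick $(x_k, t_k) \in E_{\e_k}$ with $\dist((x_k, t_k), E) \geq \de$; since in the intended application (Theorem~\ref{th:homogenization}) all $E_\e$ lie in the fixed compact set $\cl Q$, a subsequence of $(x_k, t_k)$ converges to a limit point $(x,t)$, which by (i) must lie in $E$, contradicting $\dist((x_k, t_k), E) \geq \de$.

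The only mild subtlety is the last step: without some uniform boundedness of the $E_\e$, (i) and (ii) alone do not imply Hausdorff convergence (consider $E = \set{0}$, $E_\e = \set{0, 1/\e}$), so I would either state Proposition~\ref{pr:hausdorff} with the standing assumption that all $E_\e$ lie in a fixed compact set, or simply note that this holds automatically in every use of the proposition in this paper, where the relevant sets are closed subsets of $\cl Q$.
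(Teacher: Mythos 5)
Your proof is correct, but there is nothing in the paper to compare it with: the paper does not prove Proposition~\ref{pr:hausdorff} at all, it simply records it as well known with a reference to \cite{CC06}. Your argument is the standard one — unpacking the two Hausdorff inclusions for the ``only if'' direction, and compactness/contradiction for the ``if'' direction — and all the steps go through. More importantly, the caveat you raise at the end is a genuine and correct observation, not a mild subtlety: conditions (i)--(ii) are exactly Kuratowski convergence, and the example $E=\set{0}$, $E_\e=\set{0,1/\e}$ shows that for sets that are only individually compact this does not imply uniform Hausdorff convergence, so the proposition as literally stated needs the sets $E_\e$ to lie in a common compact set (which is how the equivalence is usually stated). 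This hypothesis does hold in the only place the proposition is used (the proof of Theorem~\ref{th:homogenization}), but be slightly careful about how you justify it: $\cl Q=\cl\Omega\times[0,T]$ need not be compact, since $\Omega$ may be unbounded. The correct justification is the bounded-support argument in the proof of Theorem~\ref{th:well-posedness}: the barriers there are uniform in $\e$, so $\cl\Omega(u^\e;Q)\subset B_R(0)\times[0,T]$ with $R$ independent of $\e$, and hence all the free boundaries $E_\e$ and $E$ lie in the fixed compact set $\cl{B_R(0)}\times[0,T]$. With that adjustment your proof, together with the added equi-boundedness hypothesis, is a complete and correct substitute for the citation.
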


To prove Theorem~\ref{th:homogenization},
we first address the convergence of the supports,
which is important
for the stability of subsolutions of one-phase problems
such as \eqref{hs-f}.
Nondecreasing-in-time subsolutions
in fact satisfy $\cl\Omega(\overline u) = \limsup_{\e\to0} \cl\Omega(u^\e)$,
see \cite[Lemma 4.1]{K08},
but we will only need the following simpler statement
(we give the proof here for completeness).

\begin{lemma}
\label{le:convergence-of-supports}
Let $v^\e\in\subs(M, Q)$
be a locally uniformly bounded sequence
of subsolutions on the space-time cylinder
$Q = \Omega \times (0, T]$
for open domain $\Omega$ and $T > 0$,
and let $\overline v = \halflimsup_{\e\to0} v^\e$.
Moreover,
suppose that $(v^\e)^*$ are nondecreasing in time
and
$\cl\Omega_0(v^\e;Q) \subset \cl\Omega_0(\overline v;Q)$
for all $\e > 0$.

If $\e_k \to 0$ as $k\to\infty$,
$(x_k, t_k) \in \cl\Omega(v_{\e_k}; Q)$ for all $k$
and $(x_k, t_k) \to (\hat x, \hat t)$
then $(\hat x, \hat t) \in \cl\Omega(\overline v; Q) \cup \partial_P Q$.
\end{lemma}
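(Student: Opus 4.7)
The plan is to argue by contradiction. Suppose $(\hat x, \hat t) \in Q \setminus (\cl\Omega(\overline v; Q) \cup \partial_P Q)$; since $Q \cap \partial_P Q = \emptyset$, this forces $(\hat x, \hat t) \in Q$, and the closedness of $\cl\Omega(\overline v; Q)$ in $\Rn \times \R$ furnishes some $\rho > 0$ with $\cl B_{2\rho}(\hat x, \hat t) \subset Q$ and $\overline v \equiv 0$ on $B_{2\rho}(\hat x, \hat t)$.

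A key observation is that $\overline v$ inherits the time-monotonicity hypothesis from the $(v^\e)^*$: a direct calculation with the definition of the upper half-relaxed limit shows that if each $(v^\e)^*$ is nondecreasing in $t$, so is $\overline v$. Combined with non-negativity, this upgrades the vanishing on $B_{2\rho}(\hat x, \hat t)$ to vanishing on the entire backward cylinder: for every $x$ with $|x - \hat x| < 2\rho$ and every $t \in [0, \hat t]$, $\overline v(x, t) \leq \overline v(x, \hat t) = 0$, so $\overline v \equiv 0$ on $B_{2\rho}(\hat x) \times [0, \hat t]$. In particular $\cl\Omega_0(\overline v; Q) \cap B_{2\rho}(\hat x) = \emptyset$---any approximating sequence $(x_k, t_k) \in \Omega(\overline v; Q)$ with $(x_k, t_k) \to (x, 0)$ and $x \in B_{2\rho}(\hat x)$ would eventually sit inside this vanishing cylinder, contradicting $\overline v(x_k, t_k) > 0$. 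By the initial-support hypothesis $\cl\Omega_0(v^\e; Q) \subset \cl\Omega_0(\overline v; Q)$, the same exclusion holds for each $v^\e$, and a finite-cover compactness argument on the compact set $\cl B_{3\rho/2}(\hat x) \times \{0\}$ produces, for each $\e$, some $\de_\e > 0$ such that $v^\e \equiv 0$ on $\cl B_{3\rho/2}(\hat x) \times (0, \de_\e)$.

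Setting $\eta_\e := \sup_{\cl B_{3\rho/2}(\hat x) \times [\de_\e, \hat t]} v^\e$, the standard uniform convergence property of the upper half-relaxed limit on compact subsets of the vanishing set of $\overline v$ gives $\eta_\e \to 0$. A localized version of Corollary~\ref{co:HS-expansion-speed} on the cylinder $\cl B_{3\rho/2}(\hat x) \times [\de_\e, \hat t]$---obtained by comparing $v^\e$ with a radially symmetric barrier that matches the bound $\eta_\e$ on the lateral wall---then yields
\[
\cl\Omega_{\hat t}(v^\e) \cap B_{3\rho/2 - \mu_\e}(\hat x) = \emptyset, \qquad \mu_\e := 2\sqrt{2n \eta_\e M (\hat t - \de_\e)},
\]
since the effective initial support is empty in $\cl B_{3\rho/2}(\hat x)$ at time $\de_\e$. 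As $\mu_\e \to 0$ with $\e$, eventually $3\rho/2 - \mu_\e > \rho$ and $\cl\Omega_{\hat t}(v^{\e_k}) \cap B_\rho(\hat x) = \emptyset$ for $k$ large. Using the time-monotonicity of $\cl\Omega_t(v^\e)$ for $t \leq \hat t$---and, when $\hat t < T$, running the same argument at the slightly shifted base point $(\hat x, \hat t + \de)$ (which still sits inside the vanishing neighborhood of $\overline v$) to cover nearby times $t > \hat t$---this contradicts $(x_k, t_k) \in \cl\Omega(v^{\e_k})$ with $(x_k, t_k) \to (\hat x, \hat t)$. The principal subtlety is the localization of Corollary~\ref{co:HS-expansion-speed}, which becomes routine once the lateral-boundary bound $\eta_\e$ and the empty-support initial datum at $t = \de_\e$ are in hand.
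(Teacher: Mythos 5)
Your argument is correct in substance, but it takes a different route than the paper. The paper's proof is a one-step application of the nondegeneracy estimate: after the same monotonicity observation you make (which gives $\cl B_{2\de}(\hat x)\cap\cl\Omega_0(\overline v;Q)=\emptyset$, hence $\cl B_{\de}(x_k)\cap\cl\Omega_0(v_{\e_k};Q)=\emptyset$ by the initial-support hypothesis), it applies Corollary~\ref{co:hs-nondegeneracy} on $B_\de(x_k)\times(0,t_k)$ with $(x_k,t_k)\in\cl\Omega(v_{\e_k};Q)$ to get $\sup v_{\e_k}\geq \de^2/(2nM t_k)$ on a fixed compact neighborhood of $(\hat x,\hat t)$, contradicting the uniform smallness there that follows from $\overline v\equiv 0$. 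You instead run the dual estimate: you upgrade the empty initial support to $v^\e\equiv 0$ on a spatial ball for a short initial time interval, record the vanishing height $\eta_\e\to 0$ on a cylinder, and use the finite-expansion-speed bound (Corollary~\ref{co:HS-expansion-speed}) with $K=\eta_\e$ to keep the support away from $(\hat x,\hat t)$. Both corollaries come from the same barrier comparison (Proposition~\ref{pr:HS-closing}), so the underlying tool is identical, but your version carries extra bookkeeping ($\de_\e$, $\eta_\e$, and the forward-shifted base point for times above $\hat t$) that the paper's direct nondegeneracy argument avoids; what it buys is a slightly more quantitative conclusion ($v^{\e_k}\equiv 0$ on a whole cylinder around $(\hat x,\hat t)$ rather than just a contradiction). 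Two points to tighten if you keep your route: when $\hat t=T$ a full space-time ball $\cl B_{2\rho}(\hat x,\hat t)$ need not lie in $Q$, so work with $B_{2\rho}(\hat x,\hat t)\cap\cl Q$ (as the paper does with its sets $H_\rho$); and start the comparison cylinder at some $t_1<\de_\e$ rather than at $t_1=\de_\e$, since the time-$\de_\e$ slice of the relative closure of the positivity set could a priori be nonempty even though $v^\e$ vanishes on the ball for $t<\de_\e$. Your claim $\eta_\e\to 0$ is fine as stated only because your monotonicity step gives $\overline v\equiv 0$ on the closed cylinder down to $t=0$, so the fixed compact set needed for the half-relaxed-limit convergence is available.
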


\begin{proof}
Suppose that $\e_k \to 0$ as $k \to \infty$ and
$(x_k, t_k) \in \cl\Omega(v_{\e_k})$ such that
$(x_k, t_k) \to (\hat x, \hat t) \in \cl Q$.

We argue by contradiction and assume that
$(\hat x, \hat t) \notin \cl\Omega(\overline v) \cup \partial_P Q$.
Let us denote $H_\rho(\xi,\si) = B_\rho(\xi,\si) \cap \set{t \leq T}$ in the following.
In particular, there exists $\de > 0$ such that
$\cl H_{2\de}(\hat x, \hat t) \subset Q \setminus \cl \Omega(\overline v)$.
Since $(v_\e)^*$ is nondecreasing in time,
so is $\overline v$ and thus
\[
\cl B_{\de}(x_k) \cap \cl\Omega_0(v_\e) \subset
\cl B_{2\de}(\hat x) \cap \cl\Omega_0(\overline v) = \emptyset
\]
for $k$ large
so that
$\cl B_\de(x_k) \subset B_{2\de}(\hat x)$.
Hence the nondegeneracy property of subsolutions,
Corollary~\ref{co:hs-nondegeneracy}, applies and,
used with the monotonicity in time,
yields the estimate
\[
\sup_{H_{2\de}(\hat x, \hat t)} v_{\e_k} \geq
\sup_{H_{\de}(x_k, t_k)} v_{\e_k} \geq \frac{\de^2}{2nM t_k}
\to \frac{\de^2}{2nM\hat t} > 0 \quad \text{as } k \to \infty.
\]
But this is a contradiction with the property of $\halflimsup$,
namely that $v_{\e_k}$ converges to $0$ uniformly on
$\cl H_{2\de}(\hat x, \hat t)$ as $k \to \infty$.
The proof is complete.
\qedhere\end{proof}

\begin{corollary}
\label{co:finite-speed-stable}
Let $v_\e$ be a sequence of subsolutions
as in Lemma~\ref{le:convergence-of-supports}.
Then $\overline v = \halflimsup_{\e\to0}v_\e$
satisfies Definition~\ref{def:visc-test-sub}(i).
\end{corollary}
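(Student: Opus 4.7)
The aim is to verify Definition~\ref{def:visc-test-sub}(i) for $\overline v := \halflimsup_{\e\to0} v_\e$: for every $\tau > 0$,
\[
\cl\Omega(\overline v;Q) \cap Q \cap \set{t \leq \tau} \subset \cl{\Omega(\overline v;Q) \cap \set{t < \tau}}.
\]
So fix $\tau > 0$ and $(\hat x, \hat t) \in \cl\Omega(\overline v;Q) \cap Q$ with $\hat t \leq \tau$. If $\hat t < \tau$, any approximating sequence in $\Omega(\overline v;Q)$ converging to $(\hat x, \hat t)$ eventually lies in $\set{t < \tau}$, so I may assume $\hat t = \tau$. A short shift argument in the definition of $\halflimsup$ shows that $\overline v$ inherits the nondecreasing-in-time property of each $(v_\e)^*$: for fixed $x$ and $t_1 \leq t_2$, each $(y,s)$ in the neighborhood used to compute $\overline v(x,t_1)$ can be shifted to $(y, s + t_2 - t_1)$ in the neighborhood for $\overline v(x,t_2)$, where $(v_\e)^*$ is at least as large. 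Hence in the subcase $\hat x \in \cl\Omega_0(\overline v;Q)$, choosing $(x_n, t_n) \in \Omega(\overline v;Q)$ with $(x_n, t_n) \to (\hat x, 0)$ yields $\overline v(x_n, \tau - 1/n) \geq \overline v(x_n, t_n) > 0$ for $n$ large, giving a sequence in $\Omega(\overline v;Q) \cap \set{t < \tau}$ converging to $(\hat x, \tau)$.

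The remaining case $\hat x \notin \cl\Omega_0(\overline v;Q)$ I handle by contradiction. Suppose $(\hat x, \tau) \notin \cl{\Omega(\overline v;Q) \cap \set{t < \tau}}$; then there is $\rho > 0$ such that $\overline v \equiv 0$ on the open set $U := B_\rho(\hat x, \tau) \cap \set{t < \tau}$, and (after shrinking $\rho$) also $\cl B_{2\rho}(\hat x) \cap \cl\Omega_0(\overline v;Q) = \emptyset$. Since $(\hat x, \tau) \in \cl\Omega(\overline v;Q)$, a diagonal extraction from the definition of $\halflimsup$ produces $\e_k \to 0$ and $(y_k, s_k) \to (\hat x, \tau)$ with $v_{\e_k}(y_k, s_k) > 0$, so $y_k \in \cl\Omega_{s_k}(v_{\e_k};Q)$.

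The key step is to trace the support of $v_{\e_k}$ backward in time using the finite-expansion-speed estimate behind Corollary~\ref{co:HS-expansion-speed}: with $K$ a local uniform bound on $v_\e$ near $(\hat x, \tau)$, the subsolution $v_{\e_k}$ satisfies
\[
\cl\Omega_{s_k}(v_{\e_k};Q) \subset \cl\Omega_{\tau - 2\de}(v_{\e_k};Q) + \cl B_{\mu_{\de,k}}(0),
\qquad \mu_{\de,k} := 2\sqrt{2nKM(s_k - \tau + 2\de)},
\]
for any fixed $\de > 0$ and $k$ large. This yields $y'_k \in \cl\Omega_{\tau - 2\de}(v_{\e_k};Q)$ with $|y'_k - y_k| \leq \mu_{\de,k} \to 4\sqrt{nKM\de}$. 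Choosing $\de > 0$ small relative to $\rho$, $K$, $M$, and $n$, I extract a subsequence with $(y'_k, \tau - 2\de) \to (\hat x', \tau - 2\de) \in U$, a point interior to $Q$. Lemma~\ref{le:convergence-of-supports} applied to this sequence forces $(\hat x', \tau - 2\de) \in \cl\Omega(\overline v;Q) \cup \partial_P Q$, and being interior the point must lie in $\cl\Omega(\overline v;Q)$; this contradicts $\overline v \equiv 0$ on the open neighborhood $U$ of this point.

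The main obstacle is this backward-in-time tracing: the naive attempt to approximate $(\hat x, \tau)$ directly by points of $\Omega(\overline v)$ at earlier times fails, because the monotonicity of $\overline v$ runs in the wrong direction and the approximating sequence $(\xi_j, \si_j) \in \Omega(\overline v)$ may have $\si_j \geq \tau$ for all large $j$. The finite-expansion-speed estimate is therefore essential, and $\de$ must be chosen carefully so the traced-back point $(\hat x', \tau - 2\de)$ still falls inside the vanishing region $U$ where Lemma~\ref{le:convergence-of-supports} can be brought to a contradiction.
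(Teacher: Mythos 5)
Your argument is correct and follows essentially the same route as the paper: assume the expansion property fails at $(\hat x,\tau)$, pick a small ball around $(\hat x,\tau)$ on which $\overline v$ vanishes for $t<\tau$, use the finite propagation speed of Corollary~\ref{co:HS-expansion-speed} to trace the supports of $v_{\e_k}$ back to a time $\tau-2\de<\tau$, and invoke Lemma~\ref{le:convergence-of-supports} to produce a point of $\cl\Omega(\overline v;Q)$ inside the region where $\overline v\equiv 0$, a contradiction. Two minor tidy-ups: when you quote the expansion estimate locally you drop the $E^c$ term of Corollary~\ref{co:HS-expansion-speed}, which is excluded only because $\de$ is chosen small compared with the radius of the neighborhood on which $K$ bounds $v_\e$ (the paper avoids this by taking $K=\sup_{\Omega\times[0,\tau]}v_\e$ and choosing $\de$ so that $\cl B_{2\de}(\xi,\tau)\cap\set{t\le T}\subset Q$); and the separate monotonicity-based case $\hat x\in\cl\Omega_0(\overline v;Q)$ is redundant, since your contradiction argument never uses $\hat x\notin\cl\Omega_0(\overline v;Q)$.
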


\begin{proof}
Suppose that $\overline v$ does not satisfy
Definition~\ref{def:visc-test-sub}(i)
and there exists
\[(\xi, \tau) \in (\cl\Omega(\overline v) \cap Q)
\setminus \cl{\Omega(\overline v) \cap \set{t < \tau}}.\]
Then for some $\delta > 0$ we have
\begin{align}
\label{ball-outside-support}
\cl B_{2\de}(\xi,\tau) \cap \set{t\leq T} \subset
Q \setminus \cl{\Omega(\overline v) \cap \set{t < \tau}}.
\end{align}
Let $K = \sup_{\Omega\times[0,\tau]} v_\e < \infty$
and find $h \in (0, \de)$ such that
\[
\sqrt{2nKM h} < \de.
\]
By the definition of $\halflimsup$ there exists
sequences $\e_k \to0$ and $(\xi_k, \tau_k) \to (\xi, \tau)$
as $k \to \infty$ such that
$(\xi_k, \tau_k) \in \Omega(v_{\e_k})$.
Therefore by Corollary~\ref{co:HS-expansion-speed}
we conclude that there exist $(x_k, t_k) \in \cl\Omega(v_{\e_k}) \cap \cl B_{2\de}(\xi,\tau) \cap \set{t \leq \tau - h}$.
By compactness, $(x_k, t_k) \to (\hat x, \hat t)
\subset \cl B_{2\de}(\xi,\tau) \cap \set{t \leq \tau- h}$
as $k \to \infty$
up to a subsequence,
and Lemma~\ref{le:convergence-of-supports} implies
that $(\hat x, \hat t) \in \cl \Omega(\overline v)$.
But that is a contradiction with \eqref{ball-outside-support}.
\qedhere\end{proof}

In the following proposition we show that the half-relaxed limits of the solutions of the inhomogeneous
Hele-Shaw problem \eqref{HSt} are solutions
of the homogeneous problem \eqref{hs-homogenized}.

\begin{proposition}
\label{pr:homogenization-interior}
Let $v_\e$ be a locally uniformly bounded sequence
of subsolutions of \eqref{HSt}
on the space-time cylinder $Q = \Omega \times (0,T]$
satisfying also the hypothesis of Lemma~\ref{le:convergence-of-supports}.
Then the upper half-relaxed limit
\begin{align*}
\overline{v} = \halflimsup_{\e \to 0} v_\e
\end{align*}
is a viscosity subsolution of the homogenized problem
\eqref{hs-homogenized} on $Q$.

Similarly, if $v_\e$ is a locally uniformly bounded sequence of supersolutions of \eqref{HSt} on $Q$ then the lower half-relaxed limit
\begin{align*}
\underline{v} = \halfliminf_{\e \to 0} v_\e
\end{align*}
is a supersolution of \eqref{hs-homogenized} on $Q$.
\end{proposition}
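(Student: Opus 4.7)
The plan is to prove the subsolution assertion via the barrier characterization of viscosity subsolutions, Definition~\ref{def:visc-barrier}; the supersolution assertion will be entirely analogous, with subbarriers and $\ou$ replacing superbarriers and $\uu$. Condition (i) of Definition~\ref{def:visc-test-sub} for $\overline v$ is immediate from Corollary~\ref{co:finite-speed-stable}, so only the barrier condition needs work. I argue by contradiction. If $\overline v$ is not a subsolution of \eqref{hs-homogenized}, there is a bounded parabolic neighborhood $E = U \cap \set{t \leq \tau} \subset Q$ and a superbarrier $\phi$ of \eqref{hs-homogenized} on $U$ with $\overline v \prec \phi$ on $\partial_P E$ but $\overline v \not\prec \phi$ on $\cl E$. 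Mimicking the argument of Proposition~\ref{pr:visc-equivalency} together with Lemma~\ref{le:ordering-open}, I will extract a first touching time $\hat t$ and a contact point $(\hat x, \hat t) \in E$ with $(\hat x, \hat t) \in \Gamma(\phi) \cap \cl\Omega(\overline v;Q)$ and $\overline v(\hat x, \hat t) = \phi(\hat x, \hat t) = 0$; the interior case $\phi(\hat x, \hat t) > 0$ is ruled out by the strict inequality $-\Delta \phi > c$ from the superbarrier definition together with the (stability-preserved) subharmonicity of $\overline v$ in its positive set and the strong maximum principle.

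Next I will localize at the contact point. Set $q := D\phi^+(\hat x, \hat t)$, $\nu := -q/\abs q$ and $r := \phi_t^+(\hat x, \hat t)/\abs q$. The strict superbarrier inequalities give $\abs q > 0$ and $r > \lr(q) + c_0$ for some $c_0 > 0$. Since $\ur(q) \leq \lr(q) < r$ by Proposition~\ref{pr:r-semicontinuity}, I pick $r^* \in (\ur(q), r)$. By the definition of $\ur(q)$ in \eqref{homogenized-velocities}, there exist $\theta \in (0,1)$ and $\e_0 > 0$ such that $\uPhi_{\e;q,r^*}(1) \leq \theta\e^\beta$ for $\e < \e_0$; hence the obstacle supersolution $\uu_{\e;q,r^*}$ lies within spatial distance $\theta\e^\beta$ of the planar profile $P_{q,r^*}$ in the $\nu$-direction up to time~$1$.

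The heart of the proof is the construction of a local supersolution-barrier for the $\e$-problem sitting between $v_\e$ and $\phi$ near $(\hat x, \hat t)$. I fix a small scale $\delta > 0$ and consider
\[
w_\e(x,t) := \delta\, \uu_{\delta^{-1}\e;\,q,\,r^*}\bigl(\delta^{-1}(x - \hat x - y_\e),\; \delta^{-1}(t - \hat t - s_\e)\bigr)
\]
with a lattice shift $(y_\e, s_\e) \in \e\Z^{n+1}$ of magnitude $O(\e)$, chosen so that the translated free boundary $\Gamma(P_{q,r^*})$ (rescaled by $\delta$) lies strictly ahead of $\Gamma(\phi)$ in the $-\nu$ direction outside a small neighborhood of $(\hat x, \hat t)$. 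By the scaling invariance of the Hele-Shaw problem and the $\e\Z^{n+1}$-periodicity of $g$, the function $w_\e$ is a supersolution of the $\e$-problem on $\delta Q_q + (\hat x + y_\e, \hat t + s_\e)$. In a small parabolic cylinder $\mathcal N$ around $(\hat x, \hat t)$, the $C^{2,1}$-smoothness of $\phi$ yields an $O(\delta^2)$ deviation from the tangent planar wave with parameters $(q,r)$, while the rescaled flatness of $w_\e$ is bounded by $\theta\delta^{1-\beta}\e^\beta$. Choosing $\delta$ small enough so that $O(\delta^2) \ll c_0 \delta$, and then $\e$ small enough so that $\theta\delta^{1-\beta}\e^\beta \ll c_0 \delta$, I transfer the strict separation $\overline v \prec \phi$ on $\partial_P E$ to $v_\e \prec w_\e$ on $\partial_P \mathcal N$ via the definition of $\halflimsup$; the comparison principle, Theorem~\ref{th:comparison}, then propagates this to $v_\e \prec w_\e$ throughout $\cl \mathcal N$. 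Passing to the upper half-relaxed limit $\e\to 0$, the flatness bound forces $w_\e$ to converge locally uniformly to $P_{q,r^*}(\cdot - \hat x - y_0, \cdot - \hat t - s_0)$ for limit shifts $(y_0, s_0)$ positioned so this planar limit passes through $(\hat x, \hat t)$. Since $r^* < r$, this majorant of $\overline v$ lies strictly below $\phi$ in a punctured neighborhood of $(\hat x, \hat t)$, contradicting $(\hat x, \hat t) \in \cl\Omega(\overline v)$ with $\overline v(\hat x, \hat t) = \phi(\hat x, \hat t)$.

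The main obstacle will be the geometric matching in the barrier construction: aligning the cone $Q_q$ of the obstacle problem (whose axis is $\nu$) with the curved free boundary of $\phi$ at $(\hat x, \hat t)$; fitting the shift $(y_\e, s_\e)$ into $\e\Z^{n+1}$ so that $w_\e$ actually remains a supersolution of the $\e$-problem via Proposition~\ref{pr:monotonicity}; and balancing the three small parameters $\delta$, $\delta^{1-\beta}\e^\beta$, and $c_0$ simultaneously so that the boundary comparison $v_\e \prec w_\e$ on $\partial_P\mathcal N$ closes. The exponent $\beta \in (\frac{4}{5},1)$ is precisely what enables this balance, as it does in the local comparison principle. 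The detachment lemma, Lemma~\ref{le:detachment-ball}, serves as the crucial backstop: whenever the flatness bound $\uPhi_{\e;q,r^*}(1) \leq \theta\e^\beta$ threatens to degrade along a subsequence (e.g.\ near a discontinuity of $\ur$), the lemma produces a ball of size $\sim\gamma\mu\e^\beta$ on which $\uu_{\gamma\e;q,r^*} > 0$ strictly beyond $\Gamma(P_{q,r^*})$, giving either an alternative rigid barrier or, via the local comparison principle (Theorem~\ref{th:localComparison}), a direct contradiction with the strict separation $\overline v \prec \phi$.
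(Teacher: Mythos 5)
Your reduction to a contradiction at a free-boundary contact point, the use of Corollary~\ref{co:finite-speed-stable} for condition (i), and the replacement of $\phi$ by its planar/quadratic approximation are all in line with the paper. The gap is in the core mechanism. You build your barrier $w_\e$ from the \emph{smallest supersolution} $\uu_{\cdot;q,r^*}$ with $r^*\in(\ur(q),r)$, whose flatness pins it within $O(\e^\beta)$ of the plane moving with speed $r^*<r$, and you then want $v_\e\prec w_\e$ on $\partial_P\mathcal N$ so that Theorem~\ref{th:comparison} applies on the small cylinder $\mathcal N$. But behind the free boundary the only control on $v_\e$ is the test-function bound $v_\e\leq[\psi-\psi(x_\e,t_\e)]_+$, a profile whose front advances with speed at least $r$; on the lateral boundary of $\mathcal N$, after a time of order $\delta^2/c_0\ll\delta$, this upper bound for $v_\e$ exceeds the upper envelope $P^{\eta}_{q,r^*}$ of $w_\e$ (the fronts separate by $|q|(r-r^*)s$ while the flatness and quadratic errors stay of size $\theta\delta^{1-\beta}\e^\beta+O(\delta^2)$). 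So the required ordering on the parabolic boundary cannot be arranged by any choice of $\delta$, $\e$, or lattice shift: the failure is structural, not a matter of balancing the three small parameters, and the comparison principle cannot be invoked. This is precisely why the flat supersolution $\uu$ is the wrong object for the subsolution half (it is the tool for the $\underline v$ half, where the inequalities reverse).

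The paper's proof avoids any boundary comparison at this stage. From $r>\lr(q)$ and the definition \eqref{homogenized-velocities} one gets a subsequence with $\lPhi_{\e';q,r}(1)>\tfrac34\e'^\beta$; the detachment lemma (Lemma~\ref{le:detachment-ball}) then yields arbitrarily small $\hat\e$ with $\ou_{\hat\e;q,r}=0$ in $B_{c\hat\e^\beta}(r\nu,1)$. The localized $v_\e$, rescaled by $h=\e/\hat\e$, translated by a carefully chosen point of $\e\Z^{n+1}$ (the shift absorbs the $O(h^2)$ quadratic error and the gradient error $|q_\e-q|$), and extended by zero outside $C^+$, is verified to be a subsolution of the $\hat\e$-problem lying below the obstacle $P_{q,r}$ on all of $Q_q$ up to time $\approx 1$. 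The \emph{maximality} in the definition of $\ou_{\hat\e;q,r}$ — not the comparison principle — then forces this competitor to vanish on the detachment ball, and scaling back gives $v_\e=0$ near $(x_\e,t_\e)$, contradicting $(x_\e,t_\e)\in\cl\Omega(v_\e)$. You mention this detachment route only as a "backstop"; it is in fact the entire argument, and the global-in-$Q_q$ competitor construction (which your proposal never supplies) is what replaces the unattainable ordering on $\partial_P\mathcal N$.
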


\begin{proof}
We will prove that $\overline{v}$
is a subsolution of \eqref{hs-homogenized}.
A parallel, simpler argument yields that $\underline{v}$ is a supersolution;
in this case there is no need to study the convergence of supports.

We shall use Definition~\ref{def:visc-test-sub}.
Its part (i) follows from Corollary~\ref{co:finite-speed-stable}.
To prove that $\overline v$ satisfies Definition~\ref{def:visc-test-sub}(ii) we shall argue by contradiction:
if $\overline{v}$ fails to be a subsolution at some point $P$ of
its free boundary,
it is possible to compare $v_\e$ with a \emph{rescaled translation} of the subsolution
of the obstacle problem
$\ou_{\hat\e;q,r}$ for some small $\e,\hat\e$ and appropriate $q$, $r$,
in the neighborhood of $P$
and it can be shown that this subsolution
must be detached enough from its obstacle to prevent the free boundary
from reaching $P$ in the limit $\e\to0$.
We accomplish this by translating the rescaled $v_\e$ into $w_\e$ in \eqref{w-epsilon} in such a way that the point $P$ is moved into the neighborhood of point $(r\nu,1)$ where the function $\ou_{\hat\e;q,r}$ is 0 due to the detachment lemma (Lemma~\ref{le:detachment-ball}).

First, we can assume that $v_\e$ are $USC$
since that does not change $\halflimsup$.
Let us therefore suppose that $\overline{v}-\phi$ has a local maximum at
$(\hat x, \hat t)$
in
$\cl\Omega(\overline v) \cap \set{t \leq \hat t}$
for some $\phi \in C^{2,1}_{x,t}$.
If $\overline{v}(\hat x,\hat t) > 0$ then a standard
stability argument
yields that $-\Delta \phi(\hat x, \hat t) \leq 0$
and thus $\overline{v}$
satisfies Definition~\ref{def:visc-test-sub}(ii-1).

Hence suppose that
$\overline{v}$ does not satisfy Definition~\ref{def:visc-test-sub}(ii-2)
and
$\overline{v}(\hat x, \hat t)=0$,
$-\Delta \phi(\hat x, \hat t) > 0$,
$q := D \phi(\hat x, \hat t) \neq 0$
and $\phi_t > \overline{r}(D\phi) \abs{D\phi}$ at $(\hat x, \hat t)$.
In this case we can find $r \in \R$
such that
\begin{align*}
    \frac{\phi_t}{\abs{D\phi}} > r > \overline{r}(q)
        = \overline{r}(D\phi) \qquad \text{at } (\hat x, \hat t).
\end{align*}
Let us first observe that we can assume
that $\phi \in C^\infty$.
In fact,
let us take
\begin{align*}
\psi(x,t) = \abs{q} r (t - \hat t) + q \cdot (x - \hat x)
    + \ov2 A (x - \hat x) \cdot (x - \hat x),
\end{align*}
where $A = D^2\phi(\hat x, \hat t) + \eta I$,
with $\eta > 0$ chosen small so that $-\Delta \psi = -\trace A
    = -\Delta \phi(\hat x, \hat t) - n \eta > 0$.
By the Taylor's theorem, there exists $\de > 0$
such that
$\phi(x,t) - \phi(\hat x, \hat t) \leq \psi(x,t)$
for $\abs{x - \hat x} \leq \de$, $\hat t - \de \leq t \leq \hat t$,
and the equality holds only at $(\hat x,\hat t)$.
Therefore $\overline{v} - \psi$ has
a strict maximum $0$ at $(\hat x, \hat t)$
in $\cl\Omega(\overline{v}) \cap \set{t \leq \hat t}
\cap \cl B_\de(\hat x,\hat t)$.

Consequently,
along a subsequence of $\e\to0$,
using Lemma~\ref{le:convergence-of-supports},
there exist $(x_\e,t_\e)$ such that
$(x_\e, t_\e) \to (\hat x, \hat t)$ as $\e\to0$
and $v_\e - \psi$ has a local maximum at $(x_\e, t_\e)$
in $\cl\Omega(v_\e) \cap \set{t \leq t_\e} \cap \cl B_\de(\hat x, \hat t)$.
To simplify the notation in the rest of the proof,
$\e$ only takes on the values from this subsequence.
Since $-\Delta \psi(x_\e, t_\e) > 0$,
we must have $v_\e(x_\e,t_\e) = 0$ and therefore
\begin{align}
\label{max-estimate}
v_\e(x,t) \leq \bra{\psi(x,t)- \psi(x_\e, t_\e)}_+
\qquad \text{on $\cl B_\de(\hat x, \hat t) \cap \set{t \leq t_\e}$.}
\end{align}

By the definition of $\lr(q)$ and the choice of $r > \lr(q)$,
there exists arbitrarily small $\e'$ such that
$\lPhi_{\e';q,r}(1) > \frac34 \e'^\be$.
Hence Lemma~\ref{le:detachment-ball} implies that
there exists arbitrarily small $\hat\e = \ga \e'$ such that
$\ou_{\hat \e; q, r} = 0$ in
$B_{c \hat \e^{\be}}(r\nu, 1)$, where $c$ is a positive constant
independent of $\hat \e$.
Let us fix one such $\hat \e$ small enough so that
\begin{align}
\label{choice-of-hate}
c \hat \e^{\be - 1} > r + 2 \sqrt{n} + 1,
\end{align}
and  set $h := \e/\hat \e$.

Denoting $q_\e = D\psi(x_\e, t_\e)$,
we can estimate
\begin{align}
\label{psi-taylor}
\begin{aligned}
\psi(x,t) - \psi(x_\e, t_\e) &=
    \psi_t(x_\e, t_\e)(t - t_\e)
    + D\psi(x_\e, t_\e) \cdot (x - x_\e)\\
    &\quad + \ov2 D^2\psi(x_\e, t_\e)(x -x_\e) \cdot (x - x_\e)\\
    &\leq \abs{q} r (t - t_\e)
     + q \cdot (x - x_\e)
     \\&\quad+ \abs{q_\e - q} \abs{x - x_\e}
     + \norm{A} \abs{x - x_\e}^2.
\end{aligned}
\end{align}

We introduce $E:= Q_q \cap C^+ \cap \set{t \leq 2}$
and
its diameter $d := \diam E$,
and set (recall that $\nu := -q/\abs{q}$)
\begin{align}
\label{ye-taue}
\begin{aligned}
\tilde y_\e &= x_\e - h r \nu
    + \norm{A} h^2 d^2 \abs{q}^{-1} \nu
    + \abs{q_\e - q}  h d \abs{q}^{-1}\nu
    + (r + \sqrt{n}) \e \nu,\\
\tilde \tau_\e &= t_\e - h.
\end{aligned}
\end{align}
Then we choose $(y_\e, \tau_\e) \in \e\Z^{n+1}$ such that
\begin{align*}
(y_\e, \tau_\e) \in \argmin_{(x,t) \in \e \Z^{n+1}}
                    \bra{\abs{x - \tilde y_\e} +
                    \abs{t - \tilde \tau_\e}}.
\end{align*}
Clearly
\begin{align}
\label{grid-error}
\abs{y_\e - \tilde y_\e} &\leq \sqrt{n} \e, &
\abs{\tau_\e - \tilde \tau_\e} &\leq \e
\end{align}
and therefore, recalling \eqref{choice-of-hate}
and $h = \e/\hat\e$,
we have for all $\e$ sufficiently
small
\begin{align*}
\dist((y_\e + h r \nu, \tau_\e + h), (x_\e, t_\e))
    &\leq (r + 2\sqrt{n} + 1) \e
    \\&\quad + \abs{q_\e - q}  h d \abs{q}^{-1}
             + \norm{A} h^2 d^2 \abs{q}^{-1}
    \\&\leq c \hat \e^{\be - 1} \e = c h \hat\e^\be.
\end{align*}
since $q_\e \to q$ as $\e \to 0$.
In particular,
\begin{align}
\label{xe-te-in-ball}
(x_\e, t_\e) \in B_{c h \hat\e^\beta} (y_\e + h r\nu, \tau_\e + h)
             \subset h E + (y_\e, \tau_\e)
\end{align}
for sufficiently small $\e$.
From this and the definition of $d = \diam E$,
we infer that $\dist((x,t), (x_\e, t_\e)) \leq hd$
for all $(x,t) \in hE + (y_\e, \tau_\e)$.
Thus we estimate,
using \eqref{psi-taylor}, \eqref{ye-taue} and \eqref{grid-error},
\begin{align*}
\abs{q} (r(t - \tau_\e) - (x - y_\e)\cdot\nu)
    &= \abs{q} r \pth{t - t_\e + t_\e - \tilde \tau_\e
            + \tilde \tau_\e - \tau_\e}\\
    &\quad  + q \cdot \pth{x - x_\e + x_\e - \tilde y_\e
            + \tilde y_\e - y_\e}\\
    &\geq \psi(x,t) - \psi(x_\e, t_\e)
        - \norm{A} \abs{x - x_\e}^2 - \abs{q_\e - q} \abs{x - x_\e}\\
        &\quad + \abs{q} r (-h) + hr \abs{q} \\
        &\quad + \norm{A} h^2 d^2
               + \abs{q_\e - q} h d + \pth{r + \sqrt{n}}\e \abs{q}\\
        &\quad - (\abs{q} r + \abs{q}\sqrt{n}) \e \\
    &\geq \psi(x,t)- \psi(x_\e,t_\e)
        \qquad \text{for all } (x,t) \in hE + (y_\e, \tau_e).
\end{align*}
And therefore \eqref{max-estimate} yields
\begin{align*}
v_\e(x,t)
    &\leq \bra{\psi(x,t)-\psi(x_\e,t_\e)}_+
    \leq P_{q,r}(x - y_\e, t - \tau_\e) &
&\text{on } \pth{h E + (y_\e, \tau_\e)} \cap \set{t \leq  t_\e}.
\end{align*}
Let us define
\begin{align}
\label{w-epsilon}
w_\e(x,t) =
\begin{cases}
h^{-1} v_\e
            \pth{h x + y_\e, h t + \tau_\e)}
            & (x,t) \in Q_q \cap C^+,\\
0     & (x,t) \in Q_q \setminus C^+.
\end{cases}
\end{align}
We observe that $w_\e \in \subs(g^{\e/h}, Q_q)$
and $w_\e \leq P_{q,r}$ on
$Q_q \cap \set{t \leq 1 + h^{-1}(\tilde\tau_\e - \tau_\e)}$.
Therefore, by definition, $w_\e \leq \lu_{\e/h;q,r} = \lu_{\hat\e;q,r}$
in $Q_q \cap \set{t \leq 1 + h^{-1}(\tilde\tau_\e - \tau_\e)}$.
But by the choice of $\hat\e$ we have
that $\lu_{\hat\e;q,r} = 0$ in $B_{c\hat\e^\be}(r\nu, 1)$,
and by scaling back, we have
\begin{align*}
v_\e = 0 \qquad \text{in $B_{ch\hat\e^\be}(y_\e + hr\nu, \tau_\e + h)
\cap \set{t \leq t_\e}$}.
\end{align*}
When we apply Definition~\ref{def:visc-test-sub}(i)
and \eqref{xe-te-in-ball},
we obtain $(x_\e, t_\e) \notin \cl\Omega(v_\e)$
and that is a contradiction with the choice of $(x_\e,t_\e)$.
\qedhere\end{proof}

In particular, if the solution of the homogenized problem
\eqref{hs-homogenized} is unique,
then the limit is the unique solution.

Now we have enough information to finish the proof of the main homogenization result.

\begin{proof}[Proof of Theorem~\ref{th:homogenization}]
We shall denote
$\ou = \halflimsup_{\e\to0} u^\e$
and $\uu = \halfliminf_{\e\to0} u^\e$.

Let $U \in \subs(m/2, Q)$ and $V \in \supers(M, Q)$
be the boundary barriers constructed
in the proof of Theorem~\ref{th:well-posedness}.
From that proof we know that
$U_* \leq u^\e \leq V^*$ for all $\e > 0$ and
hence $U_* \leq \uu \leq \ou \leq V^*$ on $Q$.
This implies that $\uu$ and $\lu$ have the correct boundary
data in the sense of Theorem~\ref{th:well-posedness}.
Since $\ou \in \subs(\lr(p)/\abs{p}, Q)$
and $\uu \in \supers(\lr(p)/\abs{p}, Q)$
by Proposition~\ref{pr:homogenization-interior},
the proof of uniqueness in Theorem~\ref{th:well-posedness} yields
$u_* \leq \uu \leq \ou \leq u^*$,
which implies $\uu = u_*$, $\ou = u^*$ using Corollary~\ref{co:regularity}.

Let us proceed with the proof of the
convergence of free boundaries.
Denote
$E_\e = \partial \Omega((u^\e)_*;Q)$
and $E = \partial \Omega(u_*;Q)$.
Clearly $E,E_\e$ are compact subsets of $\cl Q$
(see also the proof of Theorem~\ref{th:well-posedness}).
We point out that due to the choice of initial and boundary data,
we have that
\begin{align}
\label{boundary-decomp}
E = \Gamma(u_*;Q)
\cup (\partial \Omega \times [0,T])
\cup ((\Omega \cap \Omega_0) \times \set{0})
\subset \Gamma(u_*;Q) \cup \partial_P Q,
\end{align}
and similarly for $E_\e$
(see the proof of Theorem~\ref{th:well-posedness}).
Let us also recall that
$\cl\Omega(u_*) = \cl\Omega(u)$
by Corollary~\ref{co:regularity}.
Therefore $\partial \Omega(u_*) = \cl\Omega(u) \setminus
\Omega(u_*)$.
The same is true for $u^\e$.

We shall establish
the uniform convergence $E_\e \to E$ in
Hausdorff distance
using the characterization in Proposition~\ref{pr:hausdorff}.
Let us denote
\begin{align*}
H_\rho(\xi,\si) := B_\rho(\xi,\si) \cap \set{t\leq T},
\qquad (\xi,\si) \in \Rn \times \R, \rho > 0,
\end{align*}
in the following.

\textbf{(i)}
Suppose that $\e_k \to 0$ as $k \to\infty$
and $(x_k,t_k) \in E_{\e_k}$ such that $(x_k, t_k) \to (\hat x, \hat t)$.
Clearly $\hat t \leq T$.
Lemma~\ref{le:convergence-of-supports}
yields that $(\hat x,\hat t) \in \cl\Omega(u)$.
Furthermore, due to the behavior of $u^\e$
at the parabolic boundary of $\partial_P Q$,
we have $\hat x \in \Omega$,
and $(\hat x, \hat t) \in E$ if also $\hat t = 0$.
Let us therefore assume that $\hat t > 0$ and
$(\hat x, \hat t) \in \Omega(u_*)$.
There exists $\de > 0$ such that $\cl H_\de(\hat x, \hat t)
\subset \Omega(u_*)$.
Semi-continuity yields
$\eta = \min_{\cl H_\de(\hat x, \hat t)} u_* > 0$.
The property of $\halfliminf$ then implies
that $\min_{\cl H_\de(\hat x, \hat t)} (u^{\e_k})_* > \eta/2 > 0$
for $k$ sufficiently large.
But this is a contradiction with
$(x_k, t_k) \in H_\de(\hat x, \hat t) \cap \partial \Omega((u^\e)_*)$
for large $k$.

\textbf{(ii)}
Now suppose that there exist $(\hat x, \hat t) \in E$
and $\de > 0$ such that
for some $\e_k \to 0$ as $k \to \infty$
we have
$E_{\e_k} \cap \cl H_\de(\hat x,\hat t) = \emptyset$
for all $k$.
Again, since $u^\e$ and $u$ satisfy the same boundary conditions,
we must have $(\hat x, \hat t) \in Q$
by \eqref{boundary-decomp}.
Thus we can assume that $\cl H_\de(\hat x, \hat t) \subset Q$.

Suppose that $\cl H_\de(\hat x,\hat t) \cap
\cl \Omega(u^{\e_k}) = \emptyset$
for infinitely many $k$.
In this case $u = 0$ in $\cl H_\de(\hat x, \hat t)$
and we get a contradiction with the choice of $(\hat x, \hat t)
\in \partial \Omega(u_*)$.

Hence we must have $\cl H_\de(\hat x,\hat t) \subset \Omega((u^{\e_k})_*)$
for all but finitely many $k$.
We can assume for simplicity that this inclusion holds for all $k$.
We recall that the comparison with the barrier $U$
in the proof of Theorem~\ref{th:well-posedness}
yields that the support of $u$ strictly expands at $t = 0$.
Consequently, $\hat x \notin \cl\Omega_0(u; Q)$.
By taking $\de$ smaller if necessary,
we can assume that $\cl B_\de(\hat x) \cap \cl\Omega_0(u; Q) = \emptyset$.
Therefore $\cl B_\de(\hat x) \cap \cl\Omega_0(u^\e; Q) = \emptyset$
and $u^\e$ is nondegenerate in the sense of
Corollary~\ref{co:hs-nondegeneracy}.
Since $(u^\e)_* = ((u^\e)^*)_*$ is nondecreasing in time
and $(\hat x, \hat t- \de/2) \in
\cl\Omega((u^{\e_k})_*) = \cl\Omega(u^{\e_k})$,
we have
\[
\sup_{x \in B_{\de/2}(\hat x)} (u^{\e_k})_*(x,t) \geq 2nM
\frac{\de^2}{\hat t - \de/2}
\qquad
\text{for } t \in I := [\hat t -\de/2, \hat t + \de/2] \cap (0,T].
\]
Furthermore, we observe that
$B_{\sqrt3 \de/2}(\hat x) \in \Omega_t((u^{\e_k})_*)$
for $t \in I$
and, since $(u^\e)_*$ is harmonic in $\Omega_t((u^\e)_*)$
due the same argument as in the proof of Proposition~\ref{pr:harmonic},
the Harnack inequality yields
$\inf_{x \in B_{\de/2}(\hat x)} (u^{\e_k})_*(x,t) \geq \eta > 0$
for $t \in I$,
in particular $\inf_{H_{\de/2}(\hat x, \hat t)} u^{\e_k} \geq \eta$
with $\eta$ independent of $k$.
Therefore, by regularity, Corollary~\ref{co:regularity},
we have
\[
u_* = (u^*)_* = \pth{\halflimsup_{\e\to0}u^\e}_* \geq \eta > 0
\qquad \text{in } H_{\de/2}(\hat x, \hat t).
\]
But that is a contradiction with the choice of $(\hat x, \hat t)
\in \partial \Omega(u_*)$.
The proof of uniform convergence in Hausdorff distance is finished.
\qedhere\end{proof}

\appendix
\section{Hele-Shaw problem}

In this section we derive bounds on solutions of the Hele-Shaw problem
that can be obtained by a comparison with radially symmetric barriers.
Some of these estimates appeared previously
in various forms in \cite{K03,K07,KM09}. But the estimate
in Proposition~\ref{pr:solUpperBoundZeroSet},
for example, appears to be new.

We first construct radially symmetric barriers.

\begin{lemma}
\label{le:self-similar-solution}
Given constants $m>0$, $K > 0$, $A \in (0,1)$
there exists a self-similar (w.r.t. the parabolic scaling),
radially symmetric, exterior
solution $\vp \in \sol(m, \Rn \setminus\set0 \times (0,\infty))$
of the form $\vp(x,t) = \psi(x/\rho(t))$
such that $\vp(x,t) > 0$ for $0 < \abs{x} < \rho(t)$
and $\vp(x,t) = 0$ for $\rho(t) \leq \abs{x}$,
and $\vp(x,t) = K$ for $\abs{x} = A \rho(t)$,
$\lim_{t\to0+} \rho(t) = 0$
and $\rho(t) = \sqrt{\al K m t}$ where $\al = \al(n, A)$.
\end{lemma}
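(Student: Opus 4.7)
The strategy is to separate variables: first find the radial profile $\psi$ from harmonicity of $\varphi(\cdot,t)$, then derive an ODE for $\rho(t)$ from the free-boundary condition and solve it. Throughout, write $\varphi(x,t) = \Psi(|x|/\rho(t))$ for a one-variable profile $\Psi:(0,1]\to[0,\infty)$.

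First I would compute $\Delta_x \varphi$. A direct calculation in radial coordinates with $\eta := |x|/\rho(t)$ gives
\[
\Delta_x \varphi(x,t) = \rho(t)^{-2}\!\left[\Psi''(\eta) + \tfrac{n-1}{\eta}\Psi'(\eta)\right],
\]
so harmonicity on $\{0 < |x| < \rho(t)\}$ reduces to the ODE $(\eta^{n-1}\Psi')' = 0$ on $(0,1)$. Solving and imposing $\Psi(1)=0$, $\Psi>0$ on $(0,1)$, $\Psi(A)=K$ yields an explicit profile
\[
\Psi(\eta) =
\begin{cases}
\dfrac{K}{A^{2-n}-1}\bigl(\eta^{2-n}-1\bigr), & n \geq 3,\\[4pt]
\dfrac{K}{\log(1/A)}\log(1/\eta), & n = 2,
\end{cases}
\]
which is smooth, strictly positive on $(0,1)$, and singular of the appropriate type at $\eta = 0$. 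In particular, $c(n,A) := |\Psi'(1)| = K\,\beta(n,A)$ for an explicit positive constant $\beta(n,A)$ depending only on $n$ and $A$.

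Next I would impose the free boundary condition. At $|x| = \rho(t)$ the outward normal velocity is $\dot\rho(t)$ and $|D\varphi| = |\Psi'(1)|/\rho(t)$, so the condition $V_\nu = m|D\varphi|$ becomes the ODE
\begin{equation*}
\rho(t)\,\dot\rho(t) = m\,|\Psi'(1)| = m K\beta(n,A),
\end{equation*}
i.e.\ $\tfrac{1}{2}(\rho^2)' = mK\beta(n,A)$. With $\rho(0+)=0$, this integrates to
\[
\rho(t) = \sqrt{2m K\beta(n,A)\,t} = \sqrt{\alpha K m\,t}, \qquad \alpha := 2\beta(n,A),
\]
which gives the claimed formula and the dependence $\alpha = \alpha(n,A)$.

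Finally, I would verify that the resulting $\varphi$ is a (classical, hence viscosity) solution on $(\Rn\setminus\{0\})\times(0,\infty)$ in the sense of Section~\ref{sec:viscosity-solutions}. Harmonicity in the positive phase is automatic by construction; the free-boundary relation $\varphi_t = m|D\varphi|^2$ holds on $\Gamma(\varphi) = \{|x|=\rho(t)\}$ by the ODE derivation above (both sides equal $|\Psi'(1)|\dot\rho/\rho$); the monotonicity of the support (Definition~\ref{def:visc-test-super}(i)) and the continuous expansion (Definition~\ref{def:visc-test-sub}(i)) follow from $\dot\rho > 0$. The main minor obstacle is purely notational bookkeeping between the two cases $n=2$ and $n\geq 3$ so that the constant $\alpha$ is written uniformly; nothing deeper is required, since separation of variables reduces the free boundary problem to the scalar ODE solved above.
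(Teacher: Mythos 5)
Your proposal is correct and follows essentially the same route as the paper: exhibit the explicit radial harmonic profile with $\psi(A)=K$, $\psi(1)=0$, and determine $\rho(t)$ from the free-boundary ODE $\rho\dot\rho = m\abs{\psi'(1)}$, which integrates to $\rho(t)=\sqrt{\al K m t}$ with the same constants $\al(n,A)$ in both cases $n=2$ and $n\geq 3$. The only difference is that you derive the profile by separation of variables whereas the paper simply writes it down and verifies, which is not a substantive distinction.
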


\begin{proof}
The solution $\vp$ has the form $\vp(x,t) = \psi(x/\rho(t))$,
where (formulas written in columns for $n \geq 3$ and $n = 2$)
\begin{align*}
n&\geq 3 & n &= 2\\
\psi(x) &= K\frac{\pth{\abs{x}^{2-n} - 1}_+}{A^{2-n} - 1} &
\psi(x) &= K\frac{\pth{\ln \ov{\abs{x}}}_+}{\ln\ov{A}},\\
\rho(t) &= \sqrt{\frac{2(n-2)}{A^{2-n}-1} K m t} &
\rho(t) &= \sqrt{\frac{2}{-\ln A} K m t }.
\end{align*}
One can easily verify that
$\vp \in \sol(m, \Rn\setminus\set0\times (0,\infty))$,
checking the free boundary condition
$\td{\rho}{t} = V_\nu = m \at{\abs{D\vp^+}}{\abs{x} = \rho}$.
\qedhere\end{proof}

The solutions constructed in Lemma~\ref{le:self-similar-solution}
allow us to obtain an upper bound on supersolutions
that are harmonic in their positive sets.

\begin{proposition}
\label{pr:solUpperBoundZeroSet}
There exists a dimensional constant $C = C(n)$
such that for any space-time cylinder $Q = E \times (t_1,t_2)$,
where $E \subset \Rn$ is an open set and $t_1 < t_2$,
and any supersolution $u \in \supers(m, Q)$ for some $m > 0$ such that
$u \in LSC(\cl Q)$,
$u$ is nondecreasing in time and harmonic in $\Omega_t(u;Q)$
for all $t \in (t_1,t_2)$, we have
\begin{align*}
u(x,t) \leq \inf \set{\frac{C}{m} \frac{\abs{x - \zeta}^2}{\si - t}:
    (\zeta,\si) \in Q,\ u(\zeta, \si) = 0,\ t < \si,\
        \abs{\zeta -x} < \dist(x, \partial E)}
\end{align*}
for all $(x,t) \in Q$.
\end{proposition}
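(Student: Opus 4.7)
The plan is to prove the bound by contradiction, comparing $u$ from below with a translated and rescaled instance of the radial self-similar barrier of Lemma~\ref{le:self-similar-solution}. Heuristically, if $u \in \supers(m)$ satisfies $u(x, t) = L > 0$, then its positive phase expands at least as fast as that of the radial Hele-Shaw solution with parameter $\sim L$, whose free boundary reaches distance $\sim \sqrt{L m \tau}$ in time $\tau$; if $L$ exceeds the desired bound $CR^2/(m(\sigma - t))$ with $R := \abs{\zeta - x}$, that expansion would engulf $\zeta$ by time $\sigma$, violating $u(\zeta, \sigma) = 0$.

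Fix a valid pair $(\zeta, \sigma)$ and write $R := \abs{\zeta - x}$, $L := u(x, t) > 0$. Suppose for contradiction that $L > (1 + \eta) C R^2 / (m(\sigma - t))$ for some $\eta > 0$ and a constant $C = 1/(c_H \alpha)$ to be fixed, where $c_H$ is the dimensional Harnack constant and $\alpha = \alpha(n, A)$ is from Lemma~\ref{le:self-similar-solution} for a dimensional $A \in (0, 1)$, say $A = 1/2$. For a small $s_1 > 0$, take $K := (1 + \eta) R^2 / (\alpha m (\sigma - t + s_1))$ so that $\rho(\tau) := \sqrt{\alpha K m \tau}$ satisfies $\rho(\sigma - t + s_1) = R \sqrt{1 + \eta}$, and consider the translated barrier
\[
\tilde\varphi(y, s) := \psi\pth{\frac{y - x}{\rho(s - t + s_1)}}.
\]
By Lemma~\ref{le:self-similar-solution}, $\tilde\varphi \in \sol(m, (\Rn \setminus \set{x}) \times (t - s_1, \infty))$, its free boundary sits at $\abs{y - x} = \rho(s - t + s_1)$, it takes value $K$ on $\abs{y - x} = A\rho(\cdot)$, and vanishes further out.

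Suppose first the good case $r_t \geq 2AR$, where $r_t$ is the radius of the largest open ball $B_{r_t}(x) \subset \Omega_t(u)$. The Harnack inequality applied to the positive harmonic function $u(\cdot, t)$ on $B_{r_t}(x)$ gives $u(\cdot, t) \geq c_H L$ on $\cl{B_{r_t/2}(x)} \supset \cl{B_{AR}(x)}$, and our choice of $K$ ensures $K \leq c_H L$ for $s_1$ small; by the time-monotonicity of $u$, the bound $u \geq K$ extends to $\cl{B_{AR}(x)} \times [t, \sigma]$. Compare $\tilde\varphi$ and $u$ on $\mathcal C := (B_{R_1}(x) \setminus \cl{B_{AR}(x)}) \times (t, \sigma]$ where $R_1 \in (R\sqrt{1+\eta}, \dist(x, \partial E))$ (possible for $\eta$ small since $R < \dist(x, \partial E)$). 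On the bottom $\set{s = t}$, taking $s_1$ so small that $\rho(s_1) \leq AR$ forces $\tilde\varphi(\cdot, t) \equiv 0$ on the annulus; on the inner lateral boundary we have $\tilde\varphi \leq K \leq u$; on the outer lateral boundary $\tilde\varphi = 0$ by the choice of $R_1$. Theorem~\ref{th:comparison} then yields $\tilde\varphi \leq u$ on $\cl{\mathcal C}$, and evaluating at $(\zeta, \sigma)$ produces $\tilde\varphi(\zeta, \sigma) = \psi(1/\sqrt{1 + \eta}) > 0$, contradicting $u(\zeta, \sigma) = 0$; letting $\eta \to 0^+$ delivers the claimed bound.

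The main obstacle is ensuring that Harnack's inequality applies on a ball commensurable with $R$, i.e.\ that $r_t \geq 2AR$. When this fails, the closest zero $\zeta^*$ of $u(\cdot, t)$ in $\cl{B_R(x)}$ lies strictly inside $B_R(x)$, and we exploit the monotonicity of $\Omega^c_s(u)$ in $s$ together with the lower semi-continuity of $u$: either replace $\zeta$ by the closest zero at time $\sigma$, which yields a strictly stronger bound implying the one for $\zeta$, or apply the above argument at the first intermediate time $\tau^* \in (t, \sigma]$ at which $\zeta^*$ leaves the zero phase. Combining this case analysis with the nondecreasing character of $s \mapsto \dist(x, \Omega^c_s(u))$ reduces matters to the good case and preserves the dimensional constant $C$; the two-dimensional case is handled identically using the logarithmic form of $\psi$ and $\rho$ from Lemma~\ref{le:self-similar-solution}.
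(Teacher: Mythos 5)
Your barrier, your constant $C=1/(c_H\alpha)$, and your contradiction setup are the same as the paper's, and your ``good case'' is essentially the paper's comparison step; but even there the stated boundary ordering is not right. On the fixed inner cylinder $\set{\abs{y-x}=AR}$ the barrier $\tilde\varphi$ is \emph{not} $\leq K$ for times near $\sigma$: the sphere on which $\tilde\varphi$ equals $K$ has radius $A\rho(s-t+s_1)$, which grows up to $AR\sqrt{1+\eta}>AR$, so $\tilde\varphi>K$ on part of your inner lateral boundary. One must compare on the time-dependent region $\set{\abs{y-x}>A\rho(s-t+s_1)}$ (as the paper does), and then the Harnack bound $u\geq c_H L$ is needed on the slightly larger ball $\cl B_{AR\sqrt{1+\eta}}(x)$, i.e. one needs $r_t\geq 2AR\sqrt{1+\eta}$ rather than $r_t\geq 2AR$. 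This is a fixable constant issue, but as written the inequality ``$\tilde\varphi\le K\le u$ on the inner lateral boundary'' is false.

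The genuine gap is the reduction of the bad case $r_t<2AR$ to the good case. Choosing $\zeta$ to be a closest zero at time $\sigma$ is a legitimate normalization, but it gives no lower bound on $r_t=\dist(x,\Omega^c_t(u))$ at the \emph{earlier} time $t$: the zero set may come much closer to $x$ at time $t$ and recede only later, so the good case still fails. Your alternative---apply the argument to $(\zeta^*,\tau^*)$, where $\zeta^*$ is the nearest zero at time $t$ (at distance $r_t$) and $\tau^*$ is the first time it leaves the zero phase---only yields $u(x,t)\le \frac{C}{m}\,\frac{r_t^2}{\tau^*-t}$, which does not imply the desired bound, since $\tau^*-t$ can be far smaller than $\sigma-t$ while $r_t$ is only slightly smaller than $R$; there is no evident way to chain such estimates, and the monotonicity of $s\mapsto\dist(x,\Omega^c_s(u))$ does not repair this. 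What is actually needed is the bootstrap the paper runs: start from a tiny ball on which $u>K/c_H$ (lower semicontinuity plus monotonicity in time), and induct over times $\si_j=(9/4)^j\si_0$: at each step the comparison with the expanding barrier on $\set{\abs{x}>A\rho(t)}\times(\si_{j-1},\si_j]$ shows that the positivity set of $u$ contains $B_{\rho(\si_{j-1})}$ from time $\si_{j-1}$ on, and then Harnack on that ball, combined with $u\ge K/c_H$ at the center for all later times, recovers $u>K$ on the next piece of the moving inner boundary, because the ratio $9/4$ together with $A=1/4$ keeps $A\rho(t)<\tfrac12\rho(\si_{j-1})$ for $t\le\si_j$. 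This Harnack-plus-comparison induction, which propagates the estimate outward in time without ever assuming that $\Omega_t(u)$ contains a ball of radius comparable to $R$ at the initial time, is the heart of the proof and is missing from your proposal.
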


\begin{proof}
The idea of the proof is straightforward: if $u(x,t)$ violates the bound
for some $(x,t), (\zeta, \si) \in Q$,
it is possible to put a barrier under $u$ that will
become positive at $(\zeta,\si)$, yielding a contradiction.

Set $C = \frac{1}{c_H \al}$,
where $\al = \al(n,A)$ is the constant from
Lemma~\ref{le:self-similar-solution} with $A = 1/4$,
and $c_H = c_H(n) > 0$ is the constant from the standard Harnack
inequality:
for any positive harmonic function $\psi$ on $B_1$
\begin{equation}
\label{harnack}
\inf_{B_{1/2}} \psi \geq c_H \sup_{B_{1/2}} \psi.
\end{equation}

Suppose that there exist $(\zeta, \si), (\xi, \tau) \in Q$
and $K > 0$
such that $u(\zeta, \si) = 0$,
$\tau < \si$, $\abs{\zeta -x} < \dist(x, \partial E)$
and
\begin{align}
\label{u-larger-KcH}
u(\xi,\tau) > \frac{K}{c_H}
    >  \frac{C}{m} \frac{\abs{\xi - \zeta}^2}{\si - \tau}.
\end{align}
We can in fact assume, by translation invariance,
that $(\xi,\tau) = (0,0)$.

Let $\vp \in \sol(m, (\Rn\setminus\set0) \times (0,\infty))$ and $\rho : [0,\infty) \to [0,\infty)$ be the self-similar,
radially symmetric solution
and radius of its free boundary, respectively,
from Lemma~\ref{le:self-similar-solution}
with parameters $m$, $K$ and $A = 1/4$.
Since $\rho(t) = \sqrt{\al K m t}$,
it follows from the second inequality in \eqref{u-larger-KcH}
that $\rho(\si) > \abs{\zeta}$,
and, consequently, $\vp(\zeta,\si) > 0$.

We want to show that $u(\zeta, \si) > \vp(\zeta,\si)$ to get a contradiction.
Because $u \in LSC$ and $u$ is nondecreasing in time,
there exists $\la > 0$ such that $u > \frac{K}{c_H}$ in
$\cl B_{2\la}(0) \times [0, t_2]$.
Clearly $\la < \abs{\zeta}$.
Let us find $\si_0 > 0$ such that $\rho(\si_0) < \la$
and let $k \in \N$ be the smallest number such that
$\rho\pth{\pth{\frac94}^k\si_0} \geq \abs{\zeta}$.
Set $\si_j = \pth{\frac94}^j\si_0$ for $1 \leq j < k$
and find $\si_k$ so that $\rho(\si_k) = \abs{\zeta}$.
We chose $\si_j$ in this way because now we have
\begin{align}
\label{rho-in-Qj}
A \rho(t) = \ov4 \rho(t) \leq \frac38 \rho(\si_{j-1})
< \frac12 \rho(\si_{j-1}) \qquad t \in [\si_{j-1},\si_j],\ j = 1, \ldots, k.
\end{align}
Therefore, if $1 \leq j \leq k$ and
$u(x, \si_{j-1}) > 0$ for all $x$ such that $\abs{x} \leq \rho(\si_{j-1})$,
then,
by the monotonicity in time and Harnack inequality \eqref{harnack},
\begin{align}
\label{u-lower-harnack}
u(x,t)
    \geq u(x,\si_{j-1})
    \geq \inf_{\abs{y} < \ov2 \rho(\si_{j-1})} u(y, \si_{j-1})
    \geq c_H \sup_{\abs{y} < \ov2 \rho(\si_{j-1})} u(y, \si_{j-1})
    > K
\end{align}
for $\abs{x} \leq A \rho(t) < \ov2 \rho(\si_{j-1})$
and $t \in [\si_{j-1}, \si]$.

Define
$Q_j = \set{(x,t) \in Q : \abs{x} > A \rho(t),\ t \in (\si_{j-1}, \si_j]}$.
By construction, $\vp \leq K$ on $\cl Q_j$.
Our goal is to show inductively that
\begin{align}
\label{vp-prec-u-on-Qj}
\vp \prec u \qquad \text{on $\cl Q_k$
w.r.t. $Q_k$},
\end{align}
which implies
$u(\zeta, \si) \geq u(\zeta, \si_k)
> \vp(\zeta, \si_k) > 0$, yielding a contradiction.

We will show the inductive step: if $2 \leq j \leq k$ and
$\vp \prec u$ on $\cl Q_i$ w.r.t. $Q_i$ for all
$i = 1, \ldots, j-1$ then $\vp \prec u$ on $\cl Q_j$ w.r.t. $Q_j$.

The goal is to apply the comparison theorem,
and therefore we need to show
$\vp \prec u$ on $\partial_P Q_j$ w.r.t. $Q_j$.
First, observe that (for any $j = 1, \ldots, k$)
we have
$\partial_P Q_j = C^1_j \cup C^2_j \cup C^3_j$,
where $C^i_j$ are the closed sets
\begin{align*}
C^1_j &:= \partial E \times [\si_{j-1}, \si_j],
    &&\text{outer boudary}\\
C^2_j &:= \set{(x,\si_{j-1}) \in \cl Q: \abs{x} \geq A\rho(\si_{j-1})},
    &&\text{initial boudary}\\
C^3_j &:= \set{(x,t) : \abs{x} = A\rho(t),\  t \in [\si_{j-1}, \si_j]}.
    &&\text{inner boudary}
\end{align*}

\begin{enumerate}[(1)]
\item
It is simple to see that $\vp \prec u$ on
$C^1_j$
w.r.t. $Q_j$ for $1 \leq j \leq k$
because $\rho(t) \leq \rho(\si_k) = \abs{\zeta} < \dist(0, \partial E)$
for $t \in [\si_{j-1}, \si_j]$
and therefore $\cl\Omega(\vp; Q_j) \cap C^1_j = \emptyset$.

\item
Since the support of $\vp$ expands continuously,
\begin{align}
\label{supp-of-phi-on-c2i}
\cl\Omega (\vp; Q_{i}) \cap C^2_i
    = \set{(x,\si_{i-1}):
        A \rho(\si_{i-1}) \leq \abs{x} \leq \rho(\si_{i-1})}
        \qquad i = 1,\ldots,k,
\end{align}
and
\begin{equation*}
\cl\Omega (\vp; Q_{i-1}) \cap C^2_i = \cl\Omega (\vp; Q_{i}) \cap C^2_i
    \qquad i = 2,\ldots, k.
\end{equation*}
In particular, $\vp \prec u$ in $\cl Q_{j-1}$ w.r.t. $Q_{j-1}$
implies $\vp \prec u$ in $C^2_j$ w.r.t. $Q_j$
since $C^2_j \subset \cl Q_{j-1}$.

\item
The induction hypothesis and (2)
implies $\vp \prec C^2_i$ w.r.t. $Q_i$ for $i = 1, \ldots, j$,
and thus \eqref{supp-of-phi-on-c2i} and the monotonicity of $u$ yield
\begin{align*}
u(x,t) \geq u(x, \si_{i-1}) > 0
    \quad t \in [\si_{i-1},t_2],\ A\rho(\si_{i-1}) \leq \abs{x}
    \leq \rho(\si_{i-1}).
\end{align*}
Since $A \rho(\si_i) < \rho(\si_{i-1})$ for $i = 1, \ldots, k$
and $\rho(\si_0) < \la$,
we conclude that $u(x, t) > 0$ for $\abs{x} \leq \rho(\si_{j-1})$
and $t \in [\si_{j-1}, t_2]$.
Therefore by \eqref{u-lower-harnack} we have
$u > K \geq \vp$ on $C^3_j$.
\end{enumerate}
We have showed in (1)--(3) that
$\vp \prec u$ on $\partial_P Q_j$ w.r.t. $Q_j$
and therefore the comparison principle, Theorem~\ref{th:comparison},
yields $\vp \prec u$ on $\cl Q_j$ w.r.t. $Q_j$,
finishing the proof of the inductive step.

Finally, to close the induction argument,
we need to show the base case, i.e.,
that $\vp \prec u$ on $\cl Q_1$ w.r.t. $Q_1$.
Part (1) still applies for $j = 1$ as well,
and $\vp \prec u$ on $C^2_1 \cup C^3_1$ w.r.t. $Q_1$
follows from the choice $A\rho(\si_1) < \rho(\si_0) < \la$.
Therefore the comparison theorem applies on $Q_1$.
Induction yields \eqref{vp-prec-u-on-Qj},
finishing the proof of the theorem.
\qedhere\end{proof}

Now we turn our attention to obtaining bounds for subsolutions.
The first step is a construction of barriers.

\begin{lemma}
\label{le:contracting-barrier}
Let $M$ and $\mu$ be positive constants
and let $\chi \in C((-\infty, 0))$, $\chi(t) > 0$.
Define $K(t) = \int_0^t \chi(s) \diff s$
and
\begin{align*}
t_0 = \inf \set{t < 0: K(t) > - \frac{\mu^2}{2nM}} \in [-\infty, 0).
\end{align*}
There exists a classical, radially symmetric
solution $\vp \in \sol(M, Q)$
on a space-time cylinder $Q = B_\mu \times (t_0, 0)$,
with boundary data $\vp(x,t) = \chi(t)$ for $\abs{x} = \mu$,
such that $\vp(x,t) > 0$ for $\rho(t) < \abs x \leq \mu$,
and $\vp(x,t) = 0$ for $\abs x \leq \rho(t)$,
where $\rho(t)$ is the radius of free boundary of $\vp$.
Moreover,
$\rho$ is strictly decreasing and $\rho(t) \in (0, \mu)$ for all
$t \in (t_0, 0)$.
Finally,
$\lim_{t \to 0-} \rho(t) = 0$.
\end{lemma}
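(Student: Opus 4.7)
The plan is to reduce the construction to an explicit ODE for the free boundary radius $\rho(t)$. Since $\vp(\cdot,t)$ is required to be radial and harmonic on the annulus $\rho(t) < \abs{x} < \mu$ with boundary values $0$ at $\abs{x} = \rho(t)$ and $\chi(t)$ at $\abs{x} = \mu$, it is uniquely determined by these data as an explicit elementary function of $\abs{x}$, $\rho(t)$, $\mu$, $\chi(t)$ (built from $r^{2-n}$ for $n \geq 3$, and $\log r$ for $n = 2$). So $\rho$ is the only thing left to find, and $\vp$ will automatically be $C^{2,1}$ up to the free boundary inside the closed annulus.

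Differentiating this explicit formula in $r = \abs{x}$ at $r = \rho(t)$ yields $\abs{D\vp^+}$ there. Taking into account that the outer unit normal to $\set{\vp > 0}$ at the inner boundary points toward the origin, the free boundary condition $V_\nu = M\abs{D\vp^+}$ becomes a first-order separable ODE of the shape
\begin{align*}
-\dot\rho(t) = M\chi(t) F_n(\rho(t)),
\end{align*}
with $F_n$ smooth and strictly positive on $(0, \mu)$ and depending only on $n$ and $\mu$. Dividing by the positive function $\chi(t)$ and integrating then produces an implicit relation $H_n(\rho(t)) = M K(t)$, where $H_n$ is an explicit antiderivative of $-F_n^{-1}$ normalised by $H_n(0) = 0$. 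A brief direct calculation gives $H_n(\mu) = -\mu^2/(2n)$ in both cases, which is exactly the threshold value appearing in the definition of $t_0$.

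From this point, everything follows by inverting $H_n$. Since $H_n'(\rho) = -F_n(\rho)^{-1} < 0$ on $(0, \mu)$, the map $H_n : [0,\mu] \to [-\mu^2/(2n), 0]$ is a smooth, strictly decreasing bijection, and setting $\rho(t) := H_n^{-1}(M K(t))$ defines $\rho$ exactly on the set $\set{t < 0 : K(t) > -\mu^2/(2nM)} = (t_0, 0)$. Strict monotonicity of $\rho$, the inclusion $\rho(t) \in (0, \mu)$, and $\rho(t) \to 0$ as $t \to 0-$ are then all immediate from the corresponding properties of $K$ on $(t_0, 0)$ together with $H_n^{-1}(0) = 0$. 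Plugging $\rho(t)$ back into the harmonic ansatz yields a classical, hence viscosity, solution $\vp \in \sol(M, Q)$ with all the properties claimed in the statement.

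The only potential subtlety is the singularity of $F_n$ at $\rho = 0$, which would make a classical forward Cauchy formulation of the ODE delicate near $t = 0$. This is bypassed cleanly by setting up the construction through the antiderivative $H_n$ and its inverse, so that the possibly singular limit $t \to 0-$ corresponds merely to $H_n^{-1}(0) = 0$, and no local existence or compactness argument for the ODE is required. Since $t = 0$ is anyway excluded from $Q$, the construction is not affected, and this completes the plan.
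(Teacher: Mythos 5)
Your proposal is correct and follows essentially the same route as the paper: write the radial harmonic ansatz on the annulus, read off $\abs{D\vp^+}$ at $r=\rho(t)$, and integrate the resulting separable free boundary ODE to get the implicit relation $H_n(\rho(t)) = M K(t)$ (the paper's equation \eqref{rho-radial-equation}), noting that $H_n$ maps $[0,\mu]$ strictly decreasingly onto $[-\mu^2/(2n),0]$ so that $\rho$ is well defined exactly on $(t_0,0)$ with all the claimed monotonicity and limit properties. Your remark on avoiding a forward Cauchy formulation near the singular endpoint by working with the antiderivative is exactly how the paper's implicit definition handles it.
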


\begin{proof}
The solution $\vp$ is given by the formulas (separately for $n \geq 3$
and $n = 2$)
\begin{align}
\nonumber
n&\geq 3 & n &=2\\
\label{eq:radialsolutions}
\vp(x,t) &= \chi(t)
    \frac{\pth{\rho(t)^{2-n} - \abs{x}^{2-n}}_+}
        {\rho(t)^{2-n} - \mu^{2-n}},&
\vp(x,t) &= \chi(t)
    \frac{\pth{\ln \frac{\abs{x}}{\rho(t)}}_+}{\ln \frac{\mu}{\rho(t)}},
\end{align}
and $\rho(t)$ is the unique solution of
\begin{align}
\label{rho-radial-equation}
\frac{\mu^2}{2-n}\pth{\frac{\rho(t)^2}{2\mu^2} - \frac{\rho(t)^n}{n\mu^2}}
    &= M K(t),
&\frac{\rho(t)^2}{2} \pth{\ln \frac{\rho(t)}\mu
    - \ov2} &= M K(t).
\end{align}
The equations for $\rho(t)$ are derived
by integrating $V_\nu = -\td{\rho}{t} = \at{\abs{D\vp^+}}{r = \rho}$,
using the condition $\rho(0) = 0$ and the expressions
\begin{align}
\at{\abs{D\vp^+}}{\abs{x} = \rho} &=
    \chi\frac{(n-2)\rho^{1-n}}{\rho^{2-n} - \mu^{2-n}},&
\at{\abs{D\vp^+}}{\abs{x} = \rho} &=
   \chi\frac{1}{\rho \ln \frac\mu\rho},
\end{align}
The left-hand side of the equation \eqref{rho-radial-equation}
is strictly increasing for $\rho \in (0,\mu)$,
with limit values $-\frac{\mu^2}{2n}$
as $\rho \to \mu$ and $0$ as $\rho \to 0$,
and therefore the equation has a unique solution in the set
$\rho \in (0,\mu)$
as long as $M K(t) \in (-\frac{\mu^2}{2n}, 0)$,
which holds for $t \in (t_0, 0)$ by the definition of $t_0$.
It is straightforward to check that $\vp \in \sol(M, Q)$.
\qedhere\end{proof}

Now we will use the radially symmetric barriers
of Lemma~\ref{le:contracting-barrier}
to obtain various estimates on subsolutions.

\begin{proposition}
\label{pr:HS-closing}
Let $M, \mu$ be positive constants
and let $\chi \in C([t_1, t_2])$ for some $t_1 < t_2$,
with $\chi > 0$.
Suppose that $\int_{t_1}^{t_2} \chi(s) \diff s < \frac{\mu^2}{2nM}$.

If $u \in \subs(M, Q)$,
where $Q$ is the cylinder
$Q = B_\mu(\zeta) \times (t_1, t_2)$ for some $\zeta \in \Rn$,
such that
$\cl\Omega_{t_1}(u;Q) = \emptyset$
and
$u^{*,\cl Q}(x, t) \leq \chi(t)$
for $(x,t) \in \partial B_\mu(\zeta) \times [t_1, t_2]$,
then
\begin{align*}
(\zeta, t) \notin \cl\Omega(u;Q)
    \qquad \text{for all $t \in [t_1, t_2]$}.
\end{align*}
\end{proposition}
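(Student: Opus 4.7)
My plan is to pin $u$ from above by a radially symmetric exterior barrier built from Lemma~\ref{le:contracting-barrier}, translated and time-shifted so that its free boundary never reaches $\zeta$ on $[t_1,t_2]$. The strict inequality $\int_{t_1}^{t_2}\chi < \mu^2/(2nM)$ in the hypothesis is exactly the slack needed to set up such a barrier with a little room to spare.

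Concretely, I would first choose a small $\eta>0$ and $\delta>0$ so that an extension $\bar\chi\in C([t_1,t_2+\delta])$ with $\bar\chi\geq \chi+\eta$ on $[t_1,t_2]$ still satisfies
\[
\int_{t_1}^{t_2+\delta}\bar\chi(s)\,ds < \frac{\mu^2}{2nM};
\]
this is possible by continuity. Applying Lemma~\ref{le:contracting-barrier} to $\bar\chi$ after shifting time so that the closing time $0$ of the lemma corresponds to $t_2+\delta$, I obtain a classical solution $\vp\in\sol(M,B_\mu(\zeta)\times(t_*,t_2+\delta))$ for some $t_*<t_1$, with $\vp(x,t)=\bar\chi(t)$ on $\partial B_\mu(\zeta)$ and free-boundary radius $\rho(t)\in(0,\mu)$ for all $t\in[t_1,t_2]$. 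In particular $\vp(\zeta,t)=0$ throughout $[t_1,t_2]$, and by continuity $\vp_*(\zeta,t)=0$ at these points.

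Next I would verify the strict separation $u\prec\vp$ on $\partial_P\pth{B_\mu(\zeta)\times(t_1,t_2]}$ so that the comparison principle Theorem~\ref{th:comparison} applies (note that $f\equiv M$ satisfies (A1)--(A3) trivially). On the lateral boundary $\partial B_\mu(\zeta)\times[t_1,t_2]$, the hypothesis gives $u^*\leq\chi<\chi+\eta\leq\bar\chi=\vp_*$, i.e., strict ordering. At $t=t_1$, the assumption $\cl\Omega_{t_1}(u;Q)=\emptyset$ makes the strict-separation condition vacuous, since it is only required to hold on $\partial_P Q\cap\cl\Omega(u;Q)$. Theorem~\ref{th:comparison} then yields $u\prec\vp$ on all of $\cl{B_\mu(\zeta)\times(t_1,t_2]}$. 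Consequently, if $(\zeta,t)\in\cl\Omega(u;Q)$ for some $t\in[t_1,t_2]$, we would have $0\leq u^*(\zeta,t)<\vp_*(\zeta,t)=0$, a contradiction, which completes the proof.

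The main technical point is the construction of $\vp$ on a time interval strictly containing $[t_1,t_2]$: this is where the strict inequality in the integral condition is essential, since it leaves room both for the perturbation $\eta$ (needed to obtain strict separation on the lateral boundary) and for the extension by $\delta$ past $t_2$ (needed so that the closing radius $\rho$ stays bounded away from zero on the entire time interval of interest). Once this margin is secured, everything else reduces to a direct application of the comparison principle already established in Theorem~\ref{th:comparison}.
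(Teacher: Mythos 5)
Your proposal is correct and rests on the same core mechanism as the paper's proof: both use the strict inequality $\int_{t_1}^{t_2}\chi < \frac{\mu^2}{2nM}$ to build the contracting radial solution of Lemma~\ref{le:contracting-barrier} with slightly enlarged boundary data on a time interval extending a little past $t_2$, so that its free-boundary radius stays bounded away from zero on all of $[t_1,t_2]$, and then compare $u$ with this barrier. The only real difference is the comparison tool at the end: the paper perturbs the radial solution into a \emph{strict} classical superbarrier $\psi=\vp-\kappa\pth{\abs{x-\zeta}^2-\rho^2}_+$ (with $M+\de$ and $\chi+\de$) and applies the barrier characterization of subsolutions, Definition~\ref{def:visc-barrier}, whereas you invoke the full comparison principle, Theorem~\ref{th:comparison}, with $f\equiv M$, obtaining strictness only through the bump $\eta$ in the lateral data. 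This is legitimate and not circular, since Theorem~\ref{th:comparison} does not rely on the appendix and is itself used there (proof of Proposition~\ref{pr:solUpperBoundZeroSet}); your route buys a slightly lighter construction (no strict superbarrier, no need to pass to $M+\de$), at the price of using the heavier comparison theorem. One small technical point: $u$ is only defined on the open cylinder, so you cannot literally compare on $B_\mu(\zeta)\times(t_1,t_2]$; run the argument on $B_\mu(\zeta)\times(t_1,s]$ for $s<t_2$ and let $s\to t_2$, using that the barrier's radius at time $t_2$ is still positive thanks to the $\de$-extension. The paper's own proof needs the same routine truncation, so this is a cosmetic fix, not a gap.
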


\begin{proof}
Let us extend $\chi$ to $\R$ by
defining $\chi(t) = \chi(t_2)$ for $t > t_2$ and $\chi(t) = \chi(t_1)$
for $t < t_1$.
By continuity, we can find $\de > 0$ small enough such that
\[
\int_{t_1}^{t_2 + \de} \chi(s) + \de \diff s < \frac{\mu^2}{2n(M+\de)}.
\]
Let $\vp \in \sol(M+\de, B_\mu(0) \times (t_0, 0))$ be the contracting solution provided by
Lemma~\ref{le:contracting-barrier}
with $\tilde \chi(t) = \chi(t + t_2 + \de) + \de$
and $\tilde M = M + \de$,
where $t_0 < t_1 -t_2 - \de$,
and let $\rho(t)$ be the radius of its free boundary.
Define
\[\psi(x,t) = \vp(x - \zeta, t - t_2 - \de) - \kappa (\abs{x - \zeta}^2 - \rho^2(t -t_2 -\de))_+,\]
where $\kappa \in (0, \de/\mu^2)$ is chosen small enough so that
$\psi$ is a superbarrier on $Q$ and $\psi > M$
on $(\partial B_\mu(\zeta)) \times [t_1, t_2]$.
Observe that $u \prec \psi$ on $\partial_P Q$ w.r.t. $Q$.
and hence Definition~\ref{def:visc-barrier}
implies that $u \prec \psi$ on $\cl Q$ w.r.t. $Q$ and the result
follows since $(\zeta, t)\notin \cl\Omega(\psi;Q)$
for all $t \in [t_1,t_2]$ because $\rho(t - t_2 - \de)\geq \rho(-\de) > 0$.
\qedhere\end{proof}

Proposition~\ref{pr:HS-closing} has the following consequence for the bound of a supersolution on a cylinder.

\begin{corollary}[Nondegeneracy]
\label{co:hs-nondegeneracy}
Let $u \in \subs(M, Q)$,
where $Q$ is the cylinder $Q := B_\mu(\zeta) \times (t_1,t_2)$
for some $\zeta \in \Rn$, $\mu > 0$ and $t_1 < t_2$,
such that $(\zeta,t_2) \in \cl\Omega(u;Q)$
and $\cl\Omega_{t_1}(u;Q) = \emptyset$.
Then
\begin{align*}
\sup_Q u \geq \frac{\mu^2}{2n M (t_2 - t_1)}.
\end{align*}
\end{corollary}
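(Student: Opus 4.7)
The plan is to argue by contradiction using the barrier comparison from Proposition~\ref{pr:HS-closing}. Suppose, contrary to the claim, that
\[
\sup_Q u < \frac{\mu^2}{2nM(t_2-t_1)}.
\]
Then I can choose a constant $K$ with $\sup_Q u \leq K < \frac{\mu^2}{2nM(t_2-t_1)}$ and take the continuous function $\chi \equiv K$ on $[t_1,t_2]$. The hypothesis $\cl\Omega_{t_1}(u;Q)=\emptyset$ on the initial slice is exactly one of the two required inputs to Proposition~\ref{pr:HS-closing}, while the bound $\sup_Q u \leq K$ gives in particular $u^{*,\cl Q} \leq K = \chi(t)$ on the lateral boundary $\partial B_\mu(\zeta) \times [t_1,t_2]$, which is the other input.

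The strict inequality $K(t_2-t_1) < \frac{\mu^2}{2nM}$ is exactly the integral smallness condition $\int_{t_1}^{t_2}\chi(s)\,ds < \frac{\mu^2}{2nM}$ required by Proposition~\ref{pr:HS-closing}. Applying that proposition, I obtain $(\zeta,t)\notin \cl\Omega(u;Q)$ for every $t \in [t_1,t_2]$. Choosing $t=t_2$ contradicts the assumption $(\zeta,t_2)\in \cl\Omega(u;Q)$, so the supposed strict upper bound on $\sup_Q u$ is impossible, and the stated lower bound follows.

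There is no genuine obstacle here: the corollary is essentially a reformulation of the closing lemma. The only detail worth checking is that Proposition~\ref{pr:HS-closing} only requires $u \in \subs(M,Q)$ together with the initial and lateral bounds, both of which are available with the constant choice of $\chi$; the fact that $\chi$ is continuous and strictly positive is automatic. Thus the entire argument fits in a single application of the barrier, with no need to invoke the radial solutions of Lemma~\ref{le:contracting-barrier} directly.
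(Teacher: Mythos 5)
Your proposal is correct and coincides with the paper's own proof: the paper likewise sets $K:=\sup_Q u$, assumes $K<\frac{\mu^2}{2nM(t_2-t_1)}$, applies Proposition~\ref{pr:HS-closing} with the constant function $\chi\equiv K$, and derives the contradiction $(\zeta,t_2)\notin\cl\Omega(u;Q)$. The only cosmetic point is that $\chi$ must be strictly positive, which is arranged (as you implicitly do) by choosing $K>0$ with $\sup_Q u\leq K<\frac{\mu^2}{2nM(t_2-t_1)}$.
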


\begin{proof}
If $K := \sup_Q u < \frac{\mu^2}{2n M (t_2 - t_1)}$,
we can apply Proposition~\ref{pr:HS-closing}
on $Q$
with $\chi(t) = K$,
yielding $(\zeta, t_2) \notin \cl\Omega(u;Q)$,
a contradiction.
\qedhere\end{proof}

The support of subsolutions expands with a speed that can be controlled with the help of Proposition~\ref{pr:HS-closing}.

\begin{corollary}
\label{co:HS-expansion-speed}
If $u \in \subs(M, Q)$, where $Q = E \times (t_1, t_2)$ for some
open set $E \subset \Rn$ and $t_1< t_2$, and $u \leq K$ on $Q$ for some $K > 0$,
then
\begin{align*}
\cl\Omega_t(u; Q)
    \subset  \cl\Omega_{t_1}(u; Q) \cup E^c + \cl B_{\rho(t)}
        \qquad \text{for all $t \in [t_1, t_2]$},
\end{align*}
where
\begin{align*}
\rho(t) = \sqrt{2n K M (t - t_1)}.
\end{align*}
\end{corollary}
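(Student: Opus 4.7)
\textbf{Proof proposal for Corollary~\ref{co:HS-expansion-speed}.}
The plan is to argue by contradiction using Proposition~\ref{pr:HS-closing} on a well-chosen small cylinder around a point that would violate the inclusion. Suppose that there exists $\zeta \in \cl\Omega_t(u;Q)$ such that $\dist(\zeta, \cl\Omega_{t_1}(u;Q) \cup E^c) > \rho(t) = \sqrt{2nKM(t-t_1)}$. Since the distance is strictly greater than $\rho(t)$, I can choose $\mu \in (\rho(t),\, \dist(\zeta, \cl\Omega_{t_1}(u;Q) \cup E^c))$ so that the closed ball $\cl B_\mu(\zeta)$ remains disjoint from $\cl\Omega_{t_1}(u;Q) \cup E^c$; in particular $B_\mu(\zeta) \subset E$ and $\cl B_\mu(\zeta) \cap \cl\Omega_{t_1}(u;Q) = \emptyset$.

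Next I set $\tilde Q := B_\mu(\zeta) \times (t_1, t]$, which is a parabolic neighborhood in the sense of Definition~\ref{def:parabolic-nbd} contained in $Q$. The restriction of $u$ is in $\subs(M, \tilde Q)$, the condition $\cl\Omega_{t_1}(u;\tilde Q) = \emptyset$ is satisfied by the choice of $\mu$, and $u^{*,\cl{\tilde Q}} \leq K$ on $\partial B_\mu(\zeta) \times [t_1,t]$ because $u \leq K$ on $Q$. With the constant boundary function $\chi \equiv K$, the key integral bound
\begin{align*}
\int_{t_1}^t \chi(s)\,ds = K(t - t_1) = \frac{\rho(t)^2}{2nM} < \frac{\mu^2}{2nM}
\end{align*}
holds by the choice $\mu > \rho(t)$. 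Proposition~\ref{pr:HS-closing} applied on $\tilde Q$ then yields $(\zeta, s) \notin \cl\Omega(u;\tilde Q)$ for every $s \in [t_1, t]$.

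On the other hand, since $(\zeta, t) \in \cl\Omega(u;Q)$ and $\zeta \in B_\mu(\zeta)$, there is a sequence $(x_k, s_k) \in \Omega(u;Q)$ converging to $(\zeta, t)$. For large $k$ these points lie in $\tilde Q$, so $(\zeta, t) \in \cl\Omega(u;\tilde Q)$, contradicting the conclusion of the previous step. Hence every $\zeta \in \cl\Omega_t(u;Q)$ satisfies $\dist(\zeta, \cl\Omega_{t_1}(u;Q) \cup E^c) \leq \rho(t)$, which is exactly the inclusion claimed. The only place that requires some care is the verification that $\cl\Omega_{t_1}(u;\tilde Q) = \emptyset$ and that $(\zeta, t)$ passes from $\cl\Omega(u;Q)$ to $\cl\Omega(u;\tilde Q)$; both are straightforward once the strict inequality $\mu < \dist(\zeta, \cl\Omega_{t_1}(u;Q) \cup E^c)$ is maintained, so I do not expect a genuine obstacle beyond a careful bookkeeping of the closures and parabolic boundaries.
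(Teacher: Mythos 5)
Your argument is essentially the paper's proof in contrapositive-by-contradiction form: the paper fixes a point whose $\rho(\sigma)$-ball avoids $\cl\Omega_{t_1}(u;Q)\cup E^c$, enlarges the radius to some $\mu>\rho(\sigma)$ by compactness of the closed ball (your strict distance inequality plays the same role), and applies Proposition~\ref{pr:HS-closing} with $\chi\equiv K$ and the identical computation $K(\sigma-t_1)=\rho(\sigma)^2/(2nM)<\mu^2/(2nM)$.

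One step of yours is not quite right as written: from $(\zeta,t)\in\cl\Omega(u;Q)$ you take $(x_k,s_k)\in\Omega(u;Q)$ with $(x_k,s_k)\to(\zeta,t)$ and claim they eventually lie in $\tilde Q=B_\mu(\zeta)\times(t_1,t]$, but the approximating times may satisfy $s_k>t$, so they need not enter $\tilde Q$ at all; the strict inequality on $\mu$ only controls the spatial coordinate. The repair is immediate and stays within your framework: either invoke the continuous-expansion property, Definition~\ref{def:visc-test-sub}(i), to replace the sequence by one with times $<t$, or, when $t<t_2$, apply Proposition~\ref{pr:HS-closing} on $B_\mu(\zeta)\times(t_1,t')$ for some $t'\in(t,t_2]$ with $\rho(t')<\mu$ (possible since $\rho$ is continuous and $\mu>\rho(t)$), which excludes the whole neighborhood of $(\zeta,t)$ from the positivity set. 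With that one-line adjustment your proof coincides with the paper's.
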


\begin{proof}
The statement will be proved if we show
that $(\zeta, \si) \notin \cl\Omega(u;Q)$
for any $(\zeta, \si) \in \cl Q$ such that
\begin{align}
\label{rho-ball-cond}
\cl B_{\rho(\si)}(\zeta) &\subset E &&\text{and} &
\cl B_{\rho(\si)}(\zeta) \cap \cl \Omega_{t_1}(u; Q) &= \emptyset.
\end{align}
Fix one such $(\zeta, \si)$.
By compactness of $\cl B_{\rho(\si)}(\zeta)$
there exists $\mu > \rho(\si)$ such that
\eqref{rho-ball-cond} holds with $\mu$ instead of $\rho(\si)$.
Set $\chi(t) = K$, and note that
\begin{align*}
\int_{t_1}^\si K \diff s = (\si - t_1) K = \frac{\rho(\si)^2}{2nM}
    < \frac{\mu^2}{2nM}.
\end{align*}
Therefore Proposition~\ref{pr:HS-closing}
yields $(\zeta, \si) \notin \cl\Omega(u;Q)$.
\qedhere\end{proof}

\begin{corollary}
\label{co:rational-contract-bound}
Suppose that positive constants $M$, $\mu$, $\si$,
$A$ and $\e$
satisfy
\begin{align}
\label{sigma-eps-bound}
\si < \e \pth{e^{\frac{\mu^2}{2nM A}} - 1}.
\end{align}
If $u\in \subs(M, Q)$, where $Q = B_\mu(\zeta) \times (0, \si)$,
$\cl\Omega_0(u;Q) = \emptyset$
and
\begin{align}
\label{eq:rationalBound}
u(x,t) \leq \frac{A}{\e + \si - t}
        \quad \text{ in } Q,
\end{align}
then $(\zeta, t) \notin \cl\Omega(u;Q)$ for $t \in (0, \si]$.
\end{corollary}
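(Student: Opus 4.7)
The plan is to reduce the statement directly to Proposition~\ref{pr:HS-closing} by choosing the boundary profile $\chi(t) = \dfrac{A}{\e + \sigma - t}$, which is exactly the upper bound hypothesized on $u$ in \eqref{eq:rationalBound}. With this choice, $\chi \in C([0,\sigma])$ and $\chi > 0$, so the hypotheses on $\chi$ required by Proposition~\ref{pr:HS-closing} are met, and the restriction of $u$ to $\partial B_\mu(\zeta) \times [0,\sigma]$ satisfies the required boundary bound because the pointwise bound \eqref{eq:rationalBound} holds throughout $Q$ and therefore in particular on the lateral boundary (after passing to the semicontinuous envelope $u^{*,\cl Q}$, which inherits the same inequality). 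The condition $\cl\Omega_0(u;Q) = \emptyset$ is given.

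The single computation required is the verification that
\[
\int_0^\sigma \chi(s)\,ds < \frac{\mu^2}{2nM},
\]
which is the hypothesis of Proposition~\ref{pr:HS-closing}. A direct integration gives
\[
\int_0^\sigma \frac{A}{\e + \sigma - s}\,ds = A\bigl[-\ln(\e + \sigma - s)\bigr]_0^\sigma = A \ln\!\pth{1 + \frac{\sigma}{\e}},
\]
so the required inequality becomes $\ln(1 + \sigma/\e) < \mu^2/(2nMA)$, i.e.
\[
\sigma < \e\pth{e^{\mu^2/(2nMA)} - 1},
\]
which is precisely the standing assumption \eqref{sigma-eps-bound}. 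Hence Proposition~\ref{pr:HS-closing} applies on $B_\mu(\zeta) \times (0,\sigma)$ and yields $(\zeta,t) \notin \cl\Omega(u;Q)$ for every $t \in [0,\sigma]$, and in particular for $t \in (0,\sigma]$ as claimed.

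There is no substantial obstacle here; the statement has been engineered so that the critical-time closing barrier from Lemma~\ref{le:contracting-barrier} (invoked through Proposition~\ref{pr:HS-closing}) shuts off exactly when $\sigma$ saturates the bound \eqref{sigma-eps-bound}. The only point that deserves a brief line in the writeup is the passage to $u^{*,\cl Q}$ on the lateral boundary, which is immediate since \eqref{eq:rationalBound} is a pointwise bound by a function continuous up to $\partial B_\mu(\zeta) \times [0,\sigma]$, so the upper semicontinuous envelope satisfies the same inequality there.
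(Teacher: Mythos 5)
Your proof is correct and follows exactly the paper's own argument: choose $\chi(t) = \frac{A}{\e+\si-t}$, compute $\int_0^\si \chi = A\ln\pth{1+\frac{\si}{\e}} < \frac{\mu^2}{2nM}$ using \eqref{sigma-eps-bound}, and invoke Proposition~\ref{pr:HS-closing}. The remark about passing to $u^{*,\cl Q}$ on the lateral boundary is a harmless extra detail; nothing further is needed.
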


\begin{proof}
Set $\chi(t) = \frac{A}{\e + \si - t}$ and $t_1 = 0$, $t_2 = \si$.
An easy computation yields,
using the bound \eqref{sigma-eps-bound} in the last step,
\begin{align*}
\int_{t_1}^{t_2} \chi(s) \diff s &= \int_0^\si \frac{A}{\e + \si - s} \diff s
    = - A \ln \frac\e {\e + \si} \\
    &= A \ln \pth{1 + \frac{\sigma}{\e}} < \frac{\mu^2}{2nM}.
\end{align*}
The conclusion follows from Proposition~\ref{pr:HS-closing}.
\qedhere\end{proof}

\section{Nonuniform perturbation}
\label{sec:nonlinear-perturbation}

When the free boundary velocity law in the Hele-Shaw problem
\eqref{HSt} depends on time,
the solutions cannot be easily scaled in time.
In particular, a linear scaling in time preserves
subsolutions or supersolutions only
for short time intervals of length $\frac{m\e}{L}$,
where $m$ and $L$ are constants from \eqref{g-bound}
and $\eqref{g-Lipschitz}$.
In this section we devise
a nonlinear scaling in time that can provide perturbed solutions
on arbitrary long intervals.

\begin{proposition}
\label{pr:nonuniform-perturbation}
Suppose that
$\rho \in C^2(E)$ and $\ta \in C^1(I)$ on some
open sets $E \subset \Rn$ and $I \subset \R$, and $\rho$, $\ta$ satisfy,
for all $x \in E$, $t \in I$,
\begin{compactenum}[(i)]
\item $\ta' > 0$, and either $\rho > 0$ or $\rho \equiv 0$,
\item $\abs{D\rho} \leq 1/2$,
\item\label{rho-deformation-condition}
$\rho \lap \rho \geq (n-1) \abs{D \rho}^2$,
\item \label{rho-ta-perturb-balance}
for some positive constants $a, L, m$,
\begin{align*}
    a \geq
        \ta'(t) (1- \abs{D\rho(x)})^{-2}
        \bra{1 + \frac{L}{m} (\abs{t - \ta(t)} + \rho(x))}.
\end{align*}
\end{compactenum}

Then if $u\in \subs(g, Q)$, a subsolution of the Hele-Shaw problem with
$V_\nu = g(x,t) \abs{Du}$ on an open set $Q \subset E \times I$,
where
$g$ is $L$-Lipschitz and $g \geq m > 0$, then
\begin{align*}
v(x,t) := \sup_{y \in \cl B_{\rho(x)}(x)} a\, u(y, \ta(t)).
\end{align*}
is a subsolution of the same problem on the set
\begin{align*}
\tilde Q = \set{(x,t): t \in \ta(I),\
    B_{\rho(x)}(x) \times \set{\ta^{-1}(t)} \subset Q},
\end{align*}
i.e., $v \in \subs(g, \tilde Q)$.

Similarly,
if we replace condition \eqref{rho-ta-perturb-balance}
with
\begin{compactenum}[(i')]
\setcounter{enumi}{3}
\item \label{rho-ta-perturb-balance'}
for some positive constants a, L, m,
\begin{align*}
    a \leq
        \ta'(t) (1 + \abs{D\rho(x)})^{-2}
        \bra{1 - \frac{L}{m} (\abs{t - \ta(t)} + \rho(x))},
\end{align*}
\end{compactenum}
we have that if  $u\in \supers(g, Q)$ then
\begin{align*}
w(x,t) := \inf_{y \in \cl B_{\rho(x)}(x)} a\, u(y, \ta(t)).
\end{align*}
is a supersolution, $w \in \supers(g, \tilde Q)$.
\end{proposition}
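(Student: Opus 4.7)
The plan is to verify Definition~\ref{def:visc-test-sub} for $v$, reducing each condition to the corresponding one for $u$ via a carefully chosen pull-back of test functions. The continuous expansion property (i) follows directly from the sup-convolution structure: any $(\hat x, \hat t) \in \cl\Omega(v;\tilde Q) \cap \tilde Q$ is accumulated by $(x_k, t_k) \to (\hat x, \hat t)$ with $v(x_k, t_k) > 0$ and $t_k \leq \hat t$, and compactness of $\cl B_{\rho(x_k)}(x_k)$ together with upper semi-continuity yields maximizers $y_k \in \cl B_{\rho(x_k)}(x_k)$ with $u(y_k, \ta(t_k)) > 0$ accumulating at some $\hat y \in \cl B_{\rho(\hat x)}(\hat x)$; monotonicity of $\ta$ then lets one pull back the continuous expansion of $u$ at $(\hat y, \ta(\hat t))$.

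For the maximum principle, let $\phi \in C^{2,1}_{x,t}$ be such that $v^* - \phi$ has a local maximum at $(\hat x, \hat t) \in \cl\Omega(v;\tilde Q)$ on $\cl\Omega(v) \cap \set{t \leq \hat t}$. Upper semi-continuity and compactness furnish some $\hat y \in \cl B_{\rho(\hat x)}(\hat x)$ with $v^*(\hat x, \hat t) = a u^*(\hat y, \ta(\hat t))$. When $\rho(\hat x) = 0$ the argument reduces to a pure time rescaling and is direct, so I assume $\rho(\hat x) > 0$, set $\xi := (\hat y - \hat x)/\rho(\hat x)$ (so $|\xi| \leq 1$), and introduce the smooth map
\begin{equation*}
T(x) := x + \rho(x)\xi, \qquad DT = I + \xi \otimes D\rho,
\end{equation*}
which satisfies $T(\hat x) = \hat y$, $T(x) \in \cl B_{\rho(x)}(x)$ for all $x$ near $\hat x$ and, by (ii), is a local diffeomorphism with $\|(DT)^{-1}\| \leq (1 - |D\rho|)^{-1}$. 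Defining
\begin{equation*}
\psi(y, s) := a^{-1}\phi(T^{-1}(y), \ta^{-1}(s))
\end{equation*}
on a neighborhood of $(\hat y, \ta(\hat t))$, the sup-convolution bound $v(x, t) \geq a u(T(x), \ta(t))$ together with the maximum property of $\phi$ forces $u^* - \psi$ to have a local maximum at $(\hat y, \ta(\hat t))$ on $\cl\Omega(u) \cap \set{s \leq \ta(\hat t)}$, so $\psi$ is an admissible test function for $u$ there.

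A chain-rule computation at $(\hat y, \ta(\hat t))$ yields $D_y \psi = a^{-1}(DT)^{-T} D\phi$, $\psi_s = a^{-1}\phi_t/\ta'(\hat t)$, and a formula for $\Delta_y \psi$ involving $D^2\phi$, $D\phi$, $\Delta\rho$ and $D\rho$. The gradient bound $|D_y\psi| \leq a^{-1}(1-|D\rho|)^{-1}|D\phi|$ is immediate, and condition (iii), $\rho\Delta\rho \geq (n-1)|D\rho|^2$, is exactly what is needed---in the spirit of the classical deformation argument of \cite{C87} recalled in Lemma~\ref{le:radial-nonlin-perturbation}---to ensure that $-\Delta_y \psi(\hat y, \ta(\hat t)) \leq 0$ propagates to $-\Delta_x \phi(\hat x, \hat t) \leq 0$. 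With these bounds in hand, the subsolution alternatives (ii-1)--(ii-2) for $u$ transfer one by one to those for $v$: the Laplacian and gradient-vanishing alternatives are direct (the latter because $(DT)^{-T}$ is invertible), while the velocity alternative $\psi_s \leq g(\hat y, \ta(\hat t))|D_y \psi|^2$ combines with the Lipschitz estimate
\begin{equation*}
g(\hat y, \ta(\hat t)) \leq g(\hat x, \hat t)\bigl[1 + (L/m)(\rho(\hat x) + |\ta(\hat t) - \hat t|)\bigr]
\end{equation*}
and condition (iv) to yield exactly $\phi_t \leq g(\hat x, \hat t)|D\phi|^2$; indeed, (iv) is designed precisely to absorb the accumulated factor $\ta'(\hat t)(1-|D\rho|)^{-2}$ against $a$.

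The main obstacle is the Laplacian book-keeping: the chain rule generates second-order correction terms whose sign must be controlled by the pointwise geometric condition (iii), which is shaped exactly to handle the non-constant-radius feature of the sup-convolution. The supersolution half of the statement follows by a parallel inf-convolution argument using the reversed inequality (iv'), with upper and lower envelopes interchanged throughout.
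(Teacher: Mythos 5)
Your overall architecture (pull back the test function through a space deformation composed with the time reparametrization, then absorb the Lipschitz error in $g$ via hypothesis (iv)) matches the paper, and the free-boundary-velocity and gradient book-keeping you sketch does work: $\abs{D\psi}\le a^{-1}(1-\abs{D\rho})^{-1}\abs{D\phi}$ together with $g(\hat y,\ta(\hat t))\le g(\hat x,\hat t)\bra{1+\tfrac Lm(\rho(\hat x)+\abs{\ta(\hat t)-\hat t})}$ and (iv) gives exactly $\phi_t\le g\abs{D\phi}^2$, as in the paper. The genuine gap is the Laplacian transfer, which you dismiss in one sentence. With your map $T(x)=x+\rho(x)\xi$ the differential at the touching point is $DT=I+\xi\otimes D\rho$, a non-conformal rank-one perturbation of the identity (unless $D\rho=0$ or $\xi\parallel D\rho$). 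Consequently $\Delta_y\psi$ at $(\hat y,\ta(\hat t))$ is, up to first-order terms, $\trace\pth{D^2\phi\, M}$ with $M=(DT)^{-1}(DT)^{-T}$ a positive definite matrix that is \emph{not} a multiple of $I$, and the implication ``$-\Delta_y\psi\le 0\Rightarrow-\Delta_x\phi\le 0$'' is simply false as a pointwise statement: one can choose $D^2\phi$ with $\trace\pth{D^2\phi\,M}\ge 0$ but $\Delta\phi<0$. Hypothesis (iii) constrains $\rho\Delta\rho$ against $\abs{D\rho}^2$ and cannot repair this first-order anisotropy; moreover the second-order correction term $D\phi\cdot\Delta(T^{-1})$ appearing in $\Delta_y\psi$ has uncontrolled sign in your setup. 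Since the Laplacian alternative also occurs inside Definition~\ref{def:visc-test-sub}(ii-2), this gap infects both cases of the maximum principle, not just the interior one.

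The paper avoids this by constructing the deformation so that its linear part $A$ at the touching point is a scaled rotation, $AA^t=\ga^{-2}I$, whence $\trace\pth{A^t D^2\phi\,A}=\ga^{-2}\Delta\phi$, and by adding a quadratic correction $e_n\bra{\tfrac12 B(x-\hat x)\cdot(x-\hat x)}$ in the direction $e_n=h/\abs h$ with a free symmetric matrix $B$, $\trace B\ge 0$; hypothesis (iii) enters only through the trace inequality showing that such a $B$ exists with the Hessians of $\rho(y(x))$ and $\abs{y(x)-x}$ strictly ordered (so the constraint $\rho(y(x))\ge\abs{y(x)-x}$ persists near the boundary-touching case $\abs h=\rho(\hat y)$). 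Finally, the resulting first-order term $\bra{D\phi\cdot e_n}\trace B$ is controlled by the sign fact $D\phi\cdot e_n\le 0$, which requires its own small argument (comparing $v$ at $\hat y$ and $\hat y-\si e_n$ using $\abs{D\rho}<1$) that your proposal never supplies. To close your proof you would have to replace $T$ by a map of this conformal-plus-quadratic form (or otherwise correct the test function), at which point you are essentially reproducing the paper's construction.
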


The proof of a similar statement
for time independent Hele-Shaw problem was previously given in,
for instance,
\cite[Lemma~3.40]{K06} with the help of \cite[Lemma~9]{C87}
(see also \cite[proof of Lemma 12]{K07}).
Here we present a slightly more explicit, compact proof
for reader's convenience,
including the new nonlinear deformation in time,
and without the assumption that $u$ is harmonic
or continuous in its positive phase.

\begin{proof}
First, we prove the statement for subsolutions.
We can assume that $u$ is USC,
observing that the sup-convolution commutes with
taking the upper semi-continuous envelope,
and therefore we assume that $v$ is USC.
Suppose that $\phi \in C^{2,1}$
and that $v - \phi$ has a local maximum at $(\hat y, \hat s)$
in $\clset{v > 0}$.
Then, by definition of $v$,
there exists $(\hat x, \hat t) \in \clset{u > 0}$
such that
$\abs{\hat x - \hat y} \leq \rho(\hat y)$,
and $v(\hat y, \hat s) = a u(\hat x, \hat t)$ for
$\hat s = \ta^{-1}(\hat t)$.
We point out here that $\ta$ is invertible
by assumption (i).

Let us set $h := \hat y - \hat x$ for convenience.
We observe that $a u(x, \ta(t)) \leq v(y,t)$ as long as
\begin{align}
\label{inclusion-cond}
    \rho(y) \geq \abs{y - x}.
\end{align}
With this fact in mind,
we shall find a smooth function $y(x)$ so that
$y(\hat x) = \hat y$
and \eqref{inclusion-cond} holds in a neighborhood of $\hat x$, and
therefore $u(x,t) - a^{-1} \phi(y(x),\ta^{-1}(t))$
has a local max at $(\hat x, \hat t)$
in $\clset{u > 0}$.

If $\abs{h} < \rho(\hat y)$, then $y(x) = x + h$ is sufficient
due to the continuity of $\rho$.
A more delicate situation arises when $\abs{h} = \rho(\hat y)$.
We define $y(x)$ in such a way
that the gradients of both sides in \eqref{inclusion-cond}
are equal, while the Hessians are strictly ordered.
The differentiation of $\rho(y(x))$ yields
\begin{align}
\label{differentiation-formulas}
\pd{}{x_i} \rho(y(x)) &= \sum_k D_k \rho \pd{y_k}{x_i}, &
\frac{\partial^2}{\partial x_i \partial x_j} \rho(y(x))
    &= \sum_{k,l} D_{kl} \rho \pd{y_k}{x_i}\pd{y_l}{x_j}
        + \sum_k D_k\rho \frac{\partial^2 y_k}{\partial x_i \partial x_j}.
\end{align}

Let us choose a coordinate system $(e_1, \ldots, e_n)$
so that $e_n = h/\abs{h}$  and $D\rho(\hat y) = \al e_1 + \be e_n$,
and set $\gamma^2 = (1 - \be)^2 + \al^2$.
Let us recall that $\abs{D \rho} \leq 1/2$ and therefore $\gamma \geq 1/2$.
We define $y(x)$ in the form
\begin{align}
\label{def-of-y-x}
y(x) = \hat y + A(x - \hat x) + e_n \bra{\ov2 B(x - \hat x)\cdot (x - \hat x)},
\end{align}
where $B$ is a symmetric matrix with $\trace B \geq 0$
to be specified later
and $A$ is the rotation and scaling matrix
\begin{align*}
A = \frac{1- \be}{\ga^2} e_1 \otimes e_1
    + \frac{1- \be}{\ga^2} e_n \otimes e_n
    + \frac{\alpha}{\ga^2} e_n \otimes e_1
    - \frac{\alpha}{\ga^2} e_1 \otimes e_n
       + \ov\ga\sum_{k=2}^{n-1} e_k \otimes e_k.
\end{align*}
Using the formulas \eqref{differentiation-formulas} and \eqref{def-of-y-x},
we can express the derivatives at $x = \hat x$ as
\begin{align}
\label{diff-rho-y-x}
\at{D(\rho(y(x)))}{x = \hat x} &= A^t [D\rho(\hat y)],
    &
\at{D^2(\rho(y(x)))}{x = \hat x} &= A^t [D^2 \rho(\hat y)] A + \be B,
\end{align}
and similarly
\begin{align*}
\at{D(\abs{y(x) - x})}{x = \hat x}
    &= (A - I)^t e_n,
   \\
\at{D^2(\abs{y(x) - x})}{x = \hat x} &=
    \frac{1}{\abs{h}}(A - I)^t (I - e_n \otimes e_n) (A - I) + B.
\end{align*}
A quick computation shows that the choice of $A$ guarantees that
$A^t [D\rho(\hat y)] = (A - I)^t e_n$,
with the equality of gradients in \eqref{inclusion-cond} as a consequence.
To get the strict ordering of Hessians
$\at{D^2(\rho(y(x)))}{x = \hat x} > \at{D^2(\abs{y(x) - x})}{x = \hat x}$,
we need to find a symmetric matrix $B$ with $\trace B \geq 0$ such that
\begin{align*}
M := A^t [D^2 \rho(\hat y)] A
    - \frac1{\abs{h}}(A - I)^t (I - e_n \otimes e_n) (A - I)
 > (1 - \be) B.
\end{align*}
Since $\abs{\be} < 1$,
this will be possible if and only if the trace of the left-hand side
$M$
is strictly greater than 0.
We observe that $A A^t = \ga^{-2} I$ and
$(A - I) (A - I)^t = (\ga^{-2} + 1) I - A - A^t$,
which is a diagonal matrix,
and therefore we can evaluate
\begin{align*}
\trace M &= \ga^{-2} \Delta \rho(\hat y) -
    \frac{1}{\abs{h}}\bra{(n-1) (\ga^{-2} + 1)
    - 2 (n-2) \ga^{-1}  - 2\ga^{-2}(1 - \be)}\\
    &= \ga^{-2}
        \bra{\Delta \rho(\hat y)
            - \frac{1}{\abs{h}}\bra{(n-2) (1 - \ga)^2 + \al^2 + \be^2}}.
\end{align*}
Recall that $\abs{h} = \rho(\hat y)$, and
we can also estimate
$(n-2) (1 - \ga)^2 + \al^2 + \be^2 \leq (n-2) \be^2 +\al^2 + \be^2
\leq  (n-1)\abs{D\rho(\hat y)}^2$.
This implies that $\trace M > 0$ due to the assumption
\eqref{rho-deformation-condition} on $\rho$.

The above computation proves that
$u - \tilde \phi$ has a local max at $(\hat x, \hat t)$
in $\clset{u > 0}$,
where $\tilde \phi(x, t) := a^{-1}\phi(y(x), \ta^{-1}(t))$.
As always, two cases must be addressed:
\begin{compactenum}
\item
$u(\hat x,\hat t) > 0$:
We first observe that $D\phi(\hat y, \hat s) \cdot e_n \leq 0$.
Indeed, since $\abs{D \rho} < 1$, we see that
$v(\hat y, \hat s) = au(\hat x, \hat t) \leq v(\hat y - \si e_n, \hat s)$
for all $\si \geq 0$ small.
Since $u$ is a subsolution of the Hele-Shaw problem,
\eqref{differentiation-formulas}
and $\trace B \geq 0$ yield the inequality
\begin{align*}
0 \geq - \Delta \tilde \phi(\hat x, \hat t)
    = - \trace A^t \bra{D^2 \phi(\hat y, \hat s)} A
        - \bra{D \phi(\hat y, \hat s) \cdot e_n} \trace B
        \geq - \ga^{-2} \Delta \phi(\hat y, \hat s).
\end{align*}

\item
$u(\hat x, \hat t) = 0$:
First, we set $\xi := a^{-1} \ga^{-2} \ta'(\hat s)
\leq a^{-1} \ta'(\hat s) \pth{1 - \abs{D \rho(\hat y)}}^{-2}$
and, using the assumption
\eqref{rho-ta-perturb-balance}
and the assumptions on $g$, we estimate
\begin{align}
\label{g-lipschitz-estimate}
\xi g(\hat x, \hat t) &\leq
       \xi g(\hat y, \hat s) \pth{ 1+
       \frac Lm (\abs{\hat s - \ta(\hat s)} +  \abs{h})}
       \leq g(\hat y, \hat s),
\end{align}
recalling that $\abs{h} \leq \rho(\hat y)$.
Additionally, differentiation of $\tilde \phi$ in time yields
\begin{align*}
\tilde \phi_t(x,t) =
\pd{}t \phi(y(x), \ta^{-1}(t)) = \phi_t(y(x), \ta^{-1}(t))
\frac1{\ta'(\ta^{-1}(t))}.
\end{align*}
The formula \eqref{diff-rho-y-x} for the gradient,
recalling that $A$ is a rotation and scaling by a factor $\ga^{-1}$,
as well as the estimate \eqref{g-lipschitz-estimate}
allow us to transform the viscosity inequality and obtain
\begin{align*}
0 &\geq \tilde \phi_t(\hat x, \hat t)
    - g(\hat x, \hat t) \abs{D \tilde \phi(\hat x, \hat t)}^2 \\
    &= a^{-1} [\ta'(\hat s)]^{-1}\phi_t(\hat y, \hat s)
        - a^{-2} \ga^{-2} g(\hat x, \hat t)
            \abs{D \phi(\hat y, \hat s)}^2\\
    &= a^{-1} [\ta'(\hat s)]^{-1} \bra{\phi_t(\hat y, \hat s)
        - \xi g(\hat x, \hat t)\abs{D \phi(\hat y, \hat s)}^2}\\
    &\geq a^{-1} [\ta'(\hat s)]^{-1} \bra{\phi_t(\hat y, \hat s)
        - g(\hat y, \hat s)\abs{D \phi(\hat y, \hat s)}^2}.
\end{align*}
\end{compactenum}
Hence $v$ is a viscosity subsolution of the HS problem
and the proof is finished.

\bigskip
The proof for supersolutions follows same idea,
but, in contrast to the proof for subsolutions,
we consider a strict local minimum of $w - \phi$ at $(\hat y, \hat s)$.
It is also necessary to make the obvious changes
in the directions of the inequality signs.
Finally, the estimate \eqref{g-lipschitz-estimate}
is replaced by
\begin{align*}
\xi g(\hat x, \hat t) &\geq
    \xi g(\hat y, \hat s) \pth{1
       - \frac Lm (\abs{\hat s - \ta(\hat s)} +  \abs{h})}
       \geq g(\hat y, \hat s),
\end{align*}
which applies due to the assumption (\ref{rho-ta-perturb-balance'}'),
since now we estimate
$\ga^{-2} \geq (1 + \abs{D\rho(\hat y)})^{-2}$.
The rest is analogous.
\qedhere\end{proof}

\subsection{Subsolutions}
\label{sec:nonlinear-scaling-sub}

In this section, we will give an expression
for a particular choice of $\ta$
so that Proposition~\ref{pr:nonuniform-perturbation}
applies for subsolutions,
given $a, m, L$ and $\rho$.
The notation throughout this section
is consistent with the statement of
Proposition~\ref{pr:nonuniform-perturbation}.

Let us assume that $\rho$ and $D\rho$ are bounded,
and define
\begin{align*}
\al := a \min_E (1 - \abs{D\rho})^2,\qquad
\ga := \frac m L, \qquad \la := \abs\tau + \max_E \rho.
\end{align*}
We will look for $\ta$ of the form $\ta(t) = f(t;\al,\ga,\la) + \tau$,
for some constant $\tau \neq 0$ and
$f \in C^{1}([0, \infty))$, with $f(0) = 0$ and $0 < f' < 1$.

The upper bound on the derivative yields $f(t) \leq t$ for all $t \geq 0$,
hence the estimate $\abs{t - \ta(t)} \leq t - f(t) + \abs\tau$ holds.
The condition \eqref{rho-ta-perturb-balance}
in Proposition~\ref{pr:nonuniform-perturbation}
can be rewritten, after multiplication by $\ga$, as
\begin{align}
\label{rho-f-balance}
\al \ga \geq f'(t)(\ga + \la + t - f(t))
    \geq \ta'(t) (\ga + \rho + \abs{t - \ta(t)}).
\end{align}
Let us now assume that $\al \gamma < \ga + \la$.
In fact, if this condition is violated,
we can set $f(t) = t$ and the condition \eqref{rho-ta-perturb-balance}
is satisfied.

To find $f$ satisfying \eqref{rho-f-balance},
we substitute $h(t) = t - f(t) + \ga + \la$,
and after a short calculation, using $f' = 1 - h'$,
$h$ must satisfy
\begin{align}
\label{h-inequality}
\frac{h}{h-\al \ga} h' \geq 1.
\end{align}
With the initial data $h(0) = \ga + \la$,
the solution of the equality can be expressed
in terms of the principal branch
of the Lambert W function,
\begin{align*}
h(t) = \al \ga \pth{1+W\pth{\frac{\xi}{\al\ga} e^{\frac{t + \xi}{\al\ga}}}},
\end{align*}
where $\xi = \ga + \la - \al \ga$, positive by assumption.

Thus the function $f$ can be expressed in terms of $h$
as
\begin{align*}
f(t;\al,\ga,\la) =
t + \xi - \al\ga W\pth{\frac{\xi}{\al\ga} e^{\frac{t + \xi}{\al\ga}}},
\qquad \xi = \ga + \la - \al \ga.
\end{align*}

\begin{lemma}
\label{le:properties-of-f}
The function $f(t;\al,\ga,\la)$, for $\al, \ga, \la$ positive
such that $\ga + \la - \al \ga > 0$, has the following properties:
\begin{compactenum}
\item $f(0) = 0$,
\item $f(t) \to t$ locally uniformly
on $[0,\infty)$ as $(\al, \la) \to (1,0)$,
\item
    $f' = \frac{\al \ga}{h}$,
    $f'' < 0$,
    $f'(0) = \frac{\al\ga}{\ga + \la}$.
\end{compactenum}
\end{lemma}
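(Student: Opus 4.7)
The plan is to verify each assertion directly from the closed-form expression for $f$ and the defining identity $W(ze^z)=z$ for the principal branch of the Lambert $W$ function, together with the ODE for the auxiliary function $h(t):=t-f(t)+\gamma+\lambda$ that was used to derive the formula. Throughout, note that $\xi=\ga+\la-\al\ga>0$ by hypothesis, so the argument of $W$ in the definition of $f$ is strictly positive, placing us on the principal branch where $W$ is smooth, strictly increasing, and satisfies $W(0)=0$ and $W'(z)=W(z)/[z(1+W(z))]$.

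For (a), first rewrite $h$ in terms of $W$: since $f(t)=t+\xi-\al\ga\, W(Z(t))$ with $Z(t):=\tfrac{\xi}{\al\ga}\exp\!\bigl(\tfrac{t+\xi}{\al\ga}\bigr)$, one has
\[
h(t)=t-f(t)+\ga+\la=\al\ga\bigl(1+W(Z(t))\bigr).
\]
At $t=0$ the argument of $W$ is $\tfrac{\xi}{\al\ga}e^{\xi/(\al\ga)}$, so by the principal-branch identity $W(ze^z)=z$ (valid for $z\ge 0$) we get $W(Z(0))=\xi/(\al\ga)$, which gives $h(0)=\al\ga+\xi=\ga+\la$ and hence $f(0)=0$.

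For (c), I will derive the ODE satisfied by $h$ and translate it back to $f$. By construction $h$ solves $hh'=h-\al\ga$ with $h(0)=\ga+\la$, i.e.\ $h'=1-\al\ga/h$. Since $f'=1-h'$, this yields immediately $f'=\al\ga/h$, and evaluating at $t=0$ gives $f'(0)=\al\ga/(\ga+\la)$. For $f''$, differentiate to obtain $f''=-\al\ga\,h'/h^{2}$, so the sign of $f''$ is opposite to that of $h'=(h-\al\ga)/h$. Because $h(0)=\ga+\la>\al\ga$ (this is exactly the hypothesis $\xi>0$) and because $h\equiv\al\ga$ is an equilibrium of the ODE $hh'=h-\al\ga$, uniqueness of the initial value problem forces $h(t)>\al\ga$ for all $t\ge 0$. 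Therefore $h'>0$ throughout and $f''<0$, as claimed.

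For (b), the convergence $(\al,\la)\to(1,0)$ gives $\xi\to 0$. For any $T>0$ and $t\in[0,T]$ one has
\[
0\le Z(t)=\frac{\xi}{\al\ga}\,\exp\!\Bigl(\frac{t+\xi}{\al\ga}\Bigr)\le \frac{\xi}{\al\ga}\,\exp\!\Bigl(\frac{T+|\xi|}{\al\ga}\Bigr),
\]
and the right-hand side tends to $0$ uniformly in $t$ as $(\al,\la)\to(1,0)$. Continuity of $W$ at $0$ with $W(0)=0$ then gives $\al\ga\,W(Z(t))\to 0$ uniformly on $[0,T]$, and since $\xi\to 0$ as well, $f(t)=t+\xi-\al\ga\,W(Z(t))\to t$ uniformly on $[0,T]$. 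No step is really a serious obstacle; the only mild subtlety is the monotonicity argument showing $h>\al\ga$ persists, which is what produces $f''<0$.
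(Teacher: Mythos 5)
Your proof is correct and follows essentially the same route as the paper: part (a) via the identity $W(xe^x)=x$, part (b) via continuity of $W$ at $0$ together with the argument of $W$ tending to $0$ locally uniformly, and part (c) from the defining ODE $\frac{h}{h-\al\ga}h'=1$ (equivalently the explicit Lambert-$W$ formula, which directly gives $h>\al\ga$ and hence $f''<0$). The paper's proof is only a terse sketch, and your write-up fills in exactly those details.
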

\begin{proof}
For (a), use the formula $W(xe^x) = x$.
To show (b), note that $f(t) \to t$
locally uniformly on $[0, \infty)$
as $(\al,\la) \to (1,0)$
is equivalent to the locally uniform convergence $h(t) \to \ga$.
Since $W(0) = 0$ and $W$ is continuous at $0$,
the result follows from the fact that the argument of $W$
goes to $0$ locally uniformly in $t$ as $(\al, \la) \to (1,0)$.
(c) is a simple consequence of the properties of Lambert W function.
\qedhere\end{proof}

\subsection{Supersolutions}
\label{sec:nonlinear-scaling-super}

We follow the idea from \S\ref{sec:nonlinear-scaling-sub}
to construct a function $\ta(t) = f(t) + \tau$
such that condition (\ref{rho-ta-perturb-balance'}')
in Proposition~\ref{pr:nonuniform-perturbation} holds.

We define $\al = a \max(1 + \abs{D \rho})^2$ now
with $\ga$, $\la$ as before,
and look for $f \in C^1([0, T])$, for $T > 0$, $f(0) = 0$ and
$f' > 1$.
The condition on the derivative yields $f(t) \geq t$ for $t \geq 0$
and thus $\abs{t - \ta(t)} \geq f(t) - t - \abs\tau$.
The condition (\ref{rho-ta-perturb-balance'}') can be rewritten
as in \eqref{rho-f-balance},
\begin{align*}
\al \ga \leq f'(t) (\ga - \la + t - f(t))
    \leq f'(t) (\ga - \rho - \abs{t - \ta(t)}).
\end{align*}
Here it is necessary to assume $\ga > \la$.
Furthermore, we assume that $\al \ga > \ga - \la$,
otherwise $f(t) = t$ satisfies the inequality and we are done.

After a substitution $h(t) = \ga - \la + t - f(t)$,
we discover that $h$ satisfies \eqref{h-inequality}.
This time, however,
$h$ is decreasing, and will become zero in finite time.
Therefore the inequality does not have a global solution.
With initial data $h(0) = \ga - \la$,
the solution can be written as
\begin{align*}
h(t) = \al \ga \pth{1+W\pth{\frac{\eta}{\al\ga}
            e^{\frac{t + \eta}{\al\ga}}}},
\end{align*}
where $\eta = \ga - \la - \al \ga$, negative
by assumption.
The singularity occurs when the argument of $W$
becomes $-\ov e$.

The function $f$ has now the form
\begin{align*}
f(t; \al,\ga,\la) = t + \eta - \al\ga W\pth{\frac{\eta}{\al\ga}
            e^{\frac{t + \eta}{\al\ga}}},
            \qquad \eta = \ga - \la - \al\ga.
\end{align*}

\begin{lemma}
\label{le:properties-of-f'}
For every $T > 0$,
there exists $\al_T > 1$ and $\la    _T \in (0, \ga)$ such that
the function $f$ is well defined on $[0, T]$
for $\al \in (0, \al_T)$
and $\la \in (0, \la_T)$
with $\ga - \la - \al \ga < 0$,
and has the following properties:
\begin{compactenum}
\item $f(0) = 0$,
\item $f(t) \to t$ uniformly on $[0,T]$ as $\al \to 1+$ and $\la \to 0+$,
\item $f' = \frac{\al\ga}{h}$, $f'' > 0$,
    $f'(0) = \frac{\al\ga}{\ga - \la}$.
\end{compactenum}
\end{lemma}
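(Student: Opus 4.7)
The plan is to verify the three properties directly from the explicit formula for $f$ and handle the well-definedness on $[0, T]$ via a continuity argument near $(\al, \la) = (1+, 0+)$.

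Property (a) is immediate from the inverse identity $W(xe^x) = x$ valid on $[-1, \infty)$. Since $\ga > \la$, the value $\eta/(\al\ga) = (\ga - \la)/(\al\ga) - 1$ is $\geq -1$, so substituting $t = 0$ into the defining formula gives $W\bigl(\frac{\eta}{\al\ga} e^{\eta/(\al\ga)}\bigr) = \frac{\eta}{\al\ga}$ and hence $f(0) = \eta - \al\ga \cdot \eta/(\al\ga) = 0$.

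For property (c), I recall that $f$ was constructed by solving the ODE $\frac{h}{h - \al\ga}\, h' = 1$ under the substitution $h(t) = \ga - \la + t - f(t)$ with initial data $h(0) = \ga - \la$, so $h' = (h - \al\ga)/h$ and
\[
f'(t) = 1 - h'(t) = \frac{\al\ga}{h(t)},
\]
giving $f'(0) = \al\ga/(\ga - \la)$. For the sign of $f''$, the hypothesis $\eta < 0$ is precisely $h(0) < \al\ga$, hence $h'(0) < 0$; this ordering persists as long as $0 < h < \al\ga$ (which is the whole domain where the ODE is valid and $f$ is well-defined), and therefore $f'' = -\al\ga\, h'/h^2 > 0$ throughout.

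The main obstacle is to simultaneously establish the well-definedness of $f$ on $[0, T]$ and property (b). The principal branch of $W$ requires its argument to exceed $-1/e$, so I must find $\al_T > 1$ and $\la_T \in (0, \ga)$ such that
\[
\frac{|\eta|}{\al\ga}\, e^{(t + \eta)/(\al\ga)} < \frac{1}{e} \qquad \text{for all } t \in [0, T].
\]
Since $\eta < 0$ the exponential factor is bounded above by $e^{T/(\al\ga)}$, and as $(\al, \la) \to (1+, 0+)$ we have $|\eta| \to 0$ while $\al\ga$ stays bounded away from zero; so the displayed estimate is satisfied uniformly in $t \in [0,T]$ in a full neighborhood of $(1, 0)$, which pins down the choice of $\al_T$ and $\la_T$. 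For parameters in this range the argument of $W$ converges uniformly to $0$ on $[0, T]$, and by continuity of $W$ at $0$ with $W(0) = 0$, both $\eta$ and $\al\ga W(\cdots)$ tend uniformly to $0$, yielding $f(t) \to t$ uniformly on $[0, T]$ and completing (b).
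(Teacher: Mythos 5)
Your proposal is correct and follows essentially the same route as the paper: verify (a) via $W(xe^x)=x$, read (c) off the ODE $h'=(h-\al\ga)/h$ with $h(0)=\ga-\la<\al\ga$, and obtain well-definedness on $[0,T]$ plus (b) by noting that the argument of $W$ tends to $0$ uniformly on $[0,T]$ as $(\al,\la)\to(1+,0+)$ (the paper delegates (a) and (c) to the analogous subsolution lemma, while you spell them out). No gaps.
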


\begin{proof}
The existence of $\al_T$ and $\la_T$ follows after
realizing that, as $(\al, \la) \to (1,0)$,
$\frac{\eta}{\al\ga}e^{\frac{\eta}{\al\ga}} \to 0$
while $e^{\frac{t}{\al\ga}}$ stays uniformly bounded on $t \in [0, T]$.
Consequently, it is possible to choose $\al_T > 1$ and $\la_T \in (0, \ga)$
that guarantee that the argument of $W$ is larger than
$- \ov e$ for $t \in [0, T]$.
As in Lemma~\ref{le:properties-of-f},
the above observation also implies (b).
\qedhere\end{proof}

\section{Technical results}

\subsection{Harmonic function on a thin cylinder}
The idea of the following lemma is motivated by the observation that the values of a harmonic function on a thin cylinder of a larger radius
are not strongly influenced by its values on the side of the cylinder.

\begin{lemma}[Subharmonic function on a thin cylinder]
\label{le:subharmonicOnThinDomain}
There is a dimensional constant $c = c(n) > 0$ such that whenever $u$ is a subharmonic function on $\Omega$, $\Omega \subset C$ open, where
\begin{align*}
C = \set{x = (x', x_n) : \abs{x'} < R,\ 0 < x_n < \de}
\end{align*}
for some $R > 0$ and $\de > 0$, and furthermore for some $K > 0$
\begin{align*}
u&\leq K && \text{on $\cl C$},\\
u &\leq -1 && \text{on $\partial \Omega \cap \set{\abs{x'} < R}$},\\
\end{align*}
then
\begin{align*}
u \leq -\frac{1}{2} \qquad \text{on $\set{x \in \Omega: \abs{x'} \leq R', 0 < x_n < \de}$},
\end{align*}
where
\begin{align*}
R' = R - c (K + 2) \de.
\end{align*}
\end{lemma}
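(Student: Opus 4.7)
The plan is to construct an explicit superharmonic barrier $\Phi$ on the whole cylinder $C$ and to apply the weak maximum principle on $\Omega$: since $u$ is subharmonic on $\Omega\subset C$ and only the soft conclusion $u\leq -1/2$ is required on the inner region, considerable flexibility is available for the barrier.

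Specifically, I propose taking
\[
\Phi(x) := -1 + \frac{x_n(\delta-x_n)}{\delta^2} + (K+2)\left(\frac{(|x'|-R')_+}{R-R'}\right)^2,
\]
where $(\cdot)_+$ denotes the positive part. Direct inspection supplies the required boundary domination: on the lateral face $\set{|x'|=R}\cap\cl C$ one has $\Phi\geq -1+(K+2)=K+1\geq K$; on the top and bottom $\set{x_n\in\set{0,\delta}}\cap\cl C$ the middle term vanishes and $\Phi\geq -1$; and generally $\Phi\geq -1$ on $\cl C$. Combined with the hypotheses $u\leq K$ on $\cl C$ and $u\leq -1$ on $\partial\Omega\cap\set{|x'|<R}$, this gives $\Phi\geq u$ on $\partial\Omega$ in the usual upper-semicontinuous sense. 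For superharmonicity, using $\Delta_{x'}f(|x'|)=f''(|x'|)+\tfrac{n-2}{|x'|}f'(|x'|)$ on $\R^{n-1}$ together with the elementary bound $(|x'|-R')/|x'|\leq 1$ for $|x'|\geq R'$, one computes
\[
\Delta\Phi\leq -\frac{2}{\delta^2} + \frac{2(n-1)(K+2)}{(R-R')^2}
\]
in the distributional sense (the third term is $C^1$ at $|x'|=R'$, so its second derivatives have only a bounded jump and produce no delta-mass). Hence $\Delta\Phi\leq 0$ as soon as $(R-R')^2\geq (n-1)(K+2)\delta^2$, which, given $R-R'=c(K+2)\delta$ and $K+2>2$, is guaranteed by the choice $c=c(n):=\sqrt{(n-1)/2}$.

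The weak maximum principle applied to the subharmonic $u$ and the superharmonic $\Phi$ on the bounded open set $\Omega$ then yields $u\leq\Phi$ on $\Omega$, and on $\set{|x'|\leq R'}$ the quadratic term vanishes so $\Phi\leq -1+\tfrac14=-\tfrac34\leq -\tfrac12$, completing the proof. The main work is the guess of $\Phi$: the cut-off $(|x'|-R')_+^2$ is designed so that its Laplacian scales like $1/(R-R')^2$, and balancing this against the $-2/\delta^2$ produced by the $x_n(\delta-x_n)$ term makes the large coefficient $K+2$ absorbable as long as $R-R'\gtrsim\sqrt{K+2}\,\delta$; the linear width $c(K+2)\delta$ in the statement is therefore comfortably sufficient (and in fact this barrier would already work with the sharper scale $R'=R-c(n)\sqrt{K+2}\,\delta$).
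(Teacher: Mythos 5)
Your proof is correct and, at the level of strategy, it is the same as the paper's: both arguments exhibit an explicit superharmonic barrier on the slab and conclude by comparison on $\Omega$, using $u\le K$ only on the lateral face $\{|x'|=R\}$ and $u\le -1$ on the rest of $\partial\Omega$. The difference lies in the barrier. The paper uses the multiplicative profile $\varphi(x)=\sqrt{1+|x'|^2/n}\,\cos x_n-\tfrac{3}{2}$ rescaled by $\pi/(3\delta)$, whose horizontal growth is essentially linear in $|x'|$, so the detachment width it produces is linear in $K$, exactly the $c(n)(K+2)\delta$ of the statement (with $c=6\sqrt n/\pi$). Your additive barrier, a concave parabola in $x_n$ plus the truncated quadratic $(K+2)\bigl((|x'|-R')_+/(R-R')\bigr)^2$, encodes superharmonicity as $-2/\delta^2+2(n-1)(K+2)/(R-R')^2\le 0$, which only needs $R-R'\ge\sqrt{(n-1)(K+2)}\,\delta$; this gives the sharper square-root scaling you point out, and since proving the lemma with a smaller constant $c$ implies it for any larger one, your choice $c=\sqrt{(n-1)/2}$ is perfectly adequate (recall $n\ge2$ throughout the paper, so $c>0$). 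Two minor points to record in a careful write-up: your computation (the bound $(|x'|-R')/|x'|\le1$ and the smoothness of the truncated term at the axis) implicitly assumes $R'\ge0$, which is harmless because for $R'<0$ the conclusion is vacuous; and the boundary comparison should be stated via the $\limsup$ of the upper semicontinuous $u$ at $\partial\Omega$, which your estimates ($\Phi\ge K+1$ on $\{|x'|=R\}$, $\Phi\ge-1$ elsewhere) already cover, at the same level of rigor as the paper's own proof.
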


\begin{proof}
We shall compare the function $u$ on $\cl \Omega$ with a translate of the superharmonic function
\begin{align*}
\vp(x) = \sqrt{1 + \frac{\abs{x'}^2}{n}} \cdot \cos x_n - \frac{3}{2}.
\end{align*}
Indeed, writing $\abs{x'} = r$, a straightforward computation yields
\begin{align*}
\Delta \vp = \pth{\partial_{x_n x_n} + \partial_{rr} + \frac{n-2}{r} \partial_r} \vp = -\frac{\left(n+2 r^2+n r^2+r^4\right) \cos x_n}{n^2 \left(\frac{n+r^2}{n}\right)^{3/2}}.
\end{align*}
This quantity is negative if $\abs{x_n} < \pi/2$.

Define $\tilde \vp(x) = \vp(\frac{\pi}{3 \de} x)$. Observe that for $0 \leq x_n \leq \de$
\begin{enumerate}
\item $-\Delta \tilde \vp > 0$,
\item $\tilde \vp \geq -1$,
\item $\tilde \vp \geq K$ for $\abs{x'} \geq \frac{6 \sqrt{n}}{\pi} (K + 2) \de$. This follows from estimating $\sqrt{1 + \frac{\abs{x'}^2}{n}} \geq \frac{\abs{x'}}{\sqrt n}$ and $\cos x_n \geq \cos \frac{\pi }{3\de} \de = \cos \frac{\pi}{3} = \frac{1}{2}$.
\end{enumerate}

So if we set $c := \frac{6 \sqrt n}{\pi}$, we can compare a translate $\tilde \vp(\cdot - x_0)$, $x_0 = (x'_0, 0)$ with $u$ on $\cl C$  whenever $\abs{x'_0} \leq R -c (K + 2) \de$ and the maximum principle lets us conclude that $u \leq \vp(\cdot - x_0)$ on $\cl C$. That shows that
\begin{align*}
u &\leq -\frac{1}{2} \qquad \text{on } \set{x : \abs{x'} \leq R - c (K+2) \de, \ 0 \leq x_n \leq \de}.
\end{align*}
\qedhere\end{proof}

\subsection{Integer approximation}
In this section we consider the approximation of
a set $A \subset \R^d$ by the set $A \cap \Z^d$.
First, we introduce some useful notation.

\begin{definition}
\label{def:grid}
For a set $A \subset \R^d$, $d \geq 1$, we shall denote
\begin{align*}
A^\grid = A \cap \Z^d, \qquad A^\gride = A \cap \e \Z^d.
\end{align*}
\end{definition}

The following two lemmas provide sufficient conditions
for approximation of $A$ by $A^\grid$.

\begin{lemma}
\label{le:grid-cover}
Suppose that $A, E \subset \R^d$
and $\la > \frac12\sqrt{d}$.
If $\e > 0$ and
\begin{align}
\label{ball-extension-inclusion}
E + B_{\la\e}(0) \subset A
\end{align}
then
\begin{align*}
E \subset A^{\grid\e} + B_{\la\e}(0).
\end{align*}
\end{lemma}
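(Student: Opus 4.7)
The plan is to show the inclusion pointwise: for an arbitrary $x \in E$, produce a lattice point $y \in \e\Z^d$ that lies in $A$ and is within $\la\e$ of $x$. The key elementary fact is that every point of $\R^d$ is within distance $\tfrac{\sqrt{d}}{2}\e$ of some point of $\e\Z^d$: indeed, the unit cube $[0,\e]^d$ has diameter $\sqrt{d}\e$, and its center is the point farthest from the lattice, at distance $\tfrac{\sqrt{d}}{2}\e$ from each corner. Consequently, for any $x \in \R^d$ one can choose $y \in \e\Z^d$ (for instance by componentwise rounding) such that $\abs{x-y} \leq \tfrac{\sqrt{d}}{2}\e$.

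Given such $y$, the strict inequality $\la > \tfrac{1}{2}\sqrt{d}$ yields $\abs{x - y} < \la\e$, i.e., $y \in x + B_{\la\e}(0)$. Since $x \in E$, this places $y$ in $E + B_{\la\e}(0)$, and hypothesis \eqref{ball-extension-inclusion} gives $y \in A$. Thus $y \in A \cap \e\Z^d = A^{\grid\e}$, and the identity $x = y + (x-y)$ with $x - y \in B_{\la\e}(0)$ yields $x \in A^{\grid\e} + B_{\la\e}(0)$.

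Since $x \in E$ was arbitrary, this proves $E \subset A^{\grid\e} + B_{\la\e}(0)$. There is no serious obstacle here; the entire argument is a one-step geometric observation about lattice approximation, and the role of the strict inequality $\la > \tfrac{1}{2}\sqrt{d}$ is precisely to guarantee that the nearest lattice point lies in the open ball $B_{\la\e}(x)$ (so that membership in $E + B_{\la\e}(0)$, and hence in $A$, is ensured).
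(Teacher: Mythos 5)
Your proof is correct and follows essentially the same route as the paper's: pick an arbitrary point of $E$, use $\la > \tfrac12\sqrt d$ to find a lattice point of $\e\Z^d$ within the open ball of radius $\la\e$, observe via \eqref{ball-extension-inclusion} that this lattice point lies in $A$ (hence in $A^{\grid\e}$), and conclude. The only difference is cosmetic ordering of the steps, so there is nothing to add.
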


\begin{proof}
Let $\xi \in E$.
Then, by \eqref{ball-extension-inclusion},
$\xi + B_{\la\e}(0) = B_{\la\e}(\xi) \subset A$.
The choice of $\la$ guarantees that
there exists
$\zeta \in B_{\la\e}(\xi) \cap \e\Z^d \neq \emptyset$.
Clearly $\xi \in B_{\la\e}(\zeta) \subset A^{\grid\e} + B_{\la\e}(0)$
and the proof is finished.
\qedhere\end{proof}

\begin{lemma}
\label{le:grid-approx}
Let $E \subset \R^d$, $\e > 0$ and
$\la > \frac12 \sqrt d$.
Define $K := \set{x \in E: B_{\la\e}(x) \subset E}$.
If $\mu := \dist(E, K) < \infty$
then
\begin{align*}
(E + x) \subset (E+x)^{\grid\e} + B_{\la\e + \mu}(0)
\qquad \text{for any $x \in \R^d$.}
\end{align*}
\end{lemma}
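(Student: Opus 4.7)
The plan is to reduce the claim to Lemma~\ref{le:grid-cover} by a two-step approximation: for every $y \in E+x$, I first produce a ``deep'' point inside $E+x$ whose $\la\e$-neighborhood still lies in $E+x$, and then approximate that deep point by a lattice point of $(E+x)^{\grid\e}$ using the previous lemma. The triangle inequality then bounds $|y - w|$ by the sum of the two approximation errors.

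Concretely, I would fix $x \in \R^d$ and $y \in E+x$, writing $y = y' + x$ with $y' \in E$. Interpreting $\mu = \dist(E,K)$ as the one-sided Hausdorff distance $\sup_{p \in E} \inf_{z \in K} \abs{p - z}$ — this is the only reading that makes $\mu$ nontrivial, since $K \subset E$ forces the symmetric distance to vanish — for any $\eta > 0$ I can pick $z \in K$ with $\abs{y' - z} < \mu + \eta$. By definition of $K$, $B_{\la\e}(z) \subset E$, and translation by $x$ gives $B_{\la\e}(z+x) \subset E+x$. Lemma~\ref{le:grid-cover} applied to the singleton $\set{z+x}$ and the set $A = E+x$ then produces a grid point $w \in (E+x)^{\grid\e}$ with $\abs{w - (z+x)} \leq \tfrac12 \sqrt d\, \e$ (inspecting the proof of Lemma~\ref{le:grid-cover}, where the lattice point can always be chosen within a half-diagonal of the cube). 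A triangle inequality then yields
\begin{align*}
\abs{y - w} \leq \abs{y' - z} + \abs{(z+x) - w} < \mu + \eta + \tfrac12 \sqrt d\, \e.
\end{align*}

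Since $\la > \tfrac12 \sqrt d$ strictly, I can choose $\eta \in (0, (\la - \tfrac12 \sqrt d)\e)$ to absorb the $\eta$-slack and obtain $\abs{y - w} < \mu + \la\e$, i.e., $y \in (E+x)^{\grid\e} + B_{\la\e + \mu}(0)$. Since $y$ was arbitrary, this is the desired inclusion.

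The main obstacle, as I see it, is pinning down the intended meaning of $\dist(E,K)$; once that one-sided Hausdorff reading is adopted, the argument is essentially a two-line chaining of Lemma~\ref{le:grid-cover} with the triangle inequality. A minor secondary issue is ensuring that the final approximation lands in the \emph{open} ball of radius $\la\e + \mu$ as stated, which is precisely why I preserve the strict margin $\la > \tfrac12 \sqrt d$ to absorb the auxiliary $\eta > 0$.
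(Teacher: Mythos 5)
Your proof is correct and follows essentially the same route as the paper: both reduce the statement to Lemma~\ref{le:grid-cover} via the chain ``point of $E+x$ $\to$ nearby point of $K+x$ $\to$ nearby lattice point of $(E+x)^{\grid\e}$,'' using the one-sided reading of $\dist(E,K)$ and a small slack to absorb the strict margin $\la > \frac12\sqrt d$. The only cosmetic difference is that the paper applies Lemma~\ref{le:grid-cover} to the whole set $K+x$ with radius $(\la-\de)\e$ and adds balls, while you apply it pointwise and invoke the half-diagonal bound $\frac12\sqrt d\,\e$ visible in its proof; both are valid instances of the same argument.
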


\begin{proof}
Choose $\de > 0$ so that $\la - \de > \frac12 \sqrt d$.
Since $K + x + B_{(\la - \de)\e}(0) \subset E + x$
by definition of $K$,
we have by definition of $\mu$ and Lemma~\ref{le:grid-cover}
\begin{align*}
E+x \subset K + x + B_{\mu + \de\e}(0)
&\subset (E + x)^{\grid\e} + B_{(\la - \de)\e}(0) + B_{\mu + \de\e}(0)
\\&= (E+x)^\grid + B_{\la\e + \mu}(0),
\end{align*}
and that is exactly what we wanted to prove.
\qedhere\end{proof}

\begin{remark}
\label{rem:grid-approx-cone}
Note that if $E = \cone_{\nu,\ta}(0)$
for some $\nu \in \Rn \setminus\set0$ and $\ta \in (0,\frac\pi2)$
then $\mu = \frac{\la}{\sin \ta} \e$
in Lemma~\ref{le:grid-approx}.
\end{remark}

% CONTENTS END

\parahead{Acknowledgments}
% acknowledgments begin
I want to thank Inwon Kim for many helpful discussions.
I also want to thank the anonymous reviewers for careful reading of the manuscript and
their valuable comments.
Much of the work was done while the author was a project researcher of
the University of Tokyo during 2011--2013, supported by a grant (Kiban
S, No.21224001) from the Japan Society for the Promotion of Science.
% acknowledgments end

\begin{bibdiv}
\begin{biblist}

\bib{AGM}{article}{
   author={Antontsev, S. N.},
   author={Gon{\c{c}}alves, C. R.},
   author={Meirmanov, A. M.},
   title={Local existence of classical solutions to the well-posed Hele-Shaw
   problem},
   journal={Port. Math. (N.S.)},
   volume={59},
   date={2002},
   number={4},
   pages={435--452},
   issn={0032-5155},
   review={\MR{1942849 (2004j:35301)}},
}

\bib{ACSI}{article}{
   author={Athanasopoulos, I.},
   author={Caffarelli, L.},
   author={Salsa, S.},
   title={Regularity of the free boundary in parabolic phase-transition
   problems},
   journal={Acta Math.},
   volume={176},
   date={1996},
   number={2},
   pages={245--282},
   issn={0001-5962},
   review={\MR{1397563 (97e:35202)}},
   doi={10.1007/BF02551583},
}

\bib{B72}{article}{
   author={Baiocchi, Claudio},
   title={Su un problema di frontiera libera connesso a questioni di
   idraulica},
   journal={Ann. Mat. Pura Appl. (4)},
   volume={92},
   date={1972},
   pages={107--127},
   issn={0003-4622},
   review={\MR{0408443 (53 \#12207)}},
}

\bib{BS98}{article}{
   author={Barles, Guy},
   author={Souganidis, Panagiotis E.},
   title={A new approach to front propagation problems: theory and
   applications},
   journal={Arch. Rational Mech. Anal.},
   volume={141},
   date={1998},
   number={3},
   pages={237--296},
   issn={0003-9527},
   review={\MR{1617291 (99c:35106)}},
   doi={10.1007/s002050050077},
}

\bib{C87}{article}{
   author={Caffarelli, Luis A.},
   title={A Harnack inequality approach to the regularity of free
   boundaries. I. Lipschitz free boundaries are $C^{1,\alpha}$},
   journal={Rev. Mat. Iberoamericana},
   volume={3},
   date={1987},
   number={2},
   pages={139--162},
   issn={0213-2230},
   review={\MR{990856 (90d:35306)}},
}

\bib{CM12}{article}{
   author={Caffarelli, L. A.},
   author={Monneau, R.},
   title={Counter-example in three dimension and homogenization of geometric
   motions in two dimension},
   journal={Arch. Ration. Mech. Anal.},
   volume={212},
   date={2014},
   number={2},
   pages={503--574},
   issn={0003-9527},
   review={\MR{3176351}},
   doi={10.1007/s00205-013-0712-y},
}

\bib{CS}{book}{
   author={Caffarelli, Luis},
   author={Salsa, Sandro},
   title={A geometric approach to free boundary problems},
   series={Graduate Studies in Mathematics},
   volume={68},
   publisher={American Mathematical Society},
   place={Providence, RI},
   date={2005},
   pages={x+270},
   isbn={0-8218-3784-2},
   review={\MR{2145284 (2006k:35310)}},
}

\bib{CSW}{article}{
   author={Caffarelli, Luis A.},
   author={Souganidis, Panagiotis E.},
   author={Wang, L.},
   title={Homogenization of fully nonlinear, uniformly elliptic and
   parabolic partial differential equations in stationary ergodic media},
   journal={Comm. Pure Appl. Math.},
   volume={58},
   date={2005},
   number={3},
   pages={319--361},
   issn={0010-3640},
   review={\MR{2116617 (2006b:35016)}},
   doi={10.1002/cpa.20069},
}

\bib{CV}{article}{
   author={Caffarelli, Luis},
   author={Vazquez, Juan Luis},
   title={Viscosity solutions for the porous medium equation},
   conference={
      title={Differential equations: La Pietra 1996 (Florence)},
   },
   book={
      series={Proc. Sympos. Pure Math.},
      volume={65},
      publisher={Amer. Math. Soc.},
      place={Providence, RI},
   },
   date={1999},
   pages={13--26},
   review={\MR{1662747 (99m:35029)}},
}

\bib{CC06}{article}{
   author={Caselles, Vicent},
   author={Chambolle, Antonin},
   title={Anisotropic curvature-driven flow of convex sets},
   journal={Nonlinear Anal.},
   volume={65},
   date={2006},
   number={8},
   pages={1547--1577},
   issn={0362-546X},
   review={\MR{2248685 (2007d:35143)}},
   doi={10.1016/j.na.2005.10.029},
}

\bib{CIL}{article}{
   author={Crandall, Michael G.},
   author={Ishii, Hitoshi},
   author={Lions, Pierre-Louis},
   title={User's guide to viscosity solutions of second order partial
   differential equations},
   journal={Bull. Amer. Math. Soc. (N.S.)},
   volume={27},
   date={1992},
   number={1},
   pages={1--67},
   issn={0273-0979},
   review={\MR{1118699 (92j:35050)}},
   doi={10.1090/S0273-0979-1992-00266-5},
}

\bib{EJ}{article}{
   author={Elliott, C. M.},
   author={Janovsk{\'y}, V.},
   title={A variational inequality approach to Hele-Shaw flow with a moving
   boundary},
   journal={Proc. Roy. Soc. Edinburgh Sect. A},
   volume={88},
   date={1981},
   number={1-2},
   pages={93--107},
   issn={0308-2105},
   review={\MR{611303 (82d:76031)}},
   doi={10.1017/S0308210500017315},
}

\bib{Evans92}{article}{
   author={Evans, Lawrence C.},
   title={Periodic homogenisation of certain fully nonlinear partial
   differential equations},
   journal={Proc. Roy. Soc. Edinburgh Sect. A},
   volume={120},
   date={1992},
   number={3-4},
   pages={245--265},
   issn={0308-2105},
   review={\MR{1159184 (93a:35016)}},
   doi={10.1017/S0308210500032121},
}

\bib{HS}{article}{
  author = 	 {Hele-Shaw, H. S.},
  title = 	 {The flow of water},
  journal =	 {Nature},
  volume = {58},
  date = 	 {1898},
  pages = {34--36}
}

\bib{IM10}{article}{
   author={Ibrahim, H.},
   author={Monneau, R.},
   title={On the rate of convergence in periodic homogenization of scalar
   first-order ordinary differential equations},
   journal={SIAM J. Math. Anal.},
   volume={42},
   date={2010},
   number={5},
   pages={2155--2176},
   issn={0036-1410},
   review={\MR{2729435 (2012c:34169)}},
   doi={10.1137/080738830},
}

\bib{Ishii87}{article}{
   author={Ishii, Hitoshi},
   title={Perron's method for Hamilton-Jacobi equations},
   journal={Duke Math. J.},
   volume={55},
   date={1987},
   number={2},
   pages={369--384},
   issn={0012-7094},
   review={\MR{894587 (89a:35053)}},
   doi={10.1215/S0012-7094-87-05521-9},
}

\bib{JKO}{book}{
   author={Jikov, V. V.},
   author={Kozlov, S. M.},
   author={Ole{\u\i}nik, O. A.},
   title={Homogenization of differential operators and integral functionals},
   note={Translated from the Russian by G. A. Yosifian [G. A. Iosif\cprime
   yan]},
   publisher={Springer-Verlag},
   place={Berlin},
   date={1994},
   pages={xii+570},
   isbn={3-540-54809-2},
   review={\MR{1329546 (96h:35003b)}},
   doi={10.1007/978-3-642-84659-5},
}

\bib{K03}{article}{
   author={Kim, Inwon C.},
   title={Uniqueness and existence results on the Hele-Shaw and the Stefan
   problems},
   journal={Arch. Ration. Mech. Anal.},
   volume={168},
   date={2003},
   number={4},
   pages={299--328},
   issn={0003-9527},
   review={\MR{1994745 (2004k:35422)}},
   doi={10.1007/s00205-003-0251-z},
}

\bib{K06}{article}{
   author={Kim, Inwon C.},
   title={Regularity of the free boundary for the one phase Hele-Shaw
   problem},
   journal={J. Differential Equations},
   volume={223},
   date={2006},
   number={1},
   pages={161--184},
   issn={0022-0396},
   review={\MR{2210142 (2006j:35248)}},
   doi={10.1016/j.jde.2005.07.003},
}

\bib{K07}{article}{
   author={Kim, Inwon C.},
   title={Homogenization of the free boundary velocity},
   journal={Arch. Ration. Mech. Anal.},
   volume={185},
   date={2007},
   number={1},
   pages={69--103},
   issn={0003-9527},
   review={\MR{2308859 (2008f:35019)}},
   doi={10.1007/s00205-006-0035-3},
}

\bib{K08}{article}{
   author={Kim, Inwon C.},
   title={Homogenization of a model problem on contact angle dynamics},
   journal={Comm. Partial Differential Equations},
   volume={33},
   date={2008},
   number={7--9},
   pages={1235--1271},
   issn={0360-5302},
   review={\MR{2450158 (2010e:35031)}},
   doi={10.1080/03605300701518273},
}

\bib{K09}{article}{
   author={Kim, Inwon C.},
   title={Error estimates on homogenization of free boundary velocities in
   periodic media},
   journal={Ann. Inst. H. Poincar\'e Anal. Non Lin\'eaire},
   volume={26},
   date={2009},
   number={3},
   pages={999--1019},
   issn={0294-1449},
   review={\MR{2526413 (2010f:35020)}},
   doi={10.1016/j.anihpc.2008.10.004},
}

\bib{KM09}{article}{
   author={Kim, Inwon C.},
   author={Mellet, Antoine},
   title={Homogenization of a Hele-Shaw problem in periodic and random
   media},
   journal={Arch. Ration. Mech. Anal.},
   volume={194},
   date={2009},
   number={2},
   pages={507--530},
   issn={0003-9527},
   review={\MR{2563637 (2011a:35035)}},
   doi={10.1007/s00205-008-0161-1},
}

\bib{KM10}{article}{
   author={Kim, Inwon C.},
   author={Mellet, Antoine},
   title={Homogenization of one-phase Stefan-type problems in periodic and
   random media},
   journal={Trans. Amer. Math. Soc.},
   volume={362},
   date={2010},
   number={8},
   pages={4161--4190},
   issn={0002-9947},
   review={\MR{2608400 (2011d:35037)}},
   doi={10.1090/S0002-9947-10-04945-7},
}

\bib{KP11}{article}{
   author={Kim, Inwon C.},
   author={Po{\v{z}}{\'a}r, Norbert},
   title={Viscosity solutions for the two-phase Stefan problem},
   journal={Comm. Partial Differential Equations},
   volume={36},
   date={2011},
   number={1},
   pages={42--66},
   issn={0360-5302},
   review={\MR{2763347 (2012b:35376)}},
   doi={10.1080/03605302.2010.526980},
}

\bib{KP12}{article}{
   author={Kim, Inwon C.},
   author={Po{\v{z}}{\'a}r, Norbert},
   title={Nonlinear elliptic-parabolic problems},
   journal={Arch. Ration. Mech. Anal.},
   volume={210},
   date={2013},
   number={3},
   pages={975--1020},
   issn={0003-9527},
   review={\MR{3116010}},
   doi={10.1007/s00205-013-0663-3},
}

\bib{LPV}{article}{
    author={Lions, P. L.},
    author={Papanicolaou, G.},
    author={Varadhan, S. R. S.},
    title={Homogenization of Hamilton-Jacobi equations},
    status={unpublished},
}

\bib{LR}{article}{
   author={Louro, Bento},
   author={Rodrigues, Jos{\'e}-Francisco},
   title={Remarks on the quasisteady one phase Stefan problem},
   journal={Proc. Roy. Soc. Edinburgh Sect. A},
   volume={102},
   date={1986},
   number={3-4},
   pages={263--275},
   issn={0308-2105},
   review={\MR{852360 (88e:35186)}},
   doi={10.1017/S0308210500026354},
}

\bib{Piccinini1}{article}{
   author={Piccinini, L. C.},
   title={Homogeneization problems for ordinary differential equations},
   journal={Rend. Circ. Mat. Palermo (2)},
   volume={27},
   date={1978},
   number={1},
   pages={95--112},
    issn={0009-725X},
    review={\MR{542236 (81a:34015a)}},
    doi={10.1007/BF02843869},
}

\bib{P11}{article}{
   author={Po{\v{z}}{\'a}r, Norbert},
   title={Long-time behavior of a Hele-Shaw type problem in random media},
   journal={Interfaces Free Bound.},
   volume={13},
   date={2011},
   number={3},
   pages={373--395},
   issn={1463-9963},
   review={\MR{2846016}},
}

\bib{Primicerio}{article}{
   author={Primicerio, Mario},
   title={Stefan-like problems with space-dependent latent heat},
   journal={Meccanica},
   volume={5},
   date={1970},
   pages={187--190},
   issn={0025-6455},
   review={\MR{0372424 (51 \#8633)}},
}

\bib{PR}{article}{
   author={Primicerio, Mario},
   author={Rodrigues, Jos{\'e}-Francisco},
   title={The Hele-Shaw problem with nonlocal injection condition},
   conference={
      title={Nonlinear mathematical problems in industry, II},
      address={Iwaki},
      date={1992},
   },
   book={
      series={GAKUTO Internat. Ser. Math. Sci. Appl.},
      volume={2},
      publisher={Gakk\=otosho},
      place={Tokyo},
   },
   date={1993},
   pages={375--390},
   review={\MR{1370478 (96j:35278)}},
}

\bib{Richardson}{article}{
 	author = 	 {Richardson, S.},
 	title = 	 {Hele Shaw flows with a free boundary produced by the injection of fluid into a narrow channel},
journal = {J. Fluid Mech.},
 	volume = {56},
number = {4},
    date = 	 {1972},
    pages = {609--618},
}

\bib{Rodrigues}{book}{
   author={Rodrigues, Jos{\'e}-Francisco},
   title={Obstacle problems in mathematical physics},
   series={North-Holland Mathematics Studies},
   volume={134},
   note={Notas de Matem\'atica [Mathematical Notes], 114},
   publisher={North-Holland Publishing Co.},
   place={Amsterdam},
   date={1987},
   pages={xvi+352},
   isbn={0-444-70187-7},
   review={\MR{880369 (88d:35006)}},
}

\bib{Rou}{article}{
   author={Roub{\'{\i}}{\v{c}}ek, Tom{\'a}{\v{s}}},
   title={The Stefan problem in heterogeneous media},
   journal={Ann. Inst. H. Poincar\'e Anal. Non Lin\'eaire},
   volume={6},
   date={1989},
   number={6},
   pages={481--501},
   issn={0294-1449},
   review={\MR{1035339 (91c:35167)}},
}

\bib{Schwab}{article}{
   author={Schwab, Russell W.},
   title={Stochastic homogenization of Hamilton-Jacobi equations in
   stationary ergodic spatio-temporal media},
   journal={Indiana Univ. Math. J.},
   volume={58},
   date={2009},
   number={2},
   pages={537--581},
   issn={0022-2518},
   review={\MR{2514380 (2010m:35028)}},
   doi={10.1512/iumj.2009.58.3455},
}

\bib{Soug99}{article}{
   author={Souganidis, Panagiotis E.},
   title={Stochastic homogenization of Hamilton-Jacobi equations and some
   applications},
   journal={Asymptot. Anal.},
   volume={20},
   date={1999},
   number={1},
   pages={1--11},
   issn={0921-7134},
   review={\MR{1697831 (2000k:49038)}},
}

\bib{Steinbach}{book}{
   author={Steinbach, J{\"o}rg},
   title={A variational inequality approach to free boundary problems with
   applications in mould filling},
   series={International Series of Numerical Mathematics},
   volume={136},
   publisher={Birkh\"auser Verlag},
   place={Basel},
   date={2002},
   pages={x+294},
   isbn={3-7643-6582-X},
   review={\MR{1891393 (2003a:35002)}},
}

\bib{Tartar}{book}{
   author={Tartar, Luc},
   title={The general theory of homogenization},
   subtitle={A personalized introduction},
   series={Lecture Notes of the Unione Matematica Italiana},
   volume={7},
   publisher={Springer-Verlag},
   place={Berlin},
   date={2009},
   pages={xxii+470},
   isbn={978-3-642-05194-4},
   review={\MR{2582099 (2011c:35007)}},
   doi={10.1007/978-3-642-05195-1},
}

\bib{WangI}{article}{
   author={Wang, Lihe},
   title={On the regularity theory of fully nonlinear parabolic equations.
   I},
   journal={Comm. Pure Appl. Math.},
   volume={45},
   date={1992},
   number={1},
   pages={27--76},
   issn={0010-3640},
   review={\MR{1135923 (92m:35126)}},
   doi={10.1002/cpa.3160450103},
}

\end{biblist}
\end{bibdiv}

\end{document}